\newtheorem{fact}{Fact}[section]
\newtheorem{lemma}[fact]{Lemma}
\newtheorem{conjecture}[fact]{Conjecture}
\newtheorem{corollary}[fact]{Corollary}
\newtheorem{proposition}[fact]{Proposition}
\newtheorem{theorem}[fact]{Theorem}
\theoremstyle{definition}
\newtheorem{definition}[fact]{Definition}
\newtheorem{question}[fact]{Question}
\newtheorem{remark}[fact]{Remark}
\newtheorem{example}[fact]{Example}
\DeclareMathOperator\Acc{Acc}
\DeclareMathOperator\Aut{Aut}
\DeclareMathOperator\Cl{cl} 
\DeclareMathOperator\Circ{Circ} 
\DeclareMathOperator\diag{diag}
\DeclareMathOperator\Id{Id}
\DeclareMathOperator\lcm{lcm}
\DeclareMathOperator\NR{NR}
\DeclareMathOperator\rank{rank}
\DeclareMathOperator\Rist{Rist}
\DeclareMathOperator\SRist{SRist}
\DeclareMathOperator\Stab{Stab}
\DeclareMathOperator\SStab{SStab}
\DeclareMathOperator\Sub{Sub}
\DeclareMathOperator\Sym{Sym}
\newcommand*\abs[1]{\lvert#1\rvert}
\newcommand*\Autf{\Aut_{\mathrm{f}}}
\newcommand*\Autfr{\Aut_{\mathrm{fr}}}
\newcommand*\defi[1]{\textbf{#1}}
\newcommand*\gen[1]{\langle#1\rangle}
\newcommand*\GGS{\textrm{GGS}}
\newcommand*\Grig{{\mathcal G}}
\newcommand*\level[1]{\mathcal L_{#1}}
\newcommand*\N{\mathbf{N}}
\newcommand*\portrait{{\mathcal P}}
\newcommand*\presentation[2]{\langle#1\,|\,#2\rangle}
\newcommand*\restr[2]{{#1}_{\mkern 1mu \vrule height 2ex\mkern2mu {#2}}}
\newcommand*\setst[2]{\{#1\,|\,#2\}}
\newcommand*\Subcl{\Sub_{\Cl}}
\newcommand*{\treesection}[2]{{#1}_{\mkern 1mu \vrule height 2ex\mkern2mu {#2}}}
\title{Weakly maximal subgroups of branch groups}
\author{Paul-Henry Leemann}
\date{\today}
\begin{document}
\maketitle
\begin{abstract}
Let $G$ be a branch group acting by automorphisms on a rooted tree $T$. Stabilizers of infinite rays in $T$ are examples of weakly maximal subgroups of $G$ (subgroups that are maximal among subgroups of infinite index), but in general they are not the only examples.

In this article we describe two families of weakly maximal subgroups of branch groups. We show that, for the first Grigorchuk group as well as for the torsion \GGS{} groups, every weakly maximal subgroup belongs to one of these families.
The first family is a generalization of stabilizers of rays, while the second one consists of weakly maximal subgroups with a block structure.
We obtain different equivalent characterizations of these families in terms of finite generation, the existence of a trivial rigid stabilizer, the number of orbit-closures for the action on the boundary of the tree or by the means of sections.
\end{abstract}
\tableofcontents
%
%
%
%
%
%
%
%
%
%
\section{Introduction and statements of the main results}
Let $T$ be a locally finite spherically regular rooted tree. Among groups that act on $T$ by automorphisms, branch groups are of particular interest (see Section~\ref{Section:Definitions} for all the relevant definitions).
The class of branch groups contains finitely generated groups with interesting properties, such as being infinite torsion, or having intermediate growth.
Branch groups have interesting subgroup structure and some of them, but not all, do not have maximal subgroups of infinite index. This is the case, for example, of the first Grigorchuk group and of the torsion \GGS{} groups.

The next step in understanding the subgroup structure of branch groups is to study \defi{weakly maximal subgroups}, that is the maximal elements among the subgroups of infinite index.
The study of such subgroups began with the following result of Bartholdi and Grigorchuk on \defi{parabolic subgroups}, which are the stabilizers of rays $\Stab_G(\xi)$.
\begin{proposition}[\cite{MR1841750,MR2893544}]\label{Prop:BartholdiGrigorchuk}
If $G\leq\Aut(T)$ is weakly branch, then all the $\Stab_G(\xi)$ for $\xi\in\partial T$ are infinite and pairwise distinct.
Moreover, if $G$ is branch, then all these subgroups are weakly maximal.
\end{proposition}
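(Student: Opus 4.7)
The argument has three parts matching the three claims, and I would tackle them in order.

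First, for the infinitude of $\Stab_G(\xi)$: every vertex $v$ off the ray $\xi$ yields an inclusion $\Rist_G(v)\leq\Stab_G(\xi)$, because elements of $\Rist_G(v)$ are supported on $T_v$, which is disjoint from $\xi$. The weakly branch hypothesis makes each such $\Rist_G(v)$ nontrivial. Choosing an infinite family $v_1,v_2,\dots$ of vertices off $\xi$ with pairwise disjoint subtrees -- for instance, the siblings of the successive vertices of $\xi$ -- gives infinitely many nontrivial subgroups of $\Stab_G(\xi)$ with pairwise disjoint supports; they commute pairwise, and their internal sum is an infinite subgroup of $\Stab_G(\xi)$.

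Second, for pairwise distinctness: let $\xi\ne\eta$ diverge at level $n$ through sibling vertices $v_\xi,v_\eta$ with common parent $u$. It is enough to exhibit an element $h\in\Rist_G(v_\eta)$ with $h\cdot\eta\ne\eta$, since any such $h$ automatically lies in $\Stab_G(\xi)\setminus\Stab_G(\eta)$ (it fixes $\xi$ because its support lies in $T_{v_\eta}$, which is disjoint from $\xi$). The key claim to prove is therefore $\Rist_G(v)\not\leq\Stab_G(\zeta)$ whenever a ray $\zeta$ passes through $v$. I would argue this by descending along $\zeta$, exploiting that level-transitivity of $G$ makes the pointwise stabilizer $\Stab_G(w)$ of every vertex $w$ act transitively on the children of $w$: combining such a child-permuting element at some deep level with nontrivial rigid stabilizers arbitrarily far down $T_v$ (provided by weak branch-ness) should allow one to localize the permutation to an element sitting inside $\Rist_G(v)$ which moves some $\zeta_k$. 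Producing the element \emph{inside} $\Rist_G(v)$, rather than merely inside $\Stab_G(\zeta_{k-1})$, is where I expect the main technical effort to go.

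Third, for weak maximality when $G$ is branch: level-transitivity already gives $[G:\Stab_G(\xi)]=\infty$, so $\Stab_G(\xi)$ is a valid candidate. Assume $\Stab_G(\xi)\subsetneq H\leq G$ and pick $g\in H\setminus\Stab_G(\xi)$. Let $m$ be the smallest level at which $g$ moves a vertex of $\xi$, so $v\defegal g\cdot\xi_m\ne\xi_m$ is a level-$m$ vertex not on $\xi$. For every level-$m$ vertex $w\ne\xi_m$ one has $\Rist_G(w)\leq\Stab_G(\xi)\leq H$, while the conjugation identity $g\Rist_G(\xi_m)g^{-1}=\Rist_G(v)\leq H$ together with $g\in H$ yields $\Rist_G(\xi_m)\leq H$ too. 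Hence $\Rist_G(\level{m})=\prod_w\Rist_G(w)\leq H$, and the branch hypothesis $[G:\Rist_G(\level{m})]<\infty$ forces $[G:H]<\infty$. Thus $\Stab_G(\xi)$ is maximal among subgroups of infinite index, i.e. weakly maximal.
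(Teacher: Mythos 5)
The paper does not actually prove this proposition --- it is quoted from Bartholdi--Grigorchuk with a citation --- so your argument can only be judged on its own terms. Your first and third parts are correct. The infinitude argument works (with the paper's definition of weakly rigid a single $\Rist_G(v)$ for one $v$ off $\xi$ is already infinite, so the infinite family of disjoint supports is not even needed), and the weak-maximality argument --- adjoin $g\in H\setminus\Stab_G(\xi)$, note $\Rist_G(w)\leq\Stab_G(\xi)\leq H$ for every level-$m$ vertex $w\neq\xi_m$, conjugate $\Rist_G(g\cdot\xi_m)\leq H$ back to $\Rist_G(\xi_m)$ by $g^{-1}\in H$, and conclude $\Rist_G(\level{m})\leq H$ --- is exactly the standard one, and it uses the branch hypothesis in the right place.

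The gap is in the second part, precisely where you flag ``the main technical effort''. Your key claim --- $\Rist_G(v)\not\leq\Stab_G(\zeta)$ for every ray $\zeta$ through $v$ --- is the right reduction and is true, but the localization strategy you sketch does not close. Concretely: if $g\in\Stab_G(\zeta_k)$ moves the child $\zeta_{k+1}$ to a sibling $c'$ and $f$ is a nontrivial element of a deep rigid stabilizer, the natural way to land inside $\Rist_G(v)$ is the commutator $[g,f]=(gfg^{-1})f^{-1}$; if $f\in\Rist_G(\zeta_{k+1})$ this element moves $\zeta$ if and only if $f$ already does, and if $f\in\Rist_G(c')$ it moves $\zeta$ if and only if $f$ moves the ray $g^{-1}\zeta\in\partial T_{c'}$ --- either way you have reproduced the original problem one level down, for a ray you do not control. (Lemma \ref{Lemma:RistLevelTransitive} of the paper would give the claim directly, but its proof uses rigidity, i.e.\ $[G:\Rist_G(\level{n})]<\infty$, which is unavailable in the weakly branch setting.) The missing idea is a minimality argument rather than a localization: the set $F_v=\setst{\zeta\in\partial T_v}{h\zeta=\zeta\ \text{for all}\ h\in\Rist_G(v)}$ is closed, is invariant under $\Stab_G(v)$ (which normalizes $\Rist_G(v)$), and is a proper subset of $\partial T_v$ (a nontrivial $h\in\Rist_G(v)$ moves some vertex $x\in T_v$, hence every ray through $x$). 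Level transitivity of $G$ forces $\Stab_G(v)$ to act transitively on $\level{k}\cap T_v$ for every $k$, hence minimally on $\partial T_v$, so the closed proper invariant set $F_v$ must be empty. With that lemma in hand, your element $h\in\Rist_G(v_\eta)$ moving $\eta$ exists and the rest of your second part goes through.
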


The study of weakly maximal subgroups continued with an example of Pervova exhibiting a non-parabolic weakly maximal subgroup of the first Grigorchuk group, answering a question of Grigorchuk about the existence of such subgroups.
After that, the author together with Bou-Rabee and Nagnibeda proved the following two results.
\begin{theorem}[\cite{MR3478865}]\label{Thm:BRLN1}
Let $T$ be a regular rooted tree and $G\leq \Aut(T)$ be a finitely generated branch group.\footnote{The result in~\cite{MR3478865} is only stated for regular branch groups, but the proof can easily be adapted for general branch groups.}
Then, for any finite subgroup $F\leq G$ there exists uncountably many weakly maximal subgroups of $G$ containing $F$.
\end{theorem}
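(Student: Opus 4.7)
The plan is to combine Zorn's lemma with the parabolic construction of Proposition~\ref{Prop:BartholdiGrigorchuk}, but applied inside a deep rigid stabilizer of $G$ rather than inside $G$ itself, so that $F$ can be attached as an external factor.

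First I would observe that in any finitely generated group, the union of an ascending chain of infinite-index subgroups still has infinite index: were the union of finite index, Reidemeister--Schreier would make it finitely generated, and its generators would already lie in some term of the chain. Hence by Zorn's lemma every infinite-index subgroup of $G$ is contained in a weakly maximal one, and it suffices to produce uncountably many infinite-index subgroups of $G$ containing $F$ whose weakly maximal extensions are pairwise distinct.

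Using $|F|<\infty$ and $\bigcap_n\Stab_G(n)=\{1\}$ I would pick $n$ with $F\cap\Stab_G(n)=\{1\}$, so that in particular $F\cap\Rist_G(n)=\{1\}$. Using the branch structure I would then locate a vertex $v_0$ at some level $\ge n$ such that $\Stab_F(v_0)$ acts trivially on the subtree $T_{v_0}$. Because $G$ is branch on $T$, $\Rist_G(v_0)$ is itself a branch group on $T_{v_0}$ and of finite index in $G$. For every ray $\eta\in\partial T_{v_0}$---automatically $\Stab_F(v_0)$-invariant by the choice of $v_0$---set $K_\eta\defegal\Stab_{\Rist_G(v_0)}(\eta)$ and $W_\eta\defegal\gen{F,K_\eta}$. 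Proposition~\ref{Prop:BartholdiGrigorchuk} applied to $\Rist_G(v_0)\acts T_{v_0}$ makes $K_\eta$ weakly maximal in $\Rist_G(v_0)$ and the family $\{K_\eta\}_\eta$ pairwise distinct. A short intersection calculation (using that any element $fk\in W_\eta\cap\Rist_G(v_0)\subseteq\Rist_G(n)$ forces $f\in F\cap\Rist_G(n)=\{1\}$, and that $\Stab_F(v_0)\cdot\eta=\{\eta\}$) then gives $W_\eta\cap\Rist_G(v_0)=K_\eta$; combined with $[W_\eta:K_\eta]\le[G:\Rist_G(v_0)]<\infty$ and $[\Rist_G(v_0):K_\eta]=\infty$, the tower law forces $[G:W_\eta]=\infty$, so $W_\eta$ extends to a weakly maximal subgroup $W'_\eta\supseteq F$.

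For the distinctness of $\eta\mapsto W'_\eta$: if $W'_{\eta_1}=W'_{\eta_2}\egaldef W$ with $\eta_1\ne\eta_2$, then $W\cap\Rist_G(v_0)$ contains the two distinct weakly maximal subgroups $K_{\eta_1}$ and $K_{\eta_2}$ of $\Rist_G(v_0)$; their join has finite index in $\Rist_G(v_0)$, which forces $W$ to have finite index in $G$ and contradicts weak maximality. Thus $\eta\mapsto W'_\eta$ is injective, and $\partial T_{v_0}$ being uncountable closes the argument. The hard part will be locating the vertex $v_0$: showing that for every finite $F\le G$ such a vertex exists is where the branch (rather than merely weakly branch) hypothesis and the finite generation of $G$ should enter essentially, presumably via a careful analysis of the sections of the nontrivial elements of $F$ inside the branch structure.
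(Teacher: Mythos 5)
The paper does not actually reprove Theorem~\ref{Thm:BRLN1}: it cites it and instead establishes the stronger Theorem~\ref{Thm:NumberGeneralizedParabolic} by a Zorn-free route, namely taking the non-open orbit-closures $C$ of $F\acts\partial T$ (when $F$ is finite every orbit is finite, hence closed and nowhere dense), observing that $\SStab_G(C)$ is then a generalized parabolic, hence weakly maximal, subgroup containing $F$ (Proposition~\ref{Proposition:WMStrongRigidSStab}), that distinct $C$ give distinct subgroups (Lemma~\ref{Lemma:GenParDistinct}), and that a Baire-category argument forces a continuum of such $C$. Your route is genuinely different, but as written it has a fatal error: you assert that $\Rist_G(v_0)$ has finite index in $G$. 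It does not (unless $v_0$ is the root): $\Rist_G(v_0)$ fixes $T\setminus T_{v_0}$ pointwise, so its number of orbits on $\level{m}$ is unbounded and Fact~\ref{Fact:NumberOrbitsIndex} gives $[G:\Rist_G(v_0)]=\infty$; only the full level product $\Rist_G(\level{n})$ has finite index. This false claim is exactly what powers your injectivity step, and without it that step collapses. From $W'_{\eta_1}=W'_{\eta_2}=W$ you only learn that $W$ contains a finite-index subgroup of $\Rist_G(v_0)$, which is no contradiction: the single infinite-index subgroup $\gen{F,\prod_{f\in F}\Rist_G(f.v_0)}$ contains \emph{every} $W_\eta$ simultaneously (for $F=\{1\}$, any parabolic $\Stab_G(\xi)$ with $\xi\notin\partial T_{v_0}$ already does), so Zorn's lemma is free to assign one and the same weakly maximal subgroup to all $\eta$. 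Since uncountability of the weakly maximal subgroups, not of the $W_\eta$, is the whole content of the theorem, this is the gap that must be closed; the fix requires an argument showing that a fixed weakly maximal subgroup of infinite index can absorb only ``few'' of the $W_\eta$, or else abandoning Zorn in favour of the paper's explicit $\SStab_G(C)$ construction.

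There are secondary problems as well. The action $\Rist_G(v_0)\acts T_{v_0}$ need not be level transitive (Lemma~\ref{Lemma:RistLevelTransitive} only provides transitivity on strictly deeper subtrees), so Proposition~\ref{Prop:BartholdiGrigorchuk} does not directly make $K_\eta$ weakly maximal in $\Rist_G(v_0)$. Elements of $W_\eta$ are not of the form $fk$ with $f\in F$ and $k\in K_\eta$, because only $\Stab_F(v_0)$ normalizes $K_\eta$; the correct normal subgroup to work with is $\prod_{f\in F}K_\eta^f$, supported on the whole $F$-orbit of $v_0$ (with this correction your conclusion $[G:W_\eta]=\infty$ does survive). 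Finally, the existence of the vertex $v_0$, which you defer as ``the hard part'', is in fact the easy part: a short induction on $\abs{F}$ using only faithfulness of the action (if some $f\neq 1$ moves a vertex $v$, pass to $\Stab_F(v)$ acting on $T_v$ and recurse) produces it, with no use of branchness or finite generation.
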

\begin{theorem}[\cite{MR3478865}]\label{Thm:BRLN3}
Let $G$ be the first Grigorchuk group or a branch \GGS{} group.
For any vertex $v$, the subgroup $\Stab_G(v)$ contains a weakly maximal subgroup $W$ that does not stabilize any vertex of level greater than the level of $v$.
\end{theorem}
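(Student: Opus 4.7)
My plan is a Zorn's lemma argument inside $\Stab_G(v)$; for concreteness I assume $v$ is not the root (the root case reduces to the existence of a weakly maximal subgroup of $G$ with no fixed vertex, obtained from Pervova-type constructions). Set $H_0 := \Rist_G(v) \le \Stab_G(v)$. Since $G$ is branch, $\Rist_G(v)$ has infinite index in $G$, and the restriction $\Rist_G(v) \to \Aut(T_v)$ has image a finite-index branch subgroup of a copy of $G$ acting on $T_v \cong T$, hence transitive on each level of $T_v$. In particular, $\Rist_G(v)$ does not fix any vertex of $T_v$ other than $v$.

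I then apply Zorn's lemma to the family
\[
\mathcal F = \{ H \le G : H_0 \le H \le \Stab_G(v),\ [G:H] = \infty\}.
\]
It is non-empty ($H_0 \in \mathcal F$) and closed under chains: a finite-index subgroup of a finitely generated group is finitely generated, so an ascending union of infinite-index subgroups of $G$ remains of infinite index. A maximal element $W$ then satisfies $W \le \Stab_G(v)$, $[G:W] = \infty$, and (via $W \supseteq \Rist_G(v)$) no vertex below $v$ is $W$-fixed.

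The delicate point is showing that $W$ is weakly maximal in $G$, and not merely in $\Stab_G(v)$. Suppose $W \lneq K \le G$ with $[G:K] = \infty$. Since $[K:K\cap\Stab_G(v)] \le [G:\Stab_G(v)] < \infty$, the intersection $K \cap \Stab_G(v)$ has infinite index in $G$ and contains $W$, so maximality of $W$ in $\mathcal F$ forces $K \cap \Stab_G(v) = W$, whence $K$ contains some $g \notin \Stab_G(v)$. Conjugation gives $\Rist_G(gv) = g\,\Rist_G(v)\,g^{-1} \le K$, and iterating over all such $g$ yields $\Rist_G(v') \le K$ for every $v'$ in the $K$-orbit $O$ of $v$ inside $\level{n}$ (where $n$ is the level of $v$). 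If $O = \level{n}$, then $K \supseteq \Rist_G(\level{n})$, a finite-index subgroup of $G$ by branchness, contradicting $[G:K] = \infty$.

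The main obstacle is thus to force $O = \level{n}$. For $v$ at level one this is automatic: the image of $G$ in $\Sym(\level{1})$ is cyclic of prime order $p$ (the tree's valence for both the first Grigorchuk group and the torsion \GGS{} groups), so its subgroups act either trivially or transitively, and the trivial case would give $K \le \Stab_G(v)$ and hence $K = W$, contradicting $W \lneq K$. For $v$ at deeper levels, $O$ can a priori be a proper non-singleton subset of $\level{n}$, and ruling this out requires the concrete structure of the groups in question — for instance, by enlarging $H_0$ with additional elements that force transitivity of any finite extension of $W$ inside $G$, or by an inductive argument on the level via the self-similar embedding.
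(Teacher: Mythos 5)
Your Zorn's-lemma strategy is genuinely different from what this paper does --- Theorem \ref{Thm:BRLN3} is quoted here from \cite{MR3478865}, and the paper's own contribution in this direction is Theorem \ref{Thm:CopyOfWeaklyMaxInStab}, proved by \emph{lifting} a prescribed weakly maximal subgroup $W$ of $G$ into $\Stab_G(v)$ via the operation $W\mapsto W^v$ of Definition \ref{Definition:Wv} and Proposition \ref{Prop:NewWmax}, with no appeal to the axiom of choice and with weak maximality of $W^v$ obtained for free from that of $W$. Your route could in principle work, but as written it contains one outright false step in addition to the gap you flag.

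The false step is the claim that $\pi_v\bigl(\Rist_G(v)\bigr)$ is a finite-index ``branch'' subgroup of a copy of $G$ and ``hence transitive on each level of $T_v$'', so that $\Rist_G(v)$ fixes no vertex of $T_v$ below $v$. A finite-index subgroup of a branch group need not be level transitive (this is precisely the caveat of Fact \ref{Fact:SubgroupBranch}), and for the groups at hand it is not: for the first Grigorchuk group $\pi_v\bigl(\Rist_\Grig(v)\bigr)$ equals $B$ (level $1$) or $K$ (level $\geq 2$), both contained in $\Stab_\Grig(\level{1})$, and for a torsion \GGS{} group the normal form of Lemma \ref{Lemma:GGSNormalForm} shows $\pi_v\bigl(\Rist_G(v)\bigr)\leq G'\leq\Stab_G(\level{1})$. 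Hence your seed $H_0=\Rist_G(v)$ fixes every child of $v$, and nothing in the Zorn argument prevents the maximal element $W$ from fixing them as well. A second, independent problem is that $H_0$ acts trivially outside $T_v$ and therefore fixes \emph{every} vertex of level $>\lvert v\rvert$ that is not below $v$; your family $\mathcal F$ contains, for instance, $\Stab_G(v)\cap\Stab_G(\xi)$ for any ray $\xi$ avoiding $T_v$, and a maximal element of $\mathcal F$ lying over such a subgroup may perfectly well stabilize deep vertices (parabolic subgroups are weakly maximal and fix infinitely many vertices, so weak maximality alone rules nothing out). Both problems force a different seed --- one with no fixed vertex of level $>\lvert v\rvert$ and finitely many orbit-closures on $\partial T$, e.g.\ a diagonal subgroup over the children of $v$ combined with rigid stabilizers elsewhere, as in the proof of Proposition \ref{Prop:InfinitelyBlockSubGrps}. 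Finally, the transitivity of the $K$-orbit of $v$ on $\level{n}$, which you correctly identify as the remaining obstacle, is exactly where the group-specific section results (Lemma \ref{Lemma:Francoeur}, Corollary \ref{Lemma:GuptaSidki}) must be injected; it is not automatic and is the reason the statement is restricted to these particular families.
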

The flavour of Theorem~\ref{Thm:BRLN1} is \emph{small subgroups are contained in many weakly maximal subgroups} while Theorem~\ref{Thm:BRLN3} can be thought of as \emph{big subgroups (vertex stabilizers) contain many weakly maximal subgroup}.
Indeed, Theorem~\ref{Thm:BRLN3} implies that $\Stab_G(v)$ contains a weakly maximal subgroup that is not a parabolic subgroup.
On the other hand, one important corollary of Theorem~\ref{Thm:BRLN1} is the existence for some branch groups of uncountably many weakly maximal subgroups that are not parabolic.
This result hold for any $G$ that admits a unique branch action and such that $G$ contains a finite subgroup $F$ that does not fix any point in $\partial T$; two conditions that hold in a lot of branch groups.

The aim of this paper is twofold: a better understanding of weakly maximal subgroups in general branch groups as well as a full description of weakly maximal subgroups for the particular cases of the first Grigorchuk group and of the \GGS{} torsion groups.

We first have the following general result.
\begin{lemma}\label{Lemma:WMaxInfinite}
Let $G\leq\Aut(T)$ be a group with $\Rist_G(v)$ infinite for all $v$.
Then every weakly maximal subgroup of $G$ is infinite.
\end{lemma}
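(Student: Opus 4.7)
My plan is to argue by contradiction: assume $W$ is weakly maximal and finite, and construct a subgroup $H$ with $W \subsetneq H \leq G$ of infinite index, violating weak maximality of $W$. The idea is to combine $W$ with the rigid stabilizer of a suitably deep vertex $v$, while leaving a sibling vertex $u$ at the same level untouched so that $\Rist_G(u)$ provides an infinite transversal to $H$.

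First I would exploit the finiteness of $W$ together with the faithfulness of the $G$-action on $T$: since $\bigcap_n \Stab_G(\level{n}) = 1$, there is a level $n$ on which $W$ acts faithfully, i.e., $W \cap \Stab_G(\level{n}) = 1$. Because all rigid stabilizers are infinite, no subtree of $T$ can be a ray (otherwise its automorphism group would be trivial), so $\abs{\level{n}} \to \infty$, and I can enlarge $n$ further to ensure $\abs{\level{n}} > \abs{W}$. Then I pick any $v \in \level{n}$, write $\{v_1, \ldots, v_k\}$ for its $W$-orbit, and choose a spare vertex $u \in \level{n} \setminus \{v_1, \ldots, v_k\}$, which exists because $k \leq \abs{W} < \abs{\level{n}}$.

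The candidate enlargement is $H = WK$, where $K = \prod_{i=1}^k \Rist_G(v_i)$ is a direct product (rigid stabilizers of distinct vertices at the same level commute and meet trivially). Since $W$ permutes the $v_i$'s it normalizes $K$, and faithfulness on $\level{n}$ gives $W \cap K = 1$, so $H = W \ltimes K$ is a genuine subgroup; it strictly contains $W$ because $\Rist_G(v) \subseteq K$ is infinite and intersects $W$ trivially.

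The heart of the proof, and the step I expect to be the main obstacle, is showing $[G:H] = \infty$. For this I would verify $H \cap \Rist_G(u) = 1$: an element $h = wk \in H$ lying in $\Rist_G(u)$ must fix every vertex of $\level{n}$ (each $v_i$ because $T_u$ and $T_{v_i}$ are disjoint, and $u$ itself because $\Rist_G(u) \leq \Stab_G(u)$); since $k \in \Stab_G(\level{n})$, this forces $w \in W \cap \Stab_G(\level{n}) = 1$, so $h = k$, and then restricting $k = \prod k_i$ to each $T_{v_i}$ (disjoint from $T_u$) shows each $k_i$ is trivial, hence $h = 1$. With $H \cap \Rist_G(u) = 1$ and $\Rist_G(u)$ infinite by hypothesis, the cosets $g H$ for $g \in \Rist_G(u)$ are pairwise distinct, giving $[G:H] \geq \abs{\Rist_G(u)} = \infty$ and the desired contradiction.
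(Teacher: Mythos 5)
Your proof is correct, and it takes a genuinely different route from the paper's. The paper obtains this lemma as a corollary of the non-rigidity tree machinery of Section \ref{Section:NonRigidityTree}, via a trichotomy: either $\NR(W)\neq T$, so $W$ contains a finite-index subgroup of some infinite $\Rist_G(v)$; or $\NR(W)=T$ and $W$ acts level transitively on it, hence is infinite; or else Lemma \ref{Lemma:sub-directProductProjects} shows that some section $\pi_v\bigl(\Stab_W(v)\bigr)$ has finite index in the infinite group $\pi_v\bigl(\Stab_G(v)\bigr)$. You instead argue by contradiction from scratch: assuming $W$ finite, you find a level $n$ on which $W$ acts faithfully and which has more than $\abs{W}$ vertices (both points are justified correctly --- faithfulness of $G\acts T$ plus finiteness of $W$ for the first, and the observation that no $T_v$ can be a ray when $\Rist_G(v)$ is infinite for the second), and then the semidirect product $H=W\ltimes\prod_{i}\Rist_G(v_i)$ over a $W$\hyphen orbit at level $n$, together with the spare vertex $u$ forcing $H\cap\Rist_G(u)=\{1\}$, produces an infinite-index subgroup strictly containing $W$. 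Every step checks out: $W$ normalizes $K=\prod_i\Rist_G(v_i)$ because it permutes the orbit, $K\leq\Stab_G(\level{n})$ gives $W\cap K=\{1\}$, and the coset argument from $\Rist_G(u)$ is sound. Your approach is more elementary and completely self-contained, needing none of the $\NR$\hyphen tree or section apparatus; the paper's proof is essentially a by-product of a structural analysis it develops anyway for later results, and that analysis additionally records \emph{which} infinite piece a weakly maximal subgroup must contain --- information your impossibility argument does not produce but which the statement does not require.
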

We then study \defi{generalized parabolic subgroups}.
These are the setwise stabilizers $\SStab_G(C)$ of closed but not clopen subsets $C\subset\partial T$ such that the action $\SStab_G(C)\curvearrowright C$ is minimal.
For this special class of subgroups we are able to prove the following generalization of Bartholdi and Grigorchuk's results.
\begin{theorem}\label{Thm:GeneralizedParabolic}
Let $T$ be a rooted tree and $G\leq\Aut(T)$ be a branch group.
Then all generalized parabolic subgroups of $G$ are weakly maximal and pairwise distinct.
\end{theorem}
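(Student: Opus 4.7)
Let $H := \SStab_G(C)$, and set $B := \{v : \emptyset \subsetneq \partial T_v \cap C \subsetneq \partial T_v\}$ and $T_C := \{v : \partial T_v \cap C \neq \emptyset\}$. The central observation, used throughout, is that $\Rist_G(v) \subseteq H$ whenever $v \notin B$: such rigid stabilizers act trivially off $\partial T_v$ and setwise fix $\partial T_v \cap C \in \{\emptyset, \partial T_v\}$. Moreover, $T_C$ determines $C$ via $C = \bigcap_n\bigcup_{|v|=n,\, v\in T_C}\partial T_v$, so $g T_C \neq T_C$ for every $g \in G \setminus H$, and $B$ is infinite because $C$ is not clopen. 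The plan is to treat infinite index, maximality among infinite-index subgroups, and pairwise distinctness in turn.

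\smallskip
\textbf{Infinite index.} Suppose for contradiction $[G:H]=k<\infty$, so $G\cdot C=\{g_1 C,\dots,g_k C\}$. Level transitivity of the branch action forces $\bigcup_i g_i C=\partial T$. The $G$-invariant multiplicity $t(\xi):=|\{i:\xi\in g_i C\}|$ takes values in $\{1,\dots,k\}$. For each $j$, the set $\{\xi\in C:t(\xi)>j\}$ is closed $H$-invariant and its complement in $C$ is open, so minimality of $H\acts C$ forces one of the two to be empty; iterating on $j$ yields $j_0$ with $t\equiv j_0$ on $C$, and hence on $\partial T$ by $G$-invariance. For any $\xi\in C$, a cylinder $\partial T_v\ni\xi$ chosen to avoid the $k-j_0$ translates that miss $\xi$ must lie in the other $j_0$ translates, including $C$ itself; so every point of $C$ is interior, making $C$ clopen --- contradiction.

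\smallskip
\textbf{Maximality.} Given $g\in G\setminus H$, I show $\Rist_G(n):=\prod_{|v|=n}\Rist_G(v)\subseteq\langle H,g\rangle$ for some $n$, which suffices by finite index of $\Rist_G(n)$ in $G$. Minimality of $H\acts C$ gives $H$-transitivity on $B\cap\{|v|=n\}$: for $v,v'\in B$ at level $n$, both $\partial T_v\cap C$ and $\partial T_{v'}\cap C$ are non-empty open subsets of $C$, so any dense $H$-orbit meets both, producing $h\in H$ with $hv=v'$. From $gT_C\neq T_C$ I extract, at some level $n$, a boundary vertex $u\in B$ and $k\in\{g,g^{-1}\}$ with $ku\notin T_C$: start from a vertex of $T_C\triangle gT_C$ at the minimal level of disagreement and, if necessary, trace upward along the ``in''-chain of its parent to the first $B$-ancestor. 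Then $\Rist_G(ku)\subseteq H$ gives $\Rist_G(u)=k^{-1}\Rist_G(ku)k\subseteq\langle H,g\rangle$, and $H$-transitivity propagates this to every $v\in B$ at level $n$. Together with $\Rist_G(v)\subseteq H$ for $v\notin B$, this yields $\Rist_G(n)\subseteq\langle H,g\rangle$.

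\smallskip
\textbf{Pairwise distinctness.} Assume $H=\SStab_G(C_1)=\SStab_G(C_2)$ with both actions minimal. The set $C_1\cap C_2$ is closed $H$-invariant in each $C_i$, so by minimality equals $\emptyset$ or each $C_i$; hence $C_1=C_2$ or $C_1\cap C_2=\emptyset$. Excluding the disjoint case is the main obstacle: for $v\in B(C_1)\setminus B(C_2)$, disjointness rules out $\partial T_v\subseteq C_2$, so $\partial T_v\cap C_2=\emptyset$ and the core observation applied to $C_2$ gives $\Rist_G(v)\subseteq H=\SStab_G(C_1)$. But then $\Rist_G(v)$ would setwise preserve the proper non-trivial closed set $C_1\cap\partial T_v\subsetneq\partial T_v$, contradicting that for a branch group $\Rist_G(v)$ acts level-transitively, hence minimally, on $T_v$. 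So $B(C_1)=B(C_2)$, and a parallel argument on the in-children of boundary vertices then excludes the disjoint case.
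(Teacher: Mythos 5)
Your multiplicity argument for infinite index is correct and is a genuinely different (and rather elegant) route from the paper's, which instead counts orbit-closures of $\SStab_G(C)\acts\partial T$ via the invariant $\xi\mapsto d(\xi,C)$. The reduction of maximality to placing a single $\Rist_G(u)$, $u\in B$, inside $\gen{H,g}$ and then spreading it by $H$-transitivity is also the paper's strategy. However, there are two problems, one of which is a genuine gap.

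The serious one is in the distinctness step, where you assert that ``for a branch group $\Rist_G(v)$ acts level-transitively, hence minimally, on $T_v$''. This is false. In the first Grigorchuk group, for example, $\Rist_\Grig(v)$ is a copy of $K=\gen{(ab)^2}^\Grig$ (or of $B=\gen{b}^\Grig$ at level one), which lies in the first-level stabilizer and so does not even act transitively on the children of $v$. What is actually true --- and is exactly Lemma \ref{Lemma:RistLevelTransitive} of the paper --- is that $\Rist_G(v)$ acts level-transitively on $T_w$ for every $w$ below $v$ at some sufficiently deep level $m$; hence the orbit-closures of $\Rist_G(v)\acts\partial T_v$ are finite unions of cylinders $\partial T_w$ with $w$ of level $m$, so they are clopen. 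Your contradiction survives in this corrected form: $C_1\cap\partial T_v$ would be a non-empty clopen $\Rist_G(v)$-invariant subset of $\partial T_v$, which is impossible because minimality of $H\acts C_1$ forces $C_1$ to be nowhere dense (if some $\partial T_w\subseteq C_1$, density of every $H$-orbit in $C_1$ makes every point of $C_1$ interior, so $C_1$ would be open). So the step is repairable, but as written it rests on a false statement about rigid stabilizers in branch groups.

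The second issue is the ``trace upward along the in-chain to the first $B$-ancestor'' manoeuvre in the maximality step. If the vertex $u$ found at the minimal level $n_0$ of disagreement between $T_C$ and $gT_C$ failed to lie in $B$, every ancestor $u''$ of $u$ would sit at a level $<n_0$ where $T_C$ and $kT_C$ agree, so $ku''\in T_C$ --- the opposite of what you need. Fortunately the patch is never needed: the same nowhere-density observation shows that $T_C=B$ for a generalized parabolic subgroup, so the vertex at level $n_0$ is automatically in $B$. You should state and use $T_C=B$ explicitly; it also finishes your distinctness argument cleanly, since $B(C_1)=B(C_2)$ then gives $T_{C_1}=T_{C_2}$ and hence $C_1=C_2$ directly, with no need for the vague ``parallel argument on the in-children''.
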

Here pairwise distinct is to be understood as $\SStab_G(C_1)\neq\SStab_G(C_2)$ if they are both generalized parabolic subgroups with $C_1\neq C_2$.
As a corollary, we obtain the following stronger versions of the results of~\cite{MR3478865}:
\begin{corollary}\label{Thm:NumberGeneralizedParabolic}
Let $G\leq\Aut(T)$ be a branch group and $F\leq G$ be any subgroup.
Let $\mathcal C_F$ be the set of all non-open orbit-closures of the action $F\curvearrowright\partial T$.
Then the function $\SStab_G(\cdot)\colon\mathcal C_F\to\Sub(G)$ is injective and has values in generalized parabolic subgroups (which are all weakly maximal).
\end{corollary}
Observe that in the above result, and unlike to~\cite{MR3478865}, we do not require $G$ to be finitely generated.
Moreover, the proof of it does not use the axiom of choice.
The same remarks hold for the following two corollaries.
\begin{corollary}\label{Cor:NumberGeneralizedParabolic}
Let $G\leq\Aut(T)$ be a branch group and $F\leq G$ be any subgroup.
Let $v$ be a vertex of $T$.
\begin{enumerate}
\item If all orbit-closures of $\Stab_F(v)\curvearrowright \partial T$ are non-open, then $F$ is contained in uncountably many generalized parabolic subgroup,
\item If all orbits of $\Stab_F(v)\curvearrowright \partial T$ are at most countable and closed, then $F$ is contained in a continuum of generalized parabolic subgroup.
If moreover $\Stab_F(v)$ is non-trivial, then $F$ is contained in a continuum of generalized parabolic subgroups that are not parabolic.
\end{enumerate}
\end{corollary}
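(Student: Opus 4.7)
My plan is to derive both statements from Theorem~\ref{Thm:NumberGeneralizedParabolic} applied to $F$ itself; the task is thus to produce sufficiently many non-open $F$-orbit-closures in $\partial T$. The bridge from the hypothesis on $\Stab_F(v)$ is the following observation, which I would record first: since $\partial T_v$ is clopen and $[F:\Stab_F(v)]=|F\cdot v|$ is finite, the assignment $Y\mapsto F\cdot Y=\bigsqcup_{f\in F/\Stab_F(v)}f\cdot Y$ is a bijection between $\Stab_F(v)$-orbit-closures in $\partial T_v$ and $F$-orbit-closures in $\partial T$ meeting $\partial T_v$, with inverse $C\mapsto C\cap\partial T_v$; clopenness of $\partial T_v$ together with $F\cdot Y\cap\partial T_v=Y$ shows that being non-open is preserved in both directions.

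For~(1), the hypothesis says every $\Stab_F(v)$-orbit-closure in $\partial T_v$ is non-open. Arguing by contradiction, assume only countably many such orbit-closures $Y_1,Y_2,\dots$ exist. Since they cover the compact Baire space $\partial T_v$, some $Y_i$ has non-empty interior. Being $\Stab_F(v)$-invariant and open, this interior contains a cylinder $\partial T_w$, hence also all its finitely many $\Stab_F(v)$-translates $\partial T_{h\cdot w}$; their union $K$ is a non-empty clopen set with $K\subseteq Y_i$. Writing $Y_i=\overline{\Stab_F(v)\cdot\xi_i}$, the $\Stab_F(v)$-invariance and clopenness of $K$ force the orbit $\Stab_F(v)\cdot\xi_i$ to lie entirely in $K$ or entirely in $\partial T_v\setminus K$: the first case gives $Y_i\subseteq K$, hence $Y_i=K$, contradicting non-openness of $Y_i$, and the second contradicts $K\subseteq Y_i$. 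So uncountably many orbit-closures exist, the bijection transports them to uncountably many non-open $F$-orbit-closures, and Theorem~\ref{Thm:NumberGeneralizedParabolic} finishes~(1).

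For~(2), the hypothesis forces all $\Stab_F(v)$-orbits (and hence all $F$-orbits, each a finite union of them) to be countable closed sets, so orbit-closures equal orbits. Since $|\partial T|=2^{\aleph_0}$ and each orbit is countable, there are $2^{\aleph_0}$ distinct $F$-orbits; each is countable in the perfect space $\partial T$, hence nowhere dense and a fortiori non-open. Theorem~\ref{Thm:NumberGeneralizedParabolic} then yields a continuum of generalized parabolic subgroups containing $F$. For the ``moreover'' clause I would use that a generalized parabolic $\SStab_G(C)$ is parabolic precisely when $C$ is a single ray. Picking any non-trivial $h\in\Stab_F(v)\leq F$, its fixed-point set in $\partial T$ is closed and proper, so its open complement has cardinality $2^{\aleph_0}$ and decomposes into a continuum of $F$-orbits of size at least~$2$; each such orbit is a non-singleton orbit-closure and contributes, by Theorem~\ref{Thm:NumberGeneralizedParabolic}, a non-parabolic generalized parabolic subgroup containing~$F$.

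The main subtlety, I expect, is the Baire step in~(1): ``non-open'' is genuinely weaker than ``nowhere dense'', so one cannot invoke the usual meagre-union form of Baire directly. The argument succeeds because $\Stab_F(v)$-invariance, combined with the finiteness of vertex orbits at a given level, promotes the interior of any orbit-closure to a \emph{finite} clopen union of cylinders, which is then incompatible with that orbit-closure being strictly larger and yet non-open.
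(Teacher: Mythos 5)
Your proof is correct and follows essentially the same route as the paper's (Corollary \ref{Cor:ExtendTreeEquivalenceClasses} there): transport $\Stab_F(v)$\hyphen orbit-closures to $F$\hyphen orbit-closures via $C\mapsto F\cdot C$, apply the Baire category theorem for (1), a cardinality count for (2), and a descendant of $v$ moved by $\Stab_F(v)$ for the non-parabolic refinement, then invoke Theorem \ref{Thm:NumberGeneralizedParabolic}. The only notable difference is that you explicitly justify the step ``a non-open orbit-closure has empty interior'' via invariant clopen unions of cylinders, a point the paper passes over with a bare ``hence''.
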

Before stating the next corollary, we introduce some equivalence relation on subgroups of a branch group $G$.
We say that two generalized parabolic subgroups $\SStab_G(C_1)$ and $\SStab_G(C_2)$ are \defi{tree-equivalent} if and only if there exists an automorphism $\varphi$ of $\Aut(T)$ such that $\varphi(C_1)=C_2$, see~Definition~\ref{Definition:TreeEquivalence} for a general definition.
The important point to keep in mind is that it is a coarser equivalence relation than conjugation of subgroups and that parabolic subgroups form one class for this relation.
\begin{corollary}\label{Cor:NumberGeneralizedParabolic2}
Let $G\leq\Aut(T)$ be a branch group.
\begin{enumerate}
\item
If $G$ is not torsion-free, it contains a continuum of generalized parabolic subgroups that are not parabolic.
\item
If $G$ has elements of arbitrarily high finite order, there are infinitely many tree-equivalence classes of generalized parabolic subgroups that each contains a continuum of subgroups.
\item
If $G$ is torsion, there is a continuum of tree-equivalence classes of generalized parabolic subgroups that each contains infinitely many subgroups.
\end{enumerate}
\end{corollary}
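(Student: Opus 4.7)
All three parts will follow from Corollary~\ref{Cor:NumberGeneralizedParabolic}; the common observation I would use throughout is that the rooted isomorphism type of the subtree $T(C)\defegal\{v\in T:\partial T_v\cap C\neq\emptyset\}$ is an invariant of the tree-equivalence class of $\SStab_G(C)$, since every $\varphi\in\Aut(T)$ sending $C_1$ to $C_2$ restricts to a rooted isomorphism $T(C_1)\to T(C_2)$. In particular, $|C|$ (when finite) and the branching pattern of $T(C)$ are preserved. Note that classes with finite $T(C)$ contain only countably many closed sets, so the continuum conclusions in $(2)$ and $(3)$ force us to work with closed sets of infinite shape.

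For $(1)$, I would pick any non-trivial torsion element $g\in G$ and set $F=\langle g\rangle$. Every automorphism of $T$ fixes the root $v_0$, so $\Stab_F(v_0)=F\neq\{1\}$; being finite, $F$ has only finite $\partial T$-orbits, which are closed and countable. The second half of Corollary~\ref{Cor:NumberGeneralizedParabolic}$(2)$, applied with $v=v_0$, then yields directly a continuum of non-parabolic generalized parabolic subgroups containing $F$.

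For $(2)$, I would choose $h_n\in G$ of pairwise distinct prime orders $p_n$ (passing to powers of elements of arbitrarily high order) and apply Corollary~\ref{Cor:NumberGeneralizedParabolic}$(2)$ to $F_n\defegal\langle h_n\rangle$. Inspecting the proof of that corollary, the continuum of subgroups it produces is realised as $\SStab_G(C)$ for $C$ ranging over closed sets containing a visible $p_n$-orbit-block of $h_n$ inside $T(C)$; this block is a tree-equivalence invariant taking different values for different $n$, giving infinitely many classes, and within each class the auxiliary parameter controlling the continuum in the proof still varies over a continuum. For $(3)$, with $G$ torsion branch, I would encode each $\alpha\in\{0,1\}^\N$ by a closed non-open set $C_\alpha\subset\partial T$ whose rooted subtree $T(C_\alpha)$ has a branching pattern prescribed by $\alpha$: follow a fixed ray $\xi_0=(v_0,v_1,\dots)$ and at each level $k$ with $\alpha_k=1$ attach one extra branch by picking a child $w_k\neq v_k$ of $v_{k-1}$ together with a ray $\xi_k$ through $w_k$, then realise $C_\alpha$ as the orbit-closure of a torsion subgroup $F_\alpha\leq G$ built branch by branch from involutions of $\Rist_G(v_{k-1})$ swapping $v_k$ and $w_k$ (available by the branch property). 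Distinct $\alpha$'s give non-isomorphic subtrees, hence non-tree-equivalent $C_\alpha$, and Theorem~\ref{Thm:NumberGeneralizedParabolic} applied to the countable family of $C$'s of each prescribed shape gives infinitely many subgroups per class.

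The hardest steps will lie in $(2)$ and $(3)$: in $(2)$, reading through Corollary~\ref{Cor:NumberGeneralizedParabolic} to check that the continuum can indeed be kept in a single tree-equivalence class indexed by $n$; and in $(3)$, arranging that $\SStab_G(C_\alpha)$ actually acts minimally on $C_\alpha$, i.e., that the recursively constructed $F_\alpha$ has $C_\alpha$ as a genuine orbit-closure rather than a larger invariant set, which is where the branch structure of the torsion group $G$ is used most sensitively.
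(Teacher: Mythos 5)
Your part (1) is exactly the paper's argument (take $F=\gen{g}$ for a nontrivial torsion element $g$ and apply Corollary \ref{Cor:NumberGeneralizedParabolic}), and your guiding idea for (2) and (3) --- distinguish \treeequivalence{} classes by the rooted shape of $T_C$ --- is also the paper's. But both (2) and (3) contain steps that fail. In (2), you cannot extract elements of pairwise distinct prime orders from a group that merely has elements of arbitrarily high finite order: an infinite torsion $2$\hyphen group such as the first Grigorchuk group has elements of order $2^k$ for every $k$, yet every power of every element again has order a power of $2$, so the family $F_n=\gen{h_n}$ with $\lvert h_n\rvert=p_n$ pairwise distinct primes simply does not exist. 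The paper avoids this by taking $g_i$ of increasing finite order and using as invariant the cardinality of a maximal $\gen{g_i}$\hyphen orbit $C_i\subset\partial T$ (unbounded as the order grows, since the order is the lcm of the orbit sizes); after extracting a subsequence these cardinalities are strictly increasing, and $\lvert C\rvert$ is the number of ends of the non-rigidity tree, hence a \treeequivalence{} invariant. The continuum within each class is then obtained essentially as you suggest, from the continuum of rays through a vertex realizing the maximal orbit.

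In (3), your construction hinges on ``involutions of $\Rist_G(v_{k-1})$ swapping $v_k$ and $w_k$'' where $v_k,w_k$ are \emph{children} of $v_{k-1}$. Such elements need not exist: in the first Grigorchuk group, $\Rist_\Grig(v)$ is a copy of $K\leq\Stab_\Grig(\level{1})$ and fixes both children of $v$, so no element of it permutes them; in general a rigid stabilizer is only guaranteed to move vertices far below $v$ (Lemma \ref{Lemma:RistLevelTransitive}), it need not contain involutions, and it certainly need not carry a chosen ray $\xi_0$ onto another chosen ray $\xi_k$. The paper instead picks arbitrary torsion elements $g_j\in\Rist_G(v_{i_j+1})$ moving the reference ray $\xi$, chooses the levels $i_j$ recursively so that each new generator acts below the levels where the previously chosen ones have stabilized, and separates the classes $C_{\mathbf b}=\overline{\gen{g_j^{b_j}}.\xi}$ by the cardinalities of their shadows at the levels $i_{k+1}$, not by attaching one extra branch per level. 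Finally, your source of infinitely many subgroups per class (varying $C$ over ``the countable family of each prescribed shape'') is not justified as stated; the paper gets this for free from the fact that in a torsion branch group weakly maximal subgroups are self-normalizing, hence each has infinitely many distinct conjugates, all of which are \treeequivalent.
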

In contrast with~\cite{MR3478865}, the above corollaries apply to subgroups that are not necessarily finite and we are able to show that finite (and other) subgroups are contained a continuum of weakly maximal subgroups.
For torsion groups we even obtain infinitely many tree-equivalence classes of weakly maximal subgroups.

Observe that, by~\cite[Theorem 6.9]{MR2035113}, if $G$ is a branch group which is torsion, then it satisfies the hypothesis of the second part of Corollary~\ref{Cor:NumberGeneralizedParabolic2}.
In particular, it has both infinitely many tree-equivalence classes of generalized parabolic subgroups that each contains a continuum of subgroups and a continuum of tree-equivalence classes of generalized parabolic subgroups that each contains infinitely many subgroups.

We also provide a version of Theorem~\ref{Thm:BRLN3} that holds in any self-replicating branch group.
\begin{theorem}\label{Thm:CopyOfWeaklyMaxInStab}
Let $G\leq\Aut(T)$ be a self-replicating branch group and let $W$ be a weakly maximal subgroup of $G$.
For any vertex $v$, there exists a weakly maximal subgroup $W^v$ contained in $\Stab_G(v)$ such that the section of $W^v$ at $v$ is equal to~$W$.
\end{theorem}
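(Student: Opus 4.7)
The plan is to take the natural candidate $W^v \defegal \pi_v^{-1}(W)$, where $\pi_v \colon \Stab_G(v) \surjection G$ is the section map at $v$, surjective by self-replication. By construction $W^v \leq \Stab_G(v)$ and $\pi_v(W^v) = W$, so the section condition holds immediately. Since $\ker \pi_v \leq W^v$ and $\pi_v$ is surjective, one has $[\Stab_G(v):W^v] = [G:W] = \infty$, and combining with $[G:\Stab_G(v)] < \infty$ gives $[G:W^v] = \infty$.

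The core of the argument is to verify that $W^v$ is maximal among subgroups of infinite index in $G$. I would suppose, for contradiction, that $W^v \subsetneq H \leq G$ with $[G:H] = \infty$, and set $H_0 \defegal H \cap \Stab_G(v)$. If $H_0 \supsetneq W^v$, then since $\ker \pi_v \leq W^v \leq H_0$ both $W^v$ and $H_0$ are full $\pi_v$-preimages, so $\pi_v(H_0) \supsetneq W$; weak maximality of $W$ then forces $\pi_v(H_0)$ to have finite index in $G$, and pulling back gives $H_0$ of finite index in $\Stab_G(v)$, whence $H$ of finite index in $G$. Contradiction.

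The remaining case $H_0 = W^v$ is where the branch structure is essential. Here $H/W^v$ injects into the coset space $G/\Stab_G(v)$, so $k \defegal [H:W^v]$ is finite and at least $2$; the $H$-orbit of $v$ has size $k$ and is contained in $\level n$, where $n$ is the level of $v$. Labelling this orbit $v_1 = v, v_2, \ldots, v_k$ and choosing $g_i \in H$ with $g_i v = v_i$, a direct computation with sections shows that conjugation by $g_i$ maps $\ker \pi_v$ onto $\ker \pi_{v_i}$, so $\ker \pi_{v_i} \leq H$ for every $i$. For any $w \in \level n$ with $w \neq v_i$ the subtrees $T_w$ and $T_{v_i}$ are disjoint, hence $\Rist_G(w) \leq \ker \pi_{v_i} \leq H$. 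Since $k \geq 2$, every $w \in \level n$ is distinct from at least one $v_i$, so $\Rist_G(w) \leq H$ for all $w \in \level n$, and consequently $\Rist_G(\level n) \leq H$; this has finite index in $G$ by the branch hypothesis, contradicting $[G:H] = \infty$.

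The first case is essentially formal and only uses that $W^v$ is a full preimage together with weak maximality of $W$. The main obstacle will be the second case, which genuinely requires the branch structure: the key observation is that conjugating $\ker \pi_v$ by the elements $g_i$ realizing the $H$-orbit of $v$ produces every $\ker \pi_{v_i}$, and these together with the rigid stabilizers at the level-$n$ vertices lying outside the orbit (which already sit inside $\ker \pi_v$) fill up the full level-$n$ rigid stabilizer.
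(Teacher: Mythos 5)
Your proposal is correct and takes essentially the same route as the paper, which derives the theorem from a more general statement (Proposition \ref{Prop:NewWmax} and Corollary \ref{Corollary:ProjInSelfReplicating}) about the full preimage $W^v=\pi_v^{-1}(W)\leq\Stab_G(v)$. The paper's weak-maximality argument is the same two-case analysis as yours: an element added inside $\Stab_G(v)$ is handled by projecting via $\pi_v$ and invoking weak maximality of $W$ together with the preimage correspondence, while an element moving $v$ forces the overgroup to contain all of $\Rist_G(\level{n})$ and hence to have finite index.
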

We then turn our attention on another family of weakly maximal subgroups, the so-called subgroups with a block structure, see Definition~\ref{defn:Block}.
Informally, $H\leq G$ has a block structure if it is, up to finite index, a product of copies of~$G$, some of them embedded diagonally.
If $G$ is finitely generated, the subgroups with a block structure are also finitely generated and therefore, they are at most countably many of them.
In the special case of the first Grigorchuk as well as for the torsion GGS groups, these subgroups coincide with the finitely generated subgroups,~\cite{MR4344374,FL2019}.
Subgroups with a block structure are a source of new examples of weakly maximal subgroups, and once again there are ``as much'' of them as possible.
\begin{proposition}\label{Prop:InfinitelyBlockSubGrps}
Let $G$ be either the first Grigorchuk group, or a torsion \GGS{} group.
Then there exists infinitely many distinct tree-equivalence classes, each of them containing infinitely many weakly maximal subgroups with a block structure.
\end{proposition}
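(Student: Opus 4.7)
The plan is to exhibit, for each integer $k\ge 2$, an infinite family of pairwise tree-equivalent but distinct weakly maximal block subgroups having a single diagonal block of size $k$, and then argue that different values of $k$ produce different tree-equivalence classes.

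\textbf{Construction.} Fix $k\ge 2$, a level $n$ with $\abs{\level{n}}\ge k$, and a subset $S\subseteq\level{n}$ of cardinality $k$. Set
\[
H_{n,S} \defegal \bigl\{g\in\Stab_G(\level{n}) : \treesection{g}{v}=\treesection{g}{w}\text{ for all }v,w\in S\bigr\}.
\]
By construction, $H_{n,S}$ is a block subgroup in the sense of Definition~\ref{Definition:BranchBlock}, with one non-trivial diagonal block on $S$ and singleton blocks elsewhere. It has infinite index in $G$: the diagonal condition on $S$ imposes $k-1$ independent coincidences among the sections, cutting a factor of $\abs{G}^{k-1}$ inside the finite-index subgroup $\Stab_G(\level{n})$.

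\textbf{Weak maximality.} I expect weak maximality of $H_{n,S}$ to follow from the classification of block subgroups developed earlier in the paper, specialised to first Grigorchuk and torsion \GGS{} groups. Heuristically, any $g\notin H_{n,S}$ has distinct sections at some pair $v,w\in S$; combining this with $H_{n,S}$ introduces ``off-diagonal'' relations on $S$ which, by self-replicativity and the branch property, generate a subgroup of finite index in $\Stab_G(\level{n})$, hence in $G$.

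\textbf{Tree-equivalence and counting.} The multiset of block sizes of a block subgroup is preserved by every tree automorphism, so it is an invariant of the tree-equivalence class. Varying $k\in\{2,3,4,\ldots\}$ therefore yields infinitely many distinct tree-equivalence classes. For fixed $k$, the subgroups $H_{n,S}$ as $(n,S)$ varies are pairwise distinct (they constrain different vertex sets) yet share the same block profile; any two can be related by a tree automorphism of $T$ (after passing to a common deeper level when $n\neq n'$), so they all lie in a single tree-equivalence class, which therefore contains infinitely many weakly maximal block subgroups.

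\textbf{Main obstacle.} The hardest step is establishing weak maximality of $H_{n,S}$, that is, showing that no larger infinite-index subgroup contains it. This rests on the classification of weakly maximal subgroups for first Grigorchuk and torsion \GGS{} groups proved earlier in the paper, together with specific features (self-replicativity, sufficient rigidity of rigid stabilizers, finiteness of quotients) of these groups. A secondary subtlety is that the tree-equivalence relation must be coarse enough to bundle $H_{n,S}$ across different levels and positions; if not, one can replace the singleton blocks outside $S$ by non-trivial rigid stabiliser components to enlarge the $\Aut(T)$-orbit at a fixed level and still produce infinitely many tree-equivalent representatives.
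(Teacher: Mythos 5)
Your construction does not work as stated, and the two steps you flag as delicate are exactly where it breaks. First, the subgroups $H_{n,S}$ are in general not weakly maximal. Take $G=\Grig$, $n=1$, $S=\level{1}$: then $H_{1,S}=\setst{g\in\Stab_\Grig(\level{1})}{\treesection{g}{0}=\treesection{g}{1}}$ is centralized by $a$ (conjugation by $a$ swaps the two first-level coordinates, which fixes diagonal elements), so $\gen{a,H_{1,S}}$ contains $H_{1,S}$ with index $2$ and is therefore still of infinite index; hence $H_{1,S}$ is properly contained in an infinite-index subgroup and is not weakly maximal. No result in the paper says that block subgroups are weakly maximal --- the classification goes the other way (weakly maximal and not generalized parabolic implies block structure), and the paper explicitly warns that not every subgroup with a block structure is weakly maximal. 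Second, the counting inside a class fails. Tree-equivalence is defined via non-rigidity trees, and $\NR(H_{n,S})$ is the subtree spanned by $\bigcup_{v\in S}T_v$ (the rigid stabilizer of $H_{n,S}$ at any vertex below $S$ is trivial, while outside $T_S$ it is all of $\Rist_G(w)$). An automorphism of $T$ preserves levels, so it cannot carry a union of $k$ subtrees rooted at level $n$ onto a union of $k$ subtrees rooted at level $n'\neq n$: the corresponding clopen boundary sets have different measure. Thus for fixed $k$ the groups $H_{n,S}$ and $H_{n',S'}$ with $n\neq n'$ are not tree-equivalent, and for fixed $(n,k)$ there are only finitely many subsets $S$, so you never obtain infinitely many subgroups in a single class.

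The paper's proof sidesteps both problems. It takes one diagonal subgroup $H=\diag\bigl(\Rist_G(v_1)\times\dots\times\Rist_G(v_p)\bigr)$ over the first level, places it inside some weakly maximal subgroup $W$ (which exists because $G$ is finitely generated), and notes that $W$ inherits from $H$ the property of acting on $\partial T$ with finitely many orbit-closures; Theorem \ref{Thm:StructureWMax} then forces $W$ to have a block structure, so weak maximality never has to be verified for an explicit block subgroup. Infinitely many tree-equivalence classes come from the lifting operation: the subgroups $W^v$ of Proposition \ref{Prop:NewWmax}, for $v$ at different levels, have non-rigidity trees with trunks of different lengths and hence are pairwise non-tree-equivalent. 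Infinitely many members of each class come from conjugation: in these torsion groups weakly maximal subgroups are self-normalizing, so each $W^v$ has infinitely many conjugates, all tree-equivalent to it. To salvage your approach you would have to saturate $H_{n,S}$ inside a weakly maximal overgroup and then control its non-rigidity tree and its conjugates --- which is essentially the paper's argument.
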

The above results, following~\cite{MR3478865}, are mostly quantitative results.
We now give more qualitative results.
The main idea here is that a weakly maximal subgroup of a branch group should retain some properties of the full group.
There is, a priori, two distinct directions for this.
On one hand, we can ask if  $W$ is ``nearly branch'' and on the other hand if $W$ is ``almost of finite index'' in some sense. 
Recall that a subgroup $G\leq\Aut(T)$ is weakly branch if and only if $G\curvearrowright\partial T$ has exactly one orbit-closure and all the subgroups $\Rist_G(v)$ are infinite.
We first show that a weakly maximal subgroup $W$ of a branch group is always ``nearly branch'' in the following sense.
\begin{proposition}\label{Proposition:Dichotomy1}
Let $G\leq\Aut(T)$ be a branch group and let $W$ be a weakly maximal subgroup of $G$.
Then at least one of the following holds:
\begin{enumerate}
\item All the subgroups $\Rist_W(v)$ are infinite,
\item $W\curvearrowright\partial T$ has a finite number of orbit-closures.
\end{enumerate}
\end{proposition}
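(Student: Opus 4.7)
The plan is to prove the contrapositive: assuming $W$ has infinitely many orbit-closures on $\partial T$, I will show every $\Rist_W(v)$ is infinite. The key preliminary observation is that $W$ sits inside the profinite group $\Aut(T)$ and hence acts equicontinuously on $\partial T$: the closure $\overline{W}$ in $\Aut(T)$ is a compact subgroup, and a standard continuity argument gives $\overline{W\xi} = \overline{W}\xi$ for every $\xi \in \partial T$. In particular, the $W$-orbit-closures on $\partial T$ coincide with the $\overline{W}$-orbits, and thus form a partition of $\partial T$ by closed sets.

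Assume there are infinitely many such orbit-closures. If every orbit-closure were also open, the partition would consist of clopens and hence be finite by compactness of $\partial T$. Therefore some orbit-closure $C \coloneqq \overline{W\xi}$ is not open, and since $C$ is a single $\overline{W}$-orbit, every $W$-orbit in $C$ is dense, so $W \acts C$ is minimal. Because $\SStab_G(C) \supseteq W$, the larger subgroup $\SStab_G(C)$ also acts minimally on $C$; by Theorem~\ref{Thm:GeneralizedParabolic}, it is therefore a generalized parabolic subgroup and in particular weakly maximal. Since $W$ is itself weakly maximal and $W \leq \SStab_G(C)$, this forces $W = \SStab_G(C)$.

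Finally, I verify that every $\Rist_W(v)$ is infinite under this equality. Since elements of $\Rist_G(v)$ act trivially outside $\partial T_v$, preservation of $C$ setwise is equivalent to preservation of $D \coloneqq C \cap \partial T_v$ setwise, and so
\[
\Rist_W(v) = W \cap \Rist_G(v) = \SStab_{\Rist_G(v)}(D).
\]
If $D \in \{\emptyset, \partial T_v\}$, the setwise stabilizer is all of $\Rist_G(v)$, which is infinite because $G$ is branch. Otherwise $D$ is a proper closed subset of $\partial T_v$, so its complement is a non-empty open subset of $\partial T_v$ and hence contains a basic open $\partial T_u$ for some vertex $u$ of $T_v$. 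Any element of $\Rist_G(u) \leq \Rist_G(v)$ acts trivially outside $\partial T_u$, so fixes $D$ pointwise and a fortiori preserves $D$ setwise; thus $\Rist_G(u) \leq \Rist_W(v)$, and $\Rist_G(u)$ is infinite by the branch property. The only delicate step of the argument is the identification of $W$-orbit-closures with $\overline{W}$-orbits that underpins the partition structure; this relies on the profiniteness of $\Aut(T)$ and the resulting equicontinuity of the action on $\partial T$.
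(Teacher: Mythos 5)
Your proof is correct and follows essentially the same route as the paper: the paper derives this proposition from the chain ``some $\Rist_W(v)$ finite $\Rightarrow$ $W$ not generalized parabolic $\Rightarrow$ all orbit-closures open $\Rightarrow$ finitely many orbit-closures'' (Lemma~\ref{Lemma:GenParabWeaklyBranch}, Corollary~\ref{Cor:Equivalences} and compactness), and your argument is exactly the contrapositive of this chain, with the two key lemmas (a non-open orbit-closure forces $W=\SStab_G(C)$ by weak maximality, and setwise stabilizers of non-open closed sets have all rigid stabilizers infinite) re-derived in place rather than cited.
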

On the other hand, a finite index subgroup $H$ of a branch group $G$ has always infinite rigid stabilizers and for any vertex $v$ the section $\pi_v\bigl(\Stab_H(v)\bigr)$ has finite index in $\pi_v\bigl(\Stab_G(v)\bigr)$.
In branch groups, weakly maximal subgroups will always be ``almost of finite index'' in the following sense.
\begin{proposition}\label{Proposition:Dichotomy2}
Let $G\leq\Aut(T)$ be a branch group and let $W$ be a weakly maximal subgroup of $G$.
Then at least one of the following holds:
\begin{enumerate}
\item All the subgroups $\Rist_W(v)$ are infinite,
\item There exists a level $n$ such that for every $v$ in $\level{n}$, the section $\pi_v\bigl(\Stab_W(v)\bigr)$ has finite index in $G=\pi_v\bigl(\Stab_G(v)\bigr)$.
\end{enumerate}
\end{proposition}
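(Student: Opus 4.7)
The plan combines Proposition~\ref{Proposition:Dichotomy1} with a K\"onig-style argument on the tree of vertices with infinite-index section, and uses Theorem~\ref{Thm:GeneralizedParabolic} together with the hypotheses on $G$ to produce a contradiction.

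Set $H_v \defegal \pi_v(\Stab_W(v))$. A direct computation using self-similarity gives $H_u = \pi_{u'}\bigl(H_v \cap \Stab_G(u')\bigr)$ whenever $u$ is a child of $v$ and $u' \in \level{1}$ corresponds to $u$ under the identification $T_v \iso T$; since $\pi_{u'}\colon \Stab_G(u') \surjection G$ is surjective by self-similarity, this shows that the set $S \defegal \{v : [G:H_v] < \infty\}$ is closed under passing to descendants. So either some $\level{n}$ is entirely contained in $S$, giving conclusion~(2), or $T \setminus S$ is upward-closed and meets every level, and K\"onig's lemma furnishes an infinite ray $\xi = (v_n)_{n\geq 0}$ along which $H_{v_n}$ has infinite index in $G$ for every $n$.

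To conclude~(1) in the second case, suppose for contradiction that $W$ also has infinitely many orbit-closures. By Proposition~\ref{Proposition:Dichotomy1} every $\Rist_W(v)$ is infinite, so $L_n \defegal \pi_{v_n}(\Rist_W(v_n))$ is an infinite subgroup of the finite-index normal subgroup $K_{v_n} \defegal \pi_{v_n}(\Rist_G(v_n))$; just-infiniteness then forces the normal closure of $L_n$ in $G$ to have finite index. Moreover, for every non-clopen orbit-closure $C$ of $W$ on which $W$ acts minimally the setwise stabilizer $\SStab_G(C)$ is a weakly maximal generalized parabolic subgroup (Theorem~\ref{Thm:GeneralizedParabolic}) containing $W$; weak maximality of both forces $W = \SStab_G(C)$, and the injectivity of $C \mapsto \SStab_G(C)$ leaves at most one such~$C$. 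Consequently infinitely many orbit-closures of $W$ are clopen, each of the form $\bigsqcup_{u \in A}\partial T_u$ for a $W$-orbit $A$ on which, for $u \in A$, the section $H_u$ acts minimally on $\partial T$.

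The main obstacle --- where the three running hypotheses (just-infinite, self-similar, and $\Stab_G(v) = \Stab_G(\level{1})$ for every $v \in \level{1}$) have to be used most delicately --- is to show that any such subgroup $H_u \leq G$ acting minimally on $\partial T$ is automatically of finite index in $G$. Minimality forces level-transitivity, the regular level-$1$ hypothesis then yields $H_u \cdot \Stab_G(\level{1}) = G$, and self-similarity propagates this to every iterated section of $H_u$; just-infiniteness is then used to upgrade this level-by-level information to an honest finite-index statement. Once this implication is granted, any $v_n$ on the bad ray lying below some clopen orbit-closure is pushed into $S$, contradicting $v_n \notin S$; in the complementary case the ray $\xi$ avoids every clopen orbit-closure and so lies in the unique non-clopen one, making $W = \SStab_G(\overline{W \cdot \xi})$ itself a generalized parabolic, whose orbit-closure count under the present hypotheses should be finite, again contradicting the assumption that $W$ has infinitely many orbit-closures.
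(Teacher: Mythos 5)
Your argument cannot be completed, and the quickest way to see this is to run it on a parabolic subgroup $W=\Stab_\Grig(\xi)$ of the first Grigorchuk group, which satisfies every hypothesis of the proposition. By Proposition \ref{Proposition:SectionsParabolic} the vertices with infinite-index section are exactly those on $\xi$, so your K\"onig ray is $\xi$ itself; $W$ has infinitely many orbit-closures, exactly one of which, namely $\{\xi\}$, is non-clopen; and your argument terminates in the ``complementary case'' with the observation that $W=\SStab_G(\{\xi\})$ is generalized parabolic. At that point you assert that a generalized parabolic subgroup ``should'' have finitely many orbit-closures, but the opposite is true: by Lemma \ref{Lemma:InfOrbitsClosure} (equivalently Corollary \ref{Cor:Equivalences}) generalized parabolic subgroups always have \emph{infinitely} many orbit-closures, so no contradiction arises and the proof halts. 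The other load-bearing claim, that a subgroup of $G$ acting minimally on $\partial T$ is automatically of finite index, is also false: Lemma \ref{Lemma:LevelTransGrigorchuk} exhibits a level-transitive weakly maximal (hence infinite-index) subgroup of $\Grig$, and the Francoeur--Garrido maximal subgroups of infinite index discussed after Theorem \ref{Thm:StructureWMax} give further counterexamples in groups satisfying all three running hypotheses; no appeal to just-infiniteness can upgrade level-by-level transitivity to finite index.

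The underlying difficulty is that the disjunction as printed is itself violated by parabolic subgroups, so the implication you set out to prove (``no level with all sections of finite index forces finitely many orbit-closures'') is not the intended content. Comparing with Table \ref{Table:WMaxSub} and with Theorem \ref{Thm:StructureWMax}, which the paper states encompasses this proposition, the second alternative must be read as its negation, ``for every $n$ there exists $v\in\level{n}$ with $\pi_v\bigl(\Stab_W(v)\bigr)$ of infinite index in $\pi_v\bigl(\Stab_G(v)\bigr)$'', so that the proposition becomes the implication $\eqref{Item:Grig4}\Rightarrow\eqref{Item:Grig3}$ of Theorem \ref{Thm:StructureWMax}. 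That implication has a short proof which your write-up never supplies (you dismiss the branch ``some level is contained in $S$'' with ``giving conclusion (2)''): if all sections at level $n$ have finite index, then for each $v\in\level{n}$ the group $\pi_v\bigl(\Stab_W(\level{n})\bigr)$ is a finite-index subgroup of the level-transitive group $\pi_v\bigl(\Stab_G(v)\bigr)$, hence by Fact \ref{Fact:NumberOrbitsIndex} has boundedly many orbits on each level of $T_v$; summing over the finitely many $v\in\level{n}$ shows that $\Stab_W(\level{n})$, and therefore $W$, is almost level transitive. Your observations that the set $S$ is closed under passing to descendants and that at most one orbit-closure of $W$ is non-clopen are correct and in the spirit of Lemma \ref{Lemma:sub-directProductProjects} and Corollary \ref{Cor:Equivalences}, but the overall architecture is aimed at the wrong implication.
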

For the particular case of the first Grigorchuk group and of torsion \GGS{} groups (which encompass Gupta-Sidki $p$-groups), we even have a full description of the weakly maximal subgroups.
They split into two classes: generalized parabolic subgroups and subgroups with a block structure.
More precisely, we have the following theorem that is summarized in Table~\ref{Table:WMaxSub}.
\begin{theorem}\label{Thm:ClassificationGrigGuptaSidki}
Let $G$ be either the first Grigorchuk group or a torsion \GGS{} group.
Let $W$ be a weakly maximal subgroup of $G$.
Then the following properties are equivalent:
\begin{enumerate}
\item $W$ has a block structure\label{Item:IntroGrig2},
\item There exists $n$ such that $\pi_v\bigl(\Stab_W(v)\bigr)$ has finite index in $G$ for every vertex of level~$n$\label{Item:IntroGrig4},
\item There exists $n$ such that $\pi_v\bigl(\Stab_W(v)\bigr)=G$ for every vertex of level~$n$\label{Item:IntroGrig4prime},
\item There exists a vertex $v$ with $\Rist_W(v)=\{1\}$\label{Item:IntroGrig6},
\item $W\curvearrowright\partial T$ has finitely many orbit-closures\label{Item:IntroGrig3},
\item $W$ is not generalized parabolic\label{Item:IntroGrig5},
\item $W$ is finitely generated.
\end{enumerate}
\end{theorem}
\begin{table}[htbp]
\begin{tabular}{|c|c|}
\hline
generalized parabolic	& 	weakly maximal with a block structure\\
\hline\hline
not finitely generated&	finitely generated\\ \hline
$\forall v:\Rist_W(v)$ is infinite	&	$\exists v:\Rist_W(v)=\{1\}$\\ \hline
$W\curvearrowright\partial T$ has infinitely many orbit-closures	&	$W\curvearrowright\partial T$  has finitely many orbit-closures\\ \hline
$\forall n\exists v\in \level{n}:[G:\pi_v(W)]$ is infinite	&	$\exists n\forall v\in \level{n}:\pi_v(W)=G$\\ \hline
\end{tabular}
\caption{The two classes of weakly maximal subgroups of $G$, where $G$ is either the first Grigorchuk group or a torsion \GGS{} group.}
\label{Table:WMaxSub}
\end{table}

Finally, we investigate further the particular case of the first Grigorchuk group.
Among other things, we gave a particular attention to sections and we prove that parabolic subgroups, as well as some generalized parabolic subgroups, behave well under taking the closure in the profinite topology. More precisely
\begin{proposition}\label{Proposition:ClosureWMaxGrig}
Let $\Grig\curvearrowright T$ be the branch action of the first Grigorchuk group.
For any finite subset $C$ of $\partial T$, we have
\[
	\overline{\Stab_\Grig(C)}=\Stab_{\overline{\Grig}}(C).
\]
If moreover $C$ is contained in one $\Grig$-orbit, then we also have
\[
	\overline{\SStab_\Grig(C)}=\SStab_{\overline{\Grig}}(C).
\]
\end{proposition}
We also give the first example of a weakly maximal subgroup of $\Grig$ that acts level-transitively on $T$.

\paragraph{Organization of the paper}
The next section contains the definitions and some useful reminders as well as preliminary results on weakly maximal subgroups in branch groups.

Section~\ref{Section:GeneralizedParabolicSubgroup} is devoted to the study of generalized parabolic subgroups.
It contains proofs of Theorems~\ref{Thm:GeneralizedParabolic} and of Corollaries~\ref{Thm:NumberGeneralizedParabolic} and~\ref{Cor:NumberGeneralizedParabolic}.

In Section~\ref{Section:NonRigidityTree} we introduce the notion of the \defi{non-rigidity tree} of a (weakly maximal) subgroup and study its properties.
This tool turns out to be of great interest for the study of (weakly maximal) subgroups of branch groups. 
Among other results, we prove there Lemma~\ref{Lemma:WMaxInfinite}, Corollary~\ref{Cor:NumberGeneralizedParabolic2} and give a characterization of generalized parabolic subgroups in terms of the non-rigidity tree.

The next section is about sections and ``lifting'' and contains the proof of Theorem~\ref{Thm:CopyOfWeaklyMaxInStab}.

Section~\ref{Section:BlockSubgroups} concerns the study of subgroups with a block structure.
It contains a general structural result about weakly maximal subgroups of branch groups which encompass Propositions~\ref{Proposition:Dichotomy1} and~\ref{Proposition:Dichotomy2} as well as Theorem~\ref{Thm:ClassificationGrigGuptaSidki} and which implies Proposition~\ref{Prop:InfinitelyBlockSubGrps}.

In Section~\ref{Section:LevelTransitive}, we turn our attention to specific examples of weakly maximal subgroups with block structure, specifically the ones that acts minimally on $\partial T$.

The last section deals in more details with the specific case of the first Grigorchuk group and contains the proof of Proposition~\ref{Proposition:ClosureWMaxGrig}.

\paragraph{Acknowledgment}
The author is grateful to Dominik Francoeur, Rostislav Grigorchuk and  Tatiana Nagnibeda for fruitful discussions and comments on a preliminary version of this article.

The author was partly supported by Swiss NSF grant P2GEP2\_168302 and by RDF-23-01-045-\emph{Groups acting on rooted trees and their subgroups} of XJTLU.
Part of this work was performed within the framework of the LABEX MILYON (ANR-10-LABX-0070) of Universit\'e de Lyon, within the program ``Investissements d'Avenir'' (ANR-11-IDEX-0007) operated by the French National Research Agency (ANR).
%
%
%
%
%
%
%
\section{Definitions and preliminaries}\label{Section:Definitions}
First of all, we insist on the fact that, unless explicitly specified, we will not assume our groups to be finitely generated, or even countable.

Recall that a subgroup $H<G$ is \defi{weakly maximal} if it is maximal among subgroup of infinite index.
In particular, every maximal subgroup of infinite index is weakly maximal, but in general they are far from the only examples.
If $G$ is finitely generated, then, assuming the axiom of choice, every infinite index subgroup is contained in at least one weakly maximal subgroup.

A \defi{rooted tree} $T$ is an unoriented connected graph without cycles and with a distinguished vertex, the \defi{root}.
We will often identify $T$ and its vertex set.
Vertices of a rooted tree can be partitioned into \defi{levels}, where a vertex $v$ is in $\level{n}$ if and only if its distance to the root is $n$.
This naturally endows the vertices of $T$ with a partial order, where $v\leq w$ if the unique path from the root to $w$ passes through $v$.
In this case, we say that $w$ is a \defi{descendant} of $v$.
The terms \defi{parent}, \defi{ancestor}, \defi{child} and \defi{sibling} have their obvious meaning in relation with this order.
For example, every vertex has a unique parent, except for the root which has none.
A rooted tree is \defi{spherically regular} if the degree (equivalently the number of children) of a vertex depends only of its level.
It is \defi{$d$-regular} if every vertex has $d$ children.
If $(m_i)_{i\in \N}$ is a sequence of cardinals greater than $1$, we will denote by $T_{(m_i)}$ an infinite spherically regular rooted tree such that a vertex of level $i$ has exactly $m_i$ children (that is, has degree $m_i+1$, except for the root that has degree $m_0$).
Unless stated otherwise, in the following we will always assume our trees to be rooted and spherically regular.
In general, we will not assume $T$ to be locally finite (i.e. all the $m_i$'s are finite).
Nevertheless, if $G\leq\Aut(T)$ is branch, or more generally, an almost level-transitive rigid group, then $T$ is automatically locally finite, see Definition~\ref{Definition:Branch} and the discussion after it.
We will identify the locally finite rooted tree $T_{(m_i)}$ with its realization as words $x_0x_1x_2\dots x_n$ such that $x_i$ is in $\{0,1,\dots,m_i-1\}$; in particular the root is the empty word and the order on $T_{(m_i)}$ is the lexicographic order.

There is a natural notion of rays (infinite paths emanating from the root, or equivalently right infinite words $x_0x_1x_2\dots$) and of boundary $\partial T$.
The space $\partial T$ is a metric space, where the distance between two distinct rays $x_0x_1x_2\dots$ and $y_0y_1y_2\dots$ is equal to $2^{-i_0}$ where $i_0$ is the lowest index such that $x_{i_0}\neq y_{i_0}$. When $T$ is locally finite, $\partial T$ is homeomorphic to a Cantor space.

For a vertex $v$ of $T$, we will denote by $T_v$ the subtree of $T$ consisting of all descendants of $v$.
It is naturally rooted at $v$.

The group $\Aut(T)$ consists of all graph automorphisms of $T$ sending the root onto itself. That is, an element of $\Aut(T)$ is a bijection of the vertex set that preserves the adjacency relation and fixes the root.
The group $\Aut(T)$ is exactly the group of isometries of $\partial T$.
On the other hand, elements of $\Aut(T)$ naturally preserve the levels.
This can be used to put a metric on $\Aut(T)$, where the distance between two distinct automorphisms $\varphi$ and $\psi$ is equal to $2^{-i_0}$ where $i_0$ is the lowest index such that there is a vertex $v$ of level $i_0$ with $\varphi(v)\neq\psi(v)$.
If~$T$ is locally finite, this induces a compact Hausdorff topology on $\Aut(T)$, turning $\Aut(T)$ into a profinite group.

Let $\{v_0,\dots, v_{n-1}\}$ be the vertices of the first level of $T$.
Since $T$ is spherically regular, all the $T_{v_i}$ are isomorphic and we have a natural isomorphism
\[
	\varphi\colon \Aut(T)\to\Sym(\level{1})\ltimes\Aut(T_{v_0})^{\level{1}}\cong \Aut(T_{v_0})\wr\Sym(\level{1})
\]
where $\Sym(\level{1})$ is the groups of all bijections of vertices of the first level.
That is, for any $g\in\Aut(T)$, there exists a unique permutation $\sigma$ of $\level{1}$ and unique $\treesection{g}{v_i}$'s in $\Aut(T_i)$ such that $\varphi(g)=\sigma(\treesection{g}{v_0},\dots, \treesection{g}{v_{n-1}})$.
The element $\treesection{g}{v}$ is called the \defi{section} of $g$ at $v$ and can be defined inductively for any $v$ in $T$.
In practice, we will often write $g=\sigma(\treesection{g}{v_0},\dots, \treesection{g}{v_{n-1}})$ as a shorthand for $\varphi(g)$.
For any vertex $v$, we have a surjective homomorphism
\begin{align*}
\pi_v\colon\Stab_{\Aut(T)}(v)&\to\Aut(T_v)\\
g&\mapsto\treesection{g}{v}
\end{align*}
The \defi{portrait} $\portrait(g)$ of an automorphism $g$ of $T$ is a labelling of vertices of $T=T_{(m_i)}$ by elements of $\Sym(m_i)$ constructed inductively as following.
Label the root by $\sigma$, where $g=\sigma(\treesection{g}{v_0},\dots, \treesection{g}{v_{m_0-1}})$ and decorated the subtrees $T_{v_i}$ by the portrait of $\treesection{g}{v_i}$.
There is a natural bijection between elements of $\Aut(T)$ and portraits.
An element $g$ is \defi{finitary} if its portrait has only finitely many non-trivial labels.
It is \defi{finitary along rays} if any ray in the portrait of $g$ has only finitely many non-trivial labels.
A subgroup $G$ of $\Aut(T)$ is said to be \defi{finitary}, respectively \defi{finitary along rays}, if all its elements have the desired property.
We have
\[
	\Autf(T)<\Autfr(T)<\Aut(T)
\]
where $\Autf(T)$, respectively $\Autfr(T)$, is the subgroup of all finitary, respectively finitary along rays, automorphisms of $T$.
If $T$ is locally finite, the subgroup $\Autf(T)$ is countable, while the other two have the cardinality of the continuum.
\begin{definition}
Let $T$ be a regular rooted tree.
A subgroup $G$ of $\Aut(T)$ is \defi{self-similar} if for every vertex $v\in T$ we have $\pi_v(\Stab_G(v))\leq G$, or equivalently if all the $\treesection{g}{v}$ belong to $G$.

A self-similar group $G$ is \defi{self-replicating} (or \defi{fractal}) if for every vertex $v\in T$ we have $\pi_v(\Stab_G(v))=G$.
The group $G$ is said to be \defi{strongly self-replicating} if for every vertex $v$ of the first level we have $\pi_v(\Stab_G(\level{1}))=G$, while $G$ is \defi{super strongly self-replicating} if for every $n$ and every vertex $v$ of level $n$ we have $\pi_v(\Stab_G(\level{n}))=G$.\footnote{\GGS{} groups with constant vectors are strongly self-replicating but not super strongly self-replicating~\cite{Uria}.}
\end{definition}
When $T$ is a regular rooted tree, the subgroups $\Autf(T)$, $\Autfr(T)$ and $\Aut(T)$ are all self-replicating.

Given $G$ a subgroup of $\Aut(T)$ it is enlightening to look at stabilizers of various subsets of $T$ or of $\partial T$.
For example it is natural to look at stabilizers of rays $\Stab_G(\xi)$ for $\xi\in\partial T$, also called \defi{parabolic subgroups}, and we will see later that setwise stabilizers of closed subsets of $\partial T$ will also play an important role.
We will also be interested in stabilizers of vertices $\Stab_G(v)$ and in pointwise stabilizers of levels $\Stab_G(\level{n})$,
the latter ones being normal subgroups of finite index.
More generally, if $X$ is any subset of $T\cup\partial T$, $\Stab_G(X)$ denotes the pointwise stabilizer of $X$, that is $\Stab_G(X)=\bigcap_{v\in X}\Stab_G(v)$, while $\SStab_G(X)$ denotes its setwise stabilizer.

We will also look at \defi{rigid stabilizers} of vertices, where $\Rist_G(v)$ is the pointwise stabilizer of $T\setminus T_v$, that is the subgroup of elements acting trivially outside $T_v$.
Finally, \defi{rigid stabilizer of levels}, $\Rist_G(\level{n})$, are defined as the product of all $\Rist_G(v)$ for $v$ of level $n$.
\begin{definition}\label{Definition:Branch}
A subgroup $G$ of $\Aut(T)$ is \defi{level-transitive} if it acts transitively on $\level{n}$ for every $n$, or equivalently if $G\curvearrowright\partial T$ is minimal.
It is \defi{almost level-transitive} if the number of orbits for the actions $G\curvearrowright\level{n}$ is bounded, or equivalently if $G\curvearrowright \partial T$ has a finite number of orbit-closures.

The group $G$ is said to be \defi{micro-supported} if all the $\Rist_G(v)$ are non-trivial (equivalently infinite)\footnote{A group $G\leq\Aut(T)$ is micro-supported if and only if the action $G\curvearrowright\partial T$ is micro-supported. That is, if for every open set $U\subset \partial T$ the subgroup  $\Rist_G(U)$ of elements acting trivially on $\partial T\setminus U$ is non-trivial.} and \defi{rigid} if all the $\Rist_G(\level{n})$ have finite index in~$G$.

The group $G$ is \defi{weakly branch}, respectively \defi{branch}, if it is both level-transitive and micro-supported, respectively rigid.
Finally, we will say that the group $G$ is \defi{almost branch} if it is both almost level-transitive and rigid.
\end{definition}
The reader should have in mind that all the above definitions are useful weakening of the branch properties. A branch subgroup of $\Aut(T)$ is thus altogether almost branch, weakly branch, micro-supported, rigid with an (almost) level-transitive action on the tree. In particular it satisfies all statements that are true for groups possessing one of these properties.

It follows from the definitions that finite subgroups of $\Aut(T)$ are always rigid, while infinite rigid subgroups of $\Aut(T)$ are automatically micro-supported.

Observe that if $G\leq\Aut(T)$ is almost branch, then $T$ is locally finite.
In particular, $T$ is locally finite if and only if the three subgroups $\Autf(T)$, $\Autfr(T)$ and $\Aut(T)$ are all branch.

While in the introduction we stated our results for (weakly) branch groups, most of these statements admit generalizations to groups that are not necessarily level-transitive but only almost level-transitive.
The detailed versions are found in the next sections.
\begin{remark}
We will sometimes say that an abstract group $G$ is (weakly) branch, meaning that it admits a faithful (weakly) branch action.
The existence of such an action can be characterized algebraically from the subgroup lattice of $G$, see~\cite{MR2011117,MR2605182}.
Moreover, some striking examples of branch groups (as the first Grigorchuk group~\cite{MR2011117} or branch generalized multi-edge spinal groups~\cite{MR3834728}) admit a unique branch action.
\end{remark}
Before going further, we recall the following fact that will be of great help for the study of infinite index subgroups of branch groups.
\begin{fact}\label{Fact:NumberOrbitsIndex}
Let $G$ be a group acting transitively on a set $X$.
For every subgroup $H\leq G$, the number of $H$-orbits for the action $H\curvearrowright X$ is bounded above by $[G:H]$.
\end{fact}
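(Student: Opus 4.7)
The plan is to reduce the problem to the classical double coset counting argument. First I would fix a basepoint $x_0\in X$ and, using transitivity, identify $X$ with the coset space $G/K$ where $K\defegal\Stab_G(x_0)$, via the $G$-equivariant bijection $gK\mapsto g\cdot x_0$. Under this identification, the action of $H\leq G$ on $X$ corresponds to the restriction to $H$ of the left-multiplication action of $G$ on $G/K$.

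Next I would observe that the $H$-orbit of $gK$ in $G/K$ is exactly the set of cosets contained in the double coset $HgK$. Hence the set of $H$-orbits on $X$ is in natural bijection with the set of double cosets $H\backslash G/K$.

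To conclude, I would note that each double coset $HgK$ is a union of right cosets $Hg'$ of $H$ in $G$, since $HgK=\bigcup_{k\in K}Hgk$. Consequently the map sending a right coset $Hg'$ to the unique double coset containing it is a surjection from $H\backslash G$ onto $H\backslash G/K$, which yields the inequality
\[
	\#\{H\text{-orbits on }X\}=\abs{H\backslash G/K}\leq\abs{H\backslash G}=[G:H].
\]
There is no real obstacle here; the argument is entirely formal, and the only point worth emphasising is that the bound is obtained without any finiteness assumption on $[G:H]$, so that the statement is vacuous (and trivially true) precisely when $H$ has infinite index and no further information is extracted.
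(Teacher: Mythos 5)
Your proof is correct: the identification of $X$ with $G/K$ for $K=\Stab_G(x_0)$, the bijection between $H$-orbits and double cosets $H\backslash G/K$, and the surjection from $H\backslash G$ onto $H\backslash G/K$ give exactly the claimed bound, with no finiteness hypothesis needed. The paper records this statement as a known Fact without supplying a proof, so there is no argument to compare against; yours is the standard one.
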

While this is stated for transitive action, we will often use it for almost transitive action, that is for actions with a finite number of orbits.
In this case, we have that the number of $H$-orbits is bounded above by the number of $G$-orbits times $[G:H]$.
As a nice consequence of it, we have the following result.
\begin{lemma}\label{Lemma:RistLevelTransitive}
Let $G\leq\Aut(T)$ be almost branch.
Then for any integer $n$, there exists $m\geq n$ such that for every $v$ of level $n$, $\Rist_G(v)\cap\Stab_G(w)$ acts level-transitively on $T_w$ for all $w$ descendant of $v$ of level $m$.
\end{lemma}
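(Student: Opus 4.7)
\emph{Plan.} My plan is to bound, uniformly in $m$, the number of $\Rist_G(v)$-orbits on $T_v\cap\level{m}$ for each $v\in\level{n}$, and then to exploit the monotonicity of orbit partitions under descent: being non-decreasing in $m$ and bounded, these counts must eventually stabilize, and stabilization will translate into the desired level transitivity.

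First, I would note that the hypotheses force $T$ to be locally finite, so $\level{n}=\{v_1,\dots,v_r\}$ is finite. Rigidity gives that $\Rist_G(\level{n})=\prod_{i=1}^{r}\Rist_G(v_i)$ has some finite index $N$ in $G$, and combining almost level transitivity (at most $c$ orbits on each level) with Fact \ref{Fact:NumberOrbitsIndex} yields that $\Rist_G(\level{n})$ has at most $cN$ orbits on each level. Since $\Rist_G(v_j)$ fixes $T_{v_i}$ pointwise whenever $j\neq i$, the $\Rist_G(\level{n})$-orbit and the $\Rist_G(v_i)$-orbit of any $w\in T_{v_i}$ coincide. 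Denoting by $b_m^{v_i}$ the number of $\Rist_G(v_i)$-orbits on $T_{v_i}\cap\level{m}$, I would thus obtain $\sum_i b_m^{v_i}\le cN$ for every $m\ge n$.

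Next, I would observe that $b_m^{v_i}$ is non-decreasing in $m$: the parent map $T_{v_i}\cap\level{m+1}\to T_{v_i}\cap\level{m}$ is $\Rist_G(v_i)$-equivariant and surjective, hence descends to a surjection of orbit sets. Boundedness together with monotonicity forces each sequence $(b_m^{v_i})$ to stabilize, and I would pick $m\ge n$ beyond all $r$ stabilization thresholds. For this $m$ and any $k\ge m$, the orbit-level surjection from $T_{v_i}\cap\level{k}$ onto $T_{v_i}\cap\level{m}$ is then a bijection, meaning that the set of descendants at level $k$ of any single $\Rist_G(v_i)$-orbit at level $m$ constitutes a single $\Rist_G(v_i)$-orbit.

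To conclude, given $v\in\level{n}$, any descendant $w$ of $v$ of level $m$, and $u_1,u_2\in T_w\cap\level{k}$ with $k\ge m$, the previous step yields $g\in\Rist_G(v)$ with $g\cdot u_1=u_2$. Since $g\cdot w$ lies at level $m$ and is an ancestor of $u_2\in T_w$, it must equal $w$, so $g$ stabilizes $T_w$ setwise. The main obstacle I anticipate is precisely this last subtlety: $\Rist_G(v)$ need not preserve the subtree $T_w$, so level transitivity on $T_w$ is not immediate from the existence of an element realizing $u_1\mapsto u_2$ and must be extracted via the ancestor argument; the rest of the proof is a clean combination of Fact \ref{Fact:NumberOrbitsIndex} with orbit refinement.
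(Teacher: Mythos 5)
Your proof is correct and follows essentially the same route as the paper: both bound the number of orbits of $\Rist_G(\level{n})$ on each level by $[G:\Rist_G(\level{n})]$ times the (uniformly bounded) number of $G$-orbits via Fact \ref{Fact:NumberOrbitsIndex}, and then deduce that the orbit counts must stabilize, which forces level transitivity on deep enough subtrees. Your write-up is somewhat more careful than the paper's iterated ``doubling'' count — in particular the closing ancestor argument showing that the transporting element automatically fixes $w$ is a detail the paper leaves implicit — but the underlying idea is identical.
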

\begin{proof}
Let $d$ denotes the maximum number of orbits for the action of $G$ on a level.
Then for every $m\geq n$ the number of orbits for the action $\Rist_G(\level{n})\curvearrowright\level{m}$ is bounded by $[G:\Rist_G(\level{n})]\cdot d$.
Now, let $v$ be any vertex of level $n$.
If $\Rist_G(v)$ does not act level-transitively on $T_v$, there exists some level $n_1> n$ such that $\Rist_G(v)\curvearrowright T_v\cap\level{n_1}$ has at least two orbits.
If $\Rist_G(v)$ does not act level-transitively on $T_w$ for all descendant $w$ of $v$ of level $n_1$, there exists a level such that $\Rist_G(v)\curvearrowright T_v\cap\level{n_2}$ has at least four orbits, and so on.
By the above, this procedure as to stop and there exists $m_v$ such that $\Rist_G(\level{n})\cap\Stab_G(w)$, and hence $\Rist_G(v)\cap\Stab_G(w)$, acts level-transitively on $T_w$ for all $w$ descendant of~$v$ of level $m_v$.
We finish the proof by taking $m$ to be the maximum of all the~$m_v$ for $v$ of level $n$.
\end{proof}
While the proof of the following fact is an easy exercise, it will be of great help to find subgroups that are branch.
\begin{fact}\label{Fact:SubgroupBranch}
Let $G\leq\Aut(T)$ be a branch group and $H\leq G$ be a finite index subgroup.
Then the action of $H$ on $T$ is branch if and only if it is level-transitive.
\end{fact}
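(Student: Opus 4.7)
The forward implication is immediate from Definition~\ref{Definition:Branch}: a branch action is by definition level transitive. All the substance is in the converse, and my plan there is to verify the rigidity half of the branch definition directly: given that $H$ has finite index in $G$ and acts level transitively on $T$, I will show $\Rist_H(\level{n})$ has finite index in $H$ for every $n$.

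The key trick is to compare $\Rist_H(\level{n})$ with the more accessible subgroup $N\defegal H\cap\Rist_G(\level{n})$. Since $G$ is branch, $\Rist_G(\level{n})$ has finite index in $G$, so $N$ has finite index in $H$ and it suffices to establish $[N:\Rist_H(\level{n})]<\infty$. Because $G$ is branch, $T$ is locally finite, so $\level{n}$ is a finite set and $\Rist_G(\level{n})=\prod_{v\in\level{n}}\Rist_G(v)$ is an internal direct product. Let $p_v$ denote the projection onto the $v$\hyphen th factor and set $P_v\defegal p_v(N)\leq\Rist_G(v)$. A short check, using that $p_v$ restricts to the identity on $\Rist_G(v)$, will yield $\Rist_H(v)=P_v\cap H$; from this the elementary inequality $[P_v:P_v\cap H]\leq[G:H]$ shows each quotient $P_v/\Rist_H(v)$ is finite. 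Finally, $N$ sits inside $\prod_v P_v$ and contains $\prod_v\Rist_H(v)=\Rist_H(\level{n})$, so
\[
    N/\Rist_H(\level{n})\hookrightarrow\prod_{v\in\level{n}}\bigl(P_v/\Rist_H(v)\bigr),
\]
which is a finite direct product of finite groups.

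I anticipate only one mild obstacle: correctly identifying $\Rist_H(v)=P_v\cap H$, since elements of $P_v$ arise as one component of an element of $N$ whose other components need not lie in $H$, and it is tempting to conflate $P_v$ with the full group $N$. It is also worth flagging a side observation that will drop out of the argument: level transitivity of $H$ is never used in the computation above; so in fact the rigidity condition of the branch definition is automatically inherited by any finite index subgroup of a branch group, and level transitivity is the only genuine content of the hypothesis in the non-trivial direction.
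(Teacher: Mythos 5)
The paper deliberately omits a proof of this Fact (``the proof \ldots{} is an easy exercise''), so there is nothing to compare against; your argument is correct and is the standard way to fill in that exercise. In particular your key identification $\Rist_H(v)=P_v\cap H$ does hold (one inclusion because $P_v\leq\Rist_G(v)$ gives $P_v\cap H\leq\Rist_G(v)\cap H$, the other because $\Rist_H(v)\leq N$ is fixed by $p_v$), and your closing observation that rigidity passes to \emph{every} finite index subgroup --- so that level transitivity is the only condition actually at stake --- is exactly the point of the Fact and is used implicitly elsewhere in the paper (e.g.\ in Lemma \ref{Lemma:JustInfinite}).
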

Let $T$ be a locally finite rooted tree.
Every subgroup $G\leq\Aut(T)$ comes with the following $3$ natural topologies.
The \defi{profinite topology}, where a basis of neighbourhood at $1$ is given by subgroups of finite index, 
the \defi{congruence topology}, where a basis of neighbourhood at $1$ is given by $\bigl(\Stab_G(n)\bigr)_{n\in\N}$ and finally the \defi{branch topology} where a basis of neighbourhood at $1$ is given by the $\bigl(\Rist_G(\level{n})\bigr)_{n\in\N}$.
The corresponding completions are denoted by $\hat G$ (\defi{profinite completion}), $\overline{G}$ (\defi{congruence completion}, which coincides with the closure of $G$ in the profinite group $\Aut(T)$) and $\tilde G$ (\defi{branch completion}).
Since $\Stab_G(n)$ is always of finite index and contains $\Rist_G(\level{n})$, we have two natural epimorphisms $\hat G\twoheadrightarrow \overline{G}$ and $\tilde G\twoheadrightarrow \overline{G}$.
If moreover $G$ is rigid, then we have $\hat G\twoheadrightarrow \tilde G\twoheadrightarrow \overline{G}$.
Therefore, for branch groups we have three kernels: the \defi{congruence kernel} $\ker(\hat G\twoheadrightarrow \overline{G})$, the \defi{branch kernel} $\ker(\hat G\twoheadrightarrow \tilde G)$ and the \defi{rigid kernel} $\ker(\tilde G\twoheadrightarrow \overline{G})$.
A branch group $G$ is said to have the \defi{congruence subgroup property} if the congruence kernel is trivial; this implies that the branch kernel is also trivial.
See~\cite{MR2891709} for more details on the congruence subgroup property.
An important result in this subject, due to Garrido~\cite{MR3556961}, is the fact that for a branch group $G$ the congruence and branch topology are intrinsic properties of $G$ and do not depend on the chosen branch action of $G$ on a tree.
In particular, the congruence, branch and rigid kernels are intrinsic properties of~$G$.

The congruence subgroup property for branch group has attracted a lot of attention these last years and was used, among other things, to describe the structure of maximal subgroups in many branch groups.
Earlier work from Pervova showed that the first Grigorchuk group,~\cite{MR1841763}, as well as torsion Gupta-Sidki groups,~\cite{MR2197824}, do not have maximal subgroups of infinite index.
This result was later generalized to all torsion multi-edge spinal groups~\cite{MR3834728}.
For super strongly self-replicating, just infinite (see Definition~\ref{Def:JustInf}) groups with the congruence subgroup property, the property to have all maximal subgroups of finite index is preserved when passing to commensurable groups~\cite[Lemma 4]{MR2009443}, in particular this property passes to subgroups of finite index.
One consequence of this result is that if $G$ is a finitely generated, just infinite, super strongly self-replicating branch group with the congruence subgroup property and without maximal subgroups of infinite index, then every weakly maximal subgroup of $G$ is closed in the profinite topology.

Another interesting property that a branch group may possess is the fact to be just infinite.
\begin{definition}\label{Def:JustInf}
A group $G$ is \defi{just infinite} if it is infinite and all of its proper quotient are finite, equivalently if every non-trivial normal subgroup is of finite index.
\end{definition}
Building upon the work of Wilson, Grigorchuk showed in~\cite{MR1765119} that the class of just infinite groups consists of groups of three distinct types that are, roughly speaking, finite powers of a simple group, finite power of an hereditarily just infinite groups (just infinite groups whose finite index subgroups are also just infinite), and the just infinite branch groups.

The following criterion will help to find just infinite subgroups in branch groups.
\begin{lemma}\label{Lemma:JustInfinite}
Let $G\leq\Aut(T)$ be a branch group and $H$ a finite index subgroup.
\begin{itemize}
\item If $H$ is just infinite, it acts level-transitively,
\item If $G$ is just infinite and $H$ acts level-transitively, then $H$ is just infinite.
\end{itemize}
\end{lemma}
\begin{proof}
Observe that for any vertex $v$ of level $n$, the subgroup $L\coloneqq\prod_{w\in H.v}\Rist_H(w)$ is always normal in $H$.
Since $\Rist_H(v)=\Rist_G(v)\cap H$ has finite index in $\Rist_G(v)$, it is not trivial.
If $H.v$ is not equal to $\level{n}$, then $L$ acts trivially on some $T_u$ for $u$ in $\level{n}\setminus H.v$ and is thus of infinite index in $G$ and also in~$H$.
This finishes the proof of the first assertion.

The second assertion is Lemma 8.5 of~\cite{MR2893544}.
\end{proof}
Finally, we record the following fact about centralizer of elements in a branch group.
\begin{lemma}\label{Lemma:InfiniteIndexCentralizer}
Let $G\leq\Aut(T)$ be a branch group.
For any $g\neq 1$ in $\Aut(T)$, the centralizer $C_G(g)$ has infinite index in $G$.
\end{lemma}
\begin{proof}
Since $g$ is non-trivial, it moves a vertex $v$.
Since weakly branch groups do not have abelian rigid stabilizers~\cite[Lemma 4.2]{2020arXiv200608677L}, we can choose $f$ and $h$ two distinct elements of the derived subgroup $\Rist_G(v)'$ and $w$ a descendant of $v$ such that $f(w)\neq h(w)$.
We have $gf(w)\neq gh(w)$ while $hg(w)=fg(w)=g(w)$, hence at most one of $f$ or $h$ belongs to $C_G(g)$.
As a consequence, $C_G(g)$ contains no derived subgroup of rigid stabilizers of levels and is of infinite index by~\cite[Proof of Theorem 4]{MR2893544}.
\end{proof}
%
%
%
%
%
%
%
%
%
%
\subsection{Some examples of branch groups}
The first Grigorchuk group $\Grig$ is probably the most well-known and most studied branch group. This was the first example of a group of intermediate growth,~\cite{MR764305}.
\begin{definition}
The \defi{first Grigorchuk group} $\Grig=\gen{a,b,c,d}$ is the subgroup of $\Aut(T_2)$ generated by $a=(1,1)\varepsilon$, where $\varepsilon$ is the cyclic permutation $(1 2)$ of the first level, and by the three elements $b$, $c$ and $d$ of $\Stab_{\Aut(T)}(\level{1})$ that are recursively defined by
\[
	b=(a,c)\qquad c=(a,d)\qquad d=(1,b).
\]
\end{definition}
This is a $2$-group of rank $3$, all generators have order $2$ and $b$, $c$ and $d$ pairwise commute.
The group $\Grig$ is branch, self-replicating, just infinite, has the congruence subgroup property and possesses a unique branch action in the sense of~\cite{MR2011117}.
Finally, all maximal subgroups of $\Grig$ have finite index,~\cite{MR1841763}.
See~\cite{MR1786869,MR2195454} for references and Section~\ref{Section:GrigorchukGroup} for more details.

Other well-studied examples of branch groups are the Gupta-Sidki groups,~\cite{MR696534}, as well as their generalizations.
\begin{definition}
Let $p$ be a prime, $T$ the $p$-regular tree and let $\mathbf e=(e_0,\dots,e_{p-2})$ be a vector in $\mathbf (F_p)^{p-1}\setminus\{0\}$.
The \GGS{} group $G_{\mathbf e}=\gen{a,b}$ with defining vector $\mathbf e$ is the subgroup of $\Aut(T)$ generated by the two automorphisms 
\begin{align*}
a&=(1,\dots,1)\cdot\varepsilon\\
b&=(a^{e_0},\dots,a^{e_{p-2}},b)
\end{align*}
where $\varepsilon$ is the cyclic permutation $(1 2 \dots p)$.
\end{definition}
The name \GGS{} stands for Grigorchuk-Gupta-Sidki as these groups generalize both the Gupta-Sidki groups (where $\mathbf e=(1,-1,0,\dots, 0)$) and the second Grigorchuk group (where $p=4$ is not prime and $\mathbf e=(1,0,1)$).

A wide generalization of \GGS{} groups is provided by the \defi{generalised multi-edge spinal groups}, where, for $p$ an odd prime and $T$ the $p$-regular rooted tree,
\[
	G=\presentation{\{a\}\cup\{b_i^{(j)}\}}{1\leq j\leq p,1\leq i\leq r_j}\leq\Aut(T_p)
\]
is a subgroup of $\Aut(T)$ that is generated by one automorphism $a=(1,\dots,1)\varepsilon$ and $p$ families $b_1^{(j)},\dots,b_{r_j}^{(j)}$ of directed automorphisms\footnote{An automorphism $b$ of $T$ is \defi{directed} if it fixes a ray $\xi=(v_i)_{i\geq 1}$ and the portrait of $b$ is trivial except maybe for children of the $v_i$'s.}, each family sharing a common
directed path disjoint from the paths of the other families.
See~\cite{MR3834728} for a precise definition.

It is shown in~\cite{MR3834728} that generalized multi-edge spinal groups enjoy a lot of interesting properties.
First of all, they always are residually-(finite $p$) group that are self-replicating.
Moreover, Klopsch and Thillaisundaram showed
\begin{theorem}[\cite{MR3834728}]
Let $G$ be a generalized multi-edge spinal group.
\begin{enumerate}
\item If $G$ is branch, then it admits a unique branch action in the sense of~\cite{MR2011117},
\item If $G$ is torsion, then it is just infinite and branch,
\item If $G$ is torsion, then $G$ does not have a maximal subgroup of infinite index. The same holds for groups commensurable with $G$.
\end{enumerate}
\end{theorem}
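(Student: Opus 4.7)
The plan is to handle the three items with separate methods. For (1), I would adapt the strategy of Grigorchuk and Wilson for $\Grig$, aiming to recover the tree structure from the abstract subgroup lattice of $G$. The key is to give an algebraic characterization of the rigid stabilizers $\Rist_G(\level{n})$ that is independent of the specific action. A promising route is to identify $\Rist_G(\level{1})$ as the unique minimal finite-index normal subgroup that is a subdirect product of $p$ copies of a smaller branch group; self-similarity would then propagate the characterization to all levels by induction. The main obstacle is ruling out exotic subdirect decompositions coming from a hypothetical second branch action: here one exploits the specific generating data---a single rooted generator $a$ together with the prescribed directed families $b_i^{(j)}$---to pin down the first-level partition, and then uses rigidity of the resulting automorphism type to identify the two actions.

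For (2), I would first prove branchness using the torsion hypothesis and then invoke Lemma \ref{Lemma:JustInfinite}. A standard \GGS-style computation produces a finite index subgroup $K\leq G$, typically built from commutators of the directed generators, such that $G$ is regularly branch over $K$, i.e.\ $K\times\cdots\times K$ embeds into $K$ through the wreath decomposition. The torsion hypothesis is used crucially to control sections of powers of the $b_i^{(j)}$: finite order forces iterated sections along any ray to eventually collapse, which is what makes the recursive branching relation close up on a finite index subgroup. Once $\Rist_G(\level{n})$ is shown to have finite index in $G$, level transitivity---immediate from $a$ acting as a $p$-cycle and propagating via self-replicability---combines with Lemma \ref{Lemma:JustInfinite} applied to $H=G$ to yield just-infiniteness.

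For (3), the natural approach is Pervova's method. Suppose $M\leq G$ is maximal of infinite index. The goal is to show that $M$ is closed in the pro-$p$ topology; combined with the congruence subgroup property for torsion multi-edge spinal branch groups, this forces $M$ to contain some $\Stab_G(\level{n})$ and hence to have finite index, a contradiction. Closedness is proved generator by generator: for each torsion generator $g$, maximality gives either $g\in M$ or $\gen{M,g}=G$, and a careful analysis of the cosets $Mg^i$---exploiting the branch structure and the explicit action of $g$ on the tree---shows that neither option is compatible with $M$ failing to be pro-$p$-closed. The main technical obstacle is establishing the congruence subgroup property for this class, which in \cite{MR3834728} requires a dedicated analysis of the finite $p$-quotients of $G$ and is by far the hardest ingredient. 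The commensurability statement then follows because the property ``no maximal subgroup of infinite index'' is preserved both under passage to finite-index subgroups (a hypothetical maximal infinite-index subgroup of $L\leq G$ of finite index would extend to an infinite-index subgroup of $G$ with index controlled by $[G:L]$) and under passage to finite-index overgroups, and commensurability is realized by a zig-zag of such inclusions.
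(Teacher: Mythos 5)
This statement is quoted verbatim from \cite{MR3834728}; the paper under review gives no proof of it, so there is no internal argument to compare yours against. Judged on its own terms, your sketches for (1) and (2) follow the standard route --- the Grigorchuk--Wilson lattice-theoretic reconstruction of the tree from rigid stabilizers for uniqueness of the branch action, and the exhibition of a finite-index subgroup $K$ with $K\times\dots\times K\leq K$ followed by an appeal to Lemma \ref{Lemma:JustInfinite} with $H=G$ --- and are plausible in outline, though of course they are only outlines.

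Part (3), however, rests on a false premise. You make the congruence subgroup property for torsion generalized multi-edge spinal groups the linchpin of the argument (calling it ``by far the hardest ingredient''), but this class does \emph{not} have the congruence subgroup property in general: the paper itself records, citing the very same reference \cite{MR3834728}, that some of these groups have the congruence subgroup property (for example torsion \GGS{} groups) but that this is not always the case. Any proof of (3) routed through CSP therefore cannot cover the whole class. The Pervova-style argument that does work avoids CSP entirely: if $M$ is maximal of infinite index then $MN=G$ for every non-trivial finite-index normal subgroup $N$ (in particular $MG'=G$, since $G/G'$ is a finite $p$\hyphen group), and a length-reduction induction on sections of $M\cap\Stab_G(\level{1})$ then forces $M=G$. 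Relatedly, your deduction ``$M$ closed in the pro-$p$ topology forces $M$ to contain some $\Stab_G(\level{n})$'' is backwards: closedness of a proper $M$ places it \emph{inside} a proper open subgroup, and it is maximality that then identifies $M$ with that subgroup and hence gives finite index. Finally, the commensurability assertion is a genuine theorem (invariance of ``all maximal subgroups have finite index'' under abstract commensurability of finitely generated groups) whose proof is more delicate than your parenthetical suggests: a maximal infinite-index subgroup of a finite-index subgroup $L\leq G$ does not obviously ``extend'' to one of $G$.
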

In particular, if $G$ is a torsion generalized multi-edge spinal group, then its maximal subgroups are normal and of index $p$.
Since all maximal subgroups are of finite index, then by~\cite{2015arXiv150908090M} the derived subgroup $G'$ of $G$ is contained in the Frattini subgroup (that is, the intersection of all the maximal subgroups)  of $G$.

Some of these groups have the congruence subgroup property, for example torsion \GGS{} groups (Pervova~\cite{MR2308183}),  but this is not always the case,~\cite{MR3834728}.

For the special case of \GGS{} groups, there is an easy criterion, due to Vovkivsky~\cite{MR1754681}, to decide whenever $G$ is torsion.
Indeed, a \GGS{} group with defining vector $\mathbf e$ is torsion if and only if $\sum_{i=0}^{p-2}e_i=0$.

For \GGS{} groups we will also make use of the following facts, see~\cite{MR3152720} for example for a proof.
\begin{proposition}\label{Prop:GGSBasics}
Let $G$ be a \GGS{} group a $G'$ be its derived subgroup. Then,
\begin{enumerate}
\item $\Stab_G(\level{1})=\gen{b}^G=\gen{b,aba^{-1}\dots,a^{p-1}ba^{-(p-1)}}$,
\item $G=\gen{a}\ltimes\Stab_G(\level{1})$,
\item $G/G'=\gen{aG',bG'}\cong C_p\times C_p$,
\item $\Stab_G(\level{2})\leq G'\leq\Stab_G(\level{1})$.
\end{enumerate}
\end{proposition}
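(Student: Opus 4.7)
The plan is to prove the four statements in sequence, leveraging the self-similar structure of the generators $a$ and $b$.

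For part (1), I would first observe that each conjugate $a^j b a^{-j}$ lies in $\Stab_G(\level{1})$: indeed $b \in \Stab_G(\level{1})$, and conjugation by $a$ merely cyclically permutes the sections. Since $a$ has order $p$, the normal closure $N := \gen{b}^G$ is generated by the $p$ elements $a^j b a^{-j}$ and sits inside $\Stab_G(\level{1})$. From $G = \gen{a,b} = \gen{a} \cdot N$ one obtains $[G:N] \leq p$, while $a \notin \Stab_G(\level{1})$ gives $[G:\Stab_G(\level{1})] \geq p$, forcing equality. Part (2) is then immediate: $\Stab_G(\level{1})$ is normal as the kernel of the action on the first level, the intersection $\gen{a} \cap \Stab_G(\level{1})$ is trivial because $a$ acts as a nontrivial $p$-cycle on $\level{1}$, and $\gen{a} \cdot \Stab_G(\level{1}) = G$ by (1).

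For part (3), the abelianization $G^{ab} := G/G'$ is generated by $\bar a, \bar b$. The projection $G \twoheadrightarrow G/\Stab_G(\level{1}) \cong C_p$ (killing $\bar b$ and sending $\bar a$ to a generator) shows $\bar a$ has order exactly $p$. Since both $a$ and $b$ lie in the Sylow pro-$p$ subgroup of $\Aut(T)$, the group $G$ is residually-(finite $p$), so the Frattini quotient $G/G^p G'$ is an $\field{F}_p$-vector space of dimension at most $2$. One concludes $G^{ab} \cong C_p \times C_p$ by verifying directly in the wreath product that $b^p \in G'$: this follows from the identity $b^p = (1, \dots, 1, b^p)$ combined with a controlled reduction using conjugates of $b$.

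For part (4), the inclusion $G' \leq \Stab_G(\level{1})$ is immediate since $G/\Stab_G(\level{1}) \cong C_p$ is abelian. For the harder inclusion $\Stab_G(\level{2}) \leq G'$, using $G^{ab} \cong \gen{\bar a} \times \gen{\bar b}$ from (3), it suffices to show that the image of every $g \in \Stab_G(\level{2})$ in $G^{ab}$ vanishes. The $\bar a$-component is $0$ because $g \in \Stab_G(\level{1})$. By (1), write $g = \prod_k a^{j_k} b^{\epsilon_k} a^{-j_k}$, so the $\bar b$-component equals $\sum_k \epsilon_k \bmod p$. Using the explicit sections $b|_i = a^{e_i}$ for $i < p-1$ and $b|_{p-1} = b$, the condition $g|_v \in \Stab_G(\level{1})$ for every $v \in \level{1}$ translates into a system of $p$ linear congruences of the form $E \ast e \equiv 0 \pmod p$, where $E_j := \sum_{k : j_k = j} \epsilon_k$ and $\ast$ denotes convolution in $\field{F}_p[\field{Z}/p]$. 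Viewing $e$ as a polynomial in $\field{F}_p[x]/(x^p-1) \cong \field{F}_p[x]/(x-1)^p$, the assumption $\mathbf e \neq 0$ implies $e$ is not divisible by $(x-1)^p$, and a short factorization argument then forces $E(1) = \sum_j E_j = \sum_k \epsilon_k \equiv 0 \pmod p$, as required.

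The main obstacle is the convolution argument at the end of (4): one must turn $p$ coordinate-wise conditions on the first-level sections into a single linear relation on the $\bar b$-exponent, and this relies on the precise arithmetic of the defining vector $\mathbf e$ in $\field{F}_p[\field{Z}/p]$. A parallel subtlety appears in (3) when verifying $b^p \in G'$; in both cases the underlying computation is routine, but setting it up correctly is the delicate part.
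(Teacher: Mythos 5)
The paper itself does not prove this proposition---it simply cites \cite{MR3152720}---so your argument has to stand on its own. Parts (1) and (2) are correct. Part (4) is also correct and rather elegant: with the convention $e_{p-1}=0$, the condition $g\in\Stab_G(\level{2})$ does translate into $E\ast e\equiv 0$ in $\field{F}_p[x]/(x^p-1)\iso\field{F}_p[x]/(x-1)^p$, and since this ring is local with maximal ideal $(x-1)$, an annihilator of the nonzero element $e$ cannot be a unit, whence $E(1)=\sum_k\epsilon_k\equiv 0\pmod p$. Note that this direction only needs $\bar b^{\,p}=1$ in $G/G'$ (which follows from $b^p=1$), not the full strength of (3).

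The genuine gap is in part (3). Your argument shows that $G/G'$ is an elementary abelian $p$\hyphen group of rank at most $2$, generated by $\bar a$ and $\bar b$ with $\bar a$ of order exactly $p$; this only proves that $G/G'$ is a quotient of $C_p\times C_p$, and nothing you write rules out $G/G'\iso C_p$. The missing step---which is the entire nontrivial content of (3)---is to show that $\bar b\neq 1$, i.e.\ $b\notin G'$ (given that $b\in\Stab_G(\level{1})$, independence of $\bar a$ and $\bar b$ reduces to exactly this). This is not a formality. The natural candidate for a separating invariant, namely the map sending $g=(g_0,\dots,g_{p-1})\in\Stab_G(\level{1})$ to $\sum_i \sigma(g_i)$ where $\sigma\colon G\to G/\Stab_G(\level{1})\iso C_p$, is conjugation-invariant and kills $[a,b]$ (the sum telescopes to $0$ on $[a,b]=(ba^{-e_0},a^{e_0-e_1},\dots,a^{e_{p-2}}b^{-1})$), hence kills $G'$; but it evaluates on $b$ to $\sum_i e_i$, which vanishes precisely when $G$ is torsion---the case the paper actually uses. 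So in the torsion case a finer argument is required (this is where the work of \cite{MR3152720} lies, and it is also essentially the content of the uniqueness statement in Lemma \ref{Lemma:GGSNormalForm}), and as written your proof of (3) does not go through.
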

Finally, we will use the existence of a normal form for elements in $\Stab_G(\level{1})$.
See~\cite{MR3513107} for the case of Gupta-Sidki groups.
This normal form is best described by the use of the circulant matrix 
\[
	\Circ(\mathbf e,0)=
	\begin{pmatrix}
		e_0		& e_1	& \dots 	& e_{p-2}	& 0\\
		0		& e_0	& e_1		& \dots		& e_{p-2}\\
		e_{p-2}	& 0		& e_1		& \dots		& e_{p-3}\\
		\vdots	&\vdots & \ddots	& \ddots	&\vdots\\
		e_1		& \dots & e_{p-2}	& 0			& e_0
	\end{pmatrix}
\]
where $\mathbf e=(e_0,\dots,e_{p-1})$ is the defining vector of $G$.

The following is a generalization of a similar result for Gupta-Sidki groups that was originally obtained by Garrido in~\cite{MR3513107}.
\begin{lemma}\label{Lemma:GGSNormalForm}
Let $G$ be a GGS group and $g$ be an element of $\Stab_G(\level{1})$.
Then there exist unique $(n_i)_{i=0}^{p-1}$ in $\mathbf F_p$ and $(d_i)_{i=0}^{p-1}$ in $G'$ such that

\[
	g=(a^{\alpha_0}b^{\beta_0}d_0,\dots, a^{\alpha_{p-1}}b^{\beta_{p-1}}d_{p-1})\tag{\dag}\label{Eq:NormalForm}
\]
where
\[
	(\alpha_0,\dots,\alpha_{p-1})=(n_0,\dots,n_{p-1})\Circ(\mathbf e,0)\quad\textnormal{and}\quad (\beta_0,\dots,\beta_{p-1})=(n_0,\dots,n_{p-1})P
\]
with $P$ the matrix representation of the permutation $(12\dots p)$.
\end{lemma}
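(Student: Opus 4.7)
The plan is to build the normal form from the explicit generating set for $\Stab_G(\level{1})$ supplied by the previous proposition, namely $\Stab_G(\level{1})=\gen{b_0,b_1,\dots,b_{p-1}}$ where $b_i\defegal a^iba^{-i}$, and then read everything off modulo $G'$ using the splitting $G/G'\iso C_p\times C_p$ with basis $\bar a,\bar b$. Since $b=(a^{e_0},\dots,a^{e_{p-2}},b)$, conjugation by $a^i$ cyclically shifts the coordinates, so a direct computation shows that $b_i$ has section $b$ at the vertex of level $1$ in position $i-1\pmod p$ and section $a^{\tilde e_{j-i\pmod p}}$ at each other position $j$, where $\tilde{\mathbf e}\defegal(e_0,\dots,e_{p-2},0)$. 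In particular the section at position $i-1$ is $b$, consistent with $\tilde e_{p-1}=0$.

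Next I would introduce the abelianized section map $\phi\colon\Stab_G(\level{1})\to (G/G')^{p}\iso\mathbf F_p^{\,p}\oplus\mathbf F_p^{\,p}$ sending $g$ to $(\alpha_0,\dots,\alpha_{p-1};\beta_0,\dots,\beta_{p-1})$ determined by $\treesection{g}{j}\equiv a^{\alpha_j}b^{\beta_j}\pmod{G'}$. This is well defined because $G'^{\,p}$ is contained in $\Stab_G(\level{1})$ via self-similarity. From the previous step, $\phi(b_i)$ has $\alpha$-part $(\tilde e_{-i},\tilde e_{1-i},\dots,\tilde e_{p-1-i})$ and $\beta$-part the standard basis vector $\mathbf 1_{i-1\pmod p}$. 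Because the codomain is abelian, $\phi$ factors through the abelianisation of $\Stab_G(\level{1})$, so any $g\in\Stab_G(\level{1})$, written as a word in the generators $b_i$ with total exponent $n_i$ in $b_i$, satisfies $\phi(g)=\sum_i n_i\phi(b_i)$. Reading off the two components, $\beta_j=n_{j+1\pmod p}$ and $\alpha_j=\sum_i n_i\tilde e_{j-i\pmod p}$, which is exactly the matrix identity $(\alpha_0,\dots,\alpha_{p-1})=(n_0,\dots,n_{p-1})\Circ(\mathbf e,0)$ and $(\beta_0,\dots,\beta_{p-1})=(n_0,\dots,n_{p-1})P$. Setting $d_j\defegal a^{-\alpha_j}b^{-\beta_j}\treesection{g}{j}\in G'$ produces the required decomposition \eqref{Eq:NormalForm}.

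For the converse direction of the biconditional, any element that can be written as the tuple $(a^{\alpha_0}b^{\beta_0}d_0,\dots,a^{\alpha_{p-1}}b^{\beta_{p-1}}d_{p-1})$ carries no permutation at the first level, hence belongs to $\Stab_{\Aut(T)}(\level{1})$; as $g$ is assumed to be in $G$, it lies in $\Stab_G(\level{1})$. Uniqueness of the representation follows from the same linear algebra in reverse: if $(a^{\alpha_j}b^{\beta_j}d_j)=(a^{\alpha_j'}b^{\beta_j'}d_j')$ for all $j$, then $a^{\alpha_j-\alpha_j'}b^{\beta_j-\beta_j'}\in G'$, and since $\{\bar a,\bar b\}$ is a basis of $G/G'\iso C_p\times C_p$ we conclude $\alpha_j=\alpha_j'$, $\beta_j=\beta_j'$ in $\mathbf F_p$, hence also $d_j=d_j'$. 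Note that the vector $(n_i)$ itself is determined as $(n_i)=(\beta_i)P^{-1}$ since $P$ is invertible, so no ambiguity is introduced by the existential quantifier on $(n_i)$ in the statement.

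The main bookkeeping obstacle is matching the indexing conventions for $\varepsilon=(1\,2\,\dots\,p)$, the permutation matrix $P$, and the circulant $\Circ(\mathbf e,0)$ so that the computed formulas $\beta_j=n_{j+1\pmod p}$ and $\alpha_j=\sum_i n_i\tilde e_{j-i\pmod p}$ agree precisely with $(\mathbf n)P$ and $(\mathbf n)\Circ(\mathbf e,0)$ as displayed in the statement; this is essentially mod-$p$ accounting but must be done carefully to confirm that the shifts in the rows of $\Circ(\mathbf e,0)$ correspond to the cyclic shifts produced by conjugation by successive powers of $a$. Everything else is linear algebra over $\mathbf F_p$ combined with the generation result from the previous proposition.
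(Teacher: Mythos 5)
Your proof is correct and follows essentially the same route as the paper: express $g$ in terms of the generators $b_i=a^iba^{-i}$ of $\Stab_G(\level{1})$, compute the first-level sections of each $b_i$, and read off $(\alpha_i)$ and $(\beta_i)$ by reducing the sections modulo $G'$, with the bookkeeping matching the circulant and permutation matrices exactly as in the paper's displayed computation. The only (minor) divergence is the uniqueness step, which you deduce directly from the sections via $G/G'\cong C_p\times C_p$ together with the invertibility of $P$, whereas the paper appeals to uniqueness of the exponents $n_i$ and of $c$ in the abelianization of $\Stab_G(\level{1})$; your variant is, if anything, the more self-contained of the two.
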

\begin{proof}
For every integer $i$, let $b_i\coloneqq a^{i}ba^{-i}$ and let $B\coloneqq\Stab_G(\level{1})$.
Hence $b_i=b_j$ if $i\equiv j \pmod p$ and $b_i^p=1$ for every $i$ and
$B=\gen{b_0,\dots,b_{p-1}}$ by Proposition~\ref{Prop:GGSBasics}.
Then the abelian group $B/B'$ admits the following description
\[
	B/B'=\gen{b_0B',\dots,b_{p-1}B'}=\setst{b_0^{r_0}b_1^{r_1}\cdots b_{p-1}^{r_{p-1}}B'}{r_i\in \mathbf F_p}.
\]
Now, for $g\in B$ there exists some $n_i$ in $\mathbf F_p$ and $c$ in $B'$ with
\begin{align*}
	g&=b_0^{n_0}b_1^{n_1}\cdots b_{p-1}^{n_{p-1}}c\\
	&=(a^{n_0e_0}b^{n_1}a^{n_2e_{p-2}}\cdots a^{n_{p-1}e_1},\dots,b^{n_0}a^{n_1e_{p-2}}\cdots a^{n_{p-1}e_{0}})c\\
	&=(a^{\alpha_0}b^{n_1},\dots, a^{\alpha_{p-1}}b^{n_0})(d_0,\dots,d_{p-1})
\end{align*}
where $(\alpha_0,\dots,\alpha_{p-1})=(n_0,\dots,n_{p-1})\Circ(\mathbf e,0)$ and the $d_i$ belong to $G'$.

The $n_i$ as well as $c$ are uniquely determined, and therefore so are the $\alpha_i$, $\beta_i$ and $d_i$.
\end{proof}
%
%
%
%
%
%
%
%
%
%
\section{Generalized parabolic subgroups}\label{Section:GeneralizedParabolicSubgroup}
In this section, we study setwise stabilizers of closed subsets in the setting of weakly maximal subgroups.
The main motivation for this is Theorem~\ref{Thm:GeneralizedParabolic}.
Recall that since the action of $\Aut(T)$ on $\partial T$ is continuous, if $C$ is a closed subset of $\partial T$, then $\SStab_G(C)$ is closed for the congruence topology.
In particular, $\SStab_G(C)$ is closed for the profinite topology and also closed for the branch topology.

The following definition is motivated by the forthcoming Proposition~\ref{Proposition:WMStrongRigidSStab}. 
\begin{definition}
A \defi{generalized parabolic subgroup} of a group $G\leq\Aut(T)$ is the setwise stabilizer $\SStab_G(C)$ of some closed non-open subset $C$ of $\partial T$ such that $\SStab_G(C)$ acts minimally on $C$.
\end{definition}
It directly follows from the definition that if $\SStab_G(C)$ is a generalized parabolic subgroup, then $C$ is closed and nowhere dense.

Before going further, recall that the set of (non necessarily spherically regular) subtrees of $T$ containing the root and without leaf is in bijection with $\mathcal C$ the set of non-empty closed subset of $\partial T$.
This bijection is given by $S\mapsto \partial S$ and $C\mapsto T_C\coloneqq\setst{v\in T}{\exists\xi\in C:v\in\xi}$ that are $G$-equivariant maps.

For every non-empty closed subset $C$ of $\partial T$, the action $\SStab_G(C)\curvearrowright C$ is minimal if and only if $\SStab_G(T_C)=\SStab_G(C)\curvearrowright T_C$ is level-transitive.
In particular, if $\SStab_G(C)$ is a generalized parabolic subgroup, then $T_C\subset T$ is a spherically regular rooted tree.

Generalized parabolic subgroups of weakly branch groups retain some of the branch structure.
More precisely, let $W=\SStab_G(C)$ be a generalized parabolic subgroup.
Since $C$ is nowhere dense, for every vertex $v$ in $T$, there exists $w\geq v$ that is not in $T_C$.
But then $W$ contains $\Rist_G(w)$ and hence $\Rist_W(v)\geq\Rist_W(w)=\Rist_G(w)$ is infinite.
We just proved
\begin{lemma}\label{Lemma:GenParabWeaklyBranch}
Let $W=\SStab_G(C)$ be a generalized parabolic subgroup of a micro-supported group $G$.
Then, for every vertex $v$ the rigid stabilizer $\Rist_W(v)$ is infinite, that is $W$ is micro-supported.
\end{lemma}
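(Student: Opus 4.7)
The plan is to use the observation, noted just before the lemma, that a generalized parabolic subgroup's closed set $C$ is nowhere dense in $\partial T$. For any vertex $v$ of $T$, the cylinder $\partial T_v$ is a non-empty open subset of $\partial T$, so it cannot be contained in $C$, as otherwise $C$ would have non-empty interior. Picking any $\eta \in \partial T_v\setminus C$ and using that $C$ is closed, some open neighbourhood of $\eta$ is disjoint from $C$. Since the cylinders $\partial T_w$ for $w$ a prefix of $\eta$ form a neighbourhood base at $\eta$, one such $w$, necessarily a descendant of $v$ (or equal to $v$), satisfies $\partial T_w\cap C=\emptyset$; equivalently, $w\notin T_C$.

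With such a $w$ in hand, every element of $\Rist_G(w)$ acts as the identity outside $T_w$, and so fixes every point of $C\subseteq \partial T\setminus\partial T_w$. In particular each such element preserves $C$ setwise and therefore belongs to $W=\SStab_G(C)$. Since $w$ is a descendant of $v$, elements of $\Rist_G(w)$ also act trivially outside $T_v$, so
\[
	\Rist_G(w)\leq W\cap\Rist_{\Aut(T)}(v)=\Rist_W(v).
\]
Because $G$ is weakly rigid, $\Rist_G(w)$ is infinite, whence $\Rist_W(v)$ is infinite as well.

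I do not expect any real obstacle here: the only point requiring a little care is the passage from the existence of a single ray outside $C$ to the existence of a whole cylinder outside $C$, which is immediate from closedness of $C$ together with the fact that the cylinders $\{\partial T_u\}_{u\in T}$ form a basis for the topology on $\partial T$.
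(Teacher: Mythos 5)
Your proof is correct and follows essentially the same route as the paper: the paper likewise uses that $C$ is closed and nowhere dense to find a descendant $w$ of $v$ with $w\notin T_C$, notes that $\Rist_G(w)\leq W$, and concludes $\Rist_W(v)\geq\Rist_G(w)$ is infinite by weak rigidity of $G$. Your write-up merely makes explicit the topological step (closedness of $C$ plus the cylinder basis) that the paper leaves implicit.
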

\begin{example}\label{Example:GenParAutT}
Let $(m_i)_{i\in N}$ be a sequence of integers greater than $1$ and $T=T_{(m_i)}$ the corresponding spherically regular rooted tree.
Let $G$ be one of the group $\Aut(T)$, $\Autfr(T)$ or $\Autf(T)$.
Let $(n_i)_{i\in N}$ be a sequence such that $1\leq n_i\leq m_i$ for all $i$ and such that $n_i<m_i$ for infinitely many $i$.
For any $S\subseteq T$ spherically regular subtree that is isomorphic to $T_{(n_i)}$, the subgroup $\SStab_G(\partial S)$ is generalized parabolic.
Moreover, all generalized parabolic subgroups of $G$ arise in this form.

Indeed, as we have seen, the condition that $S$ is spherically regular is always necessary for $\SStab_G(S)=\SStab_G(\partial S)$ to acts minimally on $\partial S$.
On the other hand, for $G=\Autf(T)$ (and also for $G=\Autfr(T)$ or $\Aut(T)$), it is also a sufficient condition.
Finally, the condition that infinitely many $n_i$ are strictly less than $m_i$ is equivalent to the fact that $\partial S$ is nowhere dense.
\end{example}
A corollary of this example is that if $G\leq\Aut(T)$ with $T$ locally finite, then~$G$ has at most a continuum of generalized parabolic subgroups, even if $G$ itself has the cardinality of the continuum.

We observe that setwise stabilizers are always infinite.
\begin{lemma}\label{Lemma:GenParInfinite}
Let $G\leq \Aut(T)$ be micro-supported.
Then, for every closed subset $C\leq \partial T$, $\SStab_G(C)$ is infinite.
\end{lemma}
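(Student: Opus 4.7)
The plan is to show that $\SStab_G(C)$ always contains some rigid vertex stabilizer $\Rist_G(v)$, which is infinite by the weak rigidity hypothesis.

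First I would dispose of the degenerate case $C = \partial T$: here $\SStab_G(C) = G$, and this already contains $\Rist_G(v)$ for any non-root vertex $v$, which is infinite.

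In the remaining case $C \subsetneq \partial T$, the complement $\partial T \setminus C$ is a non-empty open subset of $\partial T$. Since the cylinder sets $\partial T_v$ (for $v \in T$) form a basis of the topology on $\partial T$, there exists a vertex $v$ with $\partial T_v \subseteq \partial T \setminus C$, i.e.\ with $\partial T_v \cap C = \emptyset$. Any element of $\Rist_G(v)$ acts trivially on $T \setminus T_v$, so in particular it fixes pointwise every ray that does not pass through $v$. Since every ray in $C$ avoids $\partial T_v$, every element of $\Rist_G(v)$ fixes $C$ pointwise. Hence
\[
    \Rist_G(v) \leq \Stab_G(C) \leq \SStab_G(C),
\]
and weak rigidity forces $\Rist_G(v)$, and therefore $\SStab_G(C)$, to be infinite.

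There is no real obstacle here; the only thing to be careful about is treating the edge case $C = \partial T$ (where one cannot find a vertex $v$ with $\partial T_v$ disjoint from $C$) separately from the generic case, and invoking the basis property of the cylinder sets correctly so that the argument does not depend on $T$ being locally finite.
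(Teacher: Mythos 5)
Your proof is correct and follows essentially the same route as the paper: in the case $C\neq\partial T$ one finds a vertex $v$ with $\partial T_v\cap C=\emptyset$ and observes that $\Rist_G(v)$ then lies in $\SStab_G(C)$ (indeed in $\Stab_G(C)$), so weak rigidity gives infiniteness. The separate treatment of $C=\partial T$ matches the paper's as well.
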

\begin{proof}
On one hand, if $C=\partial T$, then $\SStab_G(C)=G$ is infinite.
On the other hand, if $C\neq \partial T$, then $\partial T\setminus C$ is a non-empty open set an therefore has at least one interior point $\xi=(v_i)_{i\geq 1}$.
Then there is $i$ such that $C$ contains no rays passing through $v_i$.
This implies that $\SStab_G(C)$ contains $\Rist_G(v_i)$ which is infinite.
\end{proof}
While being infinite, setwise stabilizers of non-open closed subsets of $\partial T$ are still ``small'' subgroups in the sense that they are of infinite index and have infinitely many orbit-closures.
\begin{lemma}\label{Lemma:InfOrbitsClosure}
Let $G$ be any subgroup of $\Aut(T)$ and $C$ be a non-open closed subset of $\partial T$.
Then the number of orbit-closures for the action $\SStab_G(C)\curvearrowright \partial T$ is infinite.

Moreover, if $G$ is almost level-transitive, then $\SStab_G(C)$ has infinite index in $G$.
\end{lemma}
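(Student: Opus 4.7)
The plan is to exploit the partition of $T$ into the two $\SStab_G(C)$\hyphen invariant sets $S\defegal T_C$ and $T\setminus S$; the invariance follows because any $g\in\SStab_G(C)$ preserves $C$ setwise, and hence $S$. I will define a continuous $\SStab_G(C)$\hyphen invariant map $\psi\colon\partial T\to\{0,1\}^\N$ by setting $\psi(\xi)_n=0$ when the level-$n$ vertex of $\xi$ lies in $S$, and $\psi(\xi)_n=1$ otherwise. Since $S$ is closed under taking ancestors, the possible values of $\psi$ are sequences of the form $0^k1^\infty$ with $k\in\N\cup\{\infty\}$; by continuity, each orbit-closure is contained in a single fiber of $\psi$, so it suffices to exhibit infinitely many distinct values in the image.

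The hypothesis that $C$ is closed but not open provides some $\xi\in C\setminus\tn{int}(C)$. For every $n$, the cylinder of rays through $\xi_n$ is a neighborhood of $\xi$ and therefore meets $\partial T\setminus C$; any such ray leaves $S$ for the first time at some level $m>n$, which yields a $\psi$\hyphen value $0^m1^\infty$ with arbitrarily large $m$. This settles the first part.

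For the second part, I will rely on the standard correspondence between orbit-closures on $\partial T$ and coherent sequences of level-orbits. The key identity is
\[
	\overline{H\xi}=\setst{\eta\in\partial T}{H\xi_n=H\eta_n\text{ for every }n\in\N}
\]
for any $H\leq\Aut(T)$, which holds because a tree automorphism mapping $\xi_n$ to $\eta_n$ automatically carries each ancestor $\xi_j$ to $\eta_j$. In particular, distinct $H$\hyphen orbit-closures on $\partial T$ yield distinct sequences $(H\xi_n)_n$. Moreover, the number of $H$\hyphen orbits on $\level{n}$ is non-decreasing in $n$ (an $H$\hyphen orbit at level $n+1$ projects via the parent map onto an $H$\hyphen orbit at level $n$), so if it is uniformly bounded by some $K$, it stabilizes, and the number of orbit-closures on $\partial T$ is then also at most $K$.

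Now assume $G$ is almost level transitive, so that the number of $G$\hyphen orbits on each $\level{n}$ is bounded by some $d$. Fact \ref{Fact:NumberOrbitsIndex} then bounds the number of $\SStab_G(C)$\hyphen orbits on $\level{n}$ by $d\cdot[G:\SStab_G(C)]$. If $[G:\SStab_G(C)]$ were finite, the previous paragraph would force $\SStab_G(C)$ to have only finitely many orbit-closures on $\partial T$, contradicting the first part. The most delicate point I foresee is the justification of the orbit-closure identity and of the fact that bounded $H$\hyphen orbit counts on the levels force bounded numbers of orbit-closures on $\partial T$; once these are established, the rest of the argument is essentially bookkeeping.
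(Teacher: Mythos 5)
Your proof is correct and follows essentially the same route as the paper's: your invariant $\psi$, recording the first level at which a ray leaves $T_C$, carries exactly the same information as the function $\xi\mapsto d(\xi,C)$ used in the paper (both are continuous, constant on $\SStab_G(C)$-orbits, and take infinitely many values because $C$ has a non-interior point), and the second part is the same reduction to Fact \ref{Fact:NumberOrbitsIndex} via the boundedness of orbit counts on levels. The only real difference is that you spell out the equivalence between having finitely many orbit-closures on $\partial T$ and having boundedly many orbits on the levels, a point the paper asserts without proof.
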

\begin{proof}
For $\xi=(v_i)_{i\geq 1}$ in $\partial T$ let $d(\xi,C)$ be the distance in $\partial T$ between $\xi$ and $C$.
If $\xi$ and $\eta$ are in the same $\SStab_G(C)$-orbit, then $d(\xi,C)=d(\eta,C)$.
Therefore, in order to prove the infinity of orbit-closures of $\SStab_G(C)\curvearrowright \partial T$, it is enough to find infinitely many $\xi$ with distinct $d(\xi,C)$.

Since $C$ is not open, there exists $\eta$ in $C$ that is not an interior point.
That is, there exists $(\xi_i)_i$ not in $C$ that converge to $\eta$ in $C$.
Then the sequence $d(\xi_i,C)$ converges to $d(\eta,C)=0$, while $d(\xi_i,C)\neq 0$ since $C$ is closed.
Hence $d(\xi_i,C)$ takes infinitely many values.

Finally, the infinity of orbit-closures on $\partial T$ is equivalent to the fact that the number of orbits of $\SStab_G(C)$ on $\level{n}$ is not bounded.
If $G$ acts almost level-transitively on $T$, Fact~\ref{Fact:NumberOrbitsIndex} implies that $\SStab_G(C)$ has infinite index.
\end{proof}
We are now able to prove that generalized parabolic subgroups in branch groups are pairwise distinct.
\begin{lemma}\label{Lemma:GenParDistinct}
Let $G\leq \Aut(T)$ be an almost branch group.
Let $C_1\neq C_2$ be two  closed nowhere dense subsets of $\partial T$.
Then $\SStab_G(C_1)\neq\SStab_G(C_2)$.
\end{lemma}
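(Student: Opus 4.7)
Plan for the proof of Lemma \ref{Lemma:GenParDistinct}.

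Since $C_1\neq C_2$, one of them, say $C_1$, contains a point $\xi$ not in the other. The natural first move is to localize: because $C_2$ is closed and $\xi\notin C_2$, there is a vertex $v$ on the ray $\xi$ with $\partial T_v\cap C_2=\emptyset$. Then every element of $\Rist_G(v)$ fixes $\partial T\setminus \partial T_v$ pointwise, and in particular fixes $C_2$ pointwise, so $\Rist_G(v)\leq\SStab_G(C_2)$. It therefore suffices to exhibit an element of $\Rist_G(v)$ that does \emph{not} setwise fix $C_1$.

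The idea is to trap $\xi$ inside a subtree on which a subgroup of $\Rist_G(v)$ acts level transitively, and then invoke the fact that $C_1$ is nowhere dense. More precisely, apply Lemma~\ref{Lemma:RistLevelTransitive} to the vertex $v$ to obtain a level $m\geq\tn{level}(v)$ such that $\Rist_G(v)$ acts level transitively on $T_{w'}$ for every descendant $w'$ of $v$ at level $m$. Take $w'$ to be the descendant of $v$ lying on $\xi$ at level $m$ and set
\[
  H\defegal\Rist_G(v)\cap\Stab_G(w')\leq\SStab_G(C_2).
\]
Then $H$ stabilizes $T_{w'}$ setwise and acts level transitively on it, so every $H$\hyphen orbit in $\partial T_{w'}$ is dense.

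Now assume, for contradiction, that $H\leq\SStab_G(C_1)$. Then $H\cdot\xi\subseteq C_1\cap\partial T_{w'}$, and as $C_1$ is closed we get $\partial T_{w'}=\overline{H\cdot\xi}\subseteq C_1$. But $\partial T_{w'}$ is a non-empty open cylinder of $\partial T$, contradicting the hypothesis that $C_1$ is nowhere dense. Hence $H\not\leq\SStab_G(C_1)$, which supplies an element $g\in\SStab_G(C_2)\setminus\SStab_G(C_1)$ and proves the inequality of stabilizers.

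The main issue to keep honest is the interpretation of ``$\Rist_G(v)$ acts level transitively on $T_{w'}$'': we really need a subgroup of $\Rist_G(v)$ which both preserves $T_{w'}$ setwise and acts level transitively on it, so that an orbit-closure argument inside $\partial T_{w'}$ makes sense. Taking $H=\Rist_G(v)\cap\Stab_G(w')$ gives this; this is the only delicate point. Everything else is formal manipulation with cylinders in $\partial T$ and the nowhere-density of $C_1$.
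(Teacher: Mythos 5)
Your proposal is correct and follows essentially the same route as the paper: pick $\xi\in C_1\setminus C_2$, use closedness of $C_2$ to find a vertex $v$ on $\xi$ with $\Rist_G(v)\leq\SStab_G(C_2)$, and then use Lemma \ref{Lemma:RistLevelTransitive} together with the nowhere density of $C_1$ to show $\Rist_G(v)\not\leq\SStab_G(C_1)$. Your extra step of passing to $H=\Rist_G(v)\cap\Stab_G(w')$ is a harmless (and valid) refinement — any element of $\Rist_G(v)$ sending one descendant of $w'$ to another automatically fixes $w'$, so level transitivity on $T_{w'}$ descends to $H$ — but the paper simply observes directly that the $\Rist_G(v)$-orbit of $\xi$ is already dense in the cylinder $\partial T_{w'}$.
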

\begin{proof}
Let $\xi=(v_i)_{i\geq 1}$ be in $C_1$ but not in $C_2$.
Since $C_2$ is closed, there exists $i$ such that the intersection of $C_2$ with $\partial T_{v_i}$ is empty.
This implies that $\SStab_G(C_2)$ contains $\Rist_G(v_i)$.
On the other hand, since $G$ is almost level-transitive and rigid, then $\SStab_G(C_1)$ does not contain $\Rist_G(v_i)$ as otherwise $C$ would contain a small neighbourhood of $\xi$ by Lemma~\ref{Lemma:RistLevelTransitive}.
\end{proof}
Observe that while $G$ weakly branch is sufficient for showing that parabolic subgroups are distinct (Proposition~\ref{Prop:BartholdiGrigorchuk}), we suppose in Lemma~\ref{Lemma:GenParDistinct} that $G$ is rigid.
However, a similar result holds for weakly branch groups if we restrict a little bit which kind of closed subsets we are allowed to look at.
Recall that $\Acc(C)$ is the set of accumulation points of $C$.
In particular, $\bigr(C_1\setminus\Acc(C_1)\bigl)\neq\bigr(C_2\setminus\Acc(C_2)\bigl)$ if $C_1\neq C_2$ are both finite.
\begin{lemma}\label{Lemma:GenParDistinct2}
Let $G\leq \Aut(T)$ be a weakly branch group.
Let $C_1\neq C_2$ be two closed subsets of $\partial T$ such that $C_1\setminus\Acc(C_1)\neq C_2\setminus\Acc(C_2)$.
Then $\SStab_G(C_1)\neq\SStab_G(C_2)$.
\end{lemma}
\begin{proof}
We have $C_1\setminus\Acc(C_1)\neq C_2\setminus\Acc(C_2)$ if and only if the set $\bigr(C_1\setminus\Acc(C_1)\bigl)\Delta\bigr(C_2\setminus\Acc(C_2)\bigl)=(C_1\Delta C_2)\setminus\bigl(\Acc(C_1)\cup\Acc(C_2)\bigr)$ is non-empty.
Let $\xi$ be in $(C_1\Delta C_2)\setminus\bigl(\Acc(C_1)\cup\Acc(C_2)\bigr)$.
For example, $\xi$ in $C_1$ but not in $C_2$.
Since $C_2$ is closed there exists $v$ a vertex of $\xi$ such that $\SStab_G(C_2)$ contains $\Rist_G(v)$.
On the other hand, $\SStab_G(C_1)$ does not contain $\Rist_G(v)$ as otherwise $C_1$ would contain $\Rist_G(v).\xi$.
But, by (the proof of)~\cite[Lemma 2.3]{MR2893544}, $\xi$ is an accumulation point of $\Rist_G(v).\xi$.
\end{proof}
The following examples show that the hypothesis of Lemmas~\ref{Lemma:GenParDistinct} and~\ref{Lemma:GenParDistinct2} are necessary.
Let $T$ be the $2$-regular rooted tree and $G\leq\Aut(T)$ be any subgroup --- for example one can take $G=\Grig$ which is both almost branch and weakly branch.
Let $C_1=\partial T_0$ be the set of all rays starting with $0$ and $C_2=\partial T_1$. Then $C_1\neq C_2$ is closed but non-open and $C_1\setminus\Acc(C_1)= C_2\setminus\Acc(C_2)=\emptyset$.
On the other hand, an element of $\Aut(T)$ fixes the vertex $0$ if and only if it fixes the vertex $1$. Hence we have $\SStab_G(C_1)=\SStab_G(C_2)$.

Let $T$ be a locally finite rooted tree. Then $\partial T$ and all of its closed subsets are compact. We hence obtain.
\begin{lemma}\label{Lemma:OpenFiniteIndex}
Let $T$ be locally finite, $G\leq \Aut(T)$ be a rigid group and $C$ be a closed subset of  $\partial T$.
If $C$ is open, then $\SStab_G(C)$ has finite index in $G$.
\end{lemma}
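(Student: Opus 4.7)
The plan is to exploit the fact that when $T$ is locally finite, $\partial T$ is compact and the cylinder sets $\partial T_v$ form a clopen basis of the topology. Any clopen subset $C$ of $\partial T$ is then both open (so covered by some $\partial T_v$'s) and closed in a compact space (so compact), hence a finite union $C=\bigcup_{i=1}^{k}\partial T_{v_i}$. By replacing each $v_i$ by the set of its descendants at a sufficiently deep common level, we may assume all the $v_i$ lie at a single level $n$, so that $C=\bigsqcup_{v\in S}\partial T_v$ for some $S\subseteq\level{n}$.

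The key observation is then that any element of $\Stab_G(\level{n})$ fixes every vertex of level $n$ and therefore preserves each cylinder $\partial T_v$ setwise; consequently it preserves the union $C$. This gives $\Stab_G(\level{n})\leq\SStab_G(C)$. Since $T$ is locally finite, $\level{n}$ is a finite set and $G/\Stab_G(\level{n})$ embeds in $\Sym(\level{n})$, so $\Stab_G(\level{n})$ already has finite index in $G$, which forces $\SStab_G(C)$ to have finite index as well. (If one wishes to use the rigidity hypothesis explicitly, one can instead note that $\Rist_G(\level{n})\leq\Stab_G(\level{n})\leq\SStab_G(C)$, and $\Rist_G(\level{n})$ has finite index in $G$ by definition of rigidity.)

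There is no real obstacle here: the only point that deserves a sentence of care is the passage from a finite cover by cylinders at possibly different levels to a partition by cylinders at a single common level, which uses only the fact that $\partial T_v=\bigsqcup_{w}\partial T_w$ when $w$ ranges over the children of $v$, iterated finitely many times. Once all cylinders are at the same level $n$, the inclusion $\Stab_G(\level{n})\leq\SStab_G(C)$ is immediate and finishes the proof.
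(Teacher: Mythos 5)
Your proof is correct, and it takes a slightly but genuinely different route from the paper's. Both arguments start identically, by writing the compact open set $C$ as a finite union of cylinders $\partial T_{v_i}$; the difference lies in which canonical finite-index subgroup is then exhibited inside $\SStab_G(C)$. The paper pushes the $v_i$ down to their maximal level $m$ and checks that for every $v\in\level{m}$ the rigid stabilizer $\Rist_G(v)$ preserves $C$ (either $\partial T_v$ misses $C$ or is entirely contained in it), so that $\Rist_G(\level{m})\leq\SStab_G(C)$, and then invokes rigidity to conclude. You instead observe that $\Stab_G(\level{n})\leq\SStab_G(C)$ once $C$ is a union of level-$n$ cylinders, and that this level stabilizer has finite index simply because $\level{n}$ is finite. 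Your version is more elementary and in fact shows that the rigidity hypothesis in the statement is superfluous: the conclusion holds for any $G\leq\Aut(T)$ with $T$ locally finite. The paper's choice of $\Rist_G(\level{m})$ gives the marginally stronger (and occasionally useful elsewhere in the paper) information that $\SStab_G(C)$ contains a full rigid level stabilizer, but for the statement as written your argument is complete and, if anything, cleaner.
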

\begin{proof}
Since $C$ is both compact and open, it is a finite union of cylinders: $C=\bigcup_{i=1}^d\partial T_{v_i}$.

Let $m$ be the maximal level of the $v_i$'s and $v$ any vertex of level $m$.
Either $v$ is not in $T_C$ and hence $\Rist_G(v)$ is contained in $\SStab_G(C)$, or $v$ is in $T_C$ and hence below some $v_i$.
But in this case $\Rist_G(v_i)$ and hence $\Rist_G(v)$ is also contained in $\SStab_G(C)$.
Therefore, $\SStab_G(C)$ contains the rigid stabilizer of the $m$\textsuperscript{th} level and is of finite index.
\end{proof}
Using Lemmas~\ref{Lemma:InfOrbitsClosure} and~\ref{Lemma:OpenFiniteIndex} we have.
\begin{proposition}\label{Proposition:WMStrongRigidSStab}
Let $G\leq \Aut(T)$ be almost branch and $C$ be a closed subset of $\partial T$.
Then $\SStab_G(C)$ is generalized parabolic if and only if it is weakly maximal.
\end{proposition}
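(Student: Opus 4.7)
The plan is to prove both directions of the equivalence, setting $W \defegal \SStab_G(C)$.

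For the direction generalized parabolic $\Rightarrow$ weakly maximal, I will combine Lemma \ref{Lemma:InfOrbitsClosure} (giving infinite index of $W$ from non-openness of $C$) with a rigidity argument for maximality. Given any $H$ with $W \lneq H \leq G$, pick $h \in H \setminus W$; since $h(C) \neq C$, after replacing $h$ by $h^{-1}$ if needed there is $\xi_0 \in C$ with $h(\xi_0) \notin C$, and closedness of $C$ provides a vertex $v$ on the ray $h(\xi_0)$ with $\partial T_v \cap C = \emptyset$, giving $\Rist_G(v) \leq W \leq H$. Conjugating yields $\Rist_G(h^{-1}v) \leq H$, with $h^{-1}v$ on $\xi_0 \in C$. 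The minimality of the action of $W$ on $C$, equivalent to level-transitivity on $T_C$, allows me to translate $\Rist_G(h^{-1}v)$ via elements of $W \leq H$ to every level-$n$ vertex of $T_C$, where $n$ is the level of $v$; together with the rigid stabilizers of level-$n$ vertices outside $T_C$ (which are already contained in $W$), this produces $\Rist_G(\level{n}) \leq H$, so $H$ has finite index by rigidity.

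For the direction weakly maximal $\Rightarrow$ generalized parabolic, the contrapositive of Lemma \ref{Lemma:OpenFiniteIndex} immediately gives that $C$ is non-open, since $W$ has infinite index. To realize $W$ as generalized parabolic I will exhibit a witnessing closed non-open $X$ with $\SStab_G(X) = W$ and minimal action. Since $T$ is locally finite (because $G$ is almost level transitive and rigid), $\partial T$ is compact, and Zorn's lemma yields a minimal non-empty closed $W$-invariant subset inside any non-empty closed $W$-invariant subset of $C$. If some such minimal $X \subseteq C$ is non-open, then $\SStab_G(X) \supseteq W$ acts minimally on $X$, and weak maximality of $W$ combined with Lemma \ref{Lemma:InfOrbitsClosure} applied to $X$ (which forces $\SStab_G(X)$ to have infinite index) gives $\SStab_G(X) = W$, so $X$ witnesses generalized parabolicity. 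To rule out the alternative---every minimal $W$-invariant closed subset of $C$ being clopen---I use the standard fact that distinct minimal invariant sets of a group action are disjoint: their union $U$ would be an open $W$-invariant subset of $C$. If $U = C$, compactness of $C$ forces it to be a finite disjoint union of clopens, hence clopen, contradicting $C$ non-open. If $U \subsetneq C$, then $R \defegal C \setminus U$ is a non-empty closed $W$-invariant set, and any minimal $W$-invariant closed subset of $R$ is simultaneously a minimal in $C$ and contained in $R$, hence disjoint from $U$, contradicting the assumption that every minimal in $C$ lies in $U$.

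The main subtlety lies in the reverse direction, specifically in showing that some minimal $W$-invariant closed subset of $C$ must be non-open. This is resolved by combining compactness of $\partial T$ (ensuring finite covers of $C$ by clopen sets) with the disjointness of distinct minimal invariant sets, as sketched above.
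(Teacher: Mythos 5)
Your proof is correct. The forward direction (generalized parabolic $\Rightarrow$ weakly maximal) is essentially the paper's own argument: infinite index from Lemma \ref{Lemma:InfOrbitsClosure}, then conjugating a rigid stabilizer $\Rist_G(v)\leq\SStab_G(C)$ (for $v\notin T_C$) into $T_C$ by the new element and spreading it over a whole level of $T_C$ by minimality, so that any strictly larger subgroup contains $\Rist_G(\level{n})$. The reverse direction is where you genuinely diverge. The paper simply takes $\xi$ a boundary point of $C$ and sets $C'=\overline{\SStab_G(C).\xi}$; the point it exploits (stated earlier in the section) is that for groups of tree automorphisms a \emph{dense orbit} in $\partial T_{C'}$ already forces level transitivity on $T_{C'}$, hence minimality of the action on $C'$ --- so an orbit closure is automatically a minimal set, and non-openness of $C'$ is immediate since $\xi$ is not interior to $C$. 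You instead extract, via Zorn's lemma and compactness of $\partial T$, a minimal closed $\SStab_G(C)$\hyphen invariant subset of $C$, and then rule out the possibility that all such minimal sets are clopen by the disjointness-of-minimal-sets and finite-subcover argument; the remaining steps (Lemma \ref{Lemma:InfOrbitsClosure} forcing infinite index, weak maximality forcing equality) match the paper. Your route is sound and more robust in that it does not use the tree-specific ``dense orbit $\Rightarrow$ minimal'' fact, but it is longer and invokes the axiom of choice, whereas the paper's one-line choice of a topological boundary point of $C$ gets the non-open minimal subsystem for free.
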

\begin{proof}
If $\SStab_G(C)$ is weakly maximal, it is of infinite index and hence $C$ is not open.
Let $\xi$ be any boundary point of $C$ and let $C'$ be the closure of the $\SStab_G(C)$-orbit of $\xi$.
Then $C'\subseteq C$ is closed but not open, and $\SStab_G(C)\leq\SStab_G(C')$ which is of infinite index.
Therefore they are equal and since $\SStab_G(C')$ acts minimally on $C'$ it is a generalized parabolic subgroup.

On the other hand, let $C$ be a non-open closed subset of $\partial T$ such that the action $\SStab_G(C)\curvearrowright C$ is minimal.
Since $C$ is not open, $\SStab_G(C)$ is an infinite index subgroup of $G$.
Moreover, it contains $\Rist_G(v)$ for all $v$ outside $T_C$.
Now, let us take $g$ not in $\SStab_G(C)$. 
There exists $v\notin T_C$ such that $g.v$ is in $T_C$.
In particular, $\gen{g,\SStab_G(C)}$ contains $\Rist_G(g.v)$ and by transitivity also $\Rist_G(w)$ for all $w$ in $T_C$ of the same level as~$v$.
That is, $\gen{g,\SStab_G(C)}$ contains a rigid stabilizer of a level and is of finite index.
\end{proof}
Together with Lemmas~\ref{Lemma:GenParInfinite} and~\ref{Lemma:GenParDistinct} we obtain the following corollary, which is a strong version of Theorem~\ref{Thm:GeneralizedParabolic}.
\begin{corollary}
Let $G$ be a subgroup of $\Aut(T)$.

If $G$ is micro-supported, then all generalized parabolic subgroups are infinite.

If $G$ is almost branch, then all generalized parabolic subgroups are weakly maximal and pairwise distinct.
\end{corollary}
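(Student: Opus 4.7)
The plan is to assemble the three immediately preceding results (Lemmas \ref{Lemma:GenParInfinite}, \ref{Lemma:GenParDistinct} and Proposition \ref{Proposition:WMStrongRigidSStab}); no new argument is really required.

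For the first claim, I would simply unfold the definition: a generalized parabolic subgroup is by definition of the form $\SStab_G(C)$ for some closed (non-open) subset $C\subseteq\partial T$. If $G$ is weakly rigid, Lemma \ref{Lemma:GenParInfinite} applies to \emph{every} closed $C$ and yields that $\SStab_G(C)$ is infinite, hence in particular every generalized parabolic subgroup is infinite. This step is completely formal.

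For the second claim, suppose $G$ is almost level transitive and rigid. Being almost level transitive and rigid entails, in particular, being weakly rigid, so the infiniteness from the first part still applies. Weak maximality of each generalized parabolic subgroup is exactly the content of Proposition \ref{Proposition:WMStrongRigidSStab} (the ``if'' direction of the stated equivalence). For pairwise distinctness, I would use the observation already recorded after the definition of generalized parabolic subgroup, namely that if $\SStab_G(C)$ is generalized parabolic then $C$ is closed and nowhere dense. Thus, if $\SStab_G(C_1)$ and $\SStab_G(C_2)$ are two generalized parabolic subgroups with $C_1\neq C_2$, both $C_i$ are closed nowhere dense, and Lemma \ref{Lemma:GenParDistinct} directly gives $\SStab_G(C_1)\neq\SStab_G(C_2)$.

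There is essentially no obstacle: the corollary is an explicit repackaging of the previous three results, and the only nontrivial point to check is that the hypotheses of each cited result are satisfied, which is immediate (weak rigidity for Lemma \ref{Lemma:GenParInfinite}; almost level transitivity plus rigidity for both Lemma \ref{Lemma:GenParDistinct} and Proposition \ref{Proposition:WMStrongRigidSStab}; and the structural fact that $C$ is closed and nowhere dense whenever $\SStab_G(C)$ is generalized parabolic). Consequently the proof will be short, amounting to three citations followed by a one-line conclusion.
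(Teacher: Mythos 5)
Your proposal is correct and matches the paper exactly: the corollary is stated there with no separate proof, being obtained directly by combining Lemma \ref{Lemma:GenParInfinite}, Lemma \ref{Lemma:GenParDistinct} and Proposition \ref{Proposition:WMStrongRigidSStab}, just as you do. The only extraneous bit is your remark that almost level transitivity plus rigidity implies weak rigidity, which is not needed since the second claim does not assert infiniteness.
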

We now have all the ingredients to prove generalizations of Corollaries~\ref{Thm:NumberGeneralizedParabolic} and~\ref{Cor:NumberGeneralizedParabolic}, as well as the first part of Corollary~\ref{Cor:NumberGeneralizedParabolic2}.
\begin{corollary}\label{Prop:PropCF}
Let $G\leq\Aut(T)$ be an almost branch group and $F\leq G$ be any subgroup.
Let $\mathcal C_F$ be the set of all non-open orbit-closures of the action $F\curvearrowright\partial T$.
Then the function $\SStab_G(C)\colon\mathcal C_F\to\Sub(G)$ is injective and has values in generalized parabolic subgroups, which are all weakly maximal.
\end{corollary}
\begin{proof}
On one hand, for any $C$ in $\mathcal C_F$, $F$ is a subgroup of $\SStab_G(C)$ and acts minimally on $C$.
Therefore, $\SStab_G(C)$ is a generalized parabolic subgroup that is weakly maximal.
On the other hand, by Lemma~\ref{Lemma:GenParDistinct}, if $C_1\neq C_2$, the subgroups $\SStab_G(C_1)$ and $\SStab_G(C_2)$ are distinct.
\end{proof}
\begin{corollary}\label{Cor:ExtendTreeEquivalenceClasses}
Let $G\leq\Aut(T)$ be almost branch and $F\leq G$ be any subgroup.
Let $v$ be a vertex of $T$.
\begin{enumerate}
\item If all orbit-closures of $\Stab_F(v)\curvearrowright \partial T_v$ are non-open, then $F$ is contained in uncountably many generalized parabolic subgroups that are all weakly maximal,\label{Cor:ExtendTreeEquivalenceClasses1}
\item If all orbit-closures of $\Stab_F(v)\curvearrowright \partial T_v$ are at most countable, then $F$ is contained in a continuum of generalized parabolic subgroup.
If moreover $\Stab_F(v)$ is non-trivial, then $F$ is contained in a continuum of generalized parabolic subgroups that are not parabolic.
\end{enumerate}
\end{corollary}
\begin{proof}
For $F$ a subgroup of $G$ and $v$ a vertex of $T$, let $\mathcal C_{F,v}$ be the set of non-open orbit-closures of $\Stab_F(v)\curvearrowright \partial T_v$.
In particular, $\mathcal C_{F,\emptyset}=\mathcal C_{F}$ with the notation of Corollary~\ref{Prop:PropCF}.
We claim that for any $F$ and $v$, the map $\varphi\colon\mathcal C_{F,v}\to\Sub(G)$ sending a $\Stab_F(v)$ orbit-closure $C$ onto $\SStab_G(F.C)$ is injective and has values in generalized parabolic subgroups containing $F$.
Indeed, $\varphi$ is the composition of $\psi\colon \mathcal C_{F,v}\to \mathcal C_{F}$ sending $C$ onto $F.C$ and the map $\SStab_G(\cdot)\colon \mathcal C_{F}\to\Sub(G)$.
By Corollary~\ref{Prop:PropCF}, $\SStab_G(\cdot)$ is injective and has values in generalized parabolic subgroups.
It remains to show that $\psi$ is well-defined (i.e., that $F.C$ is closed, non-open and $F$ acts minimally on it) and injective.
But this follows directly from the topology of $\partial T$.

We now prove the first assertion.
Suppose that all the $\Stab_F(v)$ orbit-closures are not open, and hence have empty-interior.
In particular, for any ray $\xi$ passing through $v$, the subgroup $\Stab_G(\overline{F.\xi})$ is generalized parabolic.
Since $\partial T_v$ is a Cantor space, it is also a Baire space.
That is, the union of countably many closed sets with empty interior has empty interior.
In particular, $\partial T_v$ cannot be covered by a countable union of $\Stab_F(v)$-orbit-closures.

In order to prove te second assertion, observe that if all $\Stab_F(v)$ orbit-closures are countable, they are never open and there must be a continuum of them to cover $\partial T_v$.
Suppose moreover that $\Stab_F(v)$ is not trivial.
Then there is a descendant $w$ of $v$ that is moved by $\Stab_F(v)$.
For any ray $\xi$ going through~$w$, $\overline{F.\xi}$ is at most countable and contains at least $2$ elements.
Therefore, all the $\SStab_G(\overline{F.\xi})$ for $\xi$ passing through $w$ are generalized parabolic subgroups that are not parabolic and there is a continuum of them by the last point.
\end{proof}
By taking $F=\gen{g}$ with $g$ a non-trivial element of finite order, we have
\begin{corollary}\label{Cor:ContinuumGenPar}
Let $G$ be an almost branch group. If $G$ is not torsion-free, it contains a continuum of generalized parabolic subgroups that are not parabolic.
\end{corollary}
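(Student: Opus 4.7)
The plan is a one-step application of Corollary \ref{Cor:ExtendTreeEquivalenceClasses}(2). Since $G$ is by hypothesis not torsion-free, I can pick a non-trivial element $g \in G$ of finite order and set $F = \gen{g}$. For the vertex, I take $v$ to be the root of $T$; then $\Stab_F(v) = F$ because every element of $\Aut(T)$ fixes the root.

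To invoke the corollary I have to verify two things: that every orbit-closure of $\Stab_F(v) \acts \partial T$ is at most countable, and that $\Stab_F(v)$ is non-trivial. The first is immediate: $F$ has finite order $|g|$, so every $F$\hyphen orbit on $\partial T$ is a finite set, which is automatically closed and (at most) countable, hence equal to its own closure. The second is immediate from $g \neq 1$. Corollary \ref{Cor:ExtendTreeEquivalenceClasses}(2) then yields a continuum of generalized parabolic subgroups of $G$ containing $F$ and distinct from every parabolic subgroup, which is exactly what is claimed.

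There is essentially no obstacle to navigate: the genuine content sits upstream, in Corollary \ref{Cor:ExtendTreeEquivalenceClasses}, which uses that $\partial T$ is a Cantor space — hence a Baire space — so that a countable family of closed nowhere-dense subsets cannot cover the cone $\partial T_v$. The only point requiring a moment's thought is making $\Stab_F(v)$ non-trivial, and taking $v$ to be the root is the canonical way to ensure this for any finite-order $g$, regardless of which vertices $g$ happens to fix deeper in the tree.
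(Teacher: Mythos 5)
Your proof is correct and is essentially the paper's own argument: the paper likewise deduces the corollary by applying Corollary \ref{Cor:ExtendTreeEquivalenceClasses}(2) to $F=\gen{g}$ for a non-trivial torsion element $g$, with the (implicit) choice of $v$ the root so that $\Stab_F(v)=F$ is finite and non-trivial. Your verification that finite orbits are closed and countable, and that non-triviality of $\Stab_F(v)$ is what rules out parabolicity, matches the intended reasoning exactly.
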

Observe that unlike the result of~\cite{MR3478865}, Corollary~\ref{Cor:ContinuumGenPar} does not require the existence of a finite subgroup whith a specific action on the tree, but requires only the existence of a non-trivial element of finite order --- a fact that is independent of the chosen branch action.
Moreover, by Corollary~\ref{Prop:PropCF}, the conclusion of Corollary~\ref{Cor:ContinuumGenPar} holds as soon as $G$ has a subgroup $F$ such that there is a continuum of orbit-closures for the action $F\curvearrowright\partial T$ that are neither open nor reduced to a point.
In fact, if the cardinality of $G$ is strictly less than~$2^{\aleph_0}$, this is also a necessary condition.
In particular, if $G$ is a countable branch group that does not have a continuum of generalized parabolic subgroups that are not parabolic, then for every $1\neq g$ in $G$, almost all (that is except for a countable number) orbit-closures of $\mathbf Z\cong \gen{g}\curvearrowright\partial T$ are either open or reduced to a point.
A fact that seems unlikely.
This led us to ask the following
\begin{question}
Let $G\leq \Aut(T)$ be a countable branch group.
Does $G$ have a continuum of generalized parabolic subgroups that are not parabolic?
\end{question}
Finally, we provide some examples of generalized parabolic subgroups.
By definition, parabolic subgroups, that is stabilizers of rays, are generalized parabolic.
If $\xi$ is a ray of $T$ and $g\in G$ is an element of finite order, then $C=\overline{\gen{g}.\xi}$ is finite and hence $\SStab_G(C)$ is generalized parabolic.
More generally, if $g$ is such that $\Stab_{\gen{g}}(v)$ does not act level-transitively on $T_v$ for every $v$, then $C=\overline{\gen{g}.\xi}$ is nowhere dense (and closed) and thus $\SStab_G(C)$ is generalized parabolic.
In general, more complicated situations may happen.
Let $T$ be the $d$-regular rooted tree. Recall that a subgroup $G\leq\Aut(T)$ is a \defi{regular branch group over $K$} if $G$ is level-transitive, self-similar and $K$ is a finite index subgroup of $G$ such that $K^d$ is contained  in $\Stab_K(\level{1})$ as a finite index subgroup.
\begin{example}
Let $G$ be a regular branch group over $K$.
If $G$ is torsion, then there exists a continuum of generalized parabolic subgroups of $G$ of the form $\SStab_G(C)$ where $C$ is a nowhere dense Cantor subset of $\partial T$.

To construct such a subgroup start with some $k\in K\setminus \Stab_G(0^\infty)$.
Such a $k$ always exists since $K$ has finite index while $\Stab_G(0^\infty)$ has infinite index in~$G$.
Since $G$ is regular branch over $K$, for every vertex $v$ of $T$, there exists an element $k_{@v}$ of $\Rist_G(v)$ that acts on $T_v$ as $k$ on $T$.
Let $(m_n)_{n\in\N}$ be a strictly increasing sequence of integers and $C\coloneqq\overline{\presentation{k_{@0^{m_n}}}{n\in\N}.0^\infty}$.
The sequence of rays $k_{@0^{m_n}}.0^\infty$ is in $C\setminus\{0^\infty\}$ and converges to $0^\infty$, hence $C$ has no isolated points and is a Cantor space.
It remains to show that for some sequence $(m_n)$ the subset $C$ is not open.
Since $k$ has finite order, the orbit $k.0^\infty$ is finite.
Therefore, there exists an integer $i$ and a vertex $v_i=x_0\dots x_i$ such that for every $n\in \N$ the vertex $0^{(n-1)i}v_i$ is not in the $\gen{k}$ orbit of $0^{ni}$.
By taking $(n\cdot i)_{n\in\N}$ for the sequence $(m_n)_{n\in\N}$, we have that none of the vertices $0^{n\cdot (i-1)}x_0\dots x_i$ are on a ray of the $\presentation{k_{@0^{m_n}}}{n\in\N}$ orbit of $0^\infty$.
This proves that $C$ is not open.
If instead of taking the sequence $(n\cdot i)$ we take a subsequence of it, we still end with a $C$ that is not open.
Different subsequences give raise to distinct subsets of $\partial T$ and there is a continuum of such sequences.
\end{example}
Finally, we study the rank of generalized parabolic subgroups.
The aim is to answer the following question.
\begin{question}
Which condition on $G$ ensures that generalized parabolic subgroups are not finitely generated.
\end{question}
A first answer comes from Francoeur's thesis~\cite{FrancoeurThese} where he proved that in many weakly branch self-similar groups, parabolic subgroups are never finitely generated.
\begin{theorem}[\cite{FrancoeurThese}]
Let $T$ be a locally finite regular rooted tree and let $G\leq\Aut(T)$ be a weakly branch group.
If there exists $N\in \mathbf N$ such that for any $v\in T$, $\Rist_{\pi_v(G)}(\level{1})$ acts non-trivially on level $N$, then parabolic subgroups are not finitely generated.
\end{theorem}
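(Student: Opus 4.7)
The plan is to argue by contradiction: assume $P\defegal\Stab_G(\xi)$ is generated by $k$ elements $g_1,\ldots,g_k$, where $\xi=v_0v_1v_2\dots$, and derive a rank obstruction inside~$P$. A preliminary observation is the section formula $\pi_{v_n}(P)=\Stab_{\pi_{v_n}(G)}(\xi_n)$, with $\xi_n=v_{n+1}v_{n+2}\dots$ the tail ray: this follows because $P\leq\Stab_G(v_n)$ and any $h\in\pi_{v_n}(G)$ fixing $\xi_n$ admits a lift $g\in\Stab_G(v_n)$ that automatically fixes $\xi$. In particular each section $P_n\defegal\pi_{v_n}(P)$ is $k$-generated, by the images $\pi_{v_n}(g_i)$.

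The structural input is that for every off-ray vertex~$w$, the rigid stabilizer $\Rist_G(w)$ lies in $P$, since it acts trivially outside $T_w$ and thus fixes~$\xi$; correspondingly $\Rist_{\pi_{v_n}(G)}(w)\leq P_n$ for every child $w\neq v_{n+1}$ of~$v_n$. Rigid stabilizers of distinct off-ray subtrees commute, so they generate a direct product $R\defegal\prod_{w\text{ off-ray}}\Rist_G(w)$ sitting inside $P$. The main step is to use the uniform hypothesis to show that infinitely many factors of $R$ contribute non-trivially to the abelianization $P^{\tn{ab}}=P/[P,P]$. Applied to any off-ray vertex~$w$, the hypothesis supplies a non-trivial action of $\Rist_{\pi_w(G)}(\level{1})$ on level~$N$ of $T_w$; combining this with the natural embedding $\Rist_{\pi_{v_n}(G)}(w)\hookrightarrow\pi_w(G)$ yields elements of $P_n$, and hence of~$P$, whose image on a bounded-depth level of $T_w$ is non-trivial.

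To conclude, note that $P$ permutes off-ray vertices level by level (it fixes every $v_n$ but can move its other children), so $R$ is normal in $P$ and its image in $P^{\tn{ab}}$ decomposes as a direct sum of coinvariant summands indexed by the $P$-orbits of off-ray vertices. The previous step produces a non-trivial summand for every orbit of sufficiently small depth, and there are infinitely many such orbits (at least one new orbit per level), so $P^{\tn{ab}}$ would have infinite rank, contradicting the $k$-generation of~$P$. The main obstacle I anticipate is the transfer step: the hypothesis controls $\Rist_{\pi_w(G)}(\level{1})$, whereas $P$ visibly contains only $\Rist_G(w)$, and the section relating these two subgroups of $\pi_w(G)$ can lose information in the absence of self-similarity---a hypothesis present in the intended examples from \cite{FrancoeurThese} though not in the bare statement. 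Once this transfer is made clean (for instance by assuming $\pi_w(\Rist_G(w))$ has finite index in $\pi_w(G)$, as happens in regular branch groups), the infinite-rank count in $P^{\tn{ab}}$ goes through without further difficulty.
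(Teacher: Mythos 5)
The paper does not actually prove this statement; it quotes it from Francoeur's thesis, so there is no in-paper proof to compare against. Judged on its own terms, your proposal has a fatal structural flaw that goes beyond the transfer gap you flag yourself: the entire strategy is to contradict finite generation by showing $P^{\mathrm{ab}}$ has infinite rank, but under the hypotheses of the theorem the parabolic subgroup $P$ can be \emph{perfect}. Take $G$ to be the finitary infinitely iterated permutational wreath product of $A_6$ acting on $6$ points. This is a branch (hence weakly branch) self-replicating group on the $6$-regular rooted tree, and the hypothesis holds with $N=2$, since $\pi_v(G)=G$ for every $v$ and $\Rist_G(\level{1})\cong G^6$ acts non-trivially on the second level. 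Writing $\xi=v_0v_1v_2\dots$, $G_n$ for the $n$-fold wreath power and $S_n=\Stab_{G_n}(v_n)$, one gets $S_n\cong A_5\ltimes\bigl(S_{n-1}\times G_{n-1}^{5}\bigr)$, where the point stabiliser $A_5\leq A_6$ acts trivially on the on-ray coordinate and transitively on the five off-ray coordinates. Since $A_5$ and every $G_{n-1}$ are perfect, induction gives $S_n'=S_n$, so $P=\Stab_G(\xi)=\bigcup_n S_n$ is a perfect, infinite, locally finite (hence not finitely generated) group. No argument bounding the rank of $P^{\mathrm{ab}}$ can therefore prove the theorem; in this example your subgroup $R$ is a restricted product of perfect groups and contributes nothing to $P^{\mathrm{ab}}$. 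Even in cases where $P^{\mathrm{ab}}$ happens to be large, your coinvariants decomposition only gives an \emph{upper} bound for the image of $R$ in $P^{\mathrm{ab}}$: you never exhibit homomorphisms to abelian groups witnessing non-triviality, and $R\cap[P,P]$ can strictly exceed $[R,R][R,P]$ because commutators of elements outside $R$ may land in $R$.

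Two further remarks. The transfer gap you identify is real, but your proposed repair does not close it: even if $\pi_w\bigl(\Rist_G(w)\bigr)$ has finite index in $\pi_w(G)$, a finite-index subgroup of a group acting non-trivially on level $N$ may perfectly well act trivially on level $N$ (the level-$N$ stabiliser itself has finite index), so the uniform bound $N$ is lost. More tellingly, $N$ plays no role at all in your final count, which signals that the argument is not engaging with the actual content of the hypothesis: that hypothesis is a uniform finiteness statement and must be played off against finite generation through quotients of bounded complexity, e.g.\ the actions of $P$ on the levels $N$ of the subtrees $T_{v_n}$, or the filtration of the neighbourhood stabiliser $P^0=\bigcup_n P^0_n$ discussed in Section 3 of the paper. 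A viable route should also exploit the identity $\pi_{v_n}(P)=\Stab_{\pi_{v_n}(G)}(\xi_n)$ to place the \emph{full} rigid stabilisers $\Rist_{\pi_{v_n}(G)}(u)$ of the off-ray children $u$ of $v_n$ inside the sections of $P$; this is a substantially larger supply of elements than the sections of the global subgroups $\Rist_G(w)$ on which your $R$ is built, and it is at that level of the section groups that the hypothesis can actually be applied.
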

For groups with the (weak) subgroup induction property, see Definition~\ref{Definition:SubgroupInduction}, we have a stronger result, which we will prove later in Lemma~\ref{Lemma:GenParabNotFG}.
\begin{proposition}\label{Prop:GenParabNotFG}
Let $G\leq\Aut(T)$ be an almost branch group with the weak subgroup induction property.
Then generalized parabolic subgroups of $G$ are not finitely generated.
\end{proposition}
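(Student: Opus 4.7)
The plan is to argue by contradiction. Suppose $W=\SStab_G(C)$ is a finitely generated generalized parabolic subgroup of $G$. I will derive a contradiction with Lemma \ref{Lemma:InfOrbitsClosure}, which asserts that $W$ has infinitely many orbit-closures on $\partial T$. The two key ingredients are, on the one hand, the infiniteness of the rigid stabilizers $\Rist_W(v)$ provided by Lemma \ref{Lemma:GenParabWeaklyBranch}, and on the other hand, the subgroup induction property assumed on $G$.

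Concretely, I would invoke the subgroup induction property on the finitely generated subgroup $W$ to obtain an integer $n$ such that, for every vertex $v$ of level $n$, the section $\pi_v\bigl(\Stab_W(\level{n})\bigr)$ lies in a restricted list of possibilities --- namely, being trivial, finite, or of finite index in $G$, where we use that $G$ is self-replicating so that $\pi_v\bigl(\Stab_G(v)\bigr)=G$. Now the rigid stabilizer $\Rist_W(v)$ fixes $\level{n}$ pointwise, hence sits inside $\Stab_W(\level{n})$, and is mapped injectively into $G$ by $\pi_v$. Since $\Rist_W(v)$ is infinite by Lemma \ref{Lemma:GenParabWeaklyBranch}, the section $\pi_v\bigl(\Stab_W(\level{n})\bigr)$ is infinite and thus cannot be trivial or finite; hence it must be of finite index in $G$. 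In particular, the larger group $\pi_v\bigl(\Stab_W(v)\bigr)$ also has finite index in $G$ for every $v\in\level{n}$.

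It remains to deduce that $W$ has finitely many orbit-closures on $\partial T$, contradicting Lemma \ref{Lemma:InfOrbitsClosure}. Since $G$ is branch it acts minimally on $\partial T\cong\partial T_v$, and any finite-index subgroup $H$ of a group acting minimally on a compact space has only finitely many orbit-closures: pass to the normal core of $H$ to reduce to the normal case, then observe that the finite quotient permutes the orbit-closures transitively inside each orbit-closure of the ambient group. Applied to $\pi_v\bigl(\Stab_W(v)\bigr)\leq G$ acting on $\partial T_v$, this yields finitely many orbit-closures for $\Stab_W(v)$ on $\partial T_v$ for each $v\in\level{n}$. Summing these counts over the finite set of $W$-orbit representatives in $\level{n}$ (using that $\partial T$ decomposes as the disjoint union of the clopen pieces $\partial T_v$) gives that $W$ has finitely many orbit-closures on $\partial T$, which is the desired contradiction.

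The main obstacle is the correct invocation of the subgroup induction property: one must ensure that the inductive decomposition terminates at sections that are trivial, finite, or of finite index in $G$, so that the infinite-rigid-stabilizer argument actually rules out the first two possibilities and forces the third. The rest of the argument is then a clean combination of Lemmas \ref{Lemma:GenParabWeaklyBranch} and \ref{Lemma:InfOrbitsClosure} with the elementary dynamical fact on orbit-closures of finite-index subgroups of minimal actions.
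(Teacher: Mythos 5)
Your proposal is correct in substance, but it takes a somewhat different route from the paper, and one point of attribution needs fixing. The trichotomy you want at level $n$ (sections trivial, finite, or of finite index in $G$) is \emph{not} a direct consequence of the bare Definition \ref{Definition:SubgroupInduction}; it is the structural theorem of \cite{GLN2019} that the paper itself invokes, namely: for a finitely generated $H\leq G$ there is a transversal $X$ and a finite index subgroup $L\leq H$ such that the sections of $\Stab_L(X)$ along $X$ are either trivial or equal to $G$. Your version follows from this by a routine reduction (push down from $X$ to the level $n=\max\{\abs{u}: u\in X\}$ using self-replication, and use that $\Stab_L(X)$ has finite index in $W$, so sections at level $n$ of $\Stab_W(\level{n})$ are finite or of finite index in $G$) --- so the "main obstacle" you flag is real but is exactly the black box both proofs must import. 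Given that input, your argument is sound: $\Rist_W(v)\leq\Stab_W(\level{n})$ maps injectively under $\pi_v$ and is infinite by Lemma \ref{Lemma:GenParabWeaklyBranch}, forcing the finite-index alternative at every $v$ of level $n$; a finite-index subgroup of the level transitive group $G$ is almost level transitive (Fact \ref{Fact:NumberOrbitsIndex} applied levelwise, which replaces your normal-core argument), so $\Stab_W(\level{n})$, and hence $W$, has finitely many orbit-closures on $\partial T$, contradicting Lemma \ref{Lemma:InfOrbitsClosure}. The paper instead contradicts the \cite{GLN2019} conclusion directly: any transversal meets $\NR(W)$, and by Lemma \ref{Lemma:ProjGenParab} the section of $W$ there is weakly maximal, hence infinite and of infinite index, so it can be neither trivial nor all of $G$. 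Your route is slightly longer but has the virtue of bypassing Lemma \ref{Lemma:ProjGenParab} entirely, which removes the circularity concern the paper has to address explicitly in the remark following its proof; the paper's route is shorter once that lemma is available.
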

Finally, in order to decide if generalized parabolic subgroups are finitely generated, it may be useful to look at specific subgroups of them.
Recall that for $C\subset \partial T$ closed, $T_C$ denotes the vertices of $T$ above $C$.
In particular, $T_C\cap\level{n}$ is what is called sometimes the \defi{shadow} of $C$ on level $n$.
\begin{definition}
Let $C$ be a closed subset of $\partial T$.
The \defi{neighbourhood stabiliser} of $C$ is
\[
	\SStab_G(C)^0\coloneqq\setst{g\in\SStab_G(C)}{\exists n\forall v\in\level{n}\cap T_C \textnormal{ the portrait of $g$ is trivial on }T_v}
\]
\end{definition}
This is a normal subgroup of $\SStab_G(C)$.

If $C$ is a ray $\xi$, this coincides with the usual definition of the neighbourhood stabiliser, as the set of elements that fixes a small neighbourhood of $\xi$.
If we fixe $n$ in the above definition, we obtain a subgroup $\SStab_G(C)^0_n$ with the property that $\SStab_G(C)^0$ is the increasing union of the $\SStab_G(C)^0_n$.
If the sequence $\SStab_G(C)^0_n$ is not eventually constant, then $\SStab_G(C)^0$ is not finitely generated.
But this is the case if $W$ is a generalized parabolic subgroup.
Indeed, $\Rist_W(v)$ is never trivial, while $\Rist_{\SStab_G(C)^0_n}(v)=\{1\}$ for $v$ in $\level{n}\cap T_C$.
We just proved
\begin{lemma}
Let $G\leq\Aut(T)$ be micro-supported and let $W=\SStab_G(C)$ be a generalized parabolic subgroup.
If $\SStab_G(C)/\SStab_G(C)^0$ is finite, then $W$ is not finitely generated.
\end{lemma}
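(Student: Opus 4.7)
The plan is to execute the strategy sketched in the paragraph immediately preceding the lemma: show that the ascending union $\SStab_G(C)^0=\bigcup_n\SStab_G(C)^0_n$ is strictly increasing, deduce that $\SStab_G(C)^0$ is not finitely generated, and then pass to $W$ using the finite-index hypothesis.

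First I would use that $W=\SStab_G(C)$ is generalized parabolic to get that $C$ is nowhere dense. Equivalently (as noted after the definition of generalized parabolic), for every $u\in T_C$ there exists a descendant $w$ of $u$ with $w\notin T_C$; otherwise $\partial T_u\subseteq C$ would give $C$ non-empty interior. Also, $\level{n}\cap T_C\neq\emptyset$ for all $n$ since $C\neq\emptyset$.

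Next, for each fixed $n$ I would produce an element $g\in\SStab_G(C)^0\setminus\SStab_G(C)^0_n$. Pick any $u\in\level{n}\cap T_C$, and use the nowhere dense property to choose a descendant $w$ of $u$, say at level $m>n$, with $w\notin T_C$. Since any element supported on $T_w$ fixes $C$ pointwise (no ray of $C$ passes through $w$), we have $\Rist_G(w)\leq\SStab_G(C)$; because $G$ is weakly rigid, $\Rist_G(w)$ is infinite, so we may pick $1\neq g\in\Rist_G(w)$. The support of $g$ lies in $T_w$, and $w$ is the unique element of $\level{m}$ on which $g$ might act non-trivially; since $w\notin T_C$, for every $w'\in\level{m}\cap T_C$ the portrait of $g$ is trivial on $T_{w'}$. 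Hence $g\in\SStab_G(C)^0_m\subseteq\SStab_G(C)^0$. On the other hand $u\in\level{n}\cap T_C$ and $g$ acts non-trivially on $T_w\subseteq T_u$, so $g\notin\SStab_G(C)^0_n$.

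Therefore $\SStab_G(C)^0_n\subsetneq\SStab_G(C)^0$ for every $n$, so the ascending union does not stabilize. Any finite generating set of $\SStab_G(C)^0$ would lie in some $\SStab_G(C)^0_N$, contradicting the previous step, so $\SStab_G(C)^0$ is not finitely generated. Since by hypothesis $[W:\SStab_G(C)^0]<\infty$, and finite-index subgroups of finitely generated groups are finitely generated (Schreier), $W$ cannot be finitely generated either. The only point requiring care is the bookkeeping around the two portrait conditions on $g$, but both are immediate once one observes that $g$ is supported on $T_w$ where $w$ is a descendant of $u\in T_C$ not lying in $T_C$ itself.
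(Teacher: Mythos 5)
Your proof is correct and follows essentially the same route as the paper's: both show that the ascending chain $\SStab_G(C)^0_n$ never stabilizes by producing, below any vertex of $\level{n}\cap T_C$, a nontrivial element of some $\Rist_G(w)$ with $w\notin T_C$ (which lies in $\SStab_G(C)^0_m$ for $m$ the level of $w$ but not in $\SStab_G(C)^0_n$), and then conclude via Schreier. Your write-up merely makes explicit the content of Lemma \ref{Lemma:GenParabWeaklyBranch} that the paper invokes implicitly.
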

When $C=\{\xi\}$, the group $\SStab_G(C)/\SStab_G(C)^0$ is known as the \defi{group of germs} of $G$ at $\xi$.
A ray $\xi$ is said to be \defi{regular} if the group of germs is trivial and singular otherwise.
This definition extends mutatis mutandis to closed subset.
We hence have
\begin{corollary}
Let $G\leq\Aut(T)$ be micro-supported and let $W=\SStab_G(C)$ be a generalized parabolic subgroup with $C$ regular.
Then $W$ is not finitely generated.
\end{corollary}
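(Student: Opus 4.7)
The plan is simply to invoke the immediately preceding lemma, since regularity of $C$ is by definition a strengthening of the hypothesis there. First I would unpack what ``$C$ is regular'' means in light of the extension of the regular/singular terminology from rays to arbitrary closed subsets described just above the statement: it says exactly that the group of germs $\SStab_G(C)/\SStab_G(C)^0$ is trivial. Triviality obviously implies finiteness, so the finiteness hypothesis of the previous lemma is met.

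Applying that lemma to $W = \SStab_G(C)$ then yields immediately that $W$ is not finitely generated. All of the substantive work --- namely producing a strictly increasing chain $\SStab_G(C)^0_n$ whose union is $\SStab_G(C)^0$, by observing that $\Rist_W(v)$ is nontrivial (since $G$ is weakly rigid and $W$ is a generalized parabolic subgroup, so Lemma \ref{Lemma:GenParabWeaklyBranch} applies) while $\Rist_{\SStab_G(C)^0_n}(v) = \{1\}$ for $v \in \level{n} \cap T_C$ --- has already been absorbed into the proof of the preceding lemma and does not need to be reproduced here.

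There is essentially no obstacle: the corollary is a one-line consequence. The only point worth double-checking is that the ``mutatis mutandis'' extension of \emph{regular} from rays to general closed subsets really is the one that produces triviality of the quotient $\SStab_G(C)/\SStab_G(C)^0$ as a whole (rather than, say, triviality only on some distinguished subclass of elements, or at the level of germs along individual rays of $C$). Once this is fixed, the deduction is automatic.
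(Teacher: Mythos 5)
Your proposal is correct and matches the paper exactly: the paper offers no separate argument for this corollary, treating it as an immediate consequence of the preceding lemma once one notes that ``$C$ regular'' means the group of germs $\SStab_G(C)/\SStab_G(C)^0$ is trivial, hence finite. Your remark that all the substantive work (the strictly increasing chain $\SStab_G(C)^0_n$ and the rigid-stabilizer comparison) lives in the lemma is precisely how the paper structures it.
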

%
%
%
%
%
%
%
%
%
%
%
%
%
%
%
\section{Non-rigidity tree}\label{Section:NonRigidityTree}
An important tool to study a (weakly maximal) subgroup $H$ of a branch group $G$ acting on $T$ is the knowledge of the vertices $v$ of $T$ such that a finite index subgroup of $\Rist_G(v)$ is contained in $H$.
This motivates the following definition.
\begin{definition}
Let $G$ be a subgroup of $\Aut(T)$ and $H$ be a subgroup of $G$.
The \defi{non-rigidity tree of $H$}, written $\NR(H)$, is the subgraph of $T$ generated by all $v$ such that $\Rist_H(v)$ has infinite index in $\Rist_G(v)$.
\end{definition}
It directly follows from the definition that a vertex $v$ is in $\NR(H)$ if and only if $H$ does not contain a finite index subgroup of $\Rist_G(v)$.

The following useful lemma describes the behaviour of subgroups under the addition of some rigid stabilizer.
\begin{lemma}\label{Lemma:RistFiniteIndex}
Let $T$ be a locally finite rooted tree.
Let $G\leq\Aut(T)$ be a rigid group and let $H$ be a subgroup of $G$.
Let $v$ be a vertex of $T$ with children $\{w_1,\dots,w_d\}$.
Suppose that for each $i\in\{1,\dots,d\}$, $H$ contains some finite index subgroup $H_{w_i}$ of $\Rist_G(w_i)$.
Then $H$ has finite index in $\gen{H,\Rist_G(v)}$.
\end{lemma}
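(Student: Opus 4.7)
The plan is to exhibit $\gen{H,\Rist_G(v)}$ as a product $RH$ of $H$ with a subgroup $R$ that $H$ normalizes, and then to use the rigidity of $G$ to bound $[R:R\cap H]$ by a product of indices of the form $[G:\Rist_G(\level{n+1})]$. The main conceptual issue to overcome is that $H$ need not fix $v$, so $\gen{H,\Rist_G(v)}$ is not immediately a product of $H$ with a subgroup that $H$ normalizes; this will be resolved by passing to the full (finite) $H$-orbit of $v$ and taking the product of rigid stabilizers over that orbit.

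First, I would argue that the $H$-orbit of $v$ is finite. Since $G$ is rigid, the subgroup $\Rist_G(\level{n})$ has finite index in $G$ (where $n$ is the level of $v$) and fixes $v$, so $\Stab_G(v)$ has finite index in $G$, and a fortiori $\Stab_H(v)$ has finite index in $H$. Let $\{v_1=v,v_2,\dots,v_s\}$ denote the $H$-orbit of $v$, and pick $h_i\in H$ with $h_i.v=v_i$ (taking $h_1=1$). Conjugation by $h_i$ sends $\Rist_G(w_j)\subseteq H$ to $\Rist_G(h_i.w_j)\subseteq H$, so $H$ contains $\Rist_G(w)$ for every child $w$ of every $v_i$.

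Next, I would set $R\defegal\prod_{i=1}^s\Rist_G(v_i)$. The $v_i$ lie at the same level and so are pairwise incomparable; the $\Rist_G(v_i)$ therefore commute pairwise and $R$ is their internal direct product. Since $H$ permutes the $v_i$, it normalizes $R$; moreover, each $\Rist_G(v_i)=h_i\Rist_G(v)h_i^{-1}$ lies in $\gen{H,\Rist_G(v)}$, so $\gen{H,\Rist_G(v)}=\gen{H,R}=RH$. By the second isomorphism theorem, $[\gen{H,\Rist_G(v)}:H]=[R:R\cap H]$.

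Finally, I would bound $[R:R\cap H]$. Set $K\defegal\prod_{i=1}^s\prod_{w \text{ child of } v_i}\Rist_G(w)$; then $K$ lies in both $R$ and $H$, hence in $R\cap H$. For each $i$, the inner factor equals $\Rist_G(v_i)\cap\Rist_G(\level{n+1})$, so by the second isomorphism theorem $[\Rist_G(v_i):\prod_{w}\Rist_G(w)]\leq [G:\Rist_G(\level{n+1})]$, which is finite by the rigidity of $G$. Multiplying over the finite orbit gives $[R:K]<\infty$, whence $[\gen{H,\Rist_G(v)}:H]\leq [R:K]<\infty$, as required.
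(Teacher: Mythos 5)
Your proof is correct and takes essentially the same route as the paper's: you identify $\gen{H,\Rist_G(v)}$ with $RH$ where $R=\prod_{u\in H.v}\Rist_G(u)$ is the $H$\hyphen normal closure of $\Rist_G(v)$, and bound $[R:R\cap H]$ by the index of the product of the children's rigid stabilizers over the orbit, which is finite by rigidity. The paper's proof is just a terser version of this same decomposition, so no further comment is needed.
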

\begin{proof}
Let $H'=\gen{H,\Rist_G(v)}$.
We have $H=\gen{H_{w_1}\times\dots\times H_{w_d}}^H\cdot H$ while $H'=\gen{H,\Rist_G(v)}=\gen{\Rist_G(v)}^H\cdot H$.
In particular, the index of $H$ in $H'$ is bounded by the index of $\gen{H_{w_1}\times\dots\times H_{w_d}}^H=\prod_{w\in H.\{w_1,\dots,w_d\}}H_w$ in $\gen{\Rist_G(v)}^H=\prod_{u\in H.v}\Rist_G(u)$, where for each $w$ in $H.\{w_1,\dots,w_d\}$, $H_w$ is a finite index subgroup of $\Rist_G(w)$.
This latter index  is itself bounded by $[\Rist_G(v):H_{w_1}\times\dots\times H_{w_d}]^{\abs{H.v}}$ which is finite.
\end{proof}
\begin{corollary}\label{Cor:RistFiniteIndex}
Let $T$ be a locally finite rooted tree.
Let $G\leq\Aut(T)$ be a rigid group and let $W\leq G$ be a weakly maximal subgroup.
Let $v$ be a vertex of $T$ with children $\{w_1,\dots,w_d\}$.
If $W$ contains a finite index subgroup of $\Rist_G(w_i)$ for each $w_i$, then $W$ contains $\Rist_G(v)$.
\end{corollary}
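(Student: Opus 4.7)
The plan is to deduce this directly from Lemma \ref{Lemma:RistFiniteIndex} together with the defining property of weak maximality. Set $H' \defegal \gen{W, \Rist_G(v)}$, which contains $W$ and is contained in $G$.

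First I would apply Lemma \ref{Lemma:RistFiniteIndex} (with $H = W$) to the vertex $v$: since $W$ contains each $\Rist_G(w_i)$ by hypothesis, the lemma gives that $[H' : W]$ is finite. Second, I would use this to conclude that $H'$ has infinite index in $G$: indeed, $[G:W] = [G:H']\cdot[H':W]$, and $[G:W]$ is infinite while $[H':W]$ is finite, forcing $[G:H'] = \infty$. Finally, since $W$ is weakly maximal — that is, maximal among subgroups of infinite index — and $H'$ is a subgroup of infinite index containing $W$, we must have $H' = W$, which yields $\Rist_G(v) \subseteq W$.

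There is essentially no obstacle here: the work is entirely done by Lemma \ref{Lemma:RistFiniteIndex}, and the only thing to verify is the elementary observation that a finite-index overgroup of an infinite-index subgroup still has infinite index in the ambient group. The statement is really the expected consequence of the lemma once one invokes the maximality hypothesis on $W$.
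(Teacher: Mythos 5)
Your argument is correct and is exactly the intended deduction: the paper states this corollary immediately after Lemma \ref{Lemma:RistFiniteIndex} with no separate proof, precisely because the combination of that lemma with the multiplicativity of the index and the definition of weak maximality is all that is needed. Nothing to add.
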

A small variation of Lemma~\ref{Lemma:RistFiniteIndex} allows us to obtain an alternative description of the non-rigidity tree of weakly maximal subgroups.
\begin{lemma}\label{Lemma:TrivialBranchKernel}
Let $T$ be a locally finite rooted tree.
Let $G\leq\Aut(T)$ be a rigid group and let $W\leq G$ be a weakly maximal subgroup.
Let $v$ be a vertex of $T$.
Then $W$ contains $\Rist_G(v)$ if and only if $v$ is not in $\NR(W)$.
\end{lemma}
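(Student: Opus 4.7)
The plan is to show first that the easy direction ($\Rist_G(v)\subseteq W\Rightarrow v\notin\NR(W)$) is immediate from the definition, since $\Rist_G(v)\subseteq W$ forces $\Rist_W(v)=W\cap\Rist_G(v)=\Rist_G(v)$, which has index $1$ in $\Rist_G(v)$. All the work is in the converse, where we are given that $\Rist_W(v)$ has finite index, say $k$, in $\Rist_G(v)$ and need to conclude $\Rist_G(v)\subseteq W$.

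The crucial observation is that because $G$ is branch, the tree $T$ is locally finite, so every level of $T$ is finite and in particular the $W$-orbit $W.v=\{v_1,\dots,v_m\}$ (with $v_1=v$) is finite. I would then form
\[
	N \defegal \prod_{i=1}^m\Rist_G(v_i),
\]
which is an internal direct product (the factors commute and have trivial pairwise intersection, since the subtrees $T_{v_i}$ are pairwise disjoint) and is normalized by $W$, because $W$ permutes the vertices $v_i$ by conjugation. Consequently $WN$ is a subgroup of $G$ that contains $\Rist_G(v)$.

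Next I would estimate $[WN:W]$. Writing $v_i=w_iv$ with $w_i\in W$, conjugation gives $\Rist_W(v_i)=w_i\Rist_W(v)w_i^{-1}$ and $\Rist_G(v_i)=w_i\Rist_G(v)w_i^{-1}$, so $[\Rist_G(v_i):\Rist_W(v_i)]=k$ for every $i$. Hence $\prod_i\Rist_W(v_i)$ is a subgroup of $N\cap W$ with $[N:\prod_i\Rist_W(v_i)]=k^m$, which forces $[N:N\cap W]\leq k^m<\infty$. Since $N$ is normal in $WN$, the second isomorphism theorem gives $[WN:W]=[N:N\cap W]<\infty$.

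To finish, I would argue by contradiction: if $\Rist_G(v)\not\subseteq W$, then $N\not\subseteq W$ and hence $WN\supsetneq W$ strictly; weak maximality of $W$ then forces $[G:WN]<\infty$, and multiplying by the finite index $[WN:W]$ gives $[G:W]<\infty$, contradicting that $W$ is of infinite index. Therefore $\Rist_G(v)\subseteq W$. The only step requiring real care is the very first one -- verifying that $W.v$ is finite so that we can actually form the finite direct product $N$ and normalize it by $W$ -- but this is free from local finiteness of $T$. Interestingly, this argument does not appear to use the trivial branch kernel hypothesis; the weak maximality of $W$ alone together with local finiteness suffices, so this hypothesis in the statement may be used only to fit the lemma into the broader framework of later results.
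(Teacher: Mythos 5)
Your proof is correct, but it takes a genuinely different route from the paper's. The paper uses the trivial branch kernel in an essential way: from $[\Rist_G(v):\Rist_W(v)]<\infty$ it deduces the existence of a level $n$ such that $W$ contains $\Rist_G(w)$ for \emph{every} $w\in\level{n}\cap T_v$, and then climbs back up to $v$ by repeated application of Corollary~\ref{Cor:RistFiniteIndex}. You instead bound $[\gen{W,\Rist_G(v)}:W]=[N:N\cap W]$ directly by $[\Rist_G(v):\Rist_W(v)]^{\abs{W.v}}$, using the direct product $N=\prod_{u\in W.v}\Rist_G(u)$ over the (finite, by local finiteness of $T$) orbit of $v$; weak maximality then forces $N\leq W$. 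This is really the same computation as the paper's own Lemma~\ref{Lemma:RistFiniteIndex}, but run with the hypothesis ``$\Rist_W(v)$ has finite index in $\Rist_G(v)$'' in place of ``$W$ contains the rigid stabilizers of the children of $v$,'' followed by the same weak-maximality punchline as in Corollary~\ref{Cor:RistFiniteIndex}. Your closing observation is accurate and worth noting: the argument never invokes the branch kernel, so the conclusion holds for any weakly maximal subgroup of any rigid almost level transitive group on a locally finite tree, which strictly strengthens the lemma as stated. What each approach buys: the paper's proof produces, as a by-product, an explicit level below $v$ at which $W$ contains full rigid stabilizers (which is in the spirit of how the non-rigidity tree is used elsewhere), while yours is shorter, self-contained, and removes a hypothesis.
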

\begin{proof}
The only if direction is trivial.
For the other direction, suppose that $\Rist_W(v)$ has finite index in $\Rist_G(v)$ and let $W'\coloneqq \gen{W,\Rist_G(v)}$.
We will prove that $W$ has finite index in $W'$, and hence that they are equal.
We have $W=\gen{\Rist_W(v)}^W\cdot W$, while $W'=\gen{\Rist_G(v)}^W\cdot W$. It follows that the index of $W$ in $W'$ is bounded above by $[Rist_G(v):\Rist:W(v)]^{\abs{W.v}}$ which is finite.
\end{proof}
The following lemma justifies the name non-rigidity trees and shows some elementary properties of them.
\begin{lemma}\label{Lemma:NRTree}
Let $G\leq\Aut(T)$ and let $H$ be a subgroup of $G$.
The non-rigidity tree of $H$ is a tree and it is non-empty if and only if $H$ has infinite index in $G$. Moreover, we have
\begin{enumerate}
\item If $H$ has infinite index in $G$, then $\NR(H)$ is a rooted subtree of $T$, rooted at $\emptyset$ (the root of $T$),
\item If $H\leq K$, then $\NR(K)\subseteq \NR(H)$, with equality if $H$ has finite index in $K$,
\item Suppose that $T$ is locally finite and that $G$ is rigid. If $W$ is a weakly maximal subgroup of $G$, then $\NR(W)$ has no leaf.\label{NoLeaf}
\end{enumerate}
\end{lemma}
\begin{proof}
For the first assertion, let $v$ be a vertex outside $\NR(H)$ and $w$ be one of its descendant.
By hypothesis, $\Rist_H(v)$ has finite index in $\Rist_G(v)$ which implies that $\Rist_H(w)$ has finite index in $\Rist_G(w)$ and hence that $w$ is not in $\NR(H)$.
That is, $\NR(H)$ is always a tree.
It is empty if and only if it does not contain the root, that is if $H$ has finite index in $G$.

The second assertion is a direct consequence of the definition while the third assertion follows from Corollary~\ref{Cor:RistFiniteIndex}.
\end{proof}
For every subgroup $H$ of $G$, we have $H\leq\SStab_G\bigl(\NR(H)\bigr)$.
Moreover, if $\NR(H)$ has no leaves (for example if $H$ is weakly maximal), then $\SStab_G\bigl(\NR(H)\bigr)=\SStab_G\bigl(\partial\NR(H)\bigr)$, with $\partial\NR(H)$ being a closed subset of $\partial T$.
As an application of Proposition~\ref{Proposition:WMStrongRigidSStab} we obtain
\begin{lemma}\label{Lemma:InverseBijections}
Let $G\leq\Aut(T)$ be almost branch and let $W\leq G$ be a weakly maximal subgroup.
Then $W=\SStab_G\bigl(\NR(W)\bigr)$ if and only if $W$ is generalized parabolic.

More precisely, the maps $W\mapsto \partial\NR(W)$ and $C\mapsto \SStab_G(C)$ are inverse bijections from the set of generalized parabolic subgroups of $G$ and the set of closed non-open subset of $\partial T$ such that the action $\SStab_G(C)\curvearrowright C$ is minimal.
\end{lemma}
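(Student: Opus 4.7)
My plan is to reduce both assertions to the single identity $\partial\NR(W) = C$ holding for every generalized parabolic $W = \SStab_G(C)$. Granting this, the well-definedness and mutual inversion of the two maps $W \mapsto \partial\NR(W)$ and $C \mapsto \SStab_G(C)$ follow at once, with injectivity of the latter supplied by Lemma~\ref{Lemma:GenParDistinct}. The ``if'' direction of the equivalence then reads $\SStab_G(\NR(W)) = \SStab_G(\partial\NR(W)) = \SStab_G(C) = W$, while for the ``only if'' direction I would argue as follows: supposing $W = \SStab_G(\NR(W))$ is weakly maximal, the closed subset $\partial\NR(W)$ cannot be open, since otherwise Lemma~\ref{Lemma:OpenFiniteIndex} would force $W$ to have finite index; then Proposition~\ref{Proposition:WMStrongRigidSStab} promotes the weakly maximal setwise stabilizer $\SStab_G(\partial\NR(W)) = W$ to a generalized parabolic subgroup.

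For the easy inclusion $\partial\NR(W) \subseteq C$, I would simply observe that any vertex $v \notin T_C$ satisfies $\partial T_v \cap C = \emptyset$ (since $C$ is closed), so $\Rist_G(v)$ lies in $\SStab_G(C) = W$ and hence $v \notin \NR(W)$. This yields $\NR(W) \subseteq T_C$ and thus $\partial\NR(W) \subseteq \partial T_C = C$.

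The main obstacle is the reverse inclusion $C \subseteq \partial\NR(W)$, which I would prove by contradiction. Fix $\xi \in C$ and assume that some vertex $v$ on $\xi$ lies outside $\NR(W)$, so that $\Rist_W(v)$ has finite index in $\Rist_G(v)$. The key tool is Lemma~\ref{Lemma:RistLevelTransitive}: it allows me to descend to a vertex $u$ on $\xi$ below $v$ such that $\pi_u\bigl(\Stab_{\Rist_G(v)}(u)\bigr)$ acts level transitively, and hence minimally, on $\partial T_u$. The finite-index subgroup $\pi_u\bigl(\Stab_{\Rist_W(v)}(u)\bigr)$ inherited from $\Rist_W(v)$ then partitions $\partial T_u$ into finitely many clopen minimal pieces; this is a standard general fact about finite-index subgroups of minimally acting groups on compact spaces, which one proves by passing to the normal core and partitioning into the clopen minimal sets of a normal finite-index subgroup. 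The piece $V$ containing $\xi$ equals $\overline{\Stab_{\Rist_W(v)}(u) \cdot \xi}$, and since $C$ is closed and $W$-invariant we obtain $V \subseteq C$. But $V$ is non-empty and open, whereas $C$ is nowhere dense by the minimality of $W \acts C$ on a closed non-open set, as recorded directly after the definition of generalized parabolic subgroup. This contradiction shows that every vertex on $\xi$ lies in $\NR(W)$, i.e., $\xi \in \partial\NR(W)$, completing the identity.
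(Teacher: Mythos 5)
Your proof is correct, but it reaches the crucial identity $\partial\NR(W)=C$ by a genuinely different route than the paper. The paper never computes $\NR(W)$ by hand: it observes that $W\leq\SStab_G\bigl(\NR(W)\bigr)=\SStab_G\bigl(\partial\NR(W)\bigr)$ (because $\NR(W)$ is $W$\hyphen invariant), that $\partial\NR(W)$ is closed, non-empty and not open, deduces from Lemma~\ref{Lemma:InfOrbitsClosure} that this setwise stabilizer has infinite index, and then lets weak maximality force $W=\SStab_G\bigl(\partial\NR(W)\bigr)$; the set-level equality $\partial\NR(W)=C$ then follows from the injectivity statement of Lemma~\ref{Lemma:GenParDistinct}, since $\SStab_G(C)=\SStab_G\bigl(\partial\NR(W)\bigr)$ with both sets closed and nowhere dense. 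You instead prove the two inclusions directly: the easy one exactly as one would expect, and the hard one $C\subseteq\partial\NR(W)$ via Lemma~\ref{Lemma:RistLevelTransitive} together with the standard fact that a finite-index subgroup of a group acting minimally on a compact space has clopen minimal orbit-closures, which would otherwise produce a non-empty open subset of the nowhere dense set $C$. Your use of Lemma~\ref{Lemma:RistLevelTransitive} to find a vertex $u$ on $\xi$ below $v$ with $\pi_u\bigl(\Stab_{\Rist_G(v)}(u)\bigr)$ acting minimally on $\partial T_u$ is exactly what that lemma delivers, and the dynamical fact is correctly justified by passing to the normal core. The paper's route is shorter given the lemmas already in place; yours buys a sharper conclusion, namely $\NR\bigl(\SStab_G(C)\bigr)=T_C$ without ever invoking weak maximality for that step, which is in the spirit of Lemma~\ref{Lemma:ProjGenParab}. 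One small economy you could make: once $\partial\NR\bigl(\SStab_G(C)\bigr)=C$ is established, the map $C\mapsto\SStab_G(C)$ has a left inverse and its injectivity is automatic, so the appeal to Lemma~\ref{Lemma:GenParDistinct} is not actually needed in your version.
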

\begin{proof}
Let $C$ be a closed non-open subset of $\partial T$ such that the action $\SStab_G(C)\curvearrowright C$ is minimal, and let $W=\SStab_G(C)$.
By Proposition~\ref{Proposition:WMStrongRigidSStab}, $W$ is weakly maximal.
On one hand, let $\xi$ be in $C$ and $v$ be a vertex of $\xi$.
Since $G$ is almost branch, the orbit $\Rist_G(v).\xi$ contains a neighbourhood of $\xi$.
Using that $C$ is not open, this implies that $W$ does not contain $\Rist_G(v)$, and by Lemma~\ref{Lemma:TrivialBranchKernel} that $C\subseteq\partial\NR(W)$.
On the other hand, let $\eta$ be in the open set $\partial T\setminus C$.
There exists some $v$ on $\eta$ such that $\partial T_v$ is contained in $\partial T\setminus C$.
We conclude that $\Rist_G(v)$ is contained in $W$ and therefore that $\eta$ is not in $\partial\NR(W)$. This prove the inequality $\partial\NR(W)\subseteq C$.
We hence have proved that $C=\partial\NR\bigl(\SStab_G(C)\bigr)$.

Now, let $W$ be a generalized parabolic subgroup, and let $C$ be the nowhere dense closed subset of $\partial T$ witnessing it: $W=\SStab_G(C)$.
By the above $C=\partial\NR(W)$.
This implies both that $\SStab_G\bigl(\partial\NR(W)\bigr)=\SStab_G(C)=W$ and that $\partial\NR(W)$ is a closed non-open subset of $\partial T$ such that the action $\SStab_G\bigl(\partial\NR(W)\bigr)\curvearrowright \partial\NR(W)$ is minimal.
\end{proof}
We obtain the following characterization of generalized parabolic subgroups.
\begin{corollary}\label{Cor:Equivalences}
Let $G$ be almost branch and let $W\leq G$ be a weakly maximal subgroup.
Then, the followings are equivalent
\begin{enumerate}
\item
$W$ is a generalized parabolic subgroup,
\item\label{Cor:Property:3}
$\SStab_G\bigl(\NR(W)\bigr)$ has infinite index in $G$,
\item\label{Cor:Property:4}
$\partial \NR(W)$ is not open,
\item
The number of orbit-closures of the action $W\curvearrowright \partial T$ is infinite.
\end{enumerate}
\end{corollary}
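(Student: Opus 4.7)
The plan is to close the loop $(1)\Rightarrow(2)\Rightarrow(3)\Rightarrow(4)\Rightarrow(1)$. The first three implications will follow directly from the material already in place. For $(1)\Rightarrow(2)$, Lemma~\ref{Lemma:InverseBijections} identifies $W$ with $\SStab_G(\NR(W))$, which has infinite index because $W$ is weakly maximal. For $(2)\Rightarrow(3)$, I will contrapose and apply Lemma~\ref{Lemma:OpenFiniteIndex}: if $\partial\NR(W)$ were open (hence clopen, being always closed), then $\SStab_G(\NR(W))=\SStab_G(\partial\NR(W))$ would be of finite index. For $(3)\Rightarrow(4)$, Lemma~\ref{Lemma:InfOrbitsClosure} applied to the non-open closed set $\partial\NR(W)$ gives infinitely many orbit-closures for $\SStab_G(\NR(W))\acts\partial T$; since $W\leq\SStab_G(\NR(W))$, every $W$-orbit is contained in a $\SStab_G(\NR(W))$-orbit, so $W$-orbits on each level are at least as numerous, yielding infinitely many $W$-orbit-closures.

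The substantive step is $(4)\Rightarrow(1)$. The strategy is to produce a non-empty closed, non-open, $W$-invariant subset $M\subseteq\partial T$ on which $W$ acts minimally. Once such $M$ is at hand, $\SStab_G(M)$ also acts minimally on $M$ (it contains $W$, and every $W$-orbit in $M$ is already dense there), so $M$ being closed and non-open together with minimality of $\SStab_G(M)\acts M$ makes $\SStab_G(M)$ generalized parabolic by Proposition~\ref{Proposition:WMStrongRigidSStab}, hence weakly maximal. From $W\leq\SStab_G(M)$ and both being weakly maximal, it will follow that $W=\SStab_G(M)$, giving $(1)$.

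The main obstacle is producing $M$. I will invoke Zorn's lemma on the poset of non-empty closed $W$-invariant subsets of $\partial T$ ordered by reverse inclusion (a descending chain has non-empty intersection by compactness) to obtain minimal such subsets; any two distinct minimal ones are disjoint, for a non-empty closed $W$-invariant intersection would equal each by minimality. Suppose for contradiction that every minimal closed $W$-invariant subset is open, hence clopen, and let $\{M_\alpha\}$ be the family of these. Its union is $W$-invariant and open, and its closed $W$-invariant complement, if non-empty, would contain a further minimal subset disjoint from every $M_\alpha$, contradicting the enumeration. Hence $\partial T=\bigsqcup_\alpha M_\alpha$ is a partition of the compact space $\partial T$ into non-empty open sets, so the family is finite, say $\partial T=M_1\sqcup\cdots\sqcup M_k$. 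But then every $\xi\in\partial T$ lies in some $M_j$ with $\overline{W.\xi}=M_j$ by minimality, giving at most $k$ orbit-closures and contradicting $(4)$. This forces the existence of a non-open minimal closed $W$-invariant $M$ and completes the proof.
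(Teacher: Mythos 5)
Your proof is correct, and for three of the four implications it runs on exactly the same fuel as the paper: Lemma~\ref{Lemma:InverseBijections} for $(1)\Leftrightarrow(2)$, and Lemmas~\ref{Lemma:InfOrbitsClosure} and~\ref{Lemma:OpenFiniteIndex} for $(2)\Leftrightarrow(3)$ and $(3)\Rightarrow(4)$. The genuine difference is in $(4)\Rightarrow(1)$. The paper argues the contrapositive directly with orbit-closures: if $W$ is not generalized parabolic, then for any orbit-closure $C=\overline{W.\xi}$ the subgroup $\SStab_G(C)$ would, were $C$ not open, have infinite index and hence equal $W$, forcing $W$ to be generalized parabolic; so all orbit-closures are open and compactness gives finitely many. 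You instead run the argument through \emph{minimal} closed $W$-invariant sets, obtained by Zorn's lemma, and show that if all of them were clopen then $\partial T$ would be a finite disjoint union of them, bounding the number of orbit-closures. Your detour is longer and invokes the axiom of choice, but it buys something real: an orbit-closure need not be a minimal set, so the paper's assertion that $W$ ``acts minimally on $C$'' for an arbitrary orbit-closure $C$ is not automatic, whereas your $M$ is minimal by construction and the minimality of $\SStab_G(M)\acts M$ (needed to certify that $\SStab_G(M)$ is generalized parabolic, via Proposition~\ref{Proposition:WMStrongRigidSStab}) comes for free. In short: the paper's route is shorter; yours is more robust on the one point where ``orbit-closure'' and ``minimal set'' must not be conflated.
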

\begin{proof}
Observe that $\SStab_G\bigl(\NR(W)\bigr)=\SStab_G\bigl(\partial\NR(W)\bigr)$.
Lemmas~\ref{Lemma:InfOrbitsClosure} and~\ref{Lemma:OpenFiniteIndex} imply the equivalence of Properties~\ref{Cor:Property:3} and~\ref{Cor:Property:4}.

Lemma~\ref{Lemma:InverseBijections} and the fact that $W\leq\SStab_G\bigl(\NR(W)\bigr)$ imply the equivalence of the first two properties.

It remains to show that a weakly maximal subgroup is generalized parabolic if and only if the number of orbit-closures of the action $W\curvearrowright \partial T$ is infinite.
The left-to-right implication is Lemma~\ref{Lemma:InfOrbitsClosure}.
Finally, suppose that $W$ is not generalized parabolic.
We claim that in this case every orbit-closure $C$ of the action $W\curvearrowright \partial T$ is open.
Indeed, by definition $W$ is contained in $\SStab_G(C)$ and acts minimally on $C$.
If $C$ was not open, then by Lemma~\ref{Lemma:InfOrbitsClosure}, $\SStab_G(C)$ would have infinite index in $G$ and hence be equal to $W$, which is absurd.
Since all orbit-closures are open, by compacity of $\partial T$ there are only a finite number of them.
\end{proof}
By Lemma~\ref{Lemma:InverseBijections}, generalized parabolic subgroups act minimally on the boundary of their non-rigidity tree.
We conjecture that this fact generalizes to every weakly maximal subgroup.
\begin{conjecture}\label{Conj:MinimalActionOnNRTree}
Let $G$ be a branch group and let $W\leq G$ be a weakly maximal subgroup.
Then the action $W\curvearrowright\partial\NR(W)$ is minimal.
\end{conjecture}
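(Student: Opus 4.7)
The natural split for this conjecture is on whether $W$ is itself generalized parabolic, using Corollary \ref{Cor:Equivalences}. If $W$ is generalized parabolic, then Lemma \ref{Lemma:InverseBijections} gives $W = \SStab_G\bigl(\partial\NR(W)\bigr)$, and $W$ acts minimally on $\partial\NR(W)$ by the very definition of a generalized parabolic subgroup. So this case is immediate and the substance of the conjecture lies in the opposite case.

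Assume then that $W$ is not generalized parabolic, so by Corollary \ref{Cor:Equivalences} the set $\partial\NR(W)$ is open in $\partial T$ and $W$ has only finitely many orbit-closures on $\partial T$. Let $\xi\in\partial\NR(W)$ and set $C\defegal\overline{W.\xi}$. Since $W$ preserves $\NR(W)$ setwise (a vertex $v$ lies in $\NR(W)$ iff $g.v$ does, for any $g\in W$), we have $C\subseteq\partial\NR(W)$. If $C$ were not open in $\partial T$, then $\SStab_G(C)$ would be generalized parabolic by Proposition \ref{Proposition:WMStrongRigidSStab}, hence weakly maximal and containing $W$, forcing $W = \SStab_G(C)$ to be generalized parabolic, a contradiction. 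Therefore $C$ is open in $\partial T$.

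It remains to prove $C = \partial\NR(W)$. I would suppose otherwise, so that $D\defegal\partial\NR(W)\setminus C$ is a nonempty open subset of $\partial\NR(W)$, and pick vertices $v_C,v_D\in\NR(W)$ at a common level with $\partial T_{v_C}\subseteq C$, $\partial T_{v_D}\subseteq D$. Using the branch structure of $G$ together with Lemma \ref{Lemma:RistLevelTransitive}, the plan is to find an element $g\in\Rist_G(u)$, for a suitable common ancestor $u$ of $v_C$ and $v_D$, that interchanges $v_C$ and $v_D$ while acting trivially outside $T_u$. Such a $g$ cannot be in $W$, because it carries a point of $C$ into $D$, in contradiction with $W$-invariance of the orbit-closure $C$. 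By weak maximality of $W$, the subgroup $\gen{W,g}$ then has finite index in $G$.

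The main obstacle is to derive a contradiction from this last assertion: one must argue that, by a careful choice of the swapping element $g$, the subgroup $\gen{W,g}$ can still be forced to have infinite index in $G$. Concretely, this reduces to a delicate control of the sections $\pi_{v_C}\bigl(\Stab_W(v_C)\bigr)$ and $\pi_{v_D}\bigl(\Stab_W(v_D)\bigr)$ and of the way $g$ conjugates one into the other; since $v_C,v_D\in\NR(W)$, both sections should be proper subgroups of the ambient section of $G$, and a sufficiently ``small'' swap $g\in\Rist_G(u)$ ought not to suffice to drag $\gen{W,g}$ to finite index. Plausibly the proof runs through the block-structure analysis of Section \ref{Section:BlockSubgroups} together with the dichotomies of Propositions \ref{Proposition:Dichotomy1} and \ref{Proposition:Dichotomy2}: if a non-generalized-parabolic weakly maximal subgroup $W$ necessarily admits a block decomposition, then the $W$-orbit-closures inside $\partial\NR(W)$ correspond exactly to the blocks, and weak maximality should force $W$ to already contain all admissible block permutations, yielding $C=\partial\NR(W)$ directly. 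Making this last step unconditional is the substantive difficulty that explains the conjectural status of the statement.
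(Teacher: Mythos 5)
The statement you set out to prove is labelled a \emph{conjecture} in the paper, and the paper contains no proof of it; there is therefore nothing to compare your attempt against, and your attempt itself is not a proof. The parts you do establish are sound and match what the paper already records: the generalized parabolic case is immediate from Lemma \ref{Lemma:InverseBijections}, and in the remaining case each orbit-closure $C=\overline{W.\xi}$ with $\xi\in\partial\NR(W)$ must be open (your argument via Proposition \ref{Proposition:WMStrongRigidSStab} and weak maximality is correct, and $\NR(W)$ is indeed $W$-invariant since $\Rist_W(g.v)=g\Rist_W(v)g^{-1}$ for $g\in W$). But this only shows that $\partial\NR(W)$ decomposes into finitely many clopen orbit-closures; minimality requires there to be exactly one, and that is precisely the step you do not carry out.

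The concrete gap is your final paragraph. You propose to pick $g\in\Rist_G(u)$ swapping two pieces of $\partial\NR(W)$ and to argue that $\gen{W,g}$ still has infinite index, but the paper's own Lemma \ref{Lemma:sub-directProductProjects} points the other way: if $W$ fails to act transitively on $\NR(W)\cap\level{n}$, then \emph{every} section $\pi_v\bigl(\Stab_W(v)\bigr)$ at level $n$ already has finite index in $\pi_v\bigl(\Stab_G(v)\bigr)$, so there is no ``smallness'' obstruction preventing $\gen{W,g}$ from reaching finite index, and the intended contradiction does not materialize. The paper goes exactly as far as you do and no further: the corollary following Proposition \ref{Proposition:SectionBlock} shows that any counterexample to the conjecture must be a weakly maximal subgroup with a block structure, i.e.\ it isolates the same residual case you identify, and leaves it unresolved. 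Your proposal should therefore be presented as a reduction of the conjecture to the block-structure case, not as a proof.
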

\begin{remark}
As a consequence of Lemma~\ref{Lemma:InverseBijections}, distinct generalized parabolic subgroups have distinct non-rigidity trees.
This does not generalize to distinct weakly maximal subgroups.
Indeed, if $W$ is a weakly maximal subgroup then for every $g$ in $\SStab_G\bigl(\partial \NR(W)\bigr)$, the non-rigidity tree of $W^g$ coincides with the one for $W$.
But if $W$ is not generalized parabolic, then $\SStab_G\bigl(\partial \NR(W)\bigr)$ is a finite index subgroup of $G$, while if $G$ is torsion $W$ is self-normalizing by~\cite{MR3478865}.
That is, in a torsion branch group, weakly maximal subgroups that are not generalized parabolic admit infinitely many distinct conjugates with the same non-rigidity tree.
\end{remark}
This remark raises the following question.
\begin{question}
Let $G$ be a branch group and $W$ and $M$ two weakly maximal subgroups with the same non-rigidity tree $\NR(W)=\NR(M)$.
Does this necessarily imply that $W$ and $M$ are conjugated?
\end{question}
Observe that if $W$ is level-transitive, then it contains no rigid stabilizer and hence $\NR(W)=T$.
It is unclear if the converse holds.
We hence have the following weaker form of Conjecture~\ref{Conj:MinimalActionOnNRTree}
\begin{question}\label{Question:LevelTransitive}
Let $G$ be a branch group and $W$ be a weakly maximal subgroup.
Is it true that $W$ is level-transitive if and only if $\NR(W)=T$?
\end{question}
The following lemma shows that possible counterexamples to Conjecture~\ref{Conj:MinimalActionOnNRTree} are very constrained.
\begin{lemma}\label{Lemma:sub-directProductProjects}
Let $G$ be any subgroup of $\Aut(T)$ and let $W\leq G$ be a weakly maximal subgroup.
Let $n$ be an integer such that $W$ does not act transitively on $\NR(W)\cap \level{n}$. Then for every vertex $v$ of level $n$, the subgroup $\pi_v\bigl(\Stab_W(v)\bigr)$ has finite index in $\pi_v\bigl(\Stab_G(v)\bigr)$.
\end{lemma}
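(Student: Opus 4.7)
The plan is to leverage the weak maximality of $W$ by adjoining a single rigid stabilizer $\Rist_G(v_2)$ chosen from a $W$-orbit distinct from the one containing $v$, and then to observe that such a rigid stabilizer is invisible to $\pi_v$.

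First, I would select the vertex $v_2 \in \NR(W) \cap \level{n}$. The non-transitivity hypothesis guarantees that $\NR(W) \cap \level{n}$ splits into at least two $W$-orbits. If $v$ itself lies in $\NR(W)$, I pick $v_2$ in a $W$-orbit different from that of $v$; if instead $v \notin \NR(W)$, then since $\NR(W)$ is $W$-invariant, any vertex in $\NR(W) \cap \level{n}$ automatically lies in a different orbit from $v$, so I pick any such $v_2$. In both cases $v \notin W.v_2$. Because $v_2 \in \NR(W)$, the rigid stabilizer $\Rist_G(v_2)$ is not contained in $W$, so $K \defegal \langle W, \Rist_G(v_2)\rangle$ is a strict overgroup of $W$; by weak maximality $[G:K]<\infty$.

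Next I would describe the structure of $K$. The rigid stabilizers $\Rist_G(u)$ for $u \in W.v_2$ have pairwise disjoint supports and thus pairwise commute; their product $N \defegal \prod_{u\in W.v_2}\Rist_G(u)$ is a subgroup normalized by $W$, whence $K = NW$. Since $v \notin W.v_2$ and $v$ is at level $n$, the subtree $T_v$ is disjoint from every $T_u$ with $u \in W.v_2$, so every element of $N$ acts trivially on $T_v$; in particular $\pi_v(N)=\{1\}$ and $N \leq \Stab_G(v)$. For an element $nw \in K$ with $n \in N$ and $w \in W$, the vertex $w(v)$ lies in $W.v$ and is therefore also fixed by $n$; thus $nw(v)=w(v)$, and so $nw \in \Stab_G(v)$ if and only if $w \in \Stab_W(v)$. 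This gives $\Stab_K(v) = N\cdot \Stab_W(v)$, and applying $\pi_v$ annihilates the $N$-factor to yield $\pi_v(\Stab_K(v))=\pi_v(\Stab_W(v))$.

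The proof concludes with the standard inequality
\[
	[\pi_v(\Stab_G(v)):\pi_v(\Stab_W(v))] = [\pi_v(\Stab_G(v)):\pi_v(\Stab_K(v))] \leq [\Stab_G(v):\Stab_K(v)] \leq [G:K]<\infty.
\]
The only conceptual step is the selection of $v_2$ in a different $W$-orbit; once this is in place, the observation that rigid stabilizers at vertices in a different $W$-orbit are killed by $\pi_v$ is essentially immediate, and no further obstacle arises.
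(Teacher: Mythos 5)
Your proof is correct and follows essentially the same route as the paper: adjoin $\Rist_G(v_2)$ for a vertex $v_2\in\NR(W)\cap\level{n}$ lying in a $W$\hyphen orbit different from that of $v$, use weak maximality to conclude that $K=\gen{W,\Rist_G(v_2)}=W\prod_{u\in W.v_2}\Rist_G(u)$ has finite index, and observe that the adjoined rigid stabilizers are killed by $\pi_v$. The only cosmetic difference is that you fix the target vertex $v$ first and then choose $v_2$, whereas the paper fixes the vertex whose rigid stabilizer is adjoined and then lets $u$ range over the other orbits; the substance is identical.
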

\begin{proof}
Let $u$ be any vertex of $\NR(W)\cap\level{n}$ and $W'\coloneqq\gen{W,\Rist_G(u)}$.
This is a finite index subgroup of $G$, and hence for every $v$ of level $n$ the subgroup $H_v=\pi_v\bigl(\Stab_{W'}(v)\bigr)$ has finite index in $\pi_v\bigl(\Stab_{G}(v)\bigr)$.
Now, let $\{u_\alpha\}_{\alpha \in I}$ be the $W$-orbit of $u$.
We have $W'=W\cdot\prod_{\alpha\in I}\Rist_G(u_\alpha)$.
In particular, for every vertex $v$ of level $n$ and for every $g$ in $\Stab_{W'}(v)$ there exists $w\in W$ and $g_\alpha$ in $\Rist_G(u_\alpha)$ (with all the $g_\alpha$ trivial except for a finite number) such that $g=w\prod_{\alpha\in I}g_\alpha$.
Equivalently, we have $w=g\prod_{\alpha\in I}g_\alpha^{-1}$.
If $v$ is not in the $W$-orbit of $u$, then $\treesection{w}{v}=\treesection{g}{v}$ which implies that $\pi_v\bigl(\Stab_{W}(v)\bigr)=H_v$.

Finally, if the action of $W$ on $\NR(W)\cap\level{n}$ is not transitive, then for every $v$ of level $n$ there exists $u$ in $\NR(W)\cap\level{n}$, with $v$ not in the $W$-orbit of $u$ and we are done.
\end{proof}
As a corollary, we prove Lemma~\ref{Lemma:WMaxInfinite}.
\begin{corollary}\label{Cor:WMaxInfinite}
Let $T$ be a non necessarily locally finite tree and let $G\leq\Aut(T)$ be micro-supported.
Then every weakly maximal subgroup of $G$ is infinite.
\end{corollary}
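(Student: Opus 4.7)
The plan is to argue by contradiction: suppose $W$ is a finite weakly maximal subgroup of a weakly rigid $G \le \Aut(T)$. I will combine two observations, one about the non-rigidity tree and one about sections, and derive that $W$ would have to act transitively on arbitrarily large levels, contradicting finiteness.

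First, I would identify $\NR(W)$. Because $W$ is finite, $\Rist_W(v) \le W$ is finite for every vertex $v$, whereas the weak rigidity hypothesis gives that $\Rist_G(v)$ is infinite. Hence $\Rist_W(v)$ has infinite index in $\Rist_G(v)$ for every $v$, so $\NR(W) = T$. (This also confirms, as needed in order for $W$ to have a chance of being weakly maximal, that $W$ has infinite index in $G$: $G$ is infinite, and $W$ being finite in an infinite group has infinite index.)

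Next I would apply Lemma \ref{Lemma:sub-directProductProjects} to $W$ at each level $n$. For every vertex $v$, $\pi_v(\Stab_W(v))$ is a subgroup of $W$, hence finite, while $\pi_v(\Stab_G(v)) \supseteq \pi_v(\Rist_G(v)) = \Rist_G(v)$ is infinite; therefore $\pi_v(\Stab_W(v))$ has infinite index in $\pi_v(\Stab_G(v))$. By the contrapositive of Lemma \ref{Lemma:sub-directProductProjects}, this forces $W$ to act transitively on $\NR(W) \cap \level{n} = \level{n}$ for every $n$.

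Finally, I would conclude by a cardinality argument on the levels. Since every $\Rist_G(v)$ is nontrivial, the subtree below any vertex cannot be a single ray, so by spherical regularity the branching number $m_k$ satisfies $m_k \ge 2$ for infinitely many $k$, whence $|\level{n}| = \prod_{k<n} m_k \to \infty$. For $n$ large enough, $|\level{n}| > |W|$, so $W$ cannot act transitively on $\level{n}$, contradicting the previous step. The main obstacle is simply the bookkeeping connecting the three ingredients (finiteness of $W$, the equality $\NR(W) = T$, and growth of level sizes); once arranged as above, the proof should be short and require no new machinery beyond Lemma \ref{Lemma:sub-directProductProjects} and the very definition of weak rigidity.
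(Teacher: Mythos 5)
Your proof is correct and is essentially the paper's own argument run in contrapositive form: the paper splits into the cases $\NR(W)\neq T$, $\NR(W)=T$ with level-transitive action, and $\NR(W)=T$ without, invoking Lemma \ref{Lemma:sub-directProductProjects} in the last case exactly as you do. Assuming $W$ finite and deriving $\NR(W)=T$, then forced level transitivity, then a cardinality contradiction is the same chain of ideas, so nothing new is needed here.
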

\begin{proof}
If $\NR(W)$ is not equal to $T$, then $W$ contains a finite index subgroup of some $\Rist_G(v)$ and is hence infinite.
If $\NR(W)=T$ and $W$ acts level-transitively on it, then it is infinite.
Finally, the last case is $\NR(W)=T$ but $W$ does not act level-transitively on it.
By the last lemma, $W$ contains a subgroup that projects onto some infinite group and we are done.
\end{proof}
In Corollary~\ref{Cor:ContinuumGenPar} we showed that a branch group $G$ with a torsion element has a continuum of generalized parabolic subgroups that are not parabolic.
If~$G$ is countable, then there is still a continuum of such subgroups, even up to conjugation or up to $\Aut(G)$.
Equivalence of subgroups up to conjugation, or up to $\Aut(G)$ were already considered in~\cite{MR3478865}.
The problem with these two equivalence relations is that each class contains at most countably many elements and hence parabolic subgroups split into many distinct classes.
This motivates the following definition.
\begin{definition}\label{Definition:TreeEquivalence}
Let $G$ be a subgroup of $\Aut(T)$.
Two weakly maximal subgroups $W_1$ and $W_2$ of $G$ are said to be \defi{tree-equivalent} if there is an automorphism of $T$ sending $\NR(W_1)$ to $\NR(W_2)$.
\end{definition}
Observe that this notion a priori depends on the chosen action.

It is possible to consider other equivalence relations on the set of weakly maximal subgroups, for example the fact that $\NR(W_1)$ and $\NR(W_2)$ are conjugated by an element of the profinite completion of $G$, or by an element of the normalizer of $G$ in $\Aut(T)$.
Among these possibilities, being tree-equivalent is the coarser equivalence relation, that is the one with the biggest classes.

It follows from Lemma~\ref{Lemma:InfOrbitsClosure} that
\begin{lemma}
Let $G\leq\Aut(T)$ be an almost level-transitive group and $W$ be a weakly maximal subgroup of $G$.
Then $W$ is tree-equivalent to a parabolic, respectively generalized parabolic, subgroup if and only if $W$ is parabolic, respectively generalized parabolic.
\end{lemma}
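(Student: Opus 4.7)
The ``if'' direction is trivial: every subgroup is tree-equivalent to itself via the identity automorphism of $T$. For the ``only if'' direction, suppose $W$ is tree-equivalent to $W'$ via $\varphi\in\Aut(T)$ with $\varphi(\NR(W'))=\NR(W)$; the induced homeomorphism of $\partial T$ sends $\partial\NR(W')$ onto $\partial\NR(W)$. The plan is to identify $\partial\NR(W')$ with (a subset of) the witness closed set for $W'$ being (generalized) parabolic, transport this across $\varphi$, and then use Lemma~\ref{Lemma:InfOrbitsClosure} together with the weak maximality of $W$ to force $W$ into the same form.

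Writing $W'=\SStab_G(C')$ with $C'$ closed and nowhere dense (which holds for generalized parabolic subgroups by the discussion after the definition, and with $C'=\{\xi\}$ when $W'=\Stab_G(\xi)$ is parabolic), I would first observe that any vertex $v$ with $\partial T_v\cap C'=\emptyset$ satisfies $\Rist_G(v)\subseteq W'$, because $\Rist_G(v)$ fixes everything outside $T_v$ and so trivially preserves $C'$ setwise. Consequently $\NR(W')\subseteq T_{C'}$, so $\partial\NR(W')\subseteq C'$ is nowhere dense (and reduces to $\{\xi\}$ in the parabolic case, using that $\NR(W')$ has no leaves by Lemma~\ref{Lemma:NRTree}.\ref{NoLeaf}). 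Applying $\varphi$, the closed set $\partial\NR(W)=\varphi(\partial\NR(W'))$ is itself nowhere dense in $\partial T$ (a single ray in the parabolic case), and in particular not open. Since $W$ preserves $\NR(W)$ setwise, $W\leq\SStab_G(\partial\NR(W))$; by Lemma~\ref{Lemma:InfOrbitsClosure} and the almost level transitivity of $G$, this stabilizer has infinite index in $G$, so weak maximality of $W$ forces $W=\SStab_G(\partial\NR(W))$. In the parabolic case this reads $W=\Stab_G(\varphi(\xi))$, which is parabolic, and we are done.

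For the generalized parabolic case I still need a witness on which $W$ acts minimally. I would pick any $\eta\in\partial\NR(W)$ and set $C\defegal\overline{W.\eta}\subseteq\partial\NR(W)$; as a subset of a nowhere dense set it is itself nowhere dense, hence non-open, and Lemma~\ref{Lemma:InfOrbitsClosure} combined with weak maximality again yields $W=\SStab_G(C)$. Passing to a minimal closed non-empty $W$-invariant subset $M\subseteq C$, produced by the standard compactness argument on closed subsets of $\partial T$, the same reasoning gives $W=\SStab_G(M)$ with $W\acts M$ minimal, so $W$ is generalized parabolic as required. The most delicate point in the whole argument is the very first step, namely ensuring that $\partial\NR(W')$ is non-empty (so that the transport via $\varphi$ produces a genuinely non-open closed subset of $\partial T$); this is automatic under the usual branch-group regularity through Lemma~\ref{Lemma:NRTree}.\ref{NoLeaf}, but would need extra attention under the bare hypothesis of almost level transitivity.
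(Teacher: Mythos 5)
Your proof is correct and is essentially the expanded version of the paper's one-line argument: the paper derives the lemma from Lemma~\ref{Lemma:InfOrbitsClosure} via the fact that a weakly maximal $W$ is parabolic (respectively generalized parabolic) exactly when $\partial\NR(W)$ is a single ray (respectively closed and non-open), a property preserved by automorphisms of $T$ --- which is precisely the route you take. Your closing caveat is well placed: the no-leaf property of $\NR(W')$, hence $\partial\NR(W')\neq\emptyset$, does use rigidity of $G$, a hypothesis the statement omits but which is in force throughout the surrounding section (cf.\ Lemma~\ref{Lemma:NRTree} and Corollary~\ref{Cor:Equivalences}).
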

\begin{example}
Let $(m_i)_{i\in N}$ be a sequence of integers greater than $1$ and $T=T_{(m_i)}$ the corresponding spherically regular rooted tree.
Let $G$ be one of the group $\Aut(T)$, $\Autfr(T)$ or $\Autf(T)$.
Then tree-equivalence classes of generalized parabolic subgroups of $G$ are in bijection with sequences $(n_i)_{i\in N}$ of integers such that $1\leq n_i\leq m_i$ for all $i$ and such that $n_i<m_i$ for infinitely many $i$.
In particular, there is a continuum of such classes, each containing a continuum of distinct subgroups.

Indeed, this is the last lemma applied to Example~\ref{Example:GenParAutT}.
Lemma~\ref{Lemma:GenParDistinct} ensures that each equivalence class contains a continuum of subgroups.
\end{example}
The above example shows that for $\Aut(T)$, $\Autfr(T)$ and $\Autf(T)$ we have ``a lot'' of ``big'' classes of generalized parabolic subgroups.
But these groups are not finitely generated.
Corollary~\ref{Cor:NumberGeneralizedParabolic2} asserts that we still have this kind of result for more general groups if we allow either ``a lot'' or ``big'' to be weakened a little and mean infinitely many instead of a continuum.

In order to finish the proof of Corollary~\ref{Cor:NumberGeneralizedParabolic2} we will use the fact that in torsion groups weakly maximal subgroups are self-normalizing,~\cite{MR3478865}; that is, they are equal to their normalizer.

We now show this slight generalization of Corollary~\ref{Cor:NumberGeneralizedParabolic2}.
\begin{proposition}
Let $G\leq\Aut(T)$ be almost branch.
\begin{enumerate}
\item
If $G$ is not torsion-free, it contains a continuum of generalized parabolic subgroups that are not parabolic.\label{Prop:Torsion1}
\item
If $G$ has elements of arbitrarily high finite order, there are infinitely many tree-equivalence classes of generalized parabolic subgroups that each contains a continuum of subgroups.\label{Prop:Torsion2}
\item
If $G$ is torsion, there is a continuum of tree-equivalence classes of generalized parabolic subgroups that each contains infinitely many subgroups.\label{Prop:Torsion3}
\end{enumerate}
\end{proposition}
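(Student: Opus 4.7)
Part~(1) is essentially done: it coincides with Corollary~\ref{Cor:ContinuumGenPar}, whose hypothesis ``$G$ almost level transitive rigid and not torsion-free'' matches exactly. The real work lies in parts~(2) and~(3), where we must refine the Baire-type construction of Corollary~\ref{Cor:ExtendTreeEquivalenceClasses} to produce many tree-equivalence classes rather than merely many subgroups.

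For part~(2), the key observation is that any element of order $n$ contains, as a power, an element of prime-power order $p^{a}$ where $p^{a}$ is the largest prime power dividing $n$. Given $g\in G$ of order $n=p^{a}m$ with $\gcd(p,m)=1$, set $h=g^{m}$, which has order exactly $p^{a}$. The element $h^{p^{a-1}}$ has order $p$ and is non-trivial in $\Aut(T)$, so its fixed-ray set is a proper closed subset of $\partial T$; its complement $U$ is non-empty and open, hence of cardinality continuum (as $T$ is locally finite under our hypotheses). Every $\xi\in U$ has $\langle h\rangle$-orbit of size exactly $p^{a}$, giving a continuum of distinct orbits $C$ of that size; each such $C$ is closed, nowhere dense, and stabilized setwise minimally by $\langle h\rangle\subseteq \SStab_{G}(C)$, so $\SStab_{G}(C)$ is generalized parabolic. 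These continuum many subgroups all belong to a single tree-equivalence class, since $\abs{\partial T_{C}}=p^{a}$ is an invariant under $\Aut(T)$. If $G$ has elements of arbitrarily high finite order, the attainable values $p^{a}$ are unbounded (else every order $n$ would satisfy $n\le M^{\pi(M)}$ for some fixed $M$), so distinct $p^{a}$ yield distinct tree-equivalence classes and we obtain infinitely many.

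For part~(3), invoke Theorem~6.9 of~\cite{MR2035113}: a torsion branch group (and by the same argument, a torsion almost level transitive rigid group) has elements of arbitrarily high order, so part~(2) applies. The self-normalizing property of weakly maximal subgroups in torsion groups from~\cite{MR3478865} then gives $N_{G}(W)=W$ for every weakly maximal $W$, so its $G$-conjugacy class has size $[G:W]=\infty$ and sits inside the tree-equivalence class of $W$, delivering the ``each class contains infinitely many subgroups'' clause. The remaining task is to produce continuum many tree-equivalence classes. The plan is to exhibit, for each binary sequence $\epsilon\in\{0,1\}^{\N}$, a torsion subgroup $F_{\epsilon}\le G$ and a ray $\xi_{\epsilon}\in\partial T$ whose orbit-closure $C_{\epsilon}=\overline{F_{\epsilon}\cdot\xi_{\epsilon}}$ has a branching profile encoding $\epsilon$, ensuring that the subtrees $T_{C_{\epsilon}}$ lie in pairwise distinct $\Aut(T)$-orbits. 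The main obstacle is precisely this explicit realization: cardinality alone does not force continuum many classes, since a single tree-equivalence class may \emph{a priori} contain a continuum of generalized parabolic subgroups; constructing continuum many non-isomorphic subtree shapes as non-rigidity trees, exploiting the richness of torsion subgroups in $G$ to vary the branching pattern of orbit-closures, is the crux of the proof.
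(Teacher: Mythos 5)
Part~(1) is fine, and your reduction of part~(2) to elements of unbounded prime-power order is a reasonable variant of the paper's setup. But part~(2) contains a genuine error: you claim that the continuum of $\gen{h}$-orbits of size $p^a$ ``all belong to a single tree-equivalence class, since $\abs{C}=p^a$ is an invariant under $\Aut(T)$.'' The cardinality of $C$ is indeed an $\Aut(T)$-invariant, which is exactly why \emph{different} values of $p^a$ give \emph{different} classes — but it does not force orbits of the \emph{same} cardinality into the \emph{same} class. Two orbits of size $p^a$ can have different branching profiles: in the binary tree, a pair of rays separating at the root and a pair of rays separating at level $5$ are finite sets of the same size whose trees $T_C$ are not exchanged by any automorphism of $T$ (automorphisms fix the root and preserve levels). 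So your continuum of subgroups may spread over many classes, and the clause ``each class contains a continuum'' is not established. This is repairable — either restrict to rays $\xi$ passing through a single vertex $v$ whose $\gen{h}$-orbit already realizes the maximal size $p^a$, so that every such orbit consists of exactly one ray below each translate of $v$ and the trees $T_{\gen{h}.\xi}$ are genuinely all $\Aut(T)$-equivalent (this is precisely what the paper does with its vertices $v_{i,j}$), or observe that finite sets of size $p^a$ fall into only countably many $\Aut(T)$-classes and invoke a cofinality argument — but as written the justification is a non sequitur.

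For part~(3) you have correctly handled the ``infinitely many subgroups per class'' clause via self-normalization and infinite conjugacy classes, but the production of a continuum of tree-equivalence classes — which you yourself identify as the crux — is only a plan, not a proof. The paper's construction is worth comparing against: fix a ray $\xi=(v_i)$ and inductively choose torsion elements $g_j\in\Rist_G(v_{i_j+1})$ moving $\xi$ (these exist by Lemma~\ref{Lemma:RistLevelTransitive}), with the levels $i_{j+1}$ chosen, using finiteness of the orbit of $\xi$ under the torsion group $\gen{g_1,\dots,g_j}$, so that $\Stab_{\gen{g_1,\dots,g_j}}(v)$ acts trivially on $T_v$ for every $v$ of level $i_{j+1}$ in that orbit. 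Then for each binary sequence $\mathbf b$ the set $C_{\mathbf b}\defegal\overline{\gen{g_j^{b_j},\,j\geq1}.\xi}$ is closed, non-open, and minimally acted on; and because each $g_j$ lives in a rigid stabilizer deep along $\xi$ and the earlier generators have ``settled'' by level $i_{j+1}$, the cardinality of the shadow $\level{i_{k+1}}\cap T_{C_{\mathbf b}}$ at the first index $k$ where $\mathbf b$ and $\mathbf b'$ disagree differs, which is an $\Aut(T)$-invariant separating the classes. Without this (or an equivalent) explicit realization of continuum many branching profiles, part~(3) is not proved.
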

\begin{proof}\label{Proof:InfinitelyTreeClasses}
\textit{Proof of~\ref{Prop:Torsion1}.} This is Corollary~\ref{Cor:ContinuumGenPar}.

\textit{Proof of~\ref{Prop:Torsion2}.} Let $(g_i)_{i=1}^n$ be a sequence of elements of $G$ with finite but increasing order.
Let $C_i$ be an orbit of maximal size for $\gen{g_i}\curvearrowright \partial T$.
Up to extracting a subsequence, $(\abs{C_i})_i$ is a strictly increasing sequence of finite numbers.
All the $\SStab_G(C_i)$ are generalized parabolic subgroups and since the $C_i$ have pairwise distinct cardinalities, the $\SStab_G(C_i)$ are in distinct tree-equivalence classes.

On the other hand, since $C_i$ is finite, there exists a vertex $v_i$ of $T$ such that $\gen{g_i}.\{v_i\}$ contains $\abs{C_i}$ elements.
Let $\{v_{i,j}\}_{j=1}^{\abs{C_i}}$ be these elements.
By maximality, for every ray $\xi$ passing through $v_i$, its orbit under $\gen{g_i}$ consists of exactly one ray under each $v_{i,j}$ for $1\leq j\leq C_i$.
That is, for every $\xi$ passing through $v_i$, the subgroup $\SStab_G(\gen{g_i}.\{\xi\})$ is a generalized parabolic subgroup that is tree-equivalent to $\SStab_G(C_i)$.
There is a continuum of such $\xi$ giving raise to pairwise distinct subgroups by Lemma~\ref{Lemma:GenParDistinct}.
This finishes the proof of the second part of the proposition.

\textit{Proof of~\ref{Prop:Torsion3}.}
Suppose now that $G$ is torsion.
Then weakly maximal subgroups are self-normalizing.
In consequence, any weakly maximal subgroup of $G$ has infinitely many conjugates, which are all tree-equivalent.

Now, let $\xi=(v_i)_i$ be a ray in $\partial T$.
By Lemma~\ref{Lemma:RistLevelTransitive}, there exists an element $1\neq g_1$ in $\Rist_G(v_1)$ that moves $\xi$.
Since this element is torsion, the orbit $\gen{g_1}. \xi$ is finite.
In particular, there exists $i_2$ such that $g_1.v_{i_2}\neq v_{i_2}$ and $\Stab_{\gen{g_1}}(v)$ acts trivially on $T_{v}$ for every $v$ of level $i_2$ in the $\gen{g_1}$-orbit of $\xi$.
Let $1\neq g_2$ be any element of $\Rist_G(v_{i_2+1})$ moving $\xi$.
Again, $g_2$ being torsion, there exists $i_3$ such that $g_2.v_{i_3}\neq v_{i_3}$ and $\Stab_{\gen{g_2}}(v)$ acts trivially on $T_{v}$ for every $v$ of level $i_3$ in the $\gen{g_2}$-orbit of $\xi$.
Since $\gen{g_1,g_2}=\gen{g_1}\cdot\gen{g_2}^{\gen{g_1}}$ and the fact that $g_2$ is in $\Rist_G(v_{i_2+1})$, we obtain that $\Stab_{\gen{g_1,g_2}}(v)$ acts trivially on $T_{v}$ for every $v$ of level $i_3$ in the $\gen{g_1,g_2}$-orbit of $\xi$.
By induction, we obtain a sequence of integers $(i_j)_{j\geq 1}$, $i_1=1$, and a sequence of elements $(g_j)$ of $G$ such that $1\neq g_j$ belongs to $\Rist_G(v_{i_j+1})$, moves $v_{i_{j+1}}$ and such that the subgroup $\Stab_{\gen{g_1,\dots, g_j}}(v)$ acts trivially on $T_v$ for every $v$ of level $i_{j+1}$ in the $\gen{g_1,\dots,g_j}$-orbit of $\xi$.
Let $\mathbf b=(b_j)_{j\geq 1}$ be a binary sequence.
We associate to it the subset $C_{\mathbf b}\coloneqq\overline{\gen{g_j^{b_j},j\geq 1}.\xi}$  of $\partial T$ and the subgroup $W_{\mathbf b}\coloneqq\SStab_G(C_{\mathbf b})$.
We claim that all the $W_{\mathbf b}$ are generalized parabolic subgroups of $G$ and that if $\mathbf b\neq\mathbf b'$, then $W_{\mathbf b}$ and $W_{\mathbf b'}$ are not tree-equivalent.
Since there is a continuum of binary sequences, the claim implies the last part of the proposition.

By definition, $C_{\mathbf b}$ is closed and $W_{\mathbf b}$ acts minimally on it.
In order to prove that $W_{\mathbf b}$ is generalized parabolic, it remains to show that $C_{\mathbf b}$ is not open.
For every $v_{i_j+1}$ and every sibling $w$ of $v_{i_j+1}$, no element of $\partial T_w$ is in $\gen{g_j,j\geq 1}.\xi$.
This implies that $\gen{g_j^{b_j},j\geq 1}.\xi$ does not contain $\partial T_{v_{i_j}}$ for every $j$ and so neither does $C_{\mathbf b}$.
Since $\xi$ is in $C_{\mathbf b}$, this proves that this subset of $\partial T$ is not open.

Finally, $W_{\mathbf b}$ and $W_{\mathbf b'}$ are not tree-equivalent if and only if there is no automorphism of $T$ sending $C_{\mathbf b}$ onto $C_{\mathbf b'}$.
Let $k$ be the smallest integer such that $b_k\neq b_k	'$.
One can suppose that $b_k=0$ while $b_k'=1$ and hence $A_k\coloneqq\gen{g_1^{b_1},\dots,g_k^{b_k}}=\gen{g_1^{b_1},\dots,g_{k-1}^{b_{k-1}}}$ while $B_k\coloneqq\gen{g_1^{b_1'},\dots,g_k^{b_k'}}=\gen{g_1^{b_1},\dots,g_{k-1}^{b_{k-1}},g_k}$.
By construction, these two subgroups have the same action on $\level{i_k}$, as well as on $T_v$ for $v\in \level{i_k}$ not on the $A_k$-orbit of $\xi$.
But, $\Stab_{A_k}(v)$ acts trivially on $T_v$ for every $v$ of level $i_{j+1}$ in the $A_k$-orbit of $\xi$, while $\Stab_{B_k}(v)$ contains $g_{k}$ and hence moves $v_{i_{k+1}}$.
We conclude that $\abs{\level{i_{k+1}}\cap A_k.\xi}<\abs{\level{i_{k+1}}\cap B_k.\xi}$.
Since the cardinality of $\level{i_{k+1}}\cap C_{\mathbf b}.\xi$ is equal to the cardinality of $\level{i_{k+1}}\cap \gen{g_1^{b_1},\dots,g_k^{b_k}}.\xi$ we have $\abs{\level{i_{k+1}}\cap C_{\mathbf b}.\xi}\neq\abs{\level{i_{k+1}}\cap C_{\mathbf b'}.\xi}$, which implies that $C_{\mathbf b}$ cannot be sent by an automorphism of $T$ onto $C_{\mathbf b'}$.
\end{proof}
%
%
%
%
%
%
%
%
%
%
%
%
%
%
%
%
\section{Sections of weakly maximal subgroups}\label{Section:Sections}
The aim of this section is to better understand sections of weakly maximal subgroups.
We will also introduce an operation of ``lifting'' subgroups of $\pi_v(G)$ to subgroups of $G$ and show that, for weakly maximal subgroup, it is the inverse of the section operation.

Before going further, we insist on the fact that when we write $\pi_v(G)$, this will always be a shorthand for $\pi_v\bigl(\Stab_G(v)\bigr)$.

First of all, we remark that sections behave nicely with respect to the properties of being branch and just infinite.
\begin{lemma}\label{Lemma:SectionJustInfinite}
Let $P$ be any property in the following list :
\begin{center}
$\{$almost level-transitive, level-transitive, micro-supported, rigid, [just infinite and branch]$\}$.
\end{center}
Let $G$ be a subgroup of $\Aut(T)$. If $G$ has $P$, then for every vertex $v$ the group $\pi_v(G)\leq\Aut(T_v)$ has also $P$.\end{lemma}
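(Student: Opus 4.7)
The plan is to handle the six properties one by one, exploiting a common geometric observation. Write $k$ for the level of $v$ and set $H \defegal \Stab_G(v)$, so that by convention $\pi_v(G) = \pi_v(H)$. Identify level $n$ of $T_v$ with the set of descendants of $v$ at level $n+k$ in $T$. The key observation, used repeatedly, is the \emph{unique ancestor principle}: each descendant of $v$ has $v$ as its unique ancestor at level $k$, so if $g \in G$ maps one descendant of $v$ to another, then $g$ must fix $v$, i.e.\ $g \in H$.

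For (almost) level transitivity this principle is immediate: the $H$-orbits on the descendants of $v$ at level $n+k$ are exactly the intersections of the $G$-orbits on $\level{n+k}$ with $T_v$. Hence $\pi_v(G)$ acts transitively on each level of $T_v$ when $G$ does, and the number of $\pi_v(G)$-orbits per level is bounded by the same constant that bounds the number of $G$-orbits. For weak rigidity, for any vertex $w \in T_v$ the elements of $\Rist_G(w)$ act trivially outside $T_w \subseteq T_v$, so they lie in $H$, $\pi_v$ is injective on $\Rist_G(w)$, and $\pi_v(\Rist_G(w)) \leq \Rist_{\pi_v(G)}(w)$; the latter is therefore infinite.

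For rigidity, observe that $\Rist_G(\level{n+k}) = \prod_{u \in \level{n+k}} \Rist_G(u)$ lies in $H$, because each factor fixes $v$ (as $v$ lies outside every $T_u$ with $u$ at a strictly greater level). Applying $\pi_v$, the factors with $u \not\leq v$ die, while those with $u \leq v$ land inside $\Rist_{\pi_v(G)}(u)$; thus $\pi_v(\Rist_G(\level{n+k})) \leq \Rist_{\pi_v(G)}(\level{n})$. Since $\Rist_G(\level{n+k})$ already has finite index in $G$, and hence in $H$, its image has finite index in $\pi_v(G)$, as required. The branch case then follows by combining this with level transitivity.

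The remaining property, just infiniteness, requires slightly different input. Assuming $H$ has finite index in $G$ (automatic in the locally finite setting relevant here, in particular whenever $G$ is almost level transitive and rigid), a standard argument shows that finite index subgroups of just infinite groups are just infinite, so $H$ is just infinite and $\pi_v(G) = \pi_v(H)$ is one of its quotients, hence either finite or just infinite. The main obstacle is ruling out the degenerate finite case: this cannot be deduced from just infiniteness alone, but comes for free whenever $G$ also satisfies one of the other listed properties forcing $\pi_v(G)$ to be infinite --- for instance weak rigidity, since then $\Rist_G(v)$ embeds into $\pi_v(G)$ by the step above. In particular for branch groups the just infinite case is unconditional.
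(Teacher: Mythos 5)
Your treatment of (almost) level transitivity, (weak) rigidity and branchness is correct and is essentially what the paper has in mind (it dismisses these cases as immediate). The genuine gap is in the just infinite case: the claim that ``finite index subgroups of just infinite groups are just infinite'' is false. That property is called being \emph{hereditarily} just infinite, and just infinite \emph{branch} groups are exactly the standard examples where it fails. Concretely, for a just infinite branch group $G$ and a non-root vertex $v$ of level $n$, the subgroup $H=\Stab_G(v)$ is never just infinite: the kernel of $\restr{\pi_v}{H}$ contains $\prod_{w}\Rist_G(w)$, the product running over the vertices $w\neq v$ of level $n$, hence is an infinite normal subgroup of $H$; and it has infinite index in $H$ because $H/\ker(\restr{\pi_v}{H})\cong\pi_v(G)$ is infinite. (For the first Grigorchuk group, $B\times\{1\}=\Rist_\Grig(0)$ is an infinite normal subgroup of infinite index in $\Stab_\Grig(\level{1})$.) So the chain ``$H$ just infinite $\Rightarrow\pi_v(H)$ is finite or just infinite'' never gets started --- and if $H$ \emph{were} just infinite, the non-trivial kernel of $\restr{\pi_v}{H}$ would force $\pi_v(H)$ to be finite, the opposite of what you want.

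The paper avoids quotient arguments altogether and instead uses Grigorchuk's characterization: a branch group is just infinite if and only if the derived subgroup $\Rist_G(w)'$ has finite index in $\Rist_G(w)$ for every vertex $w$. This criterion does pass to sections: for $w\in T_v$ one has $\pi_v\bigl(\Rist_G(w)\bigr)\leq\Rist_{\pi_v(G)}(w)$ and $\pi_v\bigl(\Rist_G(w)'\bigr)\leq\Rist_{\pi_v(G)}(w)'$, and rigidity of $G$ gives $[\Rist_{\pi_v(G)}(w):\pi_v\bigl(\Rist_G(w)\bigr)]\leq[G:\Rist_G(\level{n})]<\infty$, so that $\Rist_{\pi_v(G)}(w)'$ has finite index in $\Rist_{\pi_v(G)}(w)$. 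Since you have already shown that $\pi_v(G)$ is branch, this yields that $\pi_v(G)$ is just infinite. You should replace your last paragraph by an argument of this kind.
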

\begin{proof}
It is obvious that sections of (almost) level-transitive groups are (almost) level-transitive.
Similarly, sections of micro-supported (respectively rigid) groups are also micro-supported (respectively rigid) and hence sections of branch groups are branch.

By~\cite{MR1765119}, a branch group $G$ is just infinite if and only if for every $n$, the derived subgroup $\Rist_G(\level{n})'$ has finite index in $G$.
Suppose now that $G$ is branch and just infinite. Then for every integer $n$ the subgroup $\Rist_{G}(\level{n+\abs{v}})'$ has finite index in $G$.
It follows that $\pi_v\bigl(\Rist_{G}(\level{n+\abs{v}})'\bigr)$ has finite index in $\pi_v(G)$.
It follows from the definition of $\pi_v$ that $\pi_v\bigl(\Rist_{G}(\level{n+\abs{v}})'\bigr)=\Rist_{\pi_v(G)}(\level{n})'$, which concludes the proof.
\end{proof}
\begin{lemma}\label{WMC:lemmaSections}
Let $G$ be a rigid subgroup of $\Aut(T)$.
Let $W$ be a weakly maximal subgroup of $G$ that is contained in the first level stabilizer.
Then at most one of the first level sections of $W$ is of infinite index.

More precisely, let $\{v_\alpha\}_{\alpha\in I}$ be the collection of the first level vertices.
If $[\pi_{v_\alpha}(G):\pi_{v_\alpha}(W)]$ is of infinite index for some $\alpha$, then $\pi_{v_\alpha}(W)$ is a weakly maximal subgroup of $\pi_{v_\alpha}\bigl(G)$ and for $\beta\neq \alpha$ the section $\pi_{v_\beta}(W)$ contains the section $\pi_{v_\beta}\bigl(\Rist_G(v_\beta)\bigr)$.
\end{lemma}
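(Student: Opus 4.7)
The plan is to fix some first-level vertex $v_\alpha$ for which $[\pi_{v_\alpha}(G):\pi_{v_\alpha}(W)]$ is infinite, establish the two ``more precisely'' assertions, and then deduce the ``at most one'' bound as a corollary.

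For the weak maximality of $\pi_{v_\alpha}(W)$ inside $\pi_{v_\alpha}(G)$, I will argue by contradiction. Suppose there is a proper intermediate subgroup $\pi_{v_\alpha}(W)\subsetneq H\leq\pi_{v_\alpha}(G)$ of infinite index. Lift $H$ back to $G$ by setting $\tilde W\defegal\pi_{v_\alpha}^{-1}(H)\cap\Stab_G(v_\alpha)$. Since $W\leq\Stab_G(\level{1})\leq\Stab_G(v_\alpha)$ and $\pi_{v_\alpha}(W)\leq H$, the subgroup $\tilde W$ contains $W$; the inclusion is strict because $\pi_{v_\alpha}(\tilde W)=H$. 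Moreover $[G:\tilde W]\geq[\Stab_G(v_\alpha):\tilde W]=[\pi_{v_\alpha}(G):H]$ is infinite, contradicting the weak maximality of $W$.

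For the second more precise claim, fix $\beta\neq\alpha$ and suppose by contradiction that $\pi_{v_\beta}(\Rist_G(v_\beta))\not\subseteq\pi_{v_\beta}(W)$. Pick $r\in\Rist_G(v_\beta)$ whose section at $v_\beta$ does not lie in $\pi_{v_\beta}(W)$; in particular $r\notin W$, so $\tilde W\defegal\gen{W,r}$ strictly contains $W$ and weak maximality forces $[G:\tilde W]<\infty$. On the other hand, $r$ acts trivially outside $T_{v_\beta}$ and therefore fixes $v_\alpha$, so $\tilde W\leq\Stab_G(v_\alpha)$ and consequently $[\pi_{v_\alpha}(G):\pi_{v_\alpha}(\tilde W)]$ is finite. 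But $\pi_{v_\alpha}(r)=1$ gives $\pi_{v_\alpha}(\tilde W)=\pi_{v_\alpha}(W)$, which contradicts the hypothesis that this section has infinite index.

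The ``at most one'' assertion then drops out of rigidity. Since $G$ is rigid, $\Rist_G(\level{1})=\prod_\gamma\Rist_G(v_\gamma)$ has finite index in $\Stab_G(\level{1})$, and because each $\Rist_G(v_\gamma)$ with $\gamma\neq\beta$ is killed by $\pi_{v_\beta}$, the group $\pi_{v_\beta}(\Rist_G(v_\beta))$ has finite index in $\pi_{v_\beta}(\Stab_G(\level{1}))=\pi_{v_\beta}(G)$. Combined with the previous paragraph, every $\beta\neq\alpha$ then satisfies $[\pi_{v_\beta}(G):\pi_{v_\beta}(W)]<\infty$. The delicate point is the second contradiction: one must exploit that $W$ stabilises the first level pointwise, so that $\gen{W,r}$ still lies in $\Stab_G(v_\alpha)$ and the clash with the infinite index of $\pi_{v_\alpha}(W)$ can be extracted; without the hypothesis $W\leq\Stab_G(\level{1})$ this strategy would break down.
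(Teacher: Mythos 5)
Your proof is correct and follows essentially the same route as the paper: weak maximality of $\pi_{v_\alpha}(W)$ is obtained by lifting back to $\Stab_G(v_\alpha)$ and using $[G:\tilde W]\geq[\pi_{v_\alpha}(G):\pi_{v_\alpha}(\tilde W)]$, and the second claim by adjoining an element of $\Rist_G(v_\beta)$ and observing that this does not change the section at $v_\alpha$. The only (harmless) differences are that you lift the whole intermediate subgroup rather than a single element, phrase the second step by contradiction rather than directly, and spell out the finite-index argument for $\pi_{v_\beta}\bigl(\Rist_G(v_\beta)\bigr)$ via rigidity, which the paper leaves implicit.
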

\begin{proof}
Suppose that $\pi_{\alpha}(W)\coloneqq\pi_{v_\alpha}(W)$ is of infinite index in $\pi_\alpha(G)$.
If it were not weakly maximal, then we would have an infinite index subgroup $L$ of $\pi_\alpha(G)$ with $\pi_\alpha(W)< L$.
For $h\in L\setminus \pi_\alpha(W)$, there exists $g$ in $\Stab_G(v_\alpha)$ such that $g$ projects onto $h$.
Then we have $W<W'\coloneqq\gen{W,g}\leq \Stab_G(v_\alpha)$.
On the other hand, $\pi_\alpha(W')\leq L$ is of infinite index in $\pi_{\alpha}(G)$ which implies that $W'$ is itself of infinite index in $G$, which is absurd.

On the other hand, let $h_\beta$ be in $\pi_{v_\beta}\bigl(\Rist_G(v_\beta)\bigr)$ and let $g_\beta$ be the only element of $\Rist_G(v_\beta)$ projecting to $h_\beta$.
Let $W'\coloneqq \gen{W,g_\beta}$.
We claim that $W'$ is equal to $W$.
Since $W$ is weakly maximal, it is enough to show that $W'$ is of infinite index.
But the section of $W'$ at $v_\alpha$ is equal to the section of $W$, and hence of infinite index, which finishes the proof.
\end{proof}
The following lemma will be useful in the next section.
\begin{lemma}\label{WMC:lemmaSectionsNotTriv}
Let $G$ be a rigid subgroup of $\Aut(T)$ and let $W$ be a weakly maximal subgroup of $G$.
Then no section of $W$ is trivial.
\end{lemma}
\begin{proof}
The proof is done by contradiction.
Let $v$ be a vertex such that $\pi_v(W)$ is trivial.
By~\cite[Lemma 4.2]{2020arXiv200608677L}, $\pi_v(\Rist_G(v))$ is not virtually cyclic, in particular, $\pi_v(W)=\{1\}$ is not weakly maximal in it.
Hence there exists $h$ in $\pi_v(\Rist_G(v))$ such that $\gen{h}=\gen{\pi_v(W),h}$ is of infinite index in $\pi_v(G)$.
Let $g$ be an element of $\Rist_G(v)$ projecting to $h$ and let $W'\coloneqq\gen{W,g}>W$.

Let $f$ be an element of $\Stab_{W'}(v)$. Then $f=w_1g^{\alpha_1}w_2g^{\alpha_2}\cdots g^{\alpha_n}w_{n+1}$ for some $w_i$ in $W$ with $w_1\cdots w_{n+1}\in\Stab_W(v)$.
We can rewrite $f$ as
\[
	f=w_1g^{\alpha_1}w_1^{-1}
	\cdot \dots\cdot(w_1\cdots w_n)g^{\alpha_n}(w_1\cdots w_n)^{-1}\cdot (w_1\cdots w_{n+1}).
\]
In particular, the section $\pi_v(f)$ is the product of the sections
\[
	\pi_v\bigl((w_1\cdots w_r)g^{\alpha_r}(w_1\cdots w_r)^{-1}\bigr)
\]
for $1\leq r\leq n$.
We claim that $\pi_v\bigl((w_1\cdots w_r)g^{\alpha_r}(w_1\cdots w_r)^{-1}\bigr)=\pi_v(g^{\alpha_r})=h^{\alpha_r}$.
Indeed, if $w_1\cdots w_r$ stabilizes $v$ then its section is trivial and the claim follows. While, if  $w_1\cdots w_r$ moves $v$ the claim follows from the fact that $g$ belongs to the rigid stabilizer of $v$.
We just proved that $\pi_v(W')=\gen{h}$ is of infinite index in $\pi_v(G)$, and hence $W'$ is itself of infinite index in $G$, a contradiction.
\end{proof}
For generalized parabolic subgroups, we even have a characterization of the non-rigidity tree in terms of sections.
\begin{lemma}\label{Lemma:ProjGenParab}
Let $G\leq\Aut(T)$ be almost branch and let
$W\leq G$ be a generalized parabolic subgroup.
Then $\pi_v(W)$ has infinite index in $\pi_v(G)$ if and only if $v$ is in $\NR(W)$. Moreover, if $v$ is in $\NR(W)$, then $\pi_v(W)$ is a weakly maximal subgroup of $\pi_v(G)$.
\end{lemma}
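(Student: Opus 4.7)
The plan is to use the bijection $W\leftrightarrow\partial\NR(W)$ from Lemma~\ref{Lemma:InverseBijections} to write $W=\SStab_G(C)$ with $C=\partial\NR(W)$, so that the condition $v\in\NR(W)$ becomes $v\in T_C$. Setting $C_v:=C\cap\partial T_v$, I will argue by case analysis on whether $v\in T_C$. The easy direction handles $v\notin T_C$: then $C\cap\partial T_v=\emptyset$, so every element of $\Rist_G(v)$ fixes $C$ pointwise and $\Rist_G(v)\leq W$. Since $G$ is rigid, $\Rist_G(\level{|v|})$ has finite index in $G$; together with the identity $\pi_v(\Rist_G(\level{|v|}))=\pi_v(\Rist_G(v))$ (the other level $|v|$ rigid stabilizers project trivially) this shows $\pi_v(\Rist_G(v))$ has finite index in $\pi_v(G)$, whence so does the containing subgroup $\pi_v(W)$.

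Assume now $v\in T_C$. I would first show that $C_v$ is a closed non-open subset of $\partial T_v$: a clopen $C_v=\bigsqcup_i\partial T_{u_i}$ would, by level transitivity of $W$ on $T_C\cap\level{|u_i|}$, force $\partial T_w\subseteq C$ for every $w\in T_C\cap\level{|u_i|}$, making $C$ itself clopen and contradicting its non-openness. Next, the action $\pi_v(W)\acts C_v$ is minimal: any $w\in W$ moving one vertex of $T_{C_v}$ to another must fix $v$ (their common level-$|v|$ ancestor in $T_C$), so $\Stab_W(v)$ acts level-transitively on $T_{C_v}$ and this transitivity descends to the sections at $v$. Combined with the trivial containment $\pi_v(W)\subseteq\SStab_{\pi_v(G)}(C_v)$ and Lemma~\ref{Lemma:SectionJustInfinite}, Proposition~\ref{Proposition:WMStrongRigidSStab} applied inside $\pi_v(G)$ identifies $\SStab_{\pi_v(G)}(C_v)$ as a generalized parabolic subgroup of $\pi_v(G)$, hence weakly maximal of infinite index; the ``only if'' half of the equivalence follows.

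The main obstacle is to upgrade the containment $\pi_v(W)\subseteq\SStab_{\pi_v(G)}(C_v)$ to equality, which is forced by the weak maximality claim since $\SStab_{\pi_v(G)}(C_v)$ itself has infinite index in $\pi_v(G)$. Given $h\in\SStab_{\pi_v(G)}(C_v)$, the plan is to produce $g\in\Stab_W(v)$ with $\pi_v(g)=h$. Writing $K_G:=\ker(\pi_v|_{\Stab_G(v)})$, the subgroups $K_G$ and $\Rist_G(v)$ commute and have trivial intersection, and the index $[\Stab_G(v):K_G\cdot\Rist_G(v)]$ is bounded by $[\pi_v(G):\pi_v(\Rist_G(v))]$, which is finite by rigidity. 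Starting from an arbitrary lift $\tilde g\in\Stab_G(v)$ of $h$, the remaining finite ambiguity is absorbed by a $W$-element realigning the sibling subtrees of $v$---such an element being provided by the level transitivity of $W$ on $T_C\cap\level{|v|}$, which bijects each non-empty $C_{v_\alpha}$ onto any other $C_{v_\beta}$---so that one reduces to the case $\tilde g=kr$ with $k\in K_G$ and $r\in\Rist_G(v)$ satisfying $\pi_v(r)=h$. This $r$ acts as $h$ on $T_v$ (so preserves $C_v$) and trivially off $T_v$ (so preserves $C\setminus C_v$), giving $r\in\Stab_W(v)$ with $\pi_v(r)=h$ as required.
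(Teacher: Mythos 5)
Your overall strategy coincides with the paper's: the easy direction via $\Rist_G(v)\leq W$ for $v\notin T_C$ together with rigidity, and the hard direction via identifying $\pi_v(W)$ with $\SStab_{\pi_v(G)}(C_v)$ and invoking Proposition \ref{Proposition:WMStrongRigidSStab} inside $\pi_v(G)$. You in fact supply more detail than the paper does on the non-openness of $C_v$ and on the minimality of the action $\pi_v(W)\acts C_v$; those parts are fine.

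The gap is in your final step, the inclusion $\SStab_{\pi_v(G)}(C_v)\subseteq\pi_v(W)$. A decomposition $\tilde g=kr$ with $k$ trivial on $T_v$ and $r\in\Rist_G(v)$ exists only when $h\in\pi_v\bigl(\Rist_G(v)\bigr)$, and $\pi_v\bigl(\Rist_G(v)\bigr)$ is merely a finite-index normal subgroup of $\pi_v(G)$. To cover the remaining cosets you propose to ``realign'' by an element of $W$ permuting the siblings of $v$; but such an element either does not fix $v$ (in which case composing it with $\tilde g$ destroys the section at $v$) or lies in $\Stab_W(v)$, in which case its section at $v$ lies in $\pi_v(W)$ --- precisely the group you are trying to compute --- and the reduction is circular. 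Moreover the permutation of the sibling subtrees of $v$ has no bearing on which coset of $\pi_v\bigl(\Rist_G(v)\bigr)$ the element $h$ occupies. What your argument actually establishes is $S\cap\pi_v\bigl(\Rist_G(v)\bigr)\leq\pi_v(W)\leq S$ for $S=\SStab_{\pi_v(G)}(C_v)$, i.e.\ that $\pi_v(W)$ has finite index in $S$. That is not enough: a \emph{proper} finite-index subgroup of a weakly maximal subgroup is never weakly maximal, so both the infinite-index claim's converse and the weak maximality claim hinge on the exact equality $\pi_v(W)=S$, which remains unproved. (This is admittedly the one point where the paper's own proof is also laconic --- it asserts the equality after checking minimality --- but the mechanism you propose does not close it.)
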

\begin{proof}
On one hand, if $v$ is not in $\NR(W)$, then $W$ contains $\Rist_G(v)$ by Lemma~\ref{Lemma:TrivialBranchKernel}.
But, by rigidity of $G$, the subgroup $\pi_v\bigl(\Rist_G(v)\bigr)$ has finite index in $\pi_v(G)$, and so has $\pi_v(W)$.

On the other hand, if $v$ is in $\NR(W)$, then $\pi_v(W)$ is contained in $\SStab_{\pi_v(G)}(C\cap\partial T_v)$.
Since $G$ is almost level-transitive and rigid, so is $\pi_v(G)$, while $C$ closed and nowhere dense implies the same properties for $C\cap \partial T_v$ and $\pi_v(W)$ acts minimally on $C\cap \partial T_v$ since $W$ does so on $C$.
By Proposition~\ref{Proposition:WMStrongRigidSStab} this implies that $\SStab_{\pi_v(G)}(C\cap\partial T_v)$ is weakly maximal in $\pi_v(G)$ and hence of infinite index, and so is $\pi_v(W)$.
It remains to prove that $\pi_v(W)$ is weakly maximal in $\pi_v(G)$. This is done similarly to Lemma~\ref{WMC:lemmaSections}, using that $W$ stabilizes $v$.
\end{proof}
The first Grigorchuk group as well as the torsion \GGS{} groups behave particularly well with respect to sections. This is the content of Lemma~\ref{Lemma:Francoeur} as well as of Corollary~\ref{Lemma:GuptaSidki}.

The following lemma is due to Dominik Francoeur and plays a key role in the proof of the assertion~\ref{Thm:StructItem5} of Theorem~\ref{Thm:StructureWMax}.
We are grateful for his help.
This proof is based on the description made by Grigorchuk and Wilson in~\cite{MR2009443} of subgroups of the first Grigorchuk group $\Grig$.
\begin{lemma}[Francoeur]\label{Lemma:Francoeur}
Let $H\leq \Grig$ be a subgroup acting level-transitively on~$T$.
Then there exists a vertex $v$ such that $\pi_v(H) = \Grig$.
\end{lemma}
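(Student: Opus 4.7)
The plan is to argue by contradiction, assuming that $\pi_v(H) \lneq \Grig$ at every vertex $v$ of $T$, and to derive a contradiction from the description of the subgroup lattice of $\Grig$ given by Grigorchuk and Wilson in \cite{MR2009443}.

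The preliminary observation, which sets up the recursion, is that level transitivity descends to sections: if $H$ acts level transitively on $T$, then for every vertex $v$ the section $\pi_v\bigl(\Stab_H(v)\bigr)$ acts level transitively on $T_v$. Indeed, given two vertices $w_1, w_2 \in T_v$ at the same level of $T$, any $h \in H$ sending $w_1$ to $w_2$ must fix $v$, because $v$ is the unique ancestor of $w_2$ at the corresponding level; hence $\treesection{h}{v}$ lies in $\pi_v\bigl(\Stab_H(v)\bigr)$ and realises the required transitivity on $T_v$.

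Under the contradictory hypothesis, this observation produces, at every vertex $v \in T$, a proper level-transitive subgroup $H_v \defegal \pi_v\bigl(\Stab_H(v)\bigr) \lneq \Grig$, forming a coherent self-similar family: the sections of $H_v$ at vertices of $T_v$ are themselves sections of $H$ at the corresponding descendants in $T$. Thus the supposed failure of the lemma would give rise to a family of proper level-transitive subgroups of $\Grig$, closed under the section operation and inherited from a single subgroup $H$.

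The main obstacle is to rule out this scenario, which is where the description of subgroups of $\Grig$ from \cite{MR2009443} enters. Their classification controls how a subgroup of $\Grig$ can interact with the branching structure, and combined with the congruence subgroup property of $\Grig$ (which forces every finite-index subgroup to contain some level stabiliser $\Stab_\Grig(\level{n})$ and hence yields $\pi_v$-images of finite index in $\Grig$) and with Pervova's theorem that $\Grig$ has no maximal subgroup of infinite index, it precludes the existence of a coherent self-similar family of proper level-transitive subgroups at every vertex. The upshot is that some $H_v$ must coincide with $\Grig$, which is exactly the conclusion of the lemma. The technical heart of the argument is translating the Grigorchuk--Wilson structural information into the precise recursion-terminating statement about level-transitive subgroups needed here; once that is in hand, the induction closes.
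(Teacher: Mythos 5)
Your setup is fine --- the contradiction framework and the observation that level transitivity passes to the sections $\pi_v\bigl(\Stab_H(v)\bigr)$ are both correct and match the paper --- but the proof stops exactly where the real work begins. The entire content of the lemma is the step you describe as ``the technical heart'' and then defer: you assert that the Grigorchuk--Wilson classification ``precludes the existence of a coherent self-similar family of proper level-transitive subgroups,'' but you give no argument for this, and it is not a formal consequence of the ingredients you list. In particular, the congruence subgroup property is of no help here because $H$ is not assumed to have finite index (in the paper's application it has infinite index), and Pervova's theorem on maximal subgroups does not enter the actual argument at all. As written, the proposal is a plan, not a proof.

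For comparison, the paper's argument fills this gap with a specific chain of reductions. Since $\Grig'$ is the Frattini subgroup of $\Grig$, the assumption $\pi_v(H)\neq\Grig$ forces $\pi_v(H)\Grig'\neq\Grig$, so one can run a case analysis on the image of $\pi_v(H)$ in $\Grig/\Grig'\iso C_2^3$. Lemma 6 of \cite{MR2009443} eliminates the cases $\pi_v(H)\Grig'\leq\gen{a,x}\Grig'$ with $x\in\{b,c,d\}$, because each would produce a descendant $w$ with $\pi_w(H)\leq\Stab_\Grig(\level{1})$, contradicting level transitivity. The surviving cases are $\gen{b,ad}\Grig'$, $\gen{c,ab}\Grig'$ and $\gen{d,ac}\Grig'$, and Lemma 5 of \cite{MR2009443} upgrades the congruence mod $\Grig'$ to an actual equality $\pi_v(H)=\gen{b,ad}$, $\gen{c,ab}$ or $\gen{d,ac}$; a direct computation then shows the sections of the stabilizers in these three explicit groups are eventually all of $\Grig$, giving the contradiction. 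None of these steps (the Frattini reduction, the two cited lemmas, the final section computation) appears in your proposal, so the gap is genuine.
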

\begin{proof}
Suppose that this is not the case.
Then, for all $v\in T$, we have $\pi_v(H)\neq \Grig$.
In~\cite{MR1841763}, Pervova proved that the maximal subgroups of $\Grig$ all have index $2$. It follows that the derived subgroup $\Grig'$ is contained in the Frattini subgroup of $\Grig$ (they are in fact equal, see~\cite{MR2195454}).
This implies that $\pi_v(H)\Grig'\neq\Grig$.
It follows from Lemma 6 of~\cite{MR2009443} that if $\pi_v(H)\Grig'\leq \gen{a,x}\Grig'$ for some $x\in\{b,c,d\}$, then there exists some $w\geq v$ such that $\pi_w(H)\leq\Stab_\Grig(\level{1})$.
This is of course absurd, since $H$ acts level-transitively on $T$.
We conclude that $\pi_v(H)\Grig'$ must be either $\gen{b, ad}\Grig'$, $\gen{c, ab}\Grig'$ or $\gen{d, ac}\Grig'$.
Then, by Lemma 5 of~\cite{MR2009443}, we have that $\pi_v(H)$ is actually equal to $\gen{b, ad}$, $\gen{c, ab}$ or $\gen{d, ac}$ respectively.
It is easy to check that the sections of the stabilisers of each of these groups is eventually $\Grig$, which contradicts our hypothesis.
\end{proof}
The following lemma is a generalization to all torsion \GGS{} groups of a result that was originally obtained by Garrido,~\cite{MR3513107}, for Gupta-Sidki groups.
\begin{lemma}\label{Lemma:GGSSections}
Let $G$ be a torsion \GGS{} group and $H$ be a subgroup of $G$ that is not contained in $\Stab_G(\level{1})$.
Then either all first level sections of $H$ are equal to $G$, or they are all contained in $\Stab_G(\level{1})$ so that $\Stab_H(\level{1})=\Stab_H(\level{2})$.
\end{lemma}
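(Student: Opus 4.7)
The plan is to reduce to showing the dichotomy for the first-level sections $K_i \defegal \pi_{v_i}\bigl(\Stab_H(\level{1})\bigr)$. The ``moreover'' clause is immediate: once all $K_i$ lie in $\Stab_G(\level{1})$, every element of $\Stab_H(\level{1})$ has all its first-level sections fixing level~$1$ of the subtrees below, hence fixes level~$2$ of $T$, so $\Stab_H(\level{1}) = \Stab_H(\level{2})$.

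First I would show that the $K_i$ are pairwise $G$\hyphen conjugate. Since $H \not\leq \Stab_G(\level{1})$ and $G/\Stab_G(\level{1}) \cong C_p$ is of prime order, $H$ contains an element $g = (g_0, \ldots, g_{p-1})\varepsilon$ with first-level permutation $\varepsilon$. A direct wreath-product computation shows that for $h = (h_0, \ldots, h_{p-1})$ in $\Stab_H(\level{1})$ the $j$\hyphen th section of $h^g$ equals $g_j^{-1} h_{\varepsilon(j)} g_j$; normality of $\Stab_H(\level{1})$ in $H$ then yields $K_j = g_j^{-1} K_{\varepsilon(j)} g_j$, and iterating shows that all $K_i$ are conjugate in $G$. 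Their common image $\bar K$ in the abelian quotient $G/G' \cong C_p \times C_p$ is therefore well-defined. A first case analysis on $\bar K$: if $\bar K = G/G'$, then $K_i G' = G$, and since all maximal subgroups of a torsion \GGS{} group have finite index, Myropolska's theorem gives $G' \leq \Phi(G)$; the finite generation of $G$ then forces $K_i = G$, yielding the first alternative.

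The main obstacle—and the heart of the argument—will be to rule out $\bar K = \gen{(1, c)}$ for every $c \in \mathbf F_p$, since the remaining possibilities $\bar K = 0$ and $\bar K = \gen{\bar b}$ both give $K_i \leq \Stab_G(\level{1})$, yielding the second alternative. Supposing $\bar K = \gen{(1, c)}$, Lemma~\ref{Lemma:GGSNormalForm} applied to each $h \in \Stab_H(\level{1})$ with associated vector $n(h) = (n_0, \ldots, n_{p-1})$ forces $\beta_j(h) = c\, \alpha_j(h)$ for every $j$, where $\alpha_j = (n \Circ(\mathbf e, 0))_j$ and $\beta_j = n_{j - 1 \bmod p}$. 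Encoding $n$ as the polynomial $n(x) = \sum_k n_k x^k$ in $R \defegal \mathbf F_p[x]/(x^p - 1)$, this system collapses to the single identity $n(x)\bigl(x - c\, \tilde e(x)\bigr) \equiv 0 \pmod{x^p - 1}$ with $\tilde e(x) = \sum_{i=0}^{p-2} e_i x^i$. The finishing step uses that $R$ is the local ring $\mathbf F_p[y]/(y^p)$ under the substitution $y = x - 1$, so units of $R$ are exactly the elements non-vanishing at $x = 1$; the torsion hypothesis $\sum_{i=0}^{p-2} e_i = 0$ gives $\tilde e(1) = 0$, whence $x - c\, \tilde e(x)$ evaluates to $1$ at $x = 1$ and is a unit in $R$. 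Hence $n(x) = 0$, all $\alpha_j$ and $\beta_j$ vanish, $\bar K = 0$, contradicting $\bar K = \gen{(1, c)} \neq 0$ and completing the proof.
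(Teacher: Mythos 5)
Your proof is correct and follows essentially the same route as the paper: reduce to the common image $\bar K$ of the pairwise conjugate first-level sections in $G/G'\cong C_p\times C_p$, dispose of $\bar K=G/G'$ by the Frattini argument, note that $\bar K\leq\gen{\bar b}$ gives the second alternative, and kill the case $\bar K=\gen{\overline{ab^c}}$ via the normal form of Lemma \ref{Lemma:GGSNormalForm} together with the torsion condition $\sum_i e_i=0$. The only difference is cosmetic: where the paper invokes the circulant-matrix invertibility criterion of \cite{MR3152720}, you reprove it by identifying circulants with elements of $\mathbf F_p[x]/(x^p-1)\cong\mathbf F_p[y]/(y^p)$ and checking that $x-c\,\tilde e(x)$ is a unit of this local ring.
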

\begin{proof}
Denote by $H_0=\pi_0(H),\dots,H_{p-1}$ the first level sections of $H$.
Since $H$ does not fix pointwise the first level, they are all conjugated in $G$ and we may thus assume that no $H_i$ is equal to $G$.
Being conjugated in $G$, all the $H_i$ have the same image modulo $G'$, the derived subgroup of $G$.
The possibilities for these are $\{1\}$, $\gen{b}$, $\gen{a}\gen{b}$ or $\gen{ab^k}$ for $k\in \mathbf F_p$.

If $H_iG'=G'$ or $H_iG'=\gen{b}G'$, then $H_i\leq H_iG'\leq\Stab_G(\level{1})$ and $\Stab_H(\level{1})=\Stab_H(\level{2})$ as desired.
We will show that the remaining two cases cannot happen.

If $H_iG'=\gen{a}\gen{b}G'$. Recall that we supposed $H_i\neq G$ and hence $H_i$ is contained in some maximal subgroup $M<G$.
By~\cite{MR2197824}, all maximal subgroups of $G$ have finite index which by~\cite{2015arXiv150908090M} implies that $G'$ is contained in the Frattini subgroup of $G$.
But then $M\geq G'$ and $G=\gen{a}\gen{b} G'=H_iG'\leq MG'=M$, a contradiction.

Finally, suppose that there exists $k\in \mathbf F_p$ such that $H_iG'\leq\gen{ab^k}G'$ for all~$i$.
Let $h=(h_0,\dots,h_{p-1})$ be an element of $\Stab_H(\level{1})$.
Then there exists some $r_0,\dots,r_{p-1}$ in $\mathbf F_p$ such that
\[
	(h_0G',\dots,h_{p-1}G')=\bigl((ab^k)^{r_0}G',\dots,(ab^k)^{r_{p-1}}G'\bigr).
\]
By Lemma~\ref{Lemma:GGSNormalForm} there exists $n_i$ in $\mathbf F_p$ such that 
\[
	(n_0,\dots,n_{p-1})C= (r_0,\dots,r_{p-1}) \quad\textnormal{and}\quad (n_0,\dots,n_{p-1})P=k(r_0,\dots,r_{p-1})
\]
where $P$ is the permutation matrix associated to $(1 2 \dots p)$ and $C$ is the circulant matrix $\Circ(e_0,\dots,e_{p-2},0)$.
These equations are equivalent to 
\[
	(n_0,\dots,n_{p-1})(kCP^{-1}-\Id)=(0,\dots,0)
\]
By~\cite{MR3152720}, if $D=\Circ(a_0,\dots,a_{p-1})$ is a circulant matrix with entries in $\mathbf F_p$, then $D$ is invertible as soon as $\sum_{i=0}^{p-1}a_i\neq 0$.
Since $G$ is of torsion we have $\sum_ie_i=0$ which implies that the matrix $kCP^{-1}-\Id=\Circ(ke_1-1,ke_2,\dots,ke_{p-2},0,ke_0)$ is invertible by the above criterion.
Therefore, $(n_0,\dots,n_{p-1})=(0,\dots,0)$ is the only solution to our equation and all the $h_i$ are in $G'$. That is, $H_iG'=G'$, which finishes the proof.
\end{proof}
This directly implies
\begin{corollary}\label{Lemma:GuptaSidki} Let $G$ be a torsion \GGS{} group and $H\leq G$ be a subgroup acting level-transitively on $T$.
Then, for every vertex $v$ of the first level, $\pi_v(H)=G$.
\end{corollary}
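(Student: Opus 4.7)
The plan is to derive this directly from Lemma~\ref{Lemma:GGSSections}. Since $H$ acts level transitively on $T$, in particular $H$ acts transitively on $\level{1}$, so $H$ is not contained in $\Stab_G(\level{1})$. The hypothesis of Lemma~\ref{Lemma:GGSSections} therefore applies and we are in one of its two alternatives: either $\pi_v(H)=G$ for every vertex $v$ of the first level (which is exactly the conclusion we want), or all first level sections of $H$ lie in $\Stab_G(\level{1})$ and consequently $\Stab_H(\level{1})=\Stab_H(\level{2})$.

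The whole work is to rule out the second alternative. First I would observe that $H$ transitive on level $n$ forces $[H:\Stab_H(\level{n})]=p^n$, because the stabilizer of a vertex $v$ of level $n$ has index equal to the orbit size $p^n$, and $\Stab_H(\level{n})$ is the kernel of the action on $\level{n}$, which has the same index when $H$ is transitive on $\level{n}$ (the $H$-orbit of any vertex has full size and the pointwise and setwise stabilizers of a single orbit coincide up to a factor that is controlled by level transitivity; more carefully, $[H:\Stab_H(\level{n})]$ equals the order of the image of $H$ in $\Sym(\level{n})$, which is at least $|\level{n}|=p^n$ by transitivity). Therefore
\[
[\Stab_H(\level{1}):\Stab_H(\level{2})]=\frac{p^2}{p}=p>1,
\]
contradicting the equality $\Stab_H(\level{1})=\Stab_H(\level{2})$ that the second alternative would impose.

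The only potential obstacle is to justify rigorously the index computation $[H:\Stab_H(\level{n})]=p^n$, which is immediate from level transitivity (the action map $H\to\Sym(\level{n})$ has image of order at least $p^n$, and $\Stab_H(\level{n})$ is its kernel; equality of indices follows from orbit-stabilizer applied to any vertex of $\level{n}$ together with the fact that pointwise-stabilizing all of $\level{n}$ is the same as being in the kernel). With that in hand, the second case of Lemma~\ref{Lemma:GGSSections} is excluded, and the corollary follows.
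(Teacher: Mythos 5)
Your proof is correct and follows the same route as the paper, which simply states that the corollary follows directly from Lemma~\ref{Lemma:GGSSections}; you have usefully spelled out why the second alternative of that lemma cannot occur for a level transitive $H$. One small imprecision: $[H:\Stab_H(\level{n})]$ is the order of the image of $H$ in $\Sym(\level{n})$, which for $n\geq 2$ is in general only bounded below by $p^n$ rather than equal to it (already $[G:\Stab_G(\level{2})]>p^2$ for the Gupta--Sidki $3$\hyphen group), but since $[H:\Stab_H(\level{1})]\leq p$ while $[H:\Stab_H(\level{2})]\geq p^2>p$, the inequality alone rules out $\Stab_H(\level{1})=\Stab_H(\level{2})$ and your argument goes through.
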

Corollary~\ref{Thm:NumberGeneralizedParabolic} is a result about weakly maximal subgroups containing a given ``small'' subgroup of $G$.
We now turn our attention at weakly maximal subgroups contained in a given ``big'' subgroup.
\begin{definition}\label{Definition:Wv}
Let $G$ be a subgroup of $\Aut(T)$, $v$ a vertex of $T$ and $H$ a subgroup of $\pi_v(G)$.
The subgroup $H^{v}$ is defined by
\[
	H^v\coloneqq\setst{g\in\Stab_G(v)}{\treesection{g}{v}\in H}.
\]
\end{definition}
This is the biggest subgroup of $\Stab_G(v)$ such that $\pi_v(H^v)=H$.

Observe that $H^v$ contains $\Rist_G(u)$ for every $u$ not in $T_v$ and 
that the rank of $H$ is at most the rank of $H^v$.

Since $\pi_{g.v}(G)=\pi_v(G)^g$, if $v$ and $w$ are of the same level and $G$ is level-transitive, the subgroups $\pi_v(G)$ and $\pi_w(G)$ are conjugated and hence abstractly isomorphic.
We can define a partial map 
\begin{align*}
	\eta\colon T\times\setst{H\leq \pi_v(G)}{v\in T}&\to\{H\leq G\}\\
\end{align*}
that is defined for couples $(v,H)$ with $H\leq \pi_v(G)$, and in this case $\eta(v,H)=H^v$.
Observe that if $G$ is self-replicating (that is, $\pi_v(G)=G$ for every vertex), then the domain of definition of $\eta$ is exactly $T\times\{H\leq G\}$.

The main application of the following proposition is for self-replicating branch group in which case it implies Theorem~\ref{Thm:CopyOfWeaklyMaxInStab}.
Nevertheless it can also be useful in the more general context of self-replicating families of groups.
\begin{proposition}\label{Prop:NewWmax}
Let $G\leq\Aut(T)$ be a rigid group, $v$ a vertex of $T$ of level $n$ and $H$ a subgroup of $\pi_v(G)$.
Then 
\begin{enumerate}
\item $[G:H^v]$ is finite if and only if $[\pi_v(G):H]$ is finite. More precisely, $[\pi_v(G):H]\leq[G:H^v]\leq [\pi_v(G):H]\cdot[G:\Rist_G(n)]$,\label{Prop:NewWmax1}
\item $H^v$ is weakly maximal if and only if $H$ is weakly maximal,\label{Prop:NewWmax2}
\item If $G$ is finitely generated, then $H^v$ is finitely generated if and only if $H$ is finitely generated,\label{Prop:NewWmax3}
\item For every $v\in T$, the function $\eta(v,\cdot)$ is injective on $\{H\leq\pi_v(G)\}$,\label{Prop:NewWmax4}
\item Let $v\neq u$ be two vertices with $T_v\cong T_u$ so we might identify $\Aut(T_u)\cong\Aut(T_v)$. Let $H$ be a subgroup of $\pi_v(G)\cap\pi_u(G)$ that has infinite index in both $\pi_v(G)$ and $\pi_u(G)$\footnote{In fact $H$ such that $\pi_v(\Rist_G(v))\not\leq H$ and $\pi_u(\Rist_G(u))\not\leq H$ is enough.}. If $H$ has no fixed point except for the root, then $H^v\neq H^u$, \label{Prop:NewWmax5}
\item If $G$ is self-replicating and $H\leq G$ has infinite index and no fixed point except for the root, then $\eta(\cdot, H)$ is injective,\label{Prop:NewWmax5b}
\item $\pi_v(H^v)=H$,\label{Prop:NewWmax7}
\item For $L\leq G$, we have $\bigl(\pi_v(L)\bigr)^v\geq \Stab_L(v)$. Moreover, if $L$ is weakly maximal and $v$ is the unique vertex of level $n$ in $\NR(L)$, then $\bigl(\pi_v(L)\bigr)^v=L$.\label{Prop:NewWmax8}
\end{enumerate}
\end{proposition}
\begin{proof}
\textit{Proof of~\ref{Prop:NewWmax1}.}
We begin by proving a claim that directly implies the first assertion of the proposition, but will also be useful for the rest of the proof.
Let $K$ be a subgroup of $G$ such that $\pi_v(K)=H$ and $K$ contains $\Rist_G(w)$ for every $w$ in $\level{n}\setminus\{v\}$.
We claim that $[G:K]$ is finite if and only if $[\pi_v(G):H]$ is finite and that in this case $[\pi_v(G):H]\leq[G:K]\leq [\pi_v(G):H]\cdot[G:\Rist_G(n)]$.

Since $\pi_v(K)=H$, we have $[G:K]\geq [\pi_v(G):H]$.
On the other hand, let $d=[G:\Rist_G(n)]$ and $d'=[K:K\cap\Rist_G(n)]$.
Since $G$ is rigid, both $d$ and $d'$ are finite and $[G:K]=\frac{d}{d'}[\Rist_G(n):K\cap \Rist_G(n)]$.
The assumption on $K$ implies that $[\Rist_G(n):K\cap \Rist_G(n)]=[\pi_v\bigl(\Rist_G(n)\bigr):\pi_v\bigl(K\cap \Rist_G(n)\bigr)]\leq[\pi_v(G):\pi_v\bigl(K\cap \Rist_G(n)\bigr)]$.
Finally, $[\pi_v(K):\pi_v\bigl(K\cap \Rist_G(n)\bigr)]\leq d'$.
Altogether, this gives us
\[
	[G:K]\leq d\frac{[\pi_v(G):\pi_v\bigl(K\cap \Rist_G(n)\bigr)]}{[\pi_v(K):\pi_v\bigl(K\cap \Rist_G(n)\bigr)]} = d\cdot [\pi_v(G):\pi_v(K)]=d\cdot [\pi_v(G):H].
\]

\textit{Proof of~\ref{Prop:NewWmax2}.}
We now prove that $H^v$ is weakly maximal if $H$ is.
Let $g$ be an element of $G$ that is not in $H^v$.
If $g$ is not in $\Stab_G(v)$, then $\gen{g,H^v}$ contains $\Rist_G(\level{n})$ and is of finite index.
If $g$ is in $\Stab_G(v)$, the subgroup $\gen{g,H^v}$ stabilizes $v$ and thus $\pi_v(\gen{g,H^v})=\gen{\treesection{g}{v},H}$ is a finite index subgroup of $\pi_v(G)$.
By the above claim, this implies that $\gen{g,H^v}$ is a finite index subgroup of $G$, that is that $H^v$ is weakly maximal.

Suppose now that $H^v$ is a weakly maximal subgroup of $G$ and let $g$ be in $\pi_v(G)\setminus H$.
Then there exists $f\in \Stab_G(v)\setminus H^v$ such that $\treesection{f}{v}=g$.
Therefore, the subgroup $\gen{H^v,f}$ is of finite index in $G$ and so is $\pi_v(\gen{H^v,f})=\gen{H,g}$.

\textit{Proof of~\ref{Prop:NewWmax3}.}
We already know that $\rank(H)\leq\rank(H^v)$.
It remains to show that if $H$ is finitely generated, then so is $H^v$.
For that, we turn our attention to the homomorphism
\[
	\pi_v\colon\Stab_G(v)\to \pi_v(G)\qquad g\mapsto \treesection{g}{v}
\]
and more particularly to its kernel.
By definition, $\ker(\pi_v)$ is the set of all elements $g$ in $\Stab_G(v)$ acting trivially on $T_v$.
Let  $R\coloneqq\prod_{v\neq w\in\level{n}}\Rist_G(w)$.
Then $\ker(\pi_v)\cap\Rist_G(\level{n})=R$.
Since $R$ is a quotient of $\Rist_G(\level{n})$, a finite index subgroup of $G$, $R$ is finitely generated if $G$ is.
On the other hand, since $\Rist_G(\level{n})$ is a finite index subgroup of $G$, $R$ has finite index in $\ker(\pi_v)$.
Therefore, $\ker(\pi_v)$ is finitely generated.
We now look at the restriction of $\pi_v$ to $H^v$.
We have $\restr{\pi_v}{H^v}\colon H^v\twoheadrightarrow H$ and $\ker(\restr{\pi_v}{H^v})=\ker(\pi_v)\cap H^v$.
But $\ker(\pi_v)$ is contained in $H^v$, and therefore $\ker(\restr{\pi_v}{H^v})=\ker(\pi_v)$ is finitely generated.
Since $\restr{\pi_v}{H^v}$ is onto $H$, this implies that if $H$ is finitely generated, so is $H^v$.

\textit{Proof of~\ref{Prop:NewWmax4}.}
It follows from $\pi_v(H^v)=H$, that $H^v=K^v$ if and only if $H=K$.

\textit{Proof of~\ref{Prop:NewWmax5}.}
Let $H\leq\pi_v(G)\cap\pi_u(G)$ be a subgroup  with no fixed points (except for the root) that has infinite index in both $\pi_v(G)$ and $\pi_u(G)$ and let $M\coloneqq\gen{H^v,H^u}$.
Then $M$ contains every $\Rist_G(w)$ for $w$ not in $T_v$, but also for every $w$ not in $T_u$.
That is, if $u$ and $v$ are incomparable in $T$ ($v$ is not a descendant of $u$ nor $u$ is a descendant of $v$), $M$ contains a rigid stabilizer of a level and is of finite index in $G$.
In particular, $H^v\neq H^u$.
It remains to treat the case where $u\in T_v$ (the case $v\in T_u$ is done by symmetry).
But then, $M$ contains all $\Rist_G(w)$ for $w\neq u$ of the same level of $u$, and also $\Rist_G(u)$ since the action of $H^v$ on $T_v$ is the same as the action of $H$ and hence moves $u$.

\textit{Proof of~\ref{Prop:NewWmax5b}.} This directly follows from~\ref{Prop:NewWmax5}.

\textit{Proof of~\ref{Prop:NewWmax7}.}
This directly follows from the definitions.

\textit{Proof of~\ref{Prop:NewWmax8}.}
The inequality $\Stab_L(v)\leq\bigl(\pi_v(L)\bigr)^v$ follows from the definitions.
Suppose now that $L$ is weakly maximal and that $v$ is the unique vertex of level $n$ in $\NR(L)$.
Then $L$ stabilizes $v$ and we have $L\leq\bigl(\pi_v(L)\bigr)^v$.
Since $L$ projects onto $\pi_v(L)$, we may use the claim of the beginning of the proof.
Applying this to the pairs $\bigl(L,\pi_v(L)\bigr)$ and $\bigl((\pi_v(L))^v,\pi_v(L)\bigr)$ we obtain that $\bigl(\pi_v(L)\bigr)^v$ is an infinite index subgroup of $G$ containing the weakly maximal subgroup $L$.
They are thus equal.
\end{proof}
As a direct corollary, we obtain this generalization of Theorem~\ref{Thm:CopyOfWeaklyMaxInStab}
\begin{corollary}\label{Corollary:ProjInSelfReplicating}
Let $G$ be a self-replicating rigid group.
Then for any weakly maximal subgroup $W$ of $G$ and any vertex $v$ of $G$, there exists a weakly maximal subgroup $W^v$ of~$G$ that is contained in $\Stab_G(v)$ and with $\pi_v(W^v)=W$.
\end{corollary}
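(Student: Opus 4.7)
The plan is to derive the corollary directly from Proposition \ref{Prop:NewWmax} by feeding it the self-replication hypothesis. Recall that self-replication means $\pi_v\bigl(\Stab_G(v)\bigr) = G$ for every vertex $v$, which is precisely what is abbreviated by $\pi_v(G) = G$ in the convention fixed just before Lemma \ref{Lemma:SectionJustInfinite}. In particular, the weakly maximal subgroup $W \leq G$ can legitimately be regarded as a subgroup of $\pi_v(G)$ for any $v$, so the construction $H \mapsto H^v$ from Definition \ref{Definition:Wv} applies to $H = W$ and produces a well-defined subgroup $W^v \leq \Stab_G(v)$.

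From this point, everything we need is packaged into the statements of Proposition \ref{Prop:NewWmax}. First, by assertion (2) of that proposition, $W^v$ is weakly maximal in $G$ because $W$ is weakly maximal in $\pi_v(G) = G$; this uses rigidity of $G$ but nothing more. Second, by assertion (7), $\pi_v(W^v) = W$, which is immediate from the definition of $W^v$. Finally, containment $W^v \leq \Stab_G(v)$ is built into Definition \ref{Definition:Wv}, so all three conclusions of the corollary hold.

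Consequently, no new argument is required, and there is no real obstacle: the corollary is just the specialization of Proposition \ref{Prop:NewWmax} to the case where self-replication lets one choose the input subgroup to be any weakly maximal subgroup of $G$ rather than merely a subgroup of $\pi_v(G)$. The only thing worth flagging explicitly in the write-up is why $W$ qualifies as an input to $\eta(v, \cdot)$, namely the identification $\pi_v(G) = G$ provided by the self-replicating assumption.
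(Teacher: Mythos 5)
Your proposal is correct and matches the paper exactly: the corollary is stated there as an immediate consequence of Proposition \ref{Prop:NewWmax}, with self-replication supplying $\pi_v(G)=G$ so that $W$ is a valid input to the lifting construction, and assertions (2) and (7) of that proposition giving weak maximality of $W^v$ and $\pi_v(W^v)=W$. Nothing further is needed.
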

More generally, one useful application of Proposition~\ref{Prop:NewWmax} is the possibility to take a weakly maximal subgroup $W$ with non-rigidity tree $S$ and to cut or add some trunk to $S$ to obtain a new subtree $\tilde S$ in $T$ and to automatically have a weakly maximal subgroup $\tilde W$ with $\NR(\tilde W)=\tilde S$. 

The procedure applies to a family of self-replicating groups, but it is easier to describe it for a single self-replicating rigid group $G$.
The \defi{trunk} of a rooted subtree $S$ of $T$ is the maximal subtree $B$ of $S$ containing the root and such that every vertex of $B$ has exactly~$1$ child.
The trunk is equal to the whole subtree $S$ if and only if $S$ is a ray, otherwise it is finite (possibly empty).
The \defi{crown} $C$ of the subtree $S$ is $S$ minus its trunk.
Now, let $W$ be any weakly maximal subgroup of $G$, and let $B$ and $C$ be respectively the trunk and the crown of $\NR(W)$.
Since adding or removing an initial segment to a ray still produces a ray and that parabolic subgroups are always weakly maximal, we may suppose that $C$ is not empty.
Let $v$ be the vertex of $C$ of smallest level.
Then $\pi_v(W)$ is a weakly maximal subgroup of $\pi_v(G)=G$ with empty trunk and with crown isomorphic to $C$ (via the identification of $T$ and $T_v$).
Now, for every vertex $w$ in $T$, the subgroup $\bigl(\pi_v(W)\bigr)^w$ is a weakly maximal subgroup of $G$ with trunk the unique segment from the root to $w$ and with crown isomorphic to $C$ (via the identification of $T_w$ and $T_v$).
%
%
%
%
%
%
%
%
%
\section{Block subgroups}\label{Section:BlockSubgroups}
Other interesting subgroups of groups acting on rooted trees are subgroups with block structure.
It follows from~\cite{MR4344374,FL2019,2024arXiv240215496F} that for the first Grigorchuk group and torsion \GGS{} groups they coincide with the finitely generated subgroups.
Recall that two vertices $u$ and $v$ are incomparable if both $u\nleq v$ and $v\nleq u$, or equivalently if the subtrees $T_u$ and $T_v$ do not intersect.
Block subgroups are subgroups of automorphisms of $T$ that possess a special form. Abstractly, block subgroups are simply direct products, but some of the direct factors may be embedded ``diagonally''. To make this precise, let us first introduce the notion of a diagonal block subgroup. In order to do that, we will need the notion of the \defi{stabilized rigid stabilizer} of a subset $V\subset T$ of pairwise incomparable vertices:
\[
	\SRist_G(V)\coloneqq G\cap\prod_{v\in V}\Rist_{\Aut(T)}(v).
\]
In other words, $\SRist_G(V)$ consists of all elements of $G$ that act trivially outside of $\bigcup_{v\in V}T_v$. Observe that $\SRist_G(V)$ fixes $V$ pointwise.
\begin{definition}\label{defn:diag}
Let $G\leq \Aut(T)$ be a group of automorphisms of $T$ and let $V\subseteq T$ be a finite set of pairwise incomparable vertices of $T$. A subgroup $H\leq G$ is said to be a \defi{diagonal block subgroup with supporting vertex set $V$} if
\begin{enumerate}
\item $H \leq \SRist_G(V)$,
\item $\pi_v(H)$ is of finite index in $\pi_v(\Stab_G(v))$ for all $v\in V$,
\item for all $v\in V$, $\varphi_v$ is injective on $H$.
\end{enumerate}
\end{definition}
In other words, a diagonal block subgroup with supporting vertex set $V$ is a subgroup $H$ that is abstractly isomorphic to a finite index subgroup of $\pi_v(\Stab_G(v))$ for some $v\in V$, but which is embedded in $\SRist_G(V)$ ``diagonally'', in the sense that for every $v,w\in V$ and every $h\in H$, the ``$w$ component of $h$'', $\pi_w(h)$, is uniquely determined by its ``$v$ component'' $\pi_v(h)$. To illustrate, let us see a couple of important examples of diagonal block subgroups.
\begin{example}
Let $G\leq \Aut(T)$ be a branch group and let $v\in T$ be any vertex. Then, the diagonal block subgroups with supporting vertex set $V=\{v\}$ are exactly the finite index subgroups of $\Rist_G(v)$.
\end{example}
\begin{example}
Let $G\leq \Aut(T)$ be a self-replicating regular branch group over a subgroup $K$. Then, for every vertex  $v$, there exists a subgroup $K_{@v}\leq \Rist_K(v)$ such that $K_{@v} \cong K$. Let $f_v\colon K \rightarrow K@v$ be one such isomorphism, and let $v,w\in T$ be two incomparable elements. Then, the subgroup $H=\setst{f_v(k)f_w(k)}{ k\in K}$ is a diagonal block subgroup with supporting vertex set $V=\{v,w\}$. This example can be generalised to bigger sets of incomparable vertices in an obvious way, see Figure~\ref{fig:diag} for an example with supporting vertex set of cardinality $3$.
\end{example}
\begin{figure}[htbp]
\centering
\includegraphics{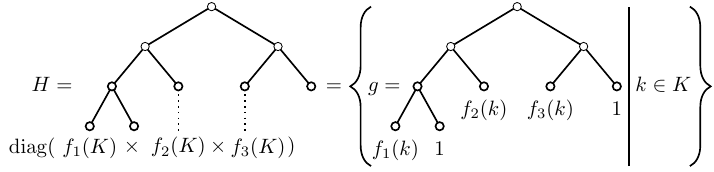}
\caption{A diagonal block subgroup over $K$ with  supporting vertex set of cardinality $3$.}
\label{fig:diag}
\end{figure}
Block subgroups are then simply direct products of diagonal block subgroups over pairwise incomparable supporting vertex sets.

\begin{definition}\label{defn:Block}
Let $G \leq \Aut(T)$ be a group of automorphisms of $T$ and let $H\leq G$ be a subgroup. Let $V=\bigsqcup_{i=1}^{b}V_i\subseteq T$ be a finite set of pairwise incomparable elements of $T$ partitioned into $b$ non-empty subsets, and let us denote by $P=\{V_i\}_{i=1}^{b}$ this partition. We say that $H$ is a \defi{block subgroup with supporting partition $P$} if
\[
	H=H_1\cdots H_b
\]
where each $H_i$ is a diagonal block subgroup with supporting vertex set $V_i$. See Figure~\ref{fig:block} for an example.

We say that a subgroup $H$ of $G$ has a \defi{block structure} if it is virtually a block subgroup.
\end{definition}
\begin{figure}[htbp]
\centering
\includegraphics{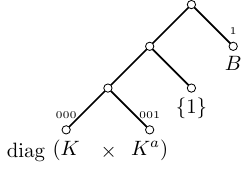}
\caption{A block subgroup with supporting partition $\{\{000,001\},1\}$. Here $H=H_1\cdot H_2$ with $H_1$ a block subgroup over $K$ with supporting partition $\{000,001\}$ and $H_2$ a block subgroup over $B$ with supporting partition $\{B\}$.}
\label{fig:block}
\end{figure}
\begin{remark}
Notice that in the previous definition, each $H_i$ is a subgroup of $\SRist_G(V_i)$. By assumption, when $i\ne j$, any element of $V_i$ is incomparable with any element of $V_j$. Therefore, the subgroups $H_i$ pairwise commute. Thus, a block subgroup is truly a group, and it is isomorphic to a direct product of diagonal block subgroups.
\end{remark}
%
%
The following is a direct consequence of the definitions.
\begin{lemma}\label{Lemma:ImplicationsBlock}
Let $G$ be a rigid group and let $H$ be a block subgroup of infinite index.
Then there exists $v$ such that $\Rist_H(v)$ is trivial
\end{lemma}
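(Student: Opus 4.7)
My plan is to exploit the diagonal structure: an element of a diagonal subgroup $D_i$ with support $U_i$ is completely determined by its action on any single $T_u$ for $u \in U_i$ (via the isomorphism $\psi_u\colon L \to L_u$). So the heart of the argument is the following observation: if $v$ is a vertex such that every $U_i$ contains at least one vertex orthogonal to $v$, then $\Rist_H(v) = \{1\}$. Indeed, writing $H = D_1 \cdots D_n$ as an internal direct product (the factors commute and have pairwise trivial intersection because the supports $U_i$ are pairwise orthogonal), any $h = g_1 \cdots g_n \in \Rist_H(v)$ acts trivially outside $T_v$; in particular, for each $i$, the component $g_i$ acts trivially on $T_u$ for the chosen $u \in U_i$ orthogonal to $v$, so the diagonal constraint forces $g_i = 1$.

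It remains to produce such a $v$. Let $U \defegal \bigcup_{i=1}^n U_i$, which is itself an orthogonal set. I would split into two cases depending on whether $U$ is a transversal. If $U$ is \emph{not} a transversal, then, since orthogonality prevents any ray from meeting $U$ twice, there exists a ray $\xi = (v_k)_{k\geq 1}$ that avoids $U$ entirely. Choose $v = v_k$ for $k$ strictly larger than the maximal level of vertices in the finite set $U$. Then $v$ is neither above, below, nor equal to any $u \in U$, so $v$ is orthogonal to every vertex of $U$, and the observation applies.

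If $U$ is a transversal, I first claim that some $U_i$ must have cardinality $k_i \geq 2$. Otherwise every $D_i$ is a finite-index subgroup of $\Rist_G(u_i)$, so $H$ has finite index in $\prod_{u \in U} \Rist_G(u)$; but since $U$ is a transversal, this latter product contains $\Rist_G(\level{N})$ (with $N$ the maximal level of $U$), which has finite index in $G$ by rigidity, contradicting $[G:H] = \infty$. Picking any $u^* \in U_i$ with $k_i \geq 2$ and setting $v = u^*$, the orthogonality of the $U_j$'s gives: for $j \neq i$, every vertex of $U_j$ is orthogonal to $u^* \in U_i$; for $j = i$, the remaining $k_i - 1 \geq 1$ vertices of $U_i$ are orthogonal to $u^*$. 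So again the observation applies and $\Rist_H(v) = \{1\}$.

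There is no real obstacle here; the only point requiring a bit of care is the uniqueness of the decomposition $h = g_1 \cdots g_n$ (needed to apply the diagonal constraint to each $g_i$ separately), which follows cleanly from the pairwise orthogonality of the supports and the fact that $g_i$ is the restriction of $h$ to the union of the $T_u$ for $u \in U_i$.
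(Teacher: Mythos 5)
Your proof is correct. The paper actually states this lemma without proof (it is introduced as ``a direct consequence of the definitions''), and your argument --- reducing to the observation that $\Rist_H(v)$ dies as soon as every support $U_i$ meets the complement of $T_v$ orthogonally, then producing such a $v$ by splitting on whether $U=\bigcup_i U_i$ is a transversal and using rigidity to force some $k_i\geq 2$ in the transversal case --- is a complete and valid justification of exactly the kind the author leaves implicit.
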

\begin{corollary}\label{Cor:NotBlock}
Let $G$ be a rigid group. Then generalized parabolic subgroups of $G$ do not have a block structure.
\end{corollary}
\begin{proof}
By Lemma~\ref{Lemma:GenParabWeaklyBranch}, the rigid stabilizers of a generalized parabolic subgroup $W$ are all infinite, and therefore $W$ cannot have a block structure.
\end{proof}
\begin{corollary}\label{Corollary:ImplicationsBlock}
Let $G$ be almost branch and let $W$ be a weakly maximal subgroup of $G$ with block structure.
Then there exists $n$ such that for every vertex $v$ of level $n$, the section $\pi_v(W)$ has finite index in $\pi_v(G)$.
\end{corollary}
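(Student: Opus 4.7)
The plan is to exhibit a level $n$ that works explicitly in terms of any given block decomposition of $W$. Writing $A = \prod_{i=1}^{k} D_i \leq W$ for a block subgroup of finite index, with each $D_i$ a diagonal supported on an orthogonal set $U_i \subset T$ as in Definition~\ref{Def:Diagonal}, I set $U = \bigcup_{i} U_i$ and let $n$ be the maximum level of the vertices in $U$. My aim is to verify that $\pi_v\bigl(\Stab_W(v)\bigr)$ has finite index in $\pi_v\bigl(\Stab_G(v)\bigr)$ for every vertex $v$ of level $n$, treating separately the vertices lying below $U$ and those orthogonal to $U$.

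The easy case is $v$ at level $n$ that is a descendant of some $u = u_j \in U_i$. By rigidity, $\Rist_G(\level{m})$ has finite index in $G$ for every $m$, so projecting by $\pi_u$ shows that $\varphi_u(\Rist_G(u))$ has finite index in $\pi_u\bigl(\Stab_G(u)\bigr)$. By Definition~\ref{Def:Diagonal}, the $j$-th fiber $L_{i,j}$ of $D_i$ has finite index in $\varphi_u(\Rist_G(u))$, hence in $\pi_u\bigl(\Stab_G(u)\bigr)$. Consequently $\Stab_{L_{i,j}}(v)$, computed inside $T_u$ via the natural identification $T_u \cong T$, has finite index in the stabilizer of $v$ in $\pi_u\bigl(\Stab_G(u)\bigr)$, and taking $v$-sections (using that sections compose along the chain from the root through $u$ to $v$) shows that $\pi_v\bigl(\Stab_{D_i}(v)\bigr)$ has finite index in $\pi_v\bigl(\Stab_G(v)\bigr)$. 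Since $\Stab_W(v) \supseteq \Stab_{D_i}(v)$, the same holds for $\pi_v\bigl(\Stab_W(v)\bigr)$.

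The harder case is a vertex $v$ at level $n$ orthogonal to $U$. Here $A$ acts trivially on $T_v$, so $\pi_v\bigl(\Stab_A(v)\bigr) = \{1\}$ and $\Rist_A(v) = \{1\}$, which is exactly the content of Lemma~\ref{Lemma:ImplicationsBlock} applied to our $A$. I plan to finesse this case by enlarging $A$ within $W$: whenever $\Rist_G(v) \leq W$ for such an orthogonal $v$, I adjoin $\Rist_G(v)$ as a new degenerate diagonal, producing a block subgroup $A' \leq W$ of finite index with strictly larger support. If this iterative enlargement eventually yields a support covering level $n$, the previous paragraph finishes the proof. The main obstacle is the remaining possibility that some orthogonal $v$ at level $n$ has $\Rist_G(v) \not\leq W$: weak maximality then forces $\gen{W, \Rist_G(v)}$ to have finite index in $G$, and the task is to combine this with Lemma~\ref{Lemma:sub-directProductProjects}, applied at a suitable level $n' \geq n$ where $W$ fails to act transitively on $\NR(W) \cap \level{n'}$, in order to still conclude that $\pi_v\bigl(\Stab_W(v)\bigr)$ has finite index in $\pi_v\bigl(\Stab_G(v)\bigr)$.
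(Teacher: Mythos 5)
Your first case (vertices lying below the support of the block subgroup $A$) is correct and is essentially the final step of the paper's argument. The proof is incomplete exactly where you say it is: the ``remaining possibility'' of a level-$n$ vertex $v$ orthogonal to the support is never resolved, and the tool you propose for it cannot be counted on. Lemma \ref{Lemma:sub-directProductProjects} only applies at a level $n'$ where $W$ fails to act transitively on $\NR(W)\cap\level{n'}$, and nothing in your argument produces such a level; indeed Conjecture \ref{Conj:MinimalActionOnNRTree} asserts that no such level ever exists, and for concrete examples with a block structure such as $W_P$ of Section \ref{Subsection:ExemplesOfWeaklyMaximal} the action on $\NR(W_P)$ is level transitive, so this fallback is simply unavailable. (Note also that if such an $n'$ did exist, Lemma \ref{Lemma:sub-directProductProjects} would by itself give the whole corollary at level $n'$, making the rest of your case analysis superfluous.)

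The missing idea is that the orthogonal case is vacuous, and the efficient way to see this is through orbit-closures rather than by enlarging $A$. If some $v$ of level $n$ were orthogonal to the support, then $A$ would act trivially on $T_v$; since $[W:A]$ is finite, every $\Stab_W(\level{n})$-orbit in $\partial T_v$ would have at most $[W:A]$ elements, hence be closed and not open, so $W\acts\partial T$ would have infinitely many orbit-closures. By Corollary \ref{Cor:Equivalences} this makes $W$ generalized parabolic, and by Lemma \ref{Lemma:GenParabWeaklyBranch} all rigid stabilizers $\Rist_W(w)$ would then be infinite --- contradicting $\Rist_A(v)=\{1\}$ together with $[W:A]<\infty$. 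This is the route the paper takes: a block structure rules out being generalized parabolic, the resulting finiteness of the number of orbit-closures forces the support of $A$ to contain a transversal, and then your descendant case closes the argument.
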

\begin{proof}
We know that $W$ is not generalized parabolic and hence acts on $\partial T$ with finitely many orbit-closures by Corollary~\ref{Cor:Equivalences}.
The same remains true for every $\Stab_W(v)$.
This can happens only if none of the $\pi_v(W)$ are trivial.
By the definition of a block subgroup, this implies that there exists a transversal $X$ such that for every $v\in X$, the section $\pi_v(W)$ has finite index in $\pi_v(G)$.
By taking $n$ to be the maximal level of elements of $X$ we obtain the desired property.
\end{proof}
It follows from the definition that if $T$ is locally finite and $G\leq\Aut(T)$ is finitely generated, then every $H\leq G$ with a block structure is finitely generated.
It is natural to ask whether the converse is true. This is the case if $G$ has the so called subgroup induction property, which we introduce now.
\begin{definition}\label{Definition:StrongSubgroupInduction} Let $G\leq \Aut(T)$ be a self-similar group.
A class $\mathcal{X}$ of subgroups of $G$ is said to be \defi{inductive} if
\begin{enumerate}[(A)]
\item\label{Item:DefSubgroupInduction1}
Both $\{1\}$ and $G$ belong to $\mathcal{X}$,
\item\label{Item:DefSubgroupInduction2}If $H\leq L$ are two subgroups of $G$ with $[L:H]$ finite, then $L$ is in $\mathcal X$ if $H$ is in $\mathcal X$.
\item\label{Item:DefSubgroupInduction3}
If $H$ is a finitely generated subgroup of $\Stab_G(\level 1)$ and all first-level sections of $H$ are in $\mathcal{X}$, then $H\in \mathcal{X}$.
\end{enumerate}

The group $G$ has the \defi{subgroup induction property} if for any inductive class of subgroups $\mathcal X$, every finitely generated subgroup of $G$ is contained in $\mathcal X$.
\end{definition}
A slight modification of the Definition~\ref{Definition:StrongSubgroupInduction} leads to the a priori weaker notion of weak subgroup induction property.
We say that a class $\mathcal{X}$ of subgroups of $G$ is \defi{strongly inductive} if it satisfies~\ref{Item:DefSubgroupInduction1},~\ref{Item:DefSubgroupInduction3} and 
\begin{enumerate}\renewcommand{\theenumi}{B'}
\renewcommand{\labelenumi}{(\theenumi)}
\item\label{Item:StrongInductive}
If $H\leq L$ are two subgroups of $G$ with $[L:H]$ finite, then $L$ is in $\mathcal X$ if and only if $H$ is in $\mathcal X$,
\end{enumerate}
\begin{definition}\label{Definition:SubgroupInduction}
The group $G$ has the \defi{weak subgroup induction property} if for any strongly inductive class of subgroups $\mathcal X$, every finitely generated subgroup of $G$ is contained in $\mathcal X$.
\end{definition}
For super strongly self-replicating groups with the congruence subgroup property, the subgroup induction property and the weak subgroup induction property coincide~\cite{FL2019}, but this is not know to be true in general.
As for now, the only known examples of groups with the subgroup induction property are the first Grigorchuk group~\cite{MR2009443} and the torsion \GGS{} groups~\cite{MR3513107,FL2019}.
The subgroup properties as many interesting consequences and we refer the reader to~\cite{MR4344374,FL2019,2024arXiv240215496F} for a list those as below we will only name the few we will need.
Firstly, a self-similar group with the subgroup induction property is necessarily self-replicating~\cite{FL2019}[Proposition 3.2] and any branch group with the weak subgroup induction property is torsion and just infinite~\cite{FL2019}[Theorem A].
The subgroup induction property has also consequences on the profinite topology on $G$, as well as on maximal and weakly maximal subgroups of $G$
\begin{theorem}[{~\cite{FL2019}[Theorem C]}]\label{Thm:Separable}
Let $G\leq Aut(T)$ be a finitely generated self-similar branch group with the subgroup induction property. Then, finitely generated subgroups of $G$ are closed in the profinite topology. Suppose moreover the intersection of all finite index maximal subgroups of $G$ is not trivial (e.g. if $G$ is a $p$-group). Then, all maximal subgroups of $G$ are of finite index and all of its weakly maximal subgroups are closed in the profinite topology.
\end{theorem}
Then, we have two results about sections.
\begin{proposition}[{\cite{MR4344374}[Proposition 4.3]}]\label{Prop:SubSect1}
Let $G\leq \Aut(T)$ be a self-similar group. Then the following assertions are equivalent.
\begin{enumerate}
\item $G$ possesses the weak subgroup induction property,
\item  For every finitely generated subgroup $H\leq G$, there is a transversal $X$ of $T$ such that for every $v\in X$ the section $\pi_v(\Stab_H(X))$ is either trivial or of finite index in $\pi_v(\Stab_H(X))$.
\end{enumerate}
\end{proposition}
\begin{proposition}[{\cite{2024arXiv240215496F}[Proposition 5.15]}]\label{Prop:SubSect2}
Let $G\leq \Aut(T)$ be a finitely generated self-similar group with the subgroup induction property.
Then, for every finitely generated subgroup $H\leq G$, there exists a transversal $X$ of $T$ such that for every  $v\in X$ the section $\pi_v(\Stab_H(v))$ is either finite or equal to $G$.
\end{proposition}
Observe that Proposition~\ref{Prop:SubSect1} is about sections of $\Stab_H(X)$, while Proposition~\ref{Prop:SubSect1} is about sections of $\Stab_H(v)$. In both cases, the transversal $X$ can be chosen to be equal to $\level n$ for some $n$.

Using the above characterization, we can finally prove Proposition~\ref{Prop:GenParabNotFG}.
Observe that until now, we did not use this proposition.
Therefore the use of Corollary~\ref{Cor:WMaxInfinite} and Lemma~\ref{Lemma:ProjGenParab} in its proof does not create a circular argument.
\begin{lemma}\label{Lemma:GenParabNotFG}
Let $G\leq\Aut(T)$ be an almost branch group with the weak subgroup induction property.
Then generalized parabolic subgroups of $G$ are not finitely generated.
\end{lemma}
\begin{proof}
If $W$ is a generalized parabolic subgroup, any transversal $X$ contains at least one vertex $v$ of $\NR(W)$.
By Lemma~\ref{Lemma:ProjGenParab}, $\pi_v(W)$ is a weakly maximal subgroup of $\pi_v(G)$, and hence an infinite, by Corollary~\ref{Cor:WMaxInfinite}, subgroup of infinite index.
In particular, $\pi_v\bigl(\Stab_W(v)\bigr)$ is neither trivial, nor of finite index in~$\pi_v(G)$.
\end{proof}
We need one last definition before stating the main result of~\cite{2024arXiv240215496F}.
\begin{definition}[\cite{2024arXiv240215496F}]\label{defn:treeprim}
Let $G\leq \Aut(T)$ be a group of automorphisms of $T$.
We say that the action of $G$ on $T$ is \defi{tree-primitive} if for every $n\in \N$, the only $G$-invariant partitions of $\level n$ are the partitions $\{\level{n-m}(T_v)\}_{v\in \level m}$ for $0\leq m \leq n$.
\end{definition}
Roughly, an action of a group of automorphisms of a rooted tree is tree-primitive if it is as primitive as it can be; given that an action on a rooted tree can never be primitive, as it must preserve the partition of the tree into levels; see Definition~\ref{defn:treeprim}.
Examples of branch group with tree-primitive actions include the first Grigorchuk group~\cite{2024arXiv240215496F}[Corollary 4.6] as well as torsion \GGS{} groups~\cite{2024arXiv240215496F}[Corollary 4.7].
\begin{theorem}[\cite{MR4344374,2024arXiv240215496F}]\label{Thm:SubgroupInductionW}
Let $G\leq \Aut(T)$ be a finitely generated self-similar branch group with the subgroup induction property acting tree-primitively on~$T$, and let $H\leq G$ be a finitely generated subgroup of $G$. Then, $H$ has a block structure.
\end{theorem}
We are now able to prove a structural theorem for weakly maximal subgroups of branch groups.
This theorem encompasses Propositions~\ref{Proposition:Dichotomy1} and~\ref{Proposition:Dichotomy2}, as well as Theorem~\ref{Thm:ClassificationGrigGuptaSidki}.
\begin{theorem}\label{Thm:StructureWMax}
Let $G\leq\Aut(T)$ be almost branch and let $W\leq G$ be a weakly maximal subgroup.
For the following properties that $W$ may have,
\begin{enumerate}
\renewcommand\labelenumi{(\alph{enumi})}
\renewcommand\theenumi{\alph{enumi}}
\item $W$ has a block structure\label{Item:GrigA},
\item There exists $n$ such that $\pi_v(W)$ has finite index in $\pi_v(G)$ for every vertex of level~$n$\label{Item:GrigB},
\item There exists $n$ such that $\pi_v(W)=\pi_v(G)$ for every vertex of level~$n$\label{Item:GrigBprime},
\item $W$ is not micro-supported\label{Item:GrigC},
\item $W$ is almost level-transitive\label{Item:GrigD},
\item $W$ is not generalized parabolic\label{Item:GrigE},
\item $W$ is finitely generated\label{Item:GrigF}.
\end{enumerate}
we have the implications

\begin{enumerate}
\item In general\label{Thm:StructItem1}
\[ \begin{tikzcd}[arrows=Rightarrow]
	\eqref{Item:GrigA} \arrow[r] \arrow[d] 	& \eqref{Item:GrigB} \arrow[d]	&\arrow[l]\eqref{Item:GrigBprime} \\%
	\eqref{Item:GrigC} \arrow[r]				& \eqref{Item:GrigD} \arrow[r, Leftrightarrow]&\eqref{Item:GrigE},
\end{tikzcd}\]
\item If $G$ is finitely generated, then $\eqref{Item:GrigA}\implies \eqref{Item:GrigF}$,\label{Thm:StructItem2}
\item
	If $G$ is a finitely generated branch group with the weak subgroup induction property, then $\eqref{Item:GrigF}\implies \eqref{Item:GrigB}$. If moreover $G$ has the subgroup induction property, then $\eqref{Item:GrigF}\implies \eqref{Item:GrigBprime}$,\label{Thm:StructItem3}
\item
	If $G\leq \Aut(T)$ is a finitely generated self-similar branch group with the subgroup induction property acting tree-primitively on $T$, then $\eqref{Item:GrigF}\iff\eqref{Item:GrigBprime}\iff \eqref{Item:GrigA}$,\label{Thm:StructItem4}
\item 
	If $G$ is either the first Grigorchuk group, or a torsion \GGS{} group, then properties \eqref{Item:GrigA} to \eqref{Item:GrigF} are equivalent.\label{Thm:StructItem5}
\end{enumerate}
\end{theorem}
\begin{proof}
\textit{Proof of~\ref{Thm:StructItem1}.}
In every almost branch group, \eqref{Item:GrigD} and \eqref{Item:GrigE} are equivalent by Corollary~\ref{Cor:Equivalences} and they are both implied by \eqref{Item:GrigC} since generalized parabolic subgroups are micro-supported.
It is also clear that \eqref{Item:GrigB} always implies \eqref{Item:GrigD}.
On the other hand, \eqref{Item:GrigA} implies both \eqref{Item:GrigC} and \eqref{Item:GrigB} by Lemma~\ref{Lemma:ImplicationsBlock}  and Corollary~\ref{Corollary:ImplicationsBlock}.
Finally, the implication $\eqref{Item:GrigBprime}\implies\eqref{Item:GrigB}$ is trivial.

\textit{Proof of~\ref{Thm:StructItem2}.}
If $G$ is finitely generated, then \eqref{Item:GrigA} also implies \eqref{Item:GrigF} by the remark before Definition~\ref{Definition:StrongSubgroupInduction}.

\textit{Proof of~\ref{Thm:StructItem3}.}
Since $W$ is weakly maximal, none of its section is trivial by Lemma~\ref{WMC:lemmaSectionsNotTriv}.
We conclude using Propositions~\ref{Prop:SubSect1} or~\ref{Prop:SubSect2}.

\textit{Proof of~\ref{Thm:StructItem4}.}
The implication $\eqref{Item:GrigF}\implies \eqref{Item:GrigA}$ is Theorem~\ref{Thm:SubgroupInductionW}.
But the proof of Theorem~\ref{Thm:SubgroupInductionW} given~\cite{2024arXiv240215496F} is done in two steps. The first step is Proposition~\ref{Prop:SubSect2}. The second and more difficult step consists of showing that if $H$ is a subgroup with a transversal $X$ such that $\pi_v(\Stab_H(v)$ is either finite or equal to $G$ for all $v$ in $X$, then $H$ has a block structure. In other words, the proof~\cite{2024arXiv240215496F}[Theorem 6.13] already gives $\eqref{Item:GrigBprime}\implies \eqref{Item:GrigA}$.

\textit{Proof of~\ref{Thm:StructItem5}.}
Suppose that \eqref{Item:GrigD} holds, that is that $W$ is almost level-transitive.
Then there exists a transversal $X$ such that $\pi_v(W)$ acts level-transitively on $T_v$ for every $v$ in~$X$.
By Lemma~\ref{Lemma:Francoeur} for the first Grigorchuk group or Corollary~\ref{Lemma:GuptaSidki} for the torsion \GGS{} groups, we have another transversal $X'$ such that every section is equal to $G$, which implies \eqref{Item:GrigBprime}.
\end{proof}
Let $G$ be either the first Grigorchuk group, or a torsion \GGS{} group.
We already know that $G$ has a continuum of weakly maximal subgroups.
On the other hand, $G$ is finitely generated and thus has at most countably many weakly maximal subgroups that are finitely generated.
As a corollary of Theorem~\ref{Thm:StructureWMax}, we prove Proposition~\ref{Prop:InfinitelyBlockSubGrps} that says that, in some sense, $G$ has also as many finitely generated weakly maximal subgroups as it is possible.
\begin{proof}[Proof of Proposition~\ref{Prop:InfinitelyBlockSubGrps}.]
Let $\{v_1,\dots,v_p\}$ be the vertices of the first level and let $H$
be the diagonal subgroup $\diag\bigl(\Rist_G(v_1)\times\dots\times\dots\times\Rist_G(v_p)\bigr)$.
This is an infinite index subgroup and it is hence contained in a weakly maximal subgroup $W$.
The subgroup $H$, and hence also $W$, acts on $\partial T$ with finitely many orbit-closures.
In particular, $W$ is not generalized parabolic.

For $v$ a vertex in $T$, let $W^v$ be the subgroup of $G$ acting like $W$ on $T_v$, see Definition~\ref{Definition:Wv}.
By Proposition~\ref{Prop:NewWmax}, all the subgroups $W^v$ are weakly maximal and none of them is generalized parabolic.
Moreover, it follows from the description of $\NR(W)$ and from the discussion after Corollary~\ref{Corollary:ProjInSelfReplicating}, that if $v$ and $w$ are vertices of different level, then $W^v$ and $W^w$ are not tree equivalent.
Finally, since the $W^v$ are weakly maximal, they are self-normalizing by~\cite{MR3478865} and hence have infinitely many conjugate.
Every conjugate of $W^v$ is still a non generalized parabolic weakly maximal subgroup that is tree-equivalent to $W^v$ since $\NR(gW^vg^{-1})=\NR(W^v)^g$.
Since none of these subgroups are generalized parabolic, they all have a block structure.
\end{proof}
It is natural to ask if all properties listed in Theorem~\ref{Thm:StructureWMax} are always equivalent.
First of all, Properties~\eqref{Item:GrigB} and~\eqref{Item:GrigBprime} are very similar and are equivalent if and only id finite index subgroups of $G$ have sections (of a deep enough level) equal to~$G$.
This happens for example whenever $G$ is a super strongly self-replicating group with the congruence subgroup property.
In~\cite{MR3886188} Francoeur and Garrido classified maximal subgroups of non-torsion \v Suni\'c groups acting on the binary rooted tree and that are different from the infinite dihedral group.
All groups in this family are finitely generated, have trivial congruence kernel (and hence trivial branch kernel), are just infinite, branch and self-replicating.
Francoeur and Garrido exhibited maximal subgroups of infinite index that are finitely generated, micro-supported (that is, do not have property \eqref{Item:GrigC}) and do not have property \eqref{Item:GrigB} of Theorem~\ref{Thm:StructureWMax}.
Such subgroups are not closed in the profinite topology and hence are not generalized parabolic (that is, have property \eqref{Item:GrigE}).
To summarize this, even when restricted to finitely generated branch group that are just infinite, self-replicating and have the congruence subgroup property, \eqref{Item:GrigD} does not imply \eqref{Item:GrigB} or \eqref{Item:GrigC}, nor does \eqref{Item:GrigF} imply~\eqref{Item:GrigA}.

We call a weakly maximal subgroup that is both micro-supported and almost level-transitive (that is, with property \eqref{Item:GrigD} but not property \eqref{Item:GrigC} of Theorem~\ref{Thm:StructureWMax}) \defi{exotic}.
Every maximal subgroup of infinite index is exotic, and to our knowledge, all known examples of branch groups $G$ that have an exotic weakly maximal subgroup also have a maximal subgroup of infinite index.
\begin{remark}\label{Remark:Trichotomy}
If $G$ is such that all weakly maximal subgroups that are not micro-supported have a block structure, then we have a trichotomy for weakly maximal subgroups of $G$.
They are either generalized parabolic (if and only if they are micro-supported but not almost level-transitive), or have a block structure (if and only if they are almost level-transitive but not micro-supported) or they are exotic (both micro-supported and almost level-transitive).
\end{remark}

On the other hand, we are inclined to believe that for a large class of groups, properties \eqref{Item:GrigB} and \eqref{Item:GrigC} of Theorem~\ref{Thm:StructureWMax} are equivalent.
The heuristic behind this is that a subgroup with \eqref{Item:GrigB} but not \eqref{Item:GrigC} should be ``too big'' (that is, of finite index), while a subgroup with \eqref{Item:GrigC} but not \eqref{Item:GrigB} should be ``too small'' (that is, not weakly maximal).

In view of the above discussion we formulate the following questions.
\begin{question}\label{Question:4and6}
Is it true that in any branch group, properties \eqref{Item:GrigB} and \eqref{Item:GrigC} of Theorem~\ref{Thm:StructureWMax} are equivalent?
\end{question}
\begin{question}
Let $G$ be a branch group.
Suppose that $G$ does not have maximal subgroup of infinite index.
Does this imply that $G$ does not have exotic (both almost level-transitive and micro-supported) weakly maximal subgroup?
\end{question}
For both questions, if the answer happens to be negative, it would be interesting to both have a counterexample and to find a ``large'' class of branch groups in which the answer is true.

If Question~\ref{Question:4and6} admits a positive answer, then Remark~\ref{Remark:Trichotomy} apply to all just infinite branch groups.
%
%
%
%
%
%
%
%
%
%
%
%
%
%
%
%
\section{Level-transitive weakly maximal subgroups}\label{Section:LevelTransitive}
The aim of this section is to exhibit examples of weakly maximal subgroups that are level-transitive.
%
%
%
%
%
%
%
%
%
%
%
%
%
%
%
Recall that a subgroup $A$ in a branch group $G$ is dense for the congruence topology if and only if $\bigl(A\Stab_G(n)\bigr)/\Stab_G(n)=G/\Stab_G(n)$ for every~$n$.
On the other hand, $A$ is level-transitive if and only if, for every $n$ the subgroup $\bigl(A\Stab_G(n)\bigr)/\Stab_G(n)\leq G/\Stab_G(n)$ acts transitively on the $n$\textsuperscript{th} level of the tree.
This directly implies
\begin{fact}
In a branch group, every maximal subgroup of infinite index is a level-transitive weakly maximal subgroup.
\end{fact}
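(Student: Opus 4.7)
The plan is to combine the two characterizations stated just before the fact: first show that a maximal subgroup $M$ of infinite index is dense in the congruence topology, and then deduce level transitivity from the characterization of density, exploiting that a branch group is by definition level transitive.

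First I would fix $n \in \N$ and consider $M\Stab_G(n)$. This is a subgroup of $G$ containing $M$, so by maximality of $M$ it is either $M$ itself or equal to $G$. The first alternative would mean $\Stab_G(n) \leq M$; since $\Stab_G(n)$ has finite index in $G$, this would force $M$ to have finite index, contradicting the hypothesis. Hence $M\Stab_G(n) = G$ for every $n$, i.e.\ $M$ is dense in the congruence topology. By the characterization recalled before the statement, $M\Stab_G(n)/\Stab_G(n) = G/\Stab_G(n)$ acts on $\level{n}$ exactly as $G/\Stab_G(n)$ does, and since $G$ is branch (in particular level transitive), this action is transitive. Therefore $M$ acts transitively on each level, which is precisely level transitivity.

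Finally, that $M$ is weakly maximal is essentially automatic: any subgroup of $G$ strictly containing $M$ equals $G$ by maximality of $M$, so $M$ is in particular maximal among the subgroups of infinite index.

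There is no real obstacle here; the whole argument is a one-line maximality dichotomy followed by an application of the dictionary between congruence density and level transitivity. The only point worth flagging is that one must use that branch groups are level transitive by definition (otherwise the implication ``dense in congruence topology $\Rightarrow$ level transitive'' would not yield anything), which is why the statement is phrased for branch groups rather than for arbitrary subgroups of $\Aut(T)$.
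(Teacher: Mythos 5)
Your argument is correct and is exactly the one the paper intends: the maximality dichotomy applied to $M\Stab_G(\level{n})$ (a subgroup since $\Stab_G(\level{n})$ is normal of finite index) forces $M\Stab_G(\level{n})=G$, and the dictionary between congruence density and level transitivity recalled just before the statement does the rest. No differences worth noting.
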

As we will see, some branch groups have level-transitive weakly maximal subgroups that are not maximal.
We will first outline a method to search for level-transitive weakly maximal subgroups in branch groups acting on the $2$-regular rooted tree.
We then illustrate this method in the particular example of the Grigrochuk group.
Finally, for every branch group $G$ we will construct some extension $\check G$ which will retains a lot of the properties of $G$ and show that $\check G$ always has a level-transitive weakly maximal subgroup.

All these constructions are variations of the following simple example.
\begin{example}
Let $T$ be a locally finite tree.
Let $G$ be a subgroup of $\Aut(T)$ such that $\Rist_G(\level{1})=G\times \dots\times G$ and such that the subgroup $R$ of rooted automorphisms of $G$ (elements of $G$ that rigidly permutes the trees rooted at the first-level vertices) acts transitively on the first level.
For example, $G$ is one of $\Aut(T)$, $\Autfr(T)$ or $\Autf(T)$, or is Bondarenko's example from~\cite{MR2727305}.
Then $\gen{R,\diag(G\times\dots\times G)}$ is an infinite index subgroup of $G$ acting level-transitively on $G$.
\end{example}
The condition on $\Rist_G(\level{1})$ in the above example is pretty restrictive.
Nevertheless, while the groups $\Aut(T)$, $\Autfr(T)$ and $\Autf(T)$ are never finitely generated, Bondarenko's example is finitely generated.
However, it is possible to adapt a little bit the construction in order to deal with other branch groups.
\begin{lemma}\label{Lemma:DiagonalTransitive}
Let $G$ be a weakly branch group acting on a $2$-regular tree.
Suppose that $G$ contains $a=(01)$ the automorphism of $T$ permuting rigidly $T_0$ and $T_1$.
Suppose moreover that there exists a subgroup $A$ of $G$ and an automorphism $\varphi$ of $G$ such that:
\begin{enumerate}
\item $\varphi^2(A)=A$ and the restriction of $\varphi^2$ on $A$ is the identity,
\item $\diag\bigl(A\times \varphi(A)\bigr)=\gen{\bigl(g,\varphi(g)\bigr)}_{g\in A}$ is a subgroup of $G$,
\item $\gen{A,\varphi(A)}$ acts level-transitively on $T$.
\end{enumerate}
Then the subgroup $L\coloneqq\gen{a,\diag\bigl(A\times \varphi(A)\bigr)}$ is an infinite index subgroup of $G$ that acts level-transitively on $T$.
\end{lemma}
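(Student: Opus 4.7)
The plan is to prove the two assertions separately: level transitivity of $L$ on $T$, and infinite index of $L$ in $G$. The crucial preliminary computation is an explicit description of $L \cap \Stab_G(\level{1})$.

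First I would show that every element of $L$ can be written as $d \cdot a^k$ with $d \in \langle D, aDa^{-1}\rangle$ (where $D = \diag(A\times\varphi(A))$) and $k \in \{0,1\}$, using the relation $a\cdot d = (a d a^{-1})\cdot a$ and the fact that $a^2=1$. Since the quotient $L/(L\cap\Stab_G(\level{1}))$ embeds into $\Sym(\level{1}) = \Z/2$ with $a$ mapping nontrivially and $D$ mapping trivially, this gives $L\cap\Stab_G(\level{1}) = \langle D, aDa^{-1}\rangle$. For level transitivity, $a$ ensures transitivity on $\level{1}$, so it is enough to show that the section of $L\cap\Stab_G(\level{1})$ at the vertex $0$ acts level transitively on $T_0$ (the vertex $1$ case being symmetric). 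But $D$ projects at $0$ onto $A$, while $aDa^{-1}$ projects at $0$ onto $\varphi(A)$, so this section contains $\langle A,\varphi(A)\rangle$, which acts level transitively on $T$ by hypothesis (iii).

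For infinite index, I would compute $aDa^{-1} = \{(\varphi(g), g) : g\in A\}$. Using $\varphi^2|_A = \id$ (and hence $\varphi^2 = \id$ on $\varphi(A)$, so on $\langle A,\varphi(A)\rangle$), the element $(\varphi(g), g)$ rewrites as $(\varphi(g), \varphi(\varphi(g)))$, i.e.\ lies in $\{(x,\varphi(x)) : x\in\varphi(A)\}$. Therefore both generating subgroups $D$ and $aDa^{-1}$ are contained in the ``twisted diagonal'' $\Delta \defegal \{(x,\varphi(x)) : x\in\langle A,\varphi(A)\rangle\}$, and so
\[
   L\cap\Stab_G(\level{1}) \;=\; \langle D, aDa^{-1}\rangle \;\subseteq\; \Delta.
\]
In particular, any element $(x,y)$ of $L\cap\Stab_G(\level{1})$ satisfies $y = \varphi(x)$.

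Finally, $\Rist_G(0)$ consists of elements acting trivially on $T_1$, i.e.\ of the form $(g,1)$. If $(g,1)$ lies in $L$ then it lies in $L\cap\Stab_G(\level{1}) \subseteq \Delta$, forcing $\varphi(g) = 1$, hence $g = 1$ since $\varphi$ is an automorphism. Thus $L\cap\Rist_G(0) = \{1\}$, so the natural map $\Rist_G(0)\to G/L$ is injective, and weak branchness of $G$ gives $|\Rist_G(0)| = \infty$, whence $[G:L] = \infty$. The main delicate point is the identification $L\cap\Stab_G(\level{1}) = \langle D, aDa^{-1}\rangle \subseteq \Delta$, which requires both the normal-form observation on $L$ and the use of $\varphi^2|_A = \id$ to realize $aDa^{-1}$ inside the twisted diagonal; everything else is then a direct consequence.
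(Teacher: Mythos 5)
Your proof is correct and follows essentially the same route as the paper: level transitivity via transitivity of $a$ on $\level{1}$ plus the sections $A$ and $\varphi(A)$ of $D$ and $aDa^{-1}$, and infinite index by showing $\Rist_L(0)=\{1\}$ using $\varphi^2|_A=\id$ (the paper writes out the generic element of $\gen{D,aDa^{-1}}$ and applies $\varphi$ to its second coordinate, which is exactly your twisted-diagonal containment in concrete form). The extra care you take with the normal form $d\cdot a^k$ and the identification $L\cap\Stab_G(\level{1})=\gen{D,aDa^{-1}}$ is a welcome elaboration of what the paper leaves implicit.
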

\begin{proof}
The second condition implies that $L$ is a subgroup of $G$.
The third condition and the fact that $a$ acts transitively on the first level imply that $L$ acts level-transitively.
Finally, by definition of $a$, the rigid stabilizer in $L$ of the vertex $0$ consists of elements of the form
\[
	(g_1\varphi(h_1)\cdots g_n\varphi(h_n),\varphi(g_1)h_1\cdots \varphi(g_n)h_n)
\]
where the $g_i$ and $h_i$ belong to $A$ and $\varphi(g_1)h_1\cdots \varphi(g_n)h_n=1$.
Since $\varphi^2$ is the identity on $A$, this implies that $\Rist_L(0)=\{1\}$ and therefore that $L$ is of infinite index in $G$ as soon as $G$ is weakly branch.
\end{proof}
We now give the first example of a level-transitive weakly maximal subgroup for the particular case of the first Grigorchuk group.
\begin{lemma}\label{Lemma:LevelTransGrigorchuk}
The subgroup
\[
	W_T\coloneqq\gen{a,\diag(\gen{b,ac}\times \gen{b,ac}^a)}=\gen{a,bab,cadab}
\]
is a level-transitive weakly maximal subgroup of the first Grigorchuk group $\Grig$.
\end{lemma}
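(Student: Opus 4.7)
The proof naturally splits into two parts: establishing the structural properties via Lemma~\ref{Lemma:DiagonalTransitive}, and then deducing weak maximality. For the first part, I apply the lemma with $A = \gen{b, ac}$ and $\varphi$ the conjugation by $a$. Condition~(i), that $\varphi^2|_A = \id$, follows immediately from $a^2 = 1$. For condition~(ii), I must show that the two generators $(ac, ca)$ and $(b, aba)$ of $\diag(A \times A^a)$ lie in $\Grig$. A direct wreath-product computation yields $bab = (ac, ca)\sigma$, whence $baba = (ac, ca) \in \Grig$. For the second, the Grigorchuk relation $bcd = 1$ implies $dc = b$; then a step-by-step calculation gives $cadab = (aba, dc) = (aba, b)$, and conjugation by $a$ swaps the two sections, producing $a \cdot cadab \cdot a = (b, aba) \in \Grig$.

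For condition~(iii), the level transitivity of $\gen{A, A^a}$: transitivity at level~$1$ is given by $ac$, and for deeper levels I observe that the section at vertex~$0$ of $\gen{A, A^a} \cap \Stab_\Grig(\level{1})$ contains $\pi_0(b) = a$, $\pi_0(aba) = c$, and $\pi_0((ac)^2) = da$; hence it contains $\gen{a, c, d}$ (since $d = (da) \cdot a$), which equals $\Grig$ (as $b = dc$ from the relation $bcd=1$). Thus the section realises all of $\Grig$, which is level transitive on $T_0$, and combined with level-$1$ transitivity this gives level transitivity on $T$. Lemma~\ref{Lemma:DiagonalTransitive} then produces $L = \gen{a, \diag(A \times A^a)}$ as an infinite-index, level-transitive subgroup with $\Rist_L(0) = \{1\}$. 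The equality $L = \gen{a, bab, cadab}$ is immediate from the identities $bab \cdot a = (ac, ca)$ and $a \cdot cadab \cdot a = (b, aba)$, which show both inclusions.

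The main obstacle is upgrading ``infinite index and level transitive'' to ``weakly maximal''. Let $H \leq \Grig$ satisfy $W_L \subsetneq H$. Pick $g \in H \setminus W_L$; after multiplication by $a \in W_L$ if needed, assume $g = (g_0, g_1) \in \Stab_\Grig(\level{1})$. Since $W_L \cap \Stab_\Grig(\level{1}) = \diag(A \times A^a)$, when $g_0 \in A$ the product $g \cdot (g_0, g_0^a)^{-1} = (1, g_1(g_0^a)^{-1})$ is a non-trivial element of $H \cap \Rist_\Grig(1)$; the case $g_0 \notin A$ forces $\pi_0(\Stab_H(0)) \supsetneq A$, from which an analogous extraction yields a non-trivial rigid element at level $1$. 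By level transitivity of $W_L$ and conjugation by $a$, $H$ acquires non-trivial rigid elements at every vertex of level~$1$. The final escalation, from a single non-trivial rigid element to a finite-index subgroup of some $\Rist_\Grig(\level{n})$ (which forces $[\Grig : H] < \infty$), uses the self-replicating and branch structure of $\Grig$ together with the fact that $\pi_{00}(W_L) = \Grig$; this last identity follows from the computation $\pi_0(A \cap \Stab_\Grig(\level{1})) \supseteq \gen{a, c, d} = \Grig$ made in paragraph~2. This escalation is the principal technical difficulty, requiring a careful inductive argument that exploits the just-infinite branch structure of $\Grig$ to propagate a single rigid element up the tree.
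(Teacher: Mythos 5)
Your first half is sound and follows the paper's route: you verify the hypotheses of Lemma~\ref{Lemma:DiagonalTransitive} for $A=\gen{b,ac}$ and $\varphi=(\cdot)^a$, and your check of level transitivity by computing the sections $\pi_0(b)=a$, $\pi_0(aba)=c$, $\pi_0\bigl((ac)^2\bigr)=da$ is a valid (and slightly more direct) alternative to the paper's argument via $K\leq J_{0,5}$ and $K\times K\leq K$.

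The weak maximality argument has a genuine gap, located exactly where the paper's proof (Lemma~\ref{Lemma:WLWMax}) does its real work. Your extraction $g\cdot(g_0,g_0^a)^{-1}=(1,g_1(g_0^a)^{-1})$ is only available when $g_0\in A$: since $W_L\cap\Stab_\Grig(\level{1})=\diag(A\times A^a)$ and $[\Grig:A]=2$, when $g_0\notin A$ there is no element of $W_L\cap\Stab_\Grig(\level{1})$ whose first coordinate equals $g_0^{-1}$, so the first coordinate cannot be killed this way, and the claim that ``an analogous extraction yields a non-trivial rigid element'' is unjustified. Nor does any naive variant work: $g^2=(g_0^2,g_1^2)$ does have $g_0^2\in A$, but the resulting element $(1,g_1^2((g_0^2)^a)^{-1})$ may well be trivial (for instance when $g$ is an involution), and a priori $\gen{W_L,g}\cap\Stab_\Grig(\level{1})$ could be the graph of a homomorphism extending $h\mapsto h^a$, in which case it meets $\Rist_\Grig(1)$ trivially. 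The paper handles this case by normalising the second coordinate to $a$, writing $x=(yc,a)$ with $y\in B$, exhibiting the two elements $(1,yaya)$ and $(1,ybayba)$ of $\gen{W_L,x}$, and proving by an explicit computation with $y=(t_1,t_2)$ that they cannot both be trivial. Some argument of this kind is unavoidable and is absent from your proposal.

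Conversely, the step you single out as ``the principal technical difficulty'' --- escalating from one non-trivial rigid element to finite index --- is in fact routine and needs no induction down the tree or use of $\pi_{00}(W_L)$. If $(1,z)\in H$ with $1\neq z\in B$, conjugating by $\diag(A\times A^a)\leq H$ yields $\{1\}\times\gen{z}^{A}\leq H$ (note $A^a=A$ since $A=J_{0,5}$ is normal). As $A$ is a level transitive finite index subgroup of $\Grig$, it is just infinite by Lemma~\ref{Lemma:JustInfinite}, so $N\defegal\gen{z}^{A}$ has finite index in $\Grig$; conjugating by $a\in H$ then gives $N\times N\leq H$, which has finite index in $\Rist_\Grig(\level{1})=B\times B$ and hence in $\Grig$. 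So the part you defer is easy, while the case you dismiss is where the difficulty actually lies.
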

\begin{proof}
We first show that $W_T$ is an infinite index subgroup that is level-transitive as a consequence of the last lemma.
Hence, it is sufficient to verify the hypotheses of the said lemma.
The conjugation by $a$ is an involution, which directly implies the first hypothesis.
It is enough to verify the second condition on the generators of $\gen{b,ac}$, that is to check that $(b,aba)$ and $(ac,aaca)=(ac,ca)$ are elements of $\Grig$.
We have indeed $acadaba=(b,aba)$ and $baba=(ca,ac)$.
This also gives the equality between the $2$ subgroups of the lemma.
Finally, $\gen{b,ac}$ is a normal subgroup of index $2$ ($J_{0,5}$ in the notation of~\cite{MR1872621}) that contains $K=\gen{(ab)^2,(bd^a)^2,(b^ad)^2}$, the subgroup of index $16$ on which $\Grig$ is branch.
The subgroup $K$ itself is not branch, but contains $K\times K$ and acts transitively on the first level of $T_0$ (by $(ab)^2=(ca,ac)$ for example).
Since $\gen{b,ac}$ acts transitively on the first level of $T$ and contains $K$, it acts level-transitively.
Hence $W_T$ is of infinite index and level-transitive. In particular, it is contained in a weakly maximal subgroup that is level-transitive.

The proof of the weak maximality of $W_T$ is done in the next section, in Lemma~\ref{Lemma:WLWMax}.
\end{proof}
We now give a general construction which, given a branch group $G$, produces a branch group $\check G$ in which $G$ embeds diagonally and that has an infinite index subgroup that is level-transitive.
Moreover, if $G$ is finitely generated, so is $\check G$, which implies that $\check G$ contains a level-transitive weakly maximal subgroup.

Let $T=T_{(m_i)}$ be a locally finite rooted tree.
Let $G$ be any subgroup of $\Aut(T)$ and let $A$ be the quotient $G/\Stab_G(\level{1})$.
Let $m_0'=m_0$ and $m_i'=m_{i-1}$ for $i\geq 1$.
Let $\check G$ be the permutational wreath product $G\wr_{\level{1}}A$, viewed has a subgroup of the automorphism group of $T'=T_{(m'_i)}$.
\begin{proposition}
For every property $P$ in the following list, $\check G$ has $P$ if and only if $G$ has $P$.
\begin{enumerate}
\item Finitely generated,\label{Prop:HatG1}
\item Almost level-transitive, respectively level-transitive,\label{Prop:HatG2}
\item Micro-supported, respectively rigid,\label{Prop:HatG3}
\item The rigid (respectively branch, respectively congruence) kernel is trivial,\label{Prop:HatG4}
\item Branch and just infinite,\label{Prop:HatG5}
\item Polynomial (respectively intermediate, respectively exponential) growth,\label{Prop:HatG6}
\item Is torsion,\label{Prop:HatG7}
\item Is a $p$-group ($p$ prime).\label{Prop:HatG8}
\end{enumerate}
\end{proposition}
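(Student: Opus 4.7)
The plan is to exploit the abstract group-theoretic identification $\check G \cong G^{m_0} \rtimes A$, where $A = G/\Stab_G(\level{1})$ acts on $G^{m_0}$ by permuting the factors according to its natural action on $\level{1}$ of $T$. Since $T$ is locally finite, $A$ is finite; moreover $A$ acts as rooted automorphisms of $T'$ with trivial intersection with the normal subgroup $G^{m_0} = \Stab_{\check G}(\level{1})$, so that $G^{m_0}$ is normal of finite index $\abs{A}$ in $\check G$. The first step is to record the systematic correspondence between rigid stabilizers: for any vertex $v' = v'_1 \cdot v$ of $T'$ with $v'_1 \in \level{1}$ of $T'$ and $v$ a vertex of the corresponding subtree (identified with a vertex of $T$), one has a natural isomorphism $\Rist_{\check G}(v') \cong \Rist_G(v)$, and for $n \geq 1$ also $\Rist_{\check G}(\level{n+1}) \cong \Rist_G(\level{n})^{m_0}$ as subgroups of $\check G$.

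With this setup, properties (2) and (3) reduce to the analogous statements for the $m_0$ copies of $G$, using that $A$ permutes the first-level subtrees exactly as $G/\Stab_G(\level{1})$ permutes $\level{1}$ of $T$; property (5) (branch and just infinite) then reduces further via Grigorchuk's characterization that a branch group is just infinite if and only if $\Rist(v)'$ has finite index in $\Rist(v)$ for every vertex $v$, a property preserved by the rigid-stabilizer correspondence. Properties (1), (6), (7) and (8) are consequences of $\check G$ being a finite extension of $G^{m_0}$: finite generation follows because $A$ is finite and a single factor $G$ together with $A$ already generates $\check G$; growth type is preserved by finite direct products and by finite extensions; every element of $\check G$ has a power in $G^{m_0}$, which settles the torsion case coordinate-wise; and for the $p$-group property, when $T$ is $p$-regular, $A$ embeds into $\Sym(p)$, so if $G$ is a $p$-group then $A$ is a $p$-subgroup of $\Sym(p)$, hence cyclic of order dividing $p$, and $\check G = G^p \rtimes A$ is itself a $p$-group; the converses follow from the inclusion $G \hookrightarrow \check G$ together with the fact that $G^{m_0}$ is of finite index in $\check G$.

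The main obstacle will be property (4): identifying the three completions of $\check G$ with the corresponding completions of $G$. The plan is to verify that under the splitting $\check G = G^{m_0} \rtimes A$, the congruence topology on $\check G$ (with basis $\Stab_{\check G}(\level{n})$) coincides, up to the finite group $A$, with the product of the congruence topologies on the $m_0$ copies of $G$, and similarly for the branch and profinite topologies, using crucially that $A$ is finite (hence clopen in every completion). Granting these identifications, one obtains $\hat{\check G} \cong \hat G^{m_0} \rtimes A$, $\bar{\check G} \cong \bar G^{m_0} \rtimes A$ and $\tilde{\check G} \cong \tilde G^{m_0} \rtimes A$; since the three completion maps of $\check G$ then split as the $m_0$\hyphen fold direct product of the corresponding maps for $G$, extended by the identity on $A$, the rigid, branch and congruence kernels of $\check G$ are isomorphic to the $m_0$\hyphen fold direct powers of the corresponding kernels of $G$, and hence vanish if and only if the kernels of $G$ do.
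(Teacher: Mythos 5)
Your proposal is correct and, for items (1)--(3) and (5)--(8), follows essentially the same path as the paper: both arguments rest on the decomposition $\check G\cong G^{m_0}\rtimes A$ with $G^{m_0}=\Stab_{\check G}(\level{1})$ normal of finite index, and on the explicit correspondences $\Rist_{\check G}(\level{n+1})=\Rist_G(\level{n})^{m_0}$ and $\Stab_{\check G}(\level{n+1})=\Stab_G(\level{n})^{m_0}$. The genuine divergence is item (4). The paper never identifies the completions: it argues directly that any finite index subgroup $H\leq\check G$ meets each factor $G\times\{1\}\times\dots\times\{1\}$ in finite index, hence (assuming trivial branch kernel for $G$) contains $\Rist_G(\level{n_1})\times\dots\times\Rist_G(\level{n_{m_0}})\supseteq\Rist_{\check G}(\level{\max(n_j)+1})$, and conversely feeds $H\times G\times\dots\times G$ back into $\check G$; the congruence kernel is then handled via the factorization $\hat G\surjection\tilde G\surjection\bar G$. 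You instead identify $\hat{\check G}\cong\hat G^{m_0}\rtimes A$, $\tilde{\check G}\cong\tilde G^{m_0}\rtimes A$, $\bar{\check G}\cong\bar G^{m_0}\rtimes A$ and read off each kernel as the $m_0$\hyphen th direct power of the corresponding kernel of $G$. This is more machinery (you must check that the profinite topology of $\check G$ induces the full profinite topology on the finite index subgroup $G^{m_0}$, and that each topology on $G^{m_0}$ is the product topology), but it buys a stronger conclusion --- the kernels of $\check G$ are computed, not merely shown to vanish simultaneously with those of $G$ --- and it treats the congruence kernel uniformly without invoking the chain of epimorphisms. Two small points to tighten: your justification of finite generation (``a single factor $G$ together with $A$ already generates $\check G$'') requires $A$ to act transitively on the first level, which holds in the intended branch setting but not for arbitrary $G\leq\Aut(T)$; in general use all $m_0$ factors, giving $\rank(\check G)\leq\rank(A)+m_0\rank(G)$. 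And in item (5), Grigorchuk's criterion must also be checked at the root of $T'$, where $\Rist_{\check G}(\emptyset)=\check G$ is not a rigid stabilizer of $G$; one needs the extra (easy) observation that $\check G'\supseteq (G')^{m_0}$ has finite index in $\check G$.
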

\begin{proof}
\textit{Proof of~\ref{Prop:HatG1}.}
On one hand, the rank of $\check G$ is at most the rank of $A$ (a finite group) plus the rank of $G$.
On the other hand, $G$ is the section of the first level stabilizer of $\check G$. In particular, $\rank(G)$ is finite if $\rank(\check G)$ is finite.

\textit{Proof of~\ref{Prop:HatG2}.}
It directly follows from the definition of $A$ and the fact that $\pi_v(\check G)=G$ for any first level vertex that $G$ is (almost) level-transitive if and only if $\check G$ is.

\textit{Proof of~\ref{Prop:HatG3}.}
We have $\Rist_{\check G}(\emptyset)=\Rist_{\check G}(\level{0})=\Stab_{\check G}(\level{0})=\check G$.
On the other hand, for any vertex $v=u_1u_2\dots u_n$ we have
\[
	\Rist_{\check G}(v)=\{1\}\times \dots\times \Rist_G(u_2\dots u_n)\times\dots\times \{1\},
\]
where the non-trivial factor appears in position $u_1$.
Finally, for any $n\geq 1$ we have
\begin{align*}
	\Rist_{\check G}(\level{n})&=\Rist_{G}(\level{n-1})\times \dots\times \Rist_{G}(\level{n-1})\\
	\Stab_{\check G}(\level{n})&=\Stab_{G}(\level{n-1})\times \dots\times \Stab_{G}(\level{n-1}).
\end{align*}
This directly implies the assumption for the micro-supported property.
Since $G\times \dots\times G$ has finite index in $\check G$, $\Rist_{\check G}(\level{n})$ has finite index in $\check G$ if and only if $\Rist_{G}(\level{n-1})$ has finite index in $G$, which finishes the proof for the rigid property.

\textit{Proof of~\ref{Prop:HatG4}.}
Suppose that $G$ has a trivial branch kernel and let $H$ be a finite index subgroup of $\check G$.
Then the index of $H\cap (G\times \{1\}\times\dots\times\{1\})$ in $G\times \{1\}\times\dots\times\{1\}\cong G$ is also finite.
By assumption, there exists $n_1$ such that $\Rist_G(\level{n_1})\leq H\cap (G\times \{1\}\times\dots\times\{1\})$.
Repeating this argument on other coordinates, we obtain $\Rist_G(\level{n_1})\times\dots\times\Rist_G(\level{n_{m_0}})\leq H$ and therefore $\Rist_{\check G}(\level{\max(n_j)+1})\leq H$.
A similar  argument shows that if $G$ has a trivial rigid kernel, so does $\check G$.
On the other hand, suppose that $\check G$ has trivial branch kernel and let $H$ be a finite index subgroup of $G$.
Then $H\times G\times\dots\times G$ is a finite index subgroup of $\check G$ and hence contains $\Rist_{\check G}(\level{n})=\Rist_G(\level{n-1})\times\dots\times \Rist_G(\level{n-1})$ for some $n$. In particular $H$ contains $\Rist_G(\level{n-1})$, and we have proved the triviality of the branch kernel for $G$.
As before, a similar argument takes care of the rigid kernel.
Finally, the triviality of the congruence kernel is equivalent to the simultaneous triviality of both the branch and the rigid kernels.

\textit{Proof of~\ref{Prop:HatG5}.}
By~\cite{MR1765119}, a branch group acting on a locally finite tree is just infinite if and only if for every vertex $v$, the derived subgroup $\Rist_G(v)'$ has finite index in $\Rist_G(v)$.
The description of rigid stabilizers of $\check G$ directly implies that if $\check G$ is just infinite, so is $G$.
For the other direction, we only need to check that the derived subgroup of $\check G$ has finite index in $\check G$.
But the derived subgroup of $\check G$ contains $\Rist_{\check G}(\level{1})'=G'\times\dots\times G'$, which has by assumption finite index in $\Rist_{\check G}(\level{1})$ and hence finite index in $\check G$.

\textit{Proof of~\ref{Prop:HatG6}.}
The group $\check G$ is the semi-direct product of $A$ (a finite group) and of $G^{m_0}$ (a finite product of copies of $G$).
In particular, $G$ and $\check G$ have the same type (polynomial, intermediate or exponential) of growth rate.

\textit{Proof of~\ref{Prop:HatG7} and~\ref{Prop:HatG8}.}
Since $G$ embeds into $\check G$, if $\check G$ is torsion or a $p$-group so is $G$.
On the other hand, every element $g$ of $\check G$ is of the form $(g_1,\dots,g_{m_0})a$ for some $a\in A$ and $g_i$'s in $G$.
In particular, $g^{\abs A}=(h_1,\dots,h_{m_0})$ for some $h_i$'s in $G$.
If all the $h_i$'s have finite order, denoted $o_i$, then the order of $g$ divides $\abs A\cdot\lcm(o_i)$. If $G$ is a $p$-group, so is $A$ and hence $\abs A$ as well as all the $o_i$ are powers of $p$, which finishes the proof.
\end{proof}
\begin{lemma}
If $T$ is $d$-regular and $G$ is self-similar, so is $\check G$.
\end{lemma}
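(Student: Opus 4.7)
The plan is to unravel what \emph{self-similarity} means here and then compute sections explicitly using the semi-direct product structure of $\check G$.

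First I would observe that the hypothesis ``$T$ is $d$-regular'' means $m_i=d$ for every $i$, whence $m'_0=m_0=d$ and $m'_i=m_{i-1}=d$ for $i\geq 1$. Thus $T'$ is also $d$-regular and $T'\iso T$ as rooted trees; in particular $T'_v\iso T'\iso T$ for every vertex $v$ of $T'$, so the section maps $\pi_v$ take values in $\Aut(T)=\Aut(T')$ and self-similarity of $\check G$ is the statement $\pi_v\bigl(\Stab_{\check G}(v)\bigr)\leq\check G$ for every $v\in T'$.

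Next I would record the algebraic structure of $\check G$. By construction $A$ acts on $G^{m_0}$ by permutation of the coordinates (this is precisely the action of $A=G/\Stab_G(\level{1})$ on the first level), and the intersection $A\cap G^{m_0}$ inside $\Aut(T')$ is trivial (no non-trivial rooted permutation has all first-level sections trivial). Consequently $\check G=G^{m_0}\rtimes A$ and every element of $\check G$ admits a unique decomposition $g=(g_1,\dots,g_{m_0})\cdot a$ with $g_i\in G$ and $a\in A$. The key observation is then that \emph{$G$ embeds into $\check G$ as a group of automorphisms of $T=T'$}: indeed, for any $g\in G$, the wreath decomposition of $g$ inside $\Aut(T)$ gives $g=(g_1,\dots,g_{m_0})\cdot a_g$ where $a_g\in A$ by the definition of $A$, and $g_i\in G$ by the hypothesis that $G$ is self-similar; hence $g\in G^{m_0}\cdot A\subseteq\check G$.

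Finally I would verify self-similarity of $\check G$. Let $v$ be a first-level vertex of $T'$ and take $g=(g_1,\dots,g_{m_0})\cdot a\in\Stab_{\check G}(v)$; then $a$ fixes $v$, and the section $\pi_v(g)$ equals $g_i\in G$ for the appropriate index $i$. Thus $\pi_v\bigl(\Stab_{\check G}(v)\bigr)\leq G$, which by the previous paragraph is contained in $\check G$. For a vertex $v$ of level $n\geq 2$, write $v=wv'$ with $w$ a first-level vertex and $v'\in T'_w\iso T$; then $\pi_v=\pi_{v'}\circ\pi_w$ on $\Stab_{\check G}(v)$, so
\[
\pi_v\bigl(\Stab_{\check G}(v)\bigr)\leq \pi_{v'}\bigl(\Stab_G(v')\bigr)\leq G\leq\check G,
\]
where the second inclusion uses self-similarity of $G$ on $T$. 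This handles all vertices and concludes the proof. There is no substantial obstacle; the only care required is to track the identification $T'_v\iso T'$ needed to interpret sections as elements of $\Aut(T')$ and to use the embedding $G\hookrightarrow\check G$ provided by the self-similarity of $G$.
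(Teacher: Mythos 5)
Your argument is correct and is essentially the paper's proof: both reduce the claim to the observation that self-similarity of $G$ gives the wreath decomposition $g_i=(h_1,\dots,h_d)b$ with $b\in A$ and $h_j\in G$, i.e.\ the embedding $G\leq G^{d}\cdot A=\check G$, after which all sections of $\check G$ land in $G\leq\check G$. The only blemish is the parenthetical justification that $A\cap G^{d}$ is trivial --- a non-trivial rooted permutation \emph{does} have trivial first-level sections; the correct reason is that it does not fix the first level while $G^{d}$ does --- but this uniqueness claim is not actually needed for the argument.
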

\begin{proof}
Suppose that $G$ is self-similar and let $g=(g_1,\dots,g_{d})a$ be an element of~$\check G$.
Since $g_1$ is in $G$, we have $g_1=(h_1,\dots,h_{d})b$ with $b$ in $A$ and the $h_i$ in $G$ by self-similarity.
Therefore, $g_1$ belongs to $\check G$ and the same is true for the other $g_i$.
\end{proof}
\begin{question}
Suppose that $\check G$ is self-similar. Does this implies that $G$ is also self-similar?

What can be said about the self-replicacity of $G$ and $\check G$?
\end{question}
The above results shows that the groups $\check G$ and $G$ look alike for a lot of purposes.
The main difference is that the construction of $\check G$ allows to easily find a level-transitive subgroup of infinite index.
\begin{lemma}
If $G$ is weakly branch, then $\gen{A,\diag(G\times\dots\times G)}$ is an infinite index level-transitive subgroup of $\check G$.
\end{lemma}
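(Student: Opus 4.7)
The plan is to first obtain a concrete description of $L$, then use it to verify level transitivity on $T'$, and finally to exhibit a trivial rigid stabiliser in $L$ at a vertex where $\check G$ has an infinite one, which will force infinite index.

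First I would observe that the finite group $A$, viewed as rooted automorphisms of $T'$, simply permutes the $m_0$ first level subtrees of $T'$, so conjugation by any $a \in A$ permutes the factors of $G^{m_0} = G \times \dots \times G \leq \check G$. Every permutation of factors preserves the diagonal subgroup setwise, so $A$ normalises $\diag(G^{m_0})$ and $L = A \ltimes \diag(G^{m_0})$; in particular every element of $L$ has the shape $(g,\dots,g)\cdot a$ with $g \in G$ and $a \in A$.

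Next I would check level transitivity of $L$ on $T'$. Since $G$ is weakly branch it acts level transitively on $T$, so the induced quotient $A = G/\Stab_G(\level{1})$ acts transitively on the first level of $T$, equivalently on the first level of $T'$. For any first level vertex $v$ of $T'$, the subgroup $\diag(G^{m_0}) \leq \Stab_L(v)$ acts on $T'_v \iso T$ exactly via the action of $G$ on $T$, which is itself level transitive. Combining these two facts and inducting on levels, $L$ is level transitive on $T'$.

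Finally I would compute $\Rist_L(v)$ for a first level vertex $v$ of $T'$. An element of this rigid stabiliser must fix every other first level vertex, so in the normal form $(g,\dots,g)\cdot a$ one is forced to have $a = 1$; and it must act trivially on each subtree $T'_w$ with $w$ a first level vertex different from $v$, which via the diagonal forces $g = 1$. Hence $\Rist_L(v) = \{1\}$, while $\Rist_{\check G}(v) \iso G$ is infinite since $G$, being weakly branch, is infinite. If $L$ had finite index $d$ in $\check G$, then $\Rist_L(v) = L \cap \Rist_{\check G}(v)$ would have index at most $d$ in $\Rist_{\check G}(v)$, contradicting that the former is trivial and the latter infinite. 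No step poses a real obstacle; the only mildly non-obvious point is the $A$-invariance of the diagonal, which reduces to the trivial observation that any permutation of $m_0$ copies of $G$ fixes the diagonal subgroup setwise.
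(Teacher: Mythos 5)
Your proof is correct and follows essentially the same route as the paper, which simply reduces to the argument of Lemma \ref{Lemma:DiagonalTransitive} with $\varphi=\id$: level transitivity from the transitive action of $A$ on the first level combined with the level transitive diagonal action on each first-level subtree, and infinite index from the triviality of $\Rist_L(v)$ at a first-level vertex together with the infinitude of $\Rist_{\check G}(v)\iso G$ coming from weak branchness. Your explicit semidirect product decomposition $L=A\ltimes\diag(G^{m_0})$ is just a slightly more detailed write-up of the same computation.
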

\begin{proof}
The proof is similar to the proof of Lemma~\ref{Lemma:DiagonalTransitive}, with the automorphism $\varphi$ being the identity.
\end{proof}
%
%
%
%
%
%
%
%
%
%
%
%
%
%
\section{Weakly maximal subgroups of the first Grigorchuk group}
\label{Section:GrigorchukGroup}
We now take a special look at what is probably the most studied branch group: the first Grigorchuk group $\Grig$.

This group is branch over $K\coloneqq\gen{(ab)^2}^\Grig$ and we have $K<_2 B\coloneqq\gen{b}^\Grig$, where both $K$ and $B$ are normal in $\Grig$.
There are exactly $10$ subgroups of $\Grig$ containing~$B$,~\cite{LeemannThese}: the group $\Grig$ itself, 3 subgroups of index $2$, $5$ subgroups of index $4$ (on which $J_{1,5}=\gen{B,(ad)^2}$ is the only normal subgroup) and $B$ itself, see Figure~\ref{WMC:Figure:Subgroups} and Table~\ref{WMC:TableB}.

Finally, let us recall that $H=\Stab_\Grig(\level{1})=\gen{b,c,d,aba,aca,ada}$ is the stabilizer of the first level and that rigid stabilizers have the following description
\[
	\Rist_{\Grig}(\level{1})=B\times B\qquad\textnormal{and for }n\geq2: \Rist_{\Grig}(\level{n})=\underbrace{K\times K\times\dots\times K}_{2^n \textnormal{ factors}}.
\]
\begin{figure}[htb]
\centering
\includegraphics{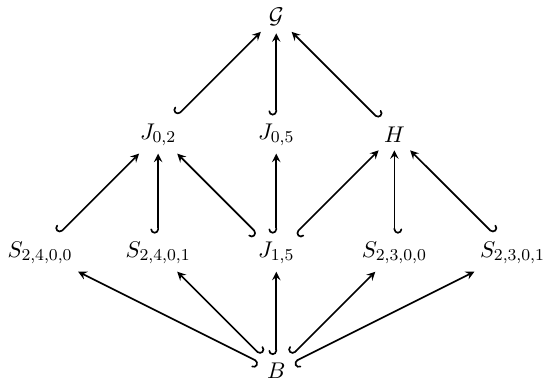}
\caption{The lattice of subgroups of $\Grig$ containing $B$. Each subgroup has index $2$ in the one above it.}
\label{WMC:Figure:Subgroups}
\end{figure}
\begin{table}[htb]
\[
\begin{array}{|c|c|c|}
\hline
\textnormal{Subgroup} & \textnormal{Generators}&\textnormal{Conjugates}\\
\hline
\hline
\Grig & a,b,c&\Grig\\
\hline
J_{0,2} &b,a, a^c &J_{0,2} \\
\hline
J_{0,5} &b,ac &J_{0,5} \\
\hline
H &c,c^a,d,d^a &H \\
\hline
J_{1,5}& b,b^a,dd^a&J_{1,5}\\
\hline
S_{2,3,0,0}& b,c,b^a,b^{aca},c^{aca}&S_{2,3,0,0},(S_{2,3,0,0})^a=S_{2,3,0,1}\\
\hline
S_{2,4,0,0}& a,b,b^{da},a^{dad}&S_{2,4,0,0},(S_{2,4,0,0})^d=S_{2,4,0,1}\\
\hline
B &b,b^a, b^{ada} & B\\
\hline
\end{array}
\]
\caption{The $10$ subgroups of $\Grig$ containing $B$ and their generators; $6$ of them are normal.}
\label{WMC:TableB}
\end{table}
%
%
%
%
%
%
%
%
%
%
%
%
%
%
%
%
%
%
%
%
%
%
%
%
%
%
%
\subsection{More examples of weakly maximal subgroups of \texorpdfstring{$\Grig$}{the first Grigorchuk group}}
\label{Subsection:ExemplesOfWeaklyMaximal}
In this subsection we investigate two explicit examples of weakly maximal subgroups that have a block structure.
The first example, due to Pervova and firstly mentioned in~\cite{MR2893544}, is
\[
	W_P\coloneqq\gen{a,\diag( J_{1,5}\times  J_{1,5}),\{1\}\times K\times\{1\}\times K}.
\]

The second example is the subgroup
\begin{align*}
	W_T	&=\gen{a,\diag(J_{0,5}\times J_{0,5}^a)}=\gen{a,bab,cadab}=\gen{a,bab,cac}
\end{align*}
of Lemma~\ref{Lemma:LevelTransGrigorchuk}.
\paragraph{The subgroup $W_P$}
The first thing we need to do is to show that the group $W_P$ is a subgroup of $\Grig$.
The element $a$ belongs to $\Grig$ and $\{1\}\times K\times\{1\}\times K$ is a subgroup of $\Grig$.
Therefore, it remains to check that $\diag(J_{1,5}\times  J_{1,5})$ is also a subgroup of $\Grig$.
But we have $J_{1,5}=\gen{b,aba,dada}$ and an easy verification gives us $dd^a=(b,b)$, $d^{ac}d^{aca}=(aba,aba)$ and $(ac)^4=(dada,adad)=(dada,dada)$.

The non-rigidity tree $\NR(W_P)$ of $W_P$ is the subtree $S$ of $T$ generated by $T_{00}$ and $T_{10}$ and $W_p$ acts level-transitively on it.
Indeed, since $W_P$ contains $\{1\}\times K\times\{1\}\times K$ we have $\NR(W)\subseteq S$.
On the other hand, $W_P$ contains $a$, which sends $T_{00}$ to $T_{10}$, and $\diag( J_{1,5}\times  J_{1,5})$.
Since $J_{1,5}=\gen{b,aba,dada}$, its left section contains $\gen{a,c,b}=\Grig$.
Therefore, $\diag( J_{1,5}\times  J_{1,5})$ acts level-transitively on $T_{00}$ and $W_P$ acts level-transitively on $S$ which implies $\NR(W)=S$.

We will now prove several results in order to better understand the structure of $W_p$ and of $\overline{W}_p$, its closure in $\hat\Grig$ the profinite completion of $\Grig$.
For the weak maximality of $W_P$, we will mainly give the proof of~\cite{MR2893544} but with more details.
\begin{lemma}[\cite{MR2893544}]\label{WMC:Lemma:index}
Let $A$ be a subgroup of $\Grig$ containing $J_{1,5}$.
Let $1\neq x\in\Stab_\Grig(1)$. Then $\gen{x}^{A}$ has infinite index in $\Grig$ if and only if $\treesection{x}{i}=1$ for some $i\in\{0,1\}$.
\end{lemma}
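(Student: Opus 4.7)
The plan is as follows. Writing $x=(x_0,x_1)$ and $g=(g_0,g_1)$ for each $g$ in $A$ (interpreting the hypothesis as $A\leq\Stab_\Grig(\level{1})$, which is the natural reading since $J_{1,5}\leq\Stab_\Grig(\level{1})$; otherwise, taking $A=\Grig$ would make $\gen{x}^\Grig$ of finite index by just-infiniteness), conjugation is coordinate-wise: $gxg^{-1}=(g_0x_0g_0^{-1},g_1x_1g_1^{-1})$. The key preliminary computation is that the generators $b=(a,c)$, $b^a=(c,a)$, and $dd^a=(b,b)$ of $J_{1,5}$ have first-coordinate sections $\{a,c,b\}$, which generate $\Grig$; therefore $\pi_0(J_{1,5})=\pi_1(J_{1,5})=\Grig$, and in particular $\pi_0(A)=\pi_1(A)=\Grig$. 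Moreover, since $[\Grig:J_{1,5}]=4$, the subgroup $J_{1,5}\cap\Rist_\Grig(i)$ has finite index in $\Rist_\Grig(i)\cong B$, so its projection $B_i\leq B$ is of finite index.

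For the forward direction, assume $x_1=1$ (the case $x_0=1$ being symmetric). Then every $A$-conjugate of $x$ lies in $\Rist_\Grig(0)$, so $\gen{x}^A\leq\Rist_\Grig(0)$. Since $\Rist_\Grig(0)=B\times\{1\}$ sits inside $\Rist_\Grig(\level{1})=B\times B$ with infinite index, it has infinite index in $\Grig$, and so does $\gen{x}^A$.

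For the converse, assume $x_0,x_1\neq 1$; the goal is to show $\gen{x}^A$ contains a finite-index subgroup of $\Rist_\Grig(\level{1})$. By Lemma~\ref{Lemma:InfiniteINdexCentralizer}, the centralizer $C_\Grig(x_0)$ has infinite index in $\Grig$, so the finite-index $B_0\leq B$ cannot be contained in it; pick $y_0\in B_0\setminus C_\Grig(x_0)$. Then $(y_0,1)\in A$ and the commutator
\[
[x,(y_0,1)]=(x_0y_0x_0^{-1}y_0^{-1},\,x_1\cdot 1\cdot x_1^{-1}\cdot 1^{-1})=([x_0,y_0],1)
\]
is a non-trivial element of $\gen{x}^A\cap\Rist_\Grig(0)$. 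Conjugating by $A$ and using $\pi_0(A)=\Grig$, one obtains $\gen{[x_0,y_0]}^\Grig\times\{1\}\subseteq\gen{x}^A$, a subgroup of finite index in $\Rist_\Grig(0)$ by just-infiniteness of $\Grig$. A symmetric argument produces a finite-index $\{1\}\times N_1\subseteq\gen{x}^A$. Combining, $\gen{x}^A\supseteq N_0\times N_1$, which has finite index in $\Rist_\Grig(\level{1})=B\times B$ and thus in $\Grig$.

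The main obstacle is the converse direction, specifically extracting a non-trivial element of $\gen{x}^A$ lying in a single rigid stabilizer $\Rist_\Grig(i)$. This requires jointly (i)~enough ``purely left-handed'' elements in $A\cap\Rist_\Grig(0)$, which is supplied by the finite index of $J_{1,5}$, and (ii)~a non-commuting companion, which is supplied by Lemma~\ref{Lemma:InfiniteINdexCentralizer}. Once one such element is produced, the surjectivity $\pi_i(A)=\Grig$ combined with the just-infinite nature of $\Grig$ automatically delivers finite index.
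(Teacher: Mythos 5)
Your proof is correct and follows essentially the same route as the paper's: commutate $x$ with a one-sided element of $A$ that fails to centralize the corresponding section, then use $\pi_i(J_{1,5})=\Grig$ and just-infiniteness of $\Grig$ to promote the resulting nontrivial element of $\Rist_\Grig(i)$ to a finite-index subgroup of $B\times B$. The only cosmetic difference is that the paper sources its one-sided elements from $K\times K\leq J_{1,5}$ rather than from the finite-index subgroup $J_{1,5}\cap\Rist_\Grig(i)$, and your explicit remark that the statement implicitly requires $A\leq\Stab_\Grig(\level{1})$ is a correct reading (the paper's proof uses this tacitly in the forward direction).
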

\begin{proof}
If $\treesection{x}{0}=1$, then $\gen{x}^{A}\leq \{1\}\times \Grig$ is of infinite index by Fact~\ref{Fact:NumberOrbitsIndex}. The case $\treesection{x}{1}=1$ is similar.

On the other hand, suppose that $x=(x_0,x_1)$ with $x_i\neq 1$ for $i\in\{0,1\}$.
In this case, the centralizer $C_\Grig(x_i)$ has infinite index in $\Grig$ by Lemma~\ref{Lemma:InfiniteIndexCentralizer}.
This implies that for $i\in\{0,1\}$ there exists $y_i\in K\setminus C_\Grig(x_i)$.
We then have $1\neq[x_i,y_i]$ belongs to $K$. 
Since $A$ contains $J_{1,5}$, it contains $K\times K$ and $(1,[x_1,y_1])=[x,(1,y_1)]=x\cdot (1,y_1)x^{-1}(1,y_1)^{-1}$ is in $\gen{x}^A$.
On the other hand, $\pi_i(A)\geq\pi_i(J_{1,5})=\Grig$.
Altogether, we have $\gen{x}^{A}\geq \gen{[x_0,y_0]}^\Grig\times \gen{[x_1,y_1]}^\Grig$.
Both $\gen{[x_i,y_i]}^\Grig$ are non-trivial normal subgroups of $\Grig$ and therefore of finite index since $\Grig$ is just infinite.
This shows that $\gen{x}^{A}$ itself is of finite index in $\Grig$.
\end{proof}
\begin{lemma}[\cite{MR2893544}]\label{LemmaWPWMax}
$W_P$ is a finitely generated weakly maximal subgroup of~$\Grig$.
\end{lemma}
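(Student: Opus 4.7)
The proof splits naturally into finite generation and weak maximality, the latter further into infinite index and maximality among infinite-index subgroups.

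For finite generation, I would simply note that $K$ and $J_{1,5}$ are finite-index subgroups of the finitely generated group $\Grig$ (of indices $16$ and $4$ respectively), hence are themselves finitely generated. The diagonal subgroup $\diag(J_{1,5}\times J_{1,5})$ is an isomorphic copy of $J_{1,5}$ and $\{1\}\times K\times\{1\}\times K\cong K\times K$, so both are finitely generated. Together with $a$, these yield a finite generating set for $W_P$.

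For infinite index, the plan is to exhibit a vertex in the non-rigidity tree $\NR(W_P)$. I would argue that $\Rist_{W_P}(00)=\{1\}$. First, since $\diag(J_{1,5}\times J_{1,5})$ and $\{1\}\times K\times\{1\}\times K$ both lie in $\Stab_\Grig(\level 2)$ and $a$ is the only generator acting nontrivially on level $1$, one has $W_P\cap\Stab_\Grig(\level 1)\leq\Stab_\Grig(\level 2)$. An element of $W_P$ inside $\Rist_\Grig(00)$ then has the form $(g,g)\cdot e$ with $g\in J_{1,5}$ and $e\in\{1\}\times K\times\{1\}\times K$, and the triviality at positions $01$, $10$, $11$ forces $g=1$ and $e=1$ (using in particular that $J_{1,5}\cap(\{1\}\times\Grig)$ is trivial, which in turn follows from $d\notin J_{1,5}$). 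Since $\Rist_\Grig(00)$ is infinite, $00\in\NR(W_P)$, and Lemma \ref{Lemma:NRTree} yields that $W_P$ has infinite index.

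For maximality, take $x\in\Grig\setminus W_P$. Since $a\in W_P$ one may reduce to $x\in\Stab_\Grig(\level 1)$ and write $x=(x_0,x_1)$; the aim is to show that $\gen{W_P,x}$ contains $\Rist_\Grig(\level n)$ for some $n$. The first-level sections of $\Stab_{\gen{W_P,x}}(\level 1)$ contain $J_{1,5}$ (coming from the diagonal) together with $x_0$ at position $0$ and $x_1$ at position $1$. Lemma \ref{WMC:Lemma:index}, applied at each side, says that if both sections of $x_i$ are non-trivial then $\gen{x_i}^{J_{1,5}}$ has finite index in $\Grig$. Combining this with the commutator identity $[x,(g,g)]=([x_0,g],[x_1,g])$ for $g\in J_{1,5}$, the $a$-conjugation swapping the two sides, and the fact that $W_P$ already supplies the ``half'' $\{1\}\times K\times\{1\}\times K$, one fills out $K\times\{1\}\times K\times\{1\}$ and hence all of $\Rist_\Grig(\level 2)=K^4$ inside $\gen{W_P,x}$.

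The main obstacle will be the degenerate case in which some section of $x$ at level $1$ or further down is trivial, so that the Lemma only delivers infinite index on one side. Concretely, if $x=(1,x_1)$, all position-$0$ commutators vanish. The plan there is to descend one level: observing that $x$ lives in $\{1\}\times\Grig$, I would iterate the argument at level $2$ using the subgroup $\{1\}\times K$ already present in $W_P$ together with the first-level sections of the diagonal, reducing to an inductive application of Lemma \ref{WMC:Lemma:index} at a deeper level. Since $x$ is non-trivial, this descent must terminate at some finite level where both sections at that level are non-trivial, and the conclusion can then be propagated back up to give $\gen{W_P,x}$ finite index in $\Grig$.
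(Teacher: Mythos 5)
Your finite-generation argument and your infinite-index argument are essentially the paper's. One correction to the latter: the parenthetical claim that $J_{1,5}\cap(\{1\}\times\Grig)$ is trivial is false, since $J_{1,5}\supseteq K\supseteq K\times K\supseteq\{1\}\times K$. What actually makes the computation work is that a diagonal element $(g,g)$ has equal sections at $00$ and at $10$, and an element of $\{1\}\times K\times\{1\}\times K$ has trivial section at $10$; so triviality at $10$ forces triviality at $00$, and the element of $\Rist_{W_P}(00)$ is trivial even though $g$ itself need not be.

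The weak-maximality part has genuine gaps. First, you never reduce $x_0$ modulo $\pi_0(W_P)\supseteq J_{1,5}$. Without that reduction Lemma \ref{WMC:Lemma:index} cannot be "applied at each side": it requires $x_0\in\Stab_\Grig(\level 1)$, which need not hold, and the case $x_0\notin H$ has to be treated separately (there one conjugates $\{1\}\times K\times\{1\}\times\{1\}$ by $x$ to land in $K\times\{1\}\times\{1\}\times\{1\}$). The reduction also guarantees, via $\Rist_\Grig(\level 1)=B\times B$, that in the remaining case $x=(1,x_1)$ one has $x_1\in B\leq J_{1,5}$, which is what lets the normal closure $\gen{x_1}^{J_{1,5}}$ be paired with the diagonal to give $\gen{x_1}^{J_{1,5}}\times\gen{x_1}^{J_{1,5}}\leq\widetilde W$; the commutator identity $[x,(g,g)]=([x_0,g],[x_1,g])$ alone only produces coupled pairs and does not decouple the two subtrees to "fill out $K\times\{1\}\times K\times\{1\}$". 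Second, the proposed descent in the degenerate case does not work as described. If $x=(1,x_1)$ with $x_1=(1,z)$, then $x\in\Rist_\Grig(\level 2)=K^{4}$ forces $z\in K$, so $x\in\{1\}\times K\times\{1\}\times K\leq W_P$: this sub-case must be \emph{excluded} by the hypothesis $x\notin W_P$, because there $\gen{W_P,x}=W_P$ has infinite index and no finite-index conclusion can be "propagated back up". If instead $x_1=(z,1)$, the argument terminates after this single extra level not because both sections become non-trivial but because $\pi_{10}$ of the diagonal equals $\pi_0(J_{1,5})=\Grig$, so $\widetilde W\supseteq\{1\}\times\{1\}\times\gen{z}^\Grig\times\{1\}$ and one invokes just-infiniteness of $\Grig$. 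Your open-ended induction down the tree has no termination guarantee and misses both mechanisms. Finally, the residual case $x_0=d$ must be shown to be impossible (via $c^a\cdot(d,x_1)\in\Rist_\Grig(\level 1)=B\times B$), which your sketch does not address.
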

\begin{proof}
The subgroups $K$ and $J_{1,5}$ being finitely generated, so is $W_P$.
Now, if $g$ is an element of $W_P\cap (K\times\{1\}\times\{1\}\times\{1\})$, we have $g=(k,1,1,1)$ and also $g=(g_0,g_1,g_0,g_3)$ which implies $g=1$.
We have shown that $W_P\cap (K\times\{1\}\times\{1\}\times\{1\})=\{1\}$ and therefore that $W_P$ is of infinite index.

We now want to prove that $W_P$ is weakly maximal.
That is, for all $x\in\Grig\setminus W_P$, the subgroup $\widetilde{W}\coloneqq \gen{W_P,x}$ is of finite index in $\Grig$.
Since $a$ belongs to $W_P$, we can assume that $x$ belongs to $H$ and $x=(x_0,x_1)$.
We have $\Grig/B=\{1,a,d,ad,ada,\dots,(ad)^3a\}\cong D_{2\cdot 4}$, the dihedral group of order $8$, and hence $\Grig/ J_{1,5}=\{1,a,d,ad\}$.
By factorizing the first coordinate by $\pi_0(W_P)\geq  J_{1,5}$ we can assume that $x_0$ belongs to $\{1,a,d,ad\}$ which leave us with four cases to check.
If $x_0$ is not in $H$, then $\widetilde{W}$ contains $(\{1\}\times K\times \{1\}\times \{1\})^{x_0}=K\times \{1\}\times \{1\}\times \{1\}$ and $a$.
In this case, $\widetilde{W}$ contains $K\times K\times K\times K$ and is of finite index.
We can hence suppose that $x_0$ is in $H$, and by symmetry, that $x_1$ is  also in $H$.
It thus remains to check two cases: $(1,x_1)$ and $(d,x_1)$ with $x_1$ in $H$.

If $x_0=1$, then $x_1\neq 1$.
In this case, $\widetilde{W}$ contains both $\{1\}\times\gen{x_1}^{J_{1,5}}$ and $a$, and hence also $\gen{x_1}^{J_{1,5}}\times\gen{x_1}^{J_{1,5}}$.
If the subgroup $\gen{x_1}^{J_{1,5}}$ has finite index in $\Grig$, then $\widetilde{W}$ has also finite index in $\Grig$.
We can therefore assume that $\gen{x_1}^{J_{1,5}}$ has infinite index in $\Grig$, which implies by Lemma~\ref{WMC:Lemma:index} that $x_1=(1,z)$ or $x_1=(z,1)$, $z\neq 1$.
In both cases, we have $x=(1,x_1)$ is an element of $\Rist_\Grig(2)=K\times K\times K\times K$ which implies that $z$ is in $K$.
This rules out the case $x_1=(1,z)$, since in this case we would have $x=(1,1,1,z)\in \{1\}\times K\times \{1\}\times K\leq W$.
On the other hand, if $x_1=(z,1)$, the subgroup $\widetilde W$ contains $\gen{(1,1,z,1)}^{\diag(J_{1,5}\times J_{1,5})}=\{1\}\times\{1\}\times\gen{z}^\Grig\times\{1\}$ since $\pi_0(J_{1,5})=\Grig$.
The group $\Grig$ being just infinite and $z\neq 1$, the subgroup $\gen{z}^\Grig$ has finite index in $\Grig$ and $\gen{z}^\Grig\cap K$ has finite index in $K$.
Therefore, $\tilde W$ contains $(\gen{z}^\Grig\cap K)\times K\times(\gen{z}^\Grig\cap K)\times K$ which has finite index in $K\times K\times K\times K=\Rist_\Grig(\level{2})$ and thus in $\Grig$.

We will now show that the case $x_0=d$ cannot happen if $x_i$ is in $H$.
Indeed, $(d,x_1)$ belongs to $\Grig$ if and only if $(1,ax_1)=c^a\cdot(d,x_1)$ is in $\Grig$.
But in this case, $(1,ax_1)$ belongs to $\Rist_\Grig(1)=B\times B$ and so $ax_1$ is in $B\leq H$, which is impossible if $x_1\in H$.
\end{proof}
\begin{lemma}\label{WMC:Lemma:WcapH}
The subgroup $W_P\cap H$ is a weakly maximal subgroup of $H$.
\end{lemma}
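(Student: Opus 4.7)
The plan is to derive weak maximality of $W_P \cap H$ in $H$ from weak maximality of $W_P$ in $\Grig$ and the fact that $H$ has index $2$ in $\Grig$, using as the key structural input that the element $a$ normalizes $W_P \cap H$.

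First I would verify infinite index. Since $a \in W_P$ nontrivially permutes the first level, one has $a \in W_P \setminus H$, so $W_P H = \Grig$ and $[W_P : W_P \cap H] = [\Grig : H] = 2$. Combined with $[\Grig : W_P] = \infty$, this gives $[H : W_P \cap H] = [\Grig : W_P] = \infty$.

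Next I would establish that $a$ normalizes $W_P \cap H$. Conjugation by $a$ acts as the swap of the two first-level halves: it fixes every element of $\diag(J_{1,5} \times J_{1,5})$ (swapping two identical factors is trivial on the diagonal) and sends $(1,k_1,1,k_2) \in \{1\} \times K \times \{1\} \times K$ to $(1,k_2,1,k_1)$, which still lies in the same subgroup. Hence $W_P = (W_P \cap H) \rtimes \gen{a}$. Now let $y \in H \setminus (W_P \cap H)$; then $y \notin W_P$, so weak maximality of $W_P$ gives that $\gen{W_P, y}$ has finite index in $\Grig$. Using the semi-direct decomposition together with $a^2 = 1$, every element of $\gen{W_P, y}$ rewrites as $\ell \cdot a^{\epsilon}$ with $\epsilon \in \{0,1\}$ and $\ell$ in the $a$-invariant subgroup $L^{*} = \gen{W_P \cap H,\, y,\, a y a^{-1}}$. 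Consequently $L^{*} = \gen{W_P, y} \cap H$ has finite index in $H$.

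The main obstacle will be the last step: passing from $L^{*}$ to $L = \gen{W_P \cap H, y}$ by showing that adjoining $a y a^{-1}$ contributes only finitely many cosets. I plan to handle this using the branching content already present in $W_P \cap H$ together with the identity $a y a^{-1} = y \cdot [y, a]^{-1}$. Specifically, after replacing $y$ by a suitable representative of the coset $y (W_P \cap H)$, one can arrange for $[y, a]$ to land in $W_P \cap H$: the pieces $\{1\} \times K \times \{1\} \times K$ and $\diag(J_{1,5} \times J_{1,5})$ provide enough symmetric branching room to absorb such commutators, at which point $a y a^{-1} \in L$ and the proof is complete.
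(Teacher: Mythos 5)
Your steps 1--3 are correct but they cannot carry the argument, and step 4 --- which you rightly identify as the main obstacle --- is where the proof actually fails. First, the general framework is insufficient on principle: the paper's Corollary \ref{Cor:NotWMInH} together with the example $W=\SStab_\Grig(\{\xi,a.\xi\})$ exhibits a weakly maximal subgroup $W$ of $\Grig$ containing $a$, hence satisfying exactly the structural facts you use ($[W:W\cap H]=2$, $a$ normalizes $W\cap H$, $W=(W\cap H)\rtimes\gen{a}$, $\gen{W,y}$ of finite index for all $y\notin W$), for which $W\cap H$ is \emph{not} weakly maximal in $H$. So any proof must use the specific shape of $W_P$, and all of that content is concentrated in your step 4.

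Second, the mechanism you propose for step 4 provably fails. Every element of $W_P\cap H=\diag(J_{1,5}\times J_{1,5})\cdot(\{1\}\times K\times\{1\}\times K)$ has both first-level sections in $J_{1,5}$ (since $\{1\}\times K\leq K\times K\leq K\leq J_{1,5}$), and $J_{1,5}$ is normal in $\Grig$. Take $y=b=(a,c)\in H\setminus W_P$. Any $y'\in y(W_P\cap H)$ has the form $(y_0',y_1')$ with $y_0'\equiv a$ and $y_1'\equiv c\equiv d$ modulo $J_{1,5}$, so $[y',a]=\bigl(y_0'(y_1')^{-1},\,y_1'(y_0')^{-1}\bigr)$ has first coordinate congruent to $ad\neq 1$ modulo $J_{1,5}$; hence $[y',a]$ can never lie in $W_P\cap H$, no matter which coset representative you choose. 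There is therefore no ``symmetric branching room'' to absorb the commutator, and the passage from $L^{*}$ to $L$ remains unproved. The paper avoids this route entirely: it takes $x\in H\setminus W_P$, reduces $x_0$ modulo $\pi_0(W_P\cap H)\geq J_{1,5}$ to the representatives $1$ and $d$, and reruns the explicit case analysis from the proof that $W_P$ is weakly maximal in $\Grig$ (using $\{1\}\times K\times\{1\}\times K$, $\diag(J_{1,5}\times J_{1,5})$ and just-infiniteness of $\Grig$), which is the kind of concrete computation your argument would ultimately have to reproduce.
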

\begin{proof}
Let $x$ be in $H\setminus W_P$ and look at $\widetilde W\coloneqq\gen{x,W_P\cap H}$.
Then $x=(x_0,x_1)$ and factorizing the first factor by $J_{1,5}$, we can assume that $x_0$ is either $1$ or $d$ and $x_1$ is in $H$.
If $x_0=1$, then $x_1\neq 1$.
In this case, $\widetilde{W}$ contains both $\{1\}\times\gen{x_1}^{J_{1,5}}$ and $\diag(J_{1,5}\times J_{1,5})$.
In particular, it contains $C\times C$ where $C=\gen{x_1}^{J_{1,5}}\cap J_{1,5}$.
Observe that $C$ is of finite index in $\Grig$ if and only if $\gen{x_1}^{J_{1,5}}$ is so.
The rest of the proof follows the proof of the weak maximality of $W$.
\end{proof}
Let $G$ be a topological group. A subgroup $W$ of $G$ is said to be \defi{weakly maximal closed} if it is maximal among all closed subgroups of infinite index.
\begin{lemma}
\label{GrigSubgroup:Conjecture:BarWIsOk}\hspace{1em}
\begin{enumerate}
\item
$\overline W_P$ is a topologically finitely generated weakly maximal closed subgroup of $\hat \Grig$;\label{WMC:ConjBarW1}
\item
There are uncountably many distinct conjugates of $\overline W_P$ in $\hat \Grig$.\label{WMC:ConjBarW2}
\end{enumerate}
\end{lemma}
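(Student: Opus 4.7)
The first assertion is immediate: since $W_P$ is finitely generated as an abstract group, $\overline{W}_P$ is topologically generated by the same finite set.

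For part (2), my plan is to imitate the proof of the abstract weak maximality of $W_P$ in $\Grig$ (i.e.\ the argument preceding Lemma~\ref{WMC:Lemma:WcapH}) inside $\hat\Grig$, systematically replacing each finite-index subgroup $L\leq\Grig$ by its closure $\overline{L}\leq\hat\Grig$. The congruence subgroup property of $\Grig$ ensures that the canonical map $\Grig/L\to\hat\Grig/\overline{L}$ is an isomorphism for every finite-index $L\leq\Grig$, so in particular $\overline{J_{1,5}}$ has index $4$ in $\hat\Grig$ with the same coset representatives $\{1,a,d,ad\}\subseteq\Grig$, and $\overline{\diag(J_{1,5}\times J_{1,5})}=\diag(\overline{J_{1,5}}\times\overline{J_{1,5}})$ is a subgroup of $\overline{W}_P$. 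Given $x\in\hat\Grig\setminus\overline{W}_P$, one first uses $a\in\overline{W}_P$ to reduce to $x=(x_0,x_1)\in\Stab_{\hat\Grig}(\level{1})$, and then uses $\diag(\overline{J_{1,5}}\times\overline{J_{1,5}})\leq\overline{W}_P$ to reduce $x_0$ modulo $\overline{J_{1,5}}$ to a representative in $\{1,a,d,ad\}$. The same four-case analysis as in the original proof then shows that the closed subgroup topologically generated by $\overline{W}_P$ and $x$ contains $\overline{\Rist_\Grig(\level{2})}$, hence is open. The case analysis invokes the topological analogue of Lemma~\ref{WMC:Lemma:index} in $\hat\Grig$, which follows from the topological just infiniteness of $\hat\Grig$ (every non-trivial closed normal subgroup has finite index), itself a consequence of the just infiniteness of $\Grig$ together with the CSP.

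For part (3), I exploit that $W_P$, having a block structure, is closed in the profinite topology of $\Grig$ by Lemma~\ref{Lemma:BlockTopology}, so that $\overline{W}_P\cap\Grig=W_P$. Consequently, any $g\in\Grig$ normalizing $\overline{W}_P$ also normalizes $W_P$, giving
\[
	N_{\hat\Grig}(\overline{W}_P)\cap\Grig\leq N_\Grig(W_P).
\]
Since $\Grig$ is a torsion branch group and $W_P$ is weakly maximal, $W_P$ is self-normalizing by \cite{MR3478865}, so $N_\Grig(W_P)=W_P$ has infinite index in $\Grig$. Were $N_{\hat\Grig}(\overline{W}_P)$ open in $\hat\Grig$, it would contain $\Stab_{\hat\Grig}(\level{n})$ for some $n$, forcing $N_\Grig(W_P)\supseteq\Stab_\Grig(\level{n})$ to have finite index, which is absurd. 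Hence $N_{\hat\Grig}(\overline{W}_P)$ is a non-open closed subgroup of the second-countable profinite group $\hat\Grig$, and a standard Baire-category argument (a countable compact Hausdorff space necessarily admits an isolated point, so a non-open closed subgroup of such a profinite group has uncountable index) yields that $\overline{W}_P$ has uncountably many distinct conjugates.

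The main obstacle is the verification in part (2): while the scheme of the proof transfers cleanly thanks to CSP, one must be careful that the elements $x_0,x_1$ produced by decomposing an arbitrary $x\in\hat\Grig$ live in $\hat\Grig$ rather than $\Grig$, so the ``just infinite'' arguments used in the original proof must be replaced by their topological analogues in $\hat\Grig$. This reduces, as mentioned, to the topological just infiniteness of $\hat\Grig$, a known consequence of the just infiniteness of $\Grig$ and its CSP.
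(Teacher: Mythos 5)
Your argument is correct. For the weak maximality of $\overline W_P$ you follow essentially the paper's route: transfer the four-case analysis from $\Grig$ to $\hat\Grig$ via the congruence subgroup property, the key input being a topological analogue of Lemma~\ref{WMC:Lemma:index}. The paper derives that analogue from Lemma~\ref{Lemma:InfiniteINdexCentralizer} (so that for each non-trivial section $x_i$ one can pick $y_i\in\bar K$ with $[x_i,y_i]\neq 1$), while you invoke the topological just infiniteness of $\hat\Grig$; that does suffice (a non-trivial $C_{\hat\Grig}(\bar K)$ would be a closed normal, hence open, subgroup, making $\bar K$ virtually abelian), but this centralizer step deserves to be spelled out, since topological just infiniteness enters a second time to make $\overline{\gen{[x_i,y_i]}^{\hat\Grig}}$ open. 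Also record explicitly that $\overline W_P$ is not open: this follows from $\overline W_P\cap\Grig=W_P$ (which you only establish in your last part) together with the fact that an open subgroup of $\hat\Grig$ contains some $\Stab_{\hat\Grig}(\level{n})$. For the uncountably many conjugates you take a genuinely different route. The paper applies the self-normalization result of \cite{MR3478865} directly to the weakly maximal closed subgroup $\overline W_P$ of $\hat\Grig$ and then quotes from \cite{MR3592603} that $\hat\Grig$, being topologically just infinite, has no subgroup of countable index. You instead use self-normalization only in the discrete group, via $N_{\hat\Grig}(\overline W_P)\cap\Grig\leq N_\Grig(W_P)=W_P$, to conclude that the normalizer is a closed non-open subgroup, and finish with the Baire-category fact that such a subgroup of a profinite group has uncountable index. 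Your version buys a cleaner logical dependency --- it does not require knowing that the self-normalization theorem applies to weakly maximal closed subgroups of the profinite completion --- at the price of the (standard) Baire argument, which is essentially what the paper's citation of \cite{MR3592603} encapsulates.
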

\begin{proof}
Let $1\neq x\in\Stab_{\hat\Grig}(1)$.
We claim that $\overline{\gen{x}^{J_{1,5}}}$ has infinite index in $\hat\Grig$ if and only if $\treesection{x}{i}=1$ for some $i\in\{0,1\}$.
The proof of this claim is the same as the proof of Lemma~\ref{WMC:Lemma:index} and we only need to show that for every $1\neq x\in\hat\Grig$, there exists $y\in\overline{K}$ such that $[x,y]\neq 1$.
But this follows from the fact that if $1\neq x$ is any element of $\hat \Grig$ then $[\Grig: C_\Grig(x)]=\infty$, which is Lemma~\ref{Lemma:InfiniteIndexCentralizer}.
Indeed, $[\hat\Grig: C_{\hat\Grig}(x)]=\infty$ if and only if $[\Grig: C_{\hat\Grig}(x)\cap\Grig]=[\Grig: C_\Grig(x)]=\infty$.
In this case, the finite index subgroup $\overline{K}$ cannot be contained in $C_{\hat\Grig}(x)$.

The application $\overline{\phantom{H}}\colon\Subcl(G)\to\Subcl(\hat\Grig)$ that sends a closed (in the profinite topology) subgroup of $\Grig$ to its closure in $\hat\Grig$ sends infinite index subgroups to infinite index subgroups. See~\cite{LeemannThese} for more details.
Since $W_P$ is weakly maximal in $\Grig$, it is closed and we have that $\overline W_P$ is an infinite index closed subgroup of $\hat\Grig$.
Indeed, by~\cite{MR1841763} all maximal subgroups of $\Grig$ are of finite index. The group $\Grig$ being a super strongly self-replicating just infinite group with the congruence subgroup property, the property of having all maximal subgroups of finite index is preserved when passing to finite index subgroup by~\cite[Lemma 4]{MR2009443}. Hence $W=\bigcap_{i\geq 0}M_i$ where $M_0=\Grig$ and $M_{i+1}$ is a maximal, and therefore finite index, subgroup of $M_i$ containing $W$.
The element $a$ normalizes both $\diag( J_{1,5}\times J_{1,5})$ and $\{1\}\times K\times\{1\}\times K$ and since $K$ is normal and $J_{1,5}\leq H$, the subgroup $\{1\}\times K\times\{1\}\times K$ is normalized by $\diag( J_{1,5}\times J_{1,5})$
Therefore, $W_P=\{1,a\}\cdot\diag( J_{1,5}\times J_{1,5})\cdot(\{1\}\times K\times\{1\}\times K)$ and $\overline W_P=\overline{\{1,a\}}\cdot\overline{\diag( J_{1,5}\times J_{1,5})}\cdot\overline{\{1\}\times K\times\{1\}\times K}$.
The subgroup $\{1,a\}$ is closed and we have $\overline{\{1\}\times K\times\{1\}\times K}=\{1\}\times \overline{K}\times\{1\}\times \overline{K}$ and $\overline{\diag( J_{1,5}\times J_{1,5})}=\diag(\overline{J_{1,5}}\times\overline{J_{1,5}})$.
Altogether, we have 
\[
	\overline W_P=\gen{a,\diag(\overline{J_{1,5}}\times\overline{J_{1,5}}),\{1\}\times \overline{K}\times\{1\}\times \overline{K}}
\]
is a (topologically) finitely generated subgroup of $\hat\Grig$.

For all $n$ we have $\Stab_\Grig(n)=\Stab_{\hat\Grig}(n)\cap\Grig$ and $\Rist_\Grig(n)=\Rist_{\hat\Grig}(n)\cap\Grig$.
Since these subgroups are of finite index, we have 
\[
	\overline{\Stab_\Grig(n)}=\Stab_{\hat\Grig}(n)\qquad\overline{\Rist_\Grig(n)}=\Rist_{\hat\Grig}(n)
\]
In particular, we have
\[
	\Rist_{\hat\Grig}(\level{1})=\overline{B}\times\overline{B}\qquad \Rist_{\hat\Grig}(\level{2})=\overline{K}\times\overline{K}\times\overline{K}\times\overline{K}
\]
On the other hand, the closure preserves transversals for finite index subgroups.
In particular, $\hat\Grig/\overline{H}=\{1,a\}$ and $\hat\Grig/\overline{J_{1,5}}=\{1,a,ad,d\}$.
Let $x$ be an element from $\Grig\setminus \overline W_P$  and look at $\widetilde{\overline W}\coloneqq\overline{\gen{\overline W_P,x}}$.
The proof that $\widetilde{\overline{W}}$ is of finite index is the same as the one for $\widetilde{W}$, where Lemma~\ref{WMC:Lemma:index} is replaced by the claim at the beginning of this proof.

Since $\overline{W}_P$ is weakly maximal closed, we have $g\overline{W}_Pg^{-1}\neq \overline{W}_P$ for every torsion element in $\hat\Grig\setminus\overline{W}_P$, adapting the proof of \cite[Proposition 2.3]{MR3478865}.
Using that $\Grig$ is torsion and that $\overline{W}_P\cap\Grig=W_P$ we obtain that the normalizer of $\overline{W}_P$ has infinite index in $\hat\Grig$.
On the other hand, $\hat \Grig$ is topologically just infinite and hence contains no subgroup with countable index,~\cite{MR3592603}.
In particular, $\overline{W}_P$ has uncountably many distinct conjugates.
\end{proof}
Using more convoluted methods, it is showed in~\cite{LeemannThese} that $\overline W_P$ has a continuum of conjugates in $\hat \Grig$.
\paragraph{The subgroup $W_T$}
We have seen in the last section that $W_T=\gen{a,bab,cac}$ is an infinite index subgroup of $G$ that is finitely generated and acts level-transitively.
It remains to show that it is weakly maximal in order to finish the proof of Lemma~\ref{Lemma:LevelTransGrigorchuk}.
\begin{lemma}\label{Lemma:WLWMax}
The subgroup $W_T$ is weakly maximal.
\end{lemma}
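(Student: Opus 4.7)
The plan is to adapt the strategy used earlier for $W_P$. Take $x\in\Grig\setminus W_L$, set $\tilde W\defegal\gen{W_L,x}$, and aim to show that $[\Grig:\tilde W]<\infty$. A preliminary step is to verify that $W_L\cap H=D\defegal\diag(J_{0,5}\times J_{0,5}^a)$, where $H=\Stab_\Grig(\level{1})$: since $J_{0,5}$ is normal in $\Grig$, conjugation by $a$ preserves $D$, and so $W_L=D\cup aD$ and $W_L\cap H=D$. Both first-level sections of $D$ are equal to $J_{0,5}$, which has index $2$ in $\Grig$ and contains $K$. Using $a\in W_L$, we may assume $x\in H$; writing $x=(x_0,x_1)$ and multiplying by a suitable element of $D$, we may further reduce $x_0$ to lie in a fixed set of coset representatives of $J_{0,5}$ in $\Grig$, say $x_0\in\{1,a\}$.

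In the case $x_0=1$ we have $x=(1,x_1)$ with $x_1\neq 1$. Conjugation of $x$ by elements $(g,g^a)\in D$ produces $(1,x_1^{g^a})\in \tilde W$, so $\tilde W\supseteq\{1\}\times\gen{x_1}^{J_{0,5}}$. Because $\Grig$ is just infinite and $[\Grig:J_{0,5}]=2$, the normal closure $N\defegal\gen{x_1}^{J_{0,5}}$ has finite index in $\Grig$. Conjugating $\{1\}\times N$ by $a\in W_L$ yields $N\times\{1\}\leq\tilde W$, whence $\tilde W\supseteq N\times N$, a finite-index subgroup of $\Rist_\Grig(\level{1})$. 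Thus $[\Grig:\tilde W]<\infty$.

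In the case $x_0=a$, the requirement $x\in\Grig\cap H$ forces $x_1\in dB$. Writing $x_1=db'$ with $b'\in B$, one computes $x^2=(1,x_1^2)\in\{1\}\times B$. If $x_1^2\neq 1$, then $x^2$ is a non-trivial element to which the previous case applies. Otherwise $x$ is an involution; in this sub-case I would form $y\defegal x\cdot(b,b^a)\in \tilde W\cap H$, with level-$1$ sections $(ab,db'\cdot aba)$, and compute $y^2$, whose left section $(ab)^2$ lies in $K\leq J_{0,5}$. Multiplying $y^2$ by the element $((ab)^{-2},((ab)^{-2})^a)\in D$ then yields an element of the form $(1,z)\in\tilde W$ with $z\in B$ (automatically, since $\pi_0=1$ forces the right section into $B$); a direct computation (for instance, for $x=c=(a,d)$ one finds $z$ with non-trivial level-$1$ sections) shows $z\neq 1$, which reduces again to the case $x_0=1$.

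The main obstacle will be the involution sub-case of $x_0=a$: the naive trick $x^2=(1,x_1^2)$ degenerates, and one has to resort to products with elements of $D$ and verify by explicit computation in $\Grig$ (using the structure of $B$, $K$ and the known involutions of $\Grig$) that the resulting element, reduced modulo $D$, is non-trivial. Once this step is carried out, the argument collapses back to the first case and gives finite index in $\Grig$, completing the proof of weak maximality.
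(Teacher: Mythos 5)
Your overall strategy is the same as the paper's: reduce $x$ modulo $a$ and the diagonal $D=\diag(J_{0,5}\times J_{0,5}^a)$ to two cases according to the coset of $J_{0,5}$, then manufacture a non-trivial element of $\{1\}\times\Grig$ inside $\tilde W$ and take its normal closure. Two points need attention. First, your justification in the case $x=(1,x_1)$ is not valid as written: the fact that $\Grig$ is just infinite and $[\Grig:J_{0,5}]=2$ does \emph{not} imply that the normal closure $\gen{x_1}^{J_{0,5}}$ has finite index --- just-infiniteness does not pass to finite-index subgroups (for instance $B\times\{1\}$ is normal in the index-$2$ subgroup $H=\Stab_\Grig(\level{1})$ and has infinite index; this is exactly why Lemma \ref{WMC:Lemma:index} has an ``infinite index'' branch). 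The conclusion is nevertheless true, because $J_{0,5}$ acts level transitively (as shown in the proof of Lemma \ref{Lemma:LevelTransGrigorchuk}) and is therefore itself just infinite by Lemma \ref{Lemma:JustInfinite}; with that citation your first case is fine, and in fact slightly cleaner than the paper's sub-case analysis of the level-$2$ sections.

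The genuine gap is the involution sub-case of $x=(a,x_1)$, which you yourself flag as ``the main obstacle'' and do not carry out. Checking the single example $x=c$ does not establish that your element $z=(x_1aba)^2\cdot\bigl((ab)^{-2}\bigr)^a$ is non-trivial for \emph{every} involution $x_1\in cB$, and there is no a priori reason it should be: the analogous first attempt in the paper's proof, which produces $(1,yaya)$ with $yaya=(t_1t_2,t_2t_1)$, genuinely can vanish (whenever $t_1=t_2^{-1}$). This is why the paper multiplies $x^{-1}$ by \emph{two} different diagonal elements, $(y,y^a)$ and $(yb,(yb)^a)$, obtains the two candidates $yaya$ and $ybayba$, and then proves by an explicit computation on the level-$1$ sections of $y$ that these cannot both be trivial. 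A proof along your lines must either perform the analogous two-element argument or verify non-vanishing of your single $z$ uniformly over all admissible $x_1$; as it stands this step is a plan, not a proof, and the one-element version of the plan is likely insufficient.
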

\begin{proof}
Let $x$ be an element of $G\setminus W_T$ and $\widetilde W_T\coloneqq\gen{W_T,x}$.
Since $a$ belongs to $W_T$, we may suppose that $x=(x_0,x_1)$ is in $H$.
On the other hand, $\Grig=J_{0,5}\sqcup a J_{0,5}$ and $W_T$ contains $\diag(J_{0,5}\times J_{0,5}^a)$ and thus we may suppose that $x=(x_0,1)$ or $x=(x_0,a)$.

Firstly suppose that $x=(x_0,1)$.
In particular, $x$ belongs to $\Rist_\Grig(0)=B\times \{1\}$ and therefore $x_0$ is in $B$ and thus in $H$ and $x_0=(z,t)$.
If both $z$ and $t$ are not trivial, then by Lemma~\ref{WMC:Lemma:index}, the subgroup $\gen{x_0}^{J_{0,5}}$ has finite index in $\Grig$.
But $\widetilde W_T$ contains $\gen{x_0}^{J_{0,5}}\times \{1\}$ and $a$, and thus it contains $\gen{x_0}^{J_{0,5}}\times \gen{x_0}^{J_{0,5}}$, which is a finite index subgroup of $\Grig$ and we are done.
On the other hand, suppose that at least one of $z$ or $t$ is trivial, say $t$. Then $x=(z,1,1,1)$ with $z\neq 1$.
In this case, $\widetilde W_T$ contains 
\begin{align*}
\gen{x}^{\diag(J_{0,5}\times J_{0,5}^a)}&=\gen{(z,1)}^{J_{0,5}}\times\{1\}\\
&=\gen{z}^\Grig\times \{1\}\times\{1\}\times\{1\}
\end{align*}
Once again, $\Grig$ being just infinite and $z$ non-trivial implies that $\gen{z}^\Grig$ is a finite index subgroup $N$ of $\Grig$ and since $\widetilde W_T$ acts transitively on the second level it contains $N\times N\times N\times N$ which is a finite index subgroup of $\Grig$.

Let us now look at the case $x=(x_0,a)\notin W_T$.
Observe that $\pi_1(\widetilde W_T)=\gen{\pi_1(W_T),\treesection{x}{1}}=\gen{J_{0,5},a}=\Grig$.
Since $x$ belongs to $\Grig$, so does $xaba=(x_0c,1)$ which implies that $x_0=yc$ with $y\in B$.
Both $x^{-1}=(cy^{-1},a)$, and $(y,aya)\in\diag(J_{0,5}\times J_{0,5}^a)$ belong to $\widetilde W_T$.
In particular, $\widetilde W_T$ contains $\bigl(x^{-1}\cdot(y,aya)\bigr)^2=(1,yaya)$ and therefore $\widetilde W_T$ contains $\{1\}\times\gen{yaya}^\Grig$.
If $yaya$ is not trivial, then the same argument as before shows that $\widetilde W_T$ is of finite index.
Since $y$ is in $B$, so is $yb$ and therefore $(yb,ayba)$ belongs to $W_T$.
In particular, $\bigl(x^{-1}\cdot(yb,ayba)\bigr)^2=(1,ybayba)$ is in $\widetilde W_T$.
As before, if $ybayba$ is not trivial, $\widetilde W_T$ is of finite index.
All we have to do is to show that $yaya$ and $ybayba$ cannot be both trivial.
Since $y$ is in $B$ it is equal to $(t_1,t_2)$.
Then $yaya=(t_1t_2,t_2t_1)$ and $ybayba=(t_1at_2c,t_2ct_1a)$.
If they are both trivial, then $t_1=t_2^{-1}$ and $t_2^{-1}at_2c=1$.
But $t_2^{-1}at_2c$ is never in $H$ regardless of the value of $t_2$ and we have the desired contradiction.
\end{proof}
%
%
%
%
%
%
%
%
%
%
%
%
%
%
%
%
\subsection{Sections of weakly maximal subgroups of \texorpdfstring{$\Grig$}{the first Grigorchuk group}}
\label{WMC:SecClassification}
While general results about sections of weakly maximal subgroups are given in Section~\ref{Section:Sections}, we give here more details for the particular case of the first Grigorchuk group.

Let $W<\Grig$ be a weakly maximal subgroup.
If $W$ is contained in the stabilizer of the first level, then by Lemma~\ref{WMC:lemmaSections}, if one of the first-level sections is of infinite index, say $\pi_0(W)$, then $\pi_0(W)$ is a weakly maximal subgroup of $\Grig$ and the other section $\pi_1(W)$ contains $B$.
It is natural to ask if this result can be extended to weakly maximal subgroups that do not stabilize the first level.
In fact, the same proof shows that if $W\cap H$ is weakly maximal in $H$ and one of the first-level sections of $W\cap H$ is of infinite index, then it is a weakly maximal subgroup of $\Grig$ and the other section contains $B$.
This remark and Lemma~\ref{Lemma:ProjGenParab} imply the following.
\begin{corollary}\label{Cor:NotWMInH}
If $W$ is a generalized parabolic subgroup of $\Grig$ not contained in~$H$, then $W\cap H$ is not weakly maximal in $H$.
\end{corollary}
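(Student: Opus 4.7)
The plan is to argue by contradiction: assume $W\cap H$ is weakly maximal in $H$, and show that one of the first-level sections of $W\cap H$ would have to simultaneously be of infinite index in $\Grig$ and contain $B$, which is absurd. To set things up: because $W\not\subseteq H$ and $[\Grig:H]=2$, the subgroup $W\cap H$ has index $2$ in $W$, and $W$ must swap the two level-one vertices $v_0$ and $v_1$. In particular $\Stab_W(v_0)=\Stab_W(v_1)=\Stab_W(\level{1})=W\cap H$, so $\pi_{v_i}(W\cap H)=\pi_{v_i}(W)$ for $i=0,1$.

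The next step is to show that both $v_0$ and $v_1$ lie in $\NR(W)$. Since $W$ is generalized parabolic it is weakly maximal of infinite index, so by Lemma \ref{Lemma:NRTree} the tree $\NR(W)$ is a nonempty rooted subtree of $T$ with no leaf; in particular it contains at least one vertex of level $1$. A one-line conjugation check gives $\NR(W)$ is $W$-invariant: for $g\in W$, the equality $\Rist_\Grig(g\cdot v)=g\Rist_\Grig(v)g^{-1}$ together with $\Rist_W(g\cdot v)=g\Rist_W(v)g^{-1}$ shows that the indices $[\Rist_\Grig(v):\Rist_W(v)]$ and $[\Rist_\Grig(g\cdot v):\Rist_W(g\cdot v)]$ coincide. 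Since $W$ swaps $v_0$ and $v_1$, this forces both of them into $\NR(W)$, and Lemma \ref{Lemma:ProjGenParab} then yields that $\pi_{v_0}(W)$ and $\pi_{v_1}(W)$ both have infinite index in $\pi_{v_i}(\Grig)=\Grig$.

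To conclude, I invoke the remark preceding the statement: if $W\cap H$ were weakly maximal in $H$ and one of its first-level sections were of infinite index in $\Grig$, then the other section would contain $B$, hence be of finite index in $\Grig$. Applying this to the section at $v_0$ shows that $\pi_{v_1}(W\cap H)=\pi_{v_1}(W)$ must contain $B$; but we have just proved that $\pi_{v_1}(W)$ has infinite index in $\Grig$, a contradiction. The main obstacle is really just the bookkeeping around the fact that the adaptation of Lemma \ref{WMC:lemmaSections} from ``$W$ weakly maximal in $\Grig$'' to ``$W\cap H$ weakly maximal in $H$'' is legitimate; this is exactly the content of the remark preceding the corollary, so once invoked the argument reduces to the $W$-invariance of $\NR(W)$ and an immediate application of Lemma \ref{Lemma:ProjGenParab}.
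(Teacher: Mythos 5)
Your proof is correct and follows essentially the same route as the paper, which deduces the corollary from the remark adapting Lemma \ref{WMC:lemmaSections} to $W\cap H$ together with Lemma \ref{Lemma:ProjGenParab}; your additional verification that both first-level vertices lie in $\NR(W)$ (via the $W$-invariance of $\NR(W)$ and the no-leaf property) just makes explicit a step the paper leaves implicit.
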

Corollary~\ref{Cor:NotWMInH} can't be extended to all weakly maximal subgroups of $\Grig$, as shown by the counter-example of $W_P$, see Lemmas~\ref{LemmaWPWMax} and~\ref{WMC:Lemma:WcapH}.

Let $\xi\in\partial T$ be any ray.
Then $W=\SStab_\Grig(\{\xi,a.\xi\})$ satisfies the hypothesis of the above corollary which shows the existence of weakly maximal subgroup of $\Grig$ such that $W\cap H$ is not a weakly maximal subgroup of $H$.
This answers positively question 6.5.6 of~\cite{LeemannThese}.

By Proposition~\ref{Prop:NewWmax}, for every weakly maximal subgroup $L$ of $\Grig$, there exists a weakly maximal subgroup $W$ of $\Grig$ that stabilizes the first level and such that $\pi_1(W)=L$.
In particular, by taking $L$ any weakly maximal subgroup that is not (generalized) parabolic, we obtain infinitely many examples of weakly maximal subgroups that are not (generalized) parabolic, but still contained in $H$, hence answering question 6.5.3 of~\cite{LeemannThese}.

In~\cite{LeemannThese}, the author also asked the following question about sections of weakly maximal subgroups.
\begin{question}\label{Question:ProjWM}
Let $W$ be a weakly maximal subgroup of $\Grig$ contained in $H$ and $\pi_0(W)$ and $\pi_1(W)$ its left and right sections.
If $\pi_1(W)$ is of infinite index, which of the $10$ subgroups of $\Grig$ containing $B$ could appear as $\pi_0(W)$?
\end{question}
We will show that both $\Grig$ (Lemma~\ref{Lemma:SectionG}) and $J_{0,2}$ (Proposition~\ref{Proposition:SectionsParabolic}) can be obtained as $\pi_0(W)$.
\begin{lemma}\label{Lemma:SectionG}
There exists a continuum of generalized parabolic subgroup $W$ of $\Grig$ such that $W$ is contained in $H$, $\pi_1(W)$ is of infinite index in $\Grig$ and $\pi_0(W)=\Grig$.
\end{lemma}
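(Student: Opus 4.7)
The plan is to construct $W$ via Proposition~\ref{Prop:NewWmax}: starting from a generalized parabolic (hence weakly maximal) subgroup $L$ of $\Grig$, the lift $W \defegal L^{1}$ is weakly maximal and contained in $H = \Stab_\Grig(1)$ with $\pi_1(W) = L$, so that $\pi_1(W)$ has infinite index as soon as $L$ does. Moreover, since $1 \in L$ and $B \times \{1\} \leq \Rist_\Grig(\level{1}) \leq H$, for every $\beta \in B$ we have $(\beta,1) \in W$, giving $B \subseteq \pi_0(W)$. Since $\pi_0(W)$ is a subgroup of $\Grig$, it will therefore be equal to $\Grig$ as soon as $\pi_0(W) \cdot B = \Grig$, i.e.\ as soon as $\pi_0(W)$ meets every coset of $B$. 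The whole construction will thus be reduced to producing a continuum of generalized parabolic $L$ of infinite index with $L \cdot B = \Grig$.

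The candidate is $L_\xi \defegal \SStab_\Grig\bigl(\gen{a, d}\cdot \xi\bigr)$, for $\xi \in \partial T$. A direct wreath-product computation gives $(ad)^2 = (b,b)$, hence $(ad)^4 = 1$, so $\gen{a,d}$ is dihedral of order $8$. Since $\Grig/B \cong D_4$ is also of order $8$ and is generated by $aB$ and $dB$, the composition $\gen{a,d} \hookrightarrow \Grig \surjection \Grig/B$ is a surjection between groups of the same order, hence an isomorphism; in particular $\gen{a,d} \cdot B = \Grig$ and so $L_\xi \cdot B = \Grig$. The orbit $C_\xi \defegal \gen{a,d}\cdot \xi$ is finite (so closed and nowhere dense in $\partial T$) and $\gen{a,d} \leq L_\xi$ acts transitively on it; thus $L_\xi$ is a generalized parabolic subgroup, weakly maximal by Proposition~\ref{Proposition:WMStrongRigidSStab}.

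I will then verify that $W_\xi \defegal L_\xi^{1}$ has all the properties claimed. Writing $C'_\xi \defegal \setst{1\eta}{\eta \in C_\xi} \subset \partial T_1$, one checks that $W_\xi = \SStab_\Grig(C'_\xi)$: any element of $\Grig$ setwise stabilizing the nonempty set $C'_\xi \subset \partial T_1$ must fix the vertex $1$, hence lie in $H$, and for $(g_0,g_1) \in H$ the condition $(g_0,g_1).C'_\xi = C'_\xi$ unfolds to $g_1 \in L_\xi$. The set $C'_\xi$ is closed and nowhere dense, and $W_\xi$ acts on it as $L_\xi$ acts on $C_\xi$, hence minimally; so $W_\xi$ is generalized parabolic. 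To see that $\pi_0(W_\xi) \cdot B = \Grig$, I use the fact that the image of $H$ under $(g_0,g_1) \mapsto (g_0 B, g_1 B)$ is the graph of an automorphism $\psi$ of $\Grig/B \cong D_4$ (the one swapping $aB$ and $dB$), so that
\[
	\pi_0(W_\xi)\cdot B/B \;=\; \psi\bigl(\pi_1(W_\xi)\cdot B/B\bigr) \;=\; \psi(L_\xi\cdot B/B) \;=\; \psi(\Grig/B) \;=\; \Grig/B.
\]
Combined with $B \subseteq \pi_0(W_\xi)$ obtained in the first paragraph, this yields $\pi_0(W_\xi) = \Grig$.

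For the continuum: since $\gen{a,d}$ is finite while $\partial T$ has cardinality $2^{\aleph_0}$, there are $2^{\aleph_0}$ distinct $\gen{a,d}$\hyphen orbits in $\partial T$, and thus $2^{\aleph_0}$ pairwise distinct closed non-open sets $C_\xi$. By Lemma~\ref{Lemma:GenParDistinct} the $L_\xi$ are pairwise distinct, and by Proposition~\ref{Prop:NewWmax}(4) so are the $W_\xi$. The main subtlety of the construction is the apparent tension between $W \leq H$, $\pi_0(W) = \Grig$ and $\pi_1(W)$ of infinite index; the key observation resolving it is that an infinite-index subgroup $L \leq \Grig$ can nevertheless surject onto the finite quotient $\Grig/B$, and then the graph $\psi$ transfers this surjectivity from $\pi_1(W)$ to $\pi_0(W)$, allowing $\pi_0(W) = \Grig$ even though $\pi_1(W)$ has infinite index.
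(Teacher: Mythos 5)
Your proof is correct and follows essentially the same route as the paper's: both arguments produce a continuum of generalized parabolic subgroups $L$ containing a finite dihedral subgroup (you use $\gen{a,d}$ and the stabilizers of its finite orbits; the paper uses $\gen{a,c}$ and invokes Corollary \ref{Cor:NumberGeneralizedParabolic}), lift them to $L^1\leq H$ via Proposition \ref{Prop:NewWmax}, and combine $B\times\{1\}\leq L^1$ with the structure of $H$ to force $\pi_0=\Grig$. The only cosmetic difference is in the last step, where the paper exhibits the explicit elements $b=(a,c)$ and $aba=(c,a)$ of $L^1$, whereas you transfer surjectivity modulo $B$ from $\pi_1$ to $\pi_0$ via the observation that $H/(B\times B)$ is the graph of the automorphism of $\Grig/B$ swapping $aB$ and $dB$.
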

\begin{proof}
Let $F=\gen{c,a}$.
This is a finite subgroup of $\Grig$ and hence contained in a continuum of pairwise distinct generalized parabolic subgroup $W$, Corollary~\ref{Cor:NumberGeneralizedParabolic}.
For each of these $W$, let $W^1=\setst{g\in\Stab_G(1)}{\treesection{g}{1}\in W}$.
By Proposition~\ref{Prop:NewWmax}, the $W^1$ are weakly maximal subgroup of $\Grig$ contained in $\Stab_\Grig(1)=H$, with $\pi_1(W^1)=W$, and they are pairwise distinct.
Moreover, since $W$ is generalized parabolic, so is $W^1$.
Finally, since $W$ contains $c$ and $a$, both $b=(a,c)$ and $aba=(c,a)$ are in $W^1$.
Thus, $\pi_0(W^1)$ contains $\gen{B,a,c}=\Grig$.
\end{proof}
%
%
%
%
%
%
%
%
%
%
%
%
%
%
%
%
%
%
%
%
%
%
\subsection{Generalized parabolic subgroups  of \texorpdfstring{$\Grig$}{the first Grigorchuk group}}
In this subsection, we show that parabolic subgroups of $\Grig$ behave nicely under the closure in the $\Aut(T)$ topology and give a full description  of their sections.

In order to study parabolic subgroups, it is of great help to have a precise description of vertex stabilizer and of one particular parabolic subgroup.
This was done in~\cite{MR1899368}, where $\Stab_\Grig(1^n)$ is described as an iterated semi-direct product, as depicted in Figure~\ref{WMC:Stab1n}.
\begin{figure}[htbp]
\centering
\includegraphics{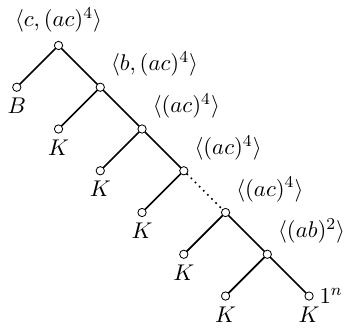}
\caption{The stabilizer in $\Grig$ of the vertex $1^n$.}
\label{WMC:Stab1n}
\end{figure}
In particular, for $P\coloneqq\Stab_\Grig(1^\infty)$ we have
\[
	P=\Bigl(B\times\bigl((K\times((K\times\dots)\rtimes\gen{(ac)^4})\bigr) \rtimes\gen{b,(ac)^4}\bigr)\Bigr)\rtimes\gen{c,(ac)^4}.
\]
As already said, $\Grig$ can be endowed with the congruence topology coming from $\Aut(T)$, which is the same as the profinite topology since $\Grig$ has trivial congruence kernel.
Recall that for a subgroup $G$ of $\Aut(T)$, we denote by $\overline{G}$ its closure in $\Aut(T)$.
Then we naturally have $\overline{\Stab_G(\xi)}\leq\Stab_{\overline{G}}(\xi)$ for any ray in $\partial T$.
The first Grigorchuk group satisfies the interesting property that this inequality is in fact always an equality:
\begin{proposition}\label{Proposition:ClosureOfStabs}
Let $\Grig\curvearrowright T$ be the branch action of the first Grigorchuk group.
For any finite subset $C$ of $\partial T$, we have
\[
	\overline{\Stab_\Grig(C)}=\Stab_{\overline{\Grig}}(C).
\]
If moreover $C$ is contained in one $\Grig$-orbit, then we also have
\[
	\overline{\SStab_\Grig(C)}=\SStab_{\overline{\Grig}}(C).
\]
\end{proposition}
Before giving the proof of this proposition, let us take a look at some of its consequences.
For $C$ a closed and nowhere dense subset of $\partial T$, we know that $\SStab_{\Grig}(C)$ is weakly maximal if and only if it acts minimally on $C$.
An obvious necessary condition for the action $\SStab_{\Grig}(C)\curvearrowright C$ to be minimal is that both the action  $\SStab_{\overline{\Grig}}(C)\curvearrowright C$ and $\Grig\curvearrowright C$ are minimal.
Proposition~\ref{Proposition:ClosureOfStabs} implies that for finite $C$, this is also a sufficient condition:
\begin{corollary}
Let $C$ be a finite subset of $\partial T$.
Then $\SStab_{\Grig}(C)$ is a generalized parabolic subgroup if and only if $C$ is contained in one ${\Grig}$-orbit and in one $\SStab_{\overline{\Grig}}(C)$-orbit.
\end{corollary}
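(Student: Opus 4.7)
The plan is to unpack the definition of ``generalized parabolic'' for finite $C$ and then reduce both directions to an orbit count, using Proposition~\ref{Proposition:ClosureOfStabs} to pass between $\Grig$ and $\bar\Grig$.

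First I would observe that a finite non-empty subset $C$ of $\partial T$ is automatically closed, and it is automatically non-open because $\partial T$ is a Cantor space (hence has no isolated points). Consequently, by the definition of a generalized parabolic subgroup, $\SStab_\Grig(C)$ is generalized parabolic if and only if it acts minimally on $C$; since $C$ is finite, minimality is just transitivity. The forward direction is then immediate: if $\SStab_\Grig(C)$ acts transitively on $C$, then $C$ is a single $\SStab_\Grig(C)$\hyphen orbit, hence in particular contained in a single $\Grig$\hyphen orbit and in a single $\SStab_{\bar\Grig}(C)$\hyphen orbit (the latter since $\SStab_\Grig(C)\leq\SStab_{\bar\Grig}(C)$).

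For the converse, suppose $C$ lies in a single $\Grig$\hyphen orbit and in a single $\SStab_{\bar\Grig}(C)$\hyphen orbit. The hypothesis that $C$ is contained in one $\Grig$\hyphen orbit is exactly what is required to apply Proposition~\ref{Proposition:ClosureOfStabs}, yielding
\[
    \overline{\SStab_\Grig(C)}=\SStab_{\bar\Grig}(C).
\]
Since $C$ is finite, there exists $n$ so large that any two distinct rays of $C$ disagree before level $n$. The action of $\Aut(T)$ on $C$ then factors through the finite quotient $\Aut(T)/\Stab_{\Aut(T)}(\level{n})$. Any element of $\SStab_{\bar\Grig}(C)$ can be approximated in the congruence topology by an element of $\SStab_\Grig(C)$ agreeing with it up to level $n$, so the two groups have the same image in that finite quotient, and hence the same orbits on $C$. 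By hypothesis $\SStab_{\bar\Grig}(C)$ acts transitively on $C$, so $\SStab_\Grig(C)$ does as well, which by the first paragraph means $\SStab_\Grig(C)$ is generalized parabolic.

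The main (and essentially only) subtlety is the density-to-finite-quotient step: one needs that orbits on $C$ descend to a finite quotient of $\Aut(T)$ so that density in the congruence topology is enough to conclude equality of orbits. Everything else is a direct combination of the definitions with Proposition~\ref{Proposition:ClosureOfStabs}.
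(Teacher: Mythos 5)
Your proof is correct and follows the same route the paper intends: the corollary is stated there as an immediate consequence of Proposition~\ref{Proposition:ClosureOfStabs}, with the forward direction being the "obvious necessary condition" and the converse obtained exactly as you do, by using the density of $\SStab_\Grig(C)$ in $\SStab_{\bar\Grig}(C)$ together with the fact that the induced permutation of the finite set $C$ is determined by the action on a sufficiently deep level. Your explicit justification of the density-to-orbit step is the only detail the paper leaves implicit, and it is handled correctly.
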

The first Grigorchuk group, as well as other groups with all maximal subgroups of finite index, has the nice property that every weakly maximal subgroup is closed in the profinite topology.
In~\cite{LeemannThese}, the author studied closed subgroups of finitely generated branch groups, and more particularly the maps between the set of closed subgroups of $G$ and of the set of closed subgroups of its profinite completion.
\begin{align*}
	\Theta\colon\Subcl(G)&\to \Subcl(\hat G)&\Psi\colon\Subcl(\hat G)&\to \Subcl(G)\\
	 H&\mapsto\overline{H}					&	M&\mapsto M\cap G
\end{align*}
When restricted to finite index subgroups, these maps are lattice-isomorphisms (that also preserve normality and the index) and $\Theta$ is the inverse of $\Psi$.
The author thus asked if the image by one of these maps of a weakly maximal subgroup was still a weakly maximal subgroup.
This is not the case as shown by the following corollary of Propositions~\ref{Proposition:ClosureOfStabs} and~\ref{Proposition:WMStrongRigidSStab}.
\begin{corollary}
Let $\Theta$ and $\Psi$ be the above maps for $G=\Grig$ the first Grigorchuk group.
Let $\mathcal C$ be the set of finite subsets $C$ of $\partial T$ such that $\SStab_{\Grig}(C)$ acts minimally (i.e. transitively) on $C$.
When restricted to generalized parabolic subgroups of the form $\SStab(C)$ with $C$ in $\mathcal C$, the maps $\Theta$ and $\Psi$ are bijective and $\Theta$ is the inverse of $\Psi$.

On the other hand, for all $C=\{\xi,\eta\}$ in $\partial T$ of cardinality $2$, the subgroup $W_{\xi,\eta}\coloneqq\SStab_{\hat\Grig}(\{\xi,\eta\})$ is weakly maximal and closed, while for a fixed $\xi$, the subgroup $\Psi(W_{\xi,\eta})=\SStab_{\Grig}(\{\xi,\eta\})$ is weakly maximal only for countably many~$\eta$.
\end{corollary}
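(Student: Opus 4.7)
For the first assertion, the point is that for $C\in\mathcal C$, the hypotheses of the second equality in Proposition \ref{Proposition:ClosureOfStabs} are automatically met. Indeed, $C$ is finite, and $\SStab_{\Grig}(C)$ acting minimally on the finite set $C$ means it acts transitively, so in particular $C$ is contained in a single $\Grig$\hyphen orbit. Therefore
\[
\Theta\bigl(\SStab_\Grig(C)\bigr) \;=\; \overline{\SStab_\Grig(C)} \;=\; \SStab_{\bar\Grig}(C),
\]
and trivially $\Psi\bigl(\SStab_{\bar\Grig}(C)\bigr)=\SStab_{\bar\Grig}(C)\cap\Grig=\SStab_\Grig(C)$. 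So $\Psi\circ\Theta$ is the identity on our class. The fact that the assignment $C\mapsto\SStab_\Grig(C)$ is injective on $\mathcal C$ (Lemma \ref{Lemma:GenParDistinct} applied to the closed nowhere-dense subsets of $\partial T$ in question, using that $\Grig$ is branch) then gives that $\Theta$ is a bijection onto its image, and $\Psi$ its inverse.

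For the second assertion, $W_{\xi,\eta}=\SStab_{\hat\Grig}(\{\xi,\eta\})$ is closed in $\hat\Grig$ as a setwise stabilizer under the continuous action of the profinite group $\hat\Grig$ on $\partial T$. Since $\Grig$ has the congruence subgroup property, $\hat\Grig=\bar\Grig$ is a closed subgroup of $\Aut(T)$ and inherits the branch action; in particular it is almost level transitive and rigid. Because $\{\xi,\eta\}$ is a two-point subset of the Cantor space $\partial T$, it is closed and not open. By Proposition \ref{Proposition:WMStrongRigidSStab} applied inside $\hat\Grig$, $W_{\xi,\eta}$ is weakly maximal if and only if it is generalized parabolic, i.e.\ if and only if $\hat\Grig$ contains an element swapping $\xi$ and $\eta$. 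This last point is where the main work lies: I would prove, by induction on $n$, that for any two distinct vertices $v,w\in\level{n}$, the finite group $\Grig/\Stab_\Grig(\level{n})$ contains a transposition of $v$ and $w$. The base case $n=1$ is handled by the rooted generator $a$. For the inductive step, if $v,w$ lie in different subtrees below the root we compose $a$ with a suitable element of $\Stab_\Grig(\level{1})$; if they lie in the same subtree we use that $\Grig$ is regular branch over $K$, so $\Rist_\Grig(0)$ (resp.\ $\Rist_\Grig(1)$) contains a copy of $K$, and the section $\pi_0$ applied to $K$\hyphen type elements produces any prescribed transposition in a smaller subtree by induction. Projecting through the inverse limit yields an element of $\hat\Grig$ swapping the two rays $\xi$ and $\eta$.

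For the last part of the statement, $\Psi(W_{\xi,\eta})=\SStab_\Grig(\{\xi,\eta\})$ is weakly maximal, by Proposition \ref{Proposition:WMStrongRigidSStab} applied inside $\Grig$, exactly when it is generalized parabolic, that is when $\Grig$ acts transitively on $\{\xi,\eta\}$; equivalently, $\eta\in\Grig\cdot\xi$. Since $\Grig$ is countable, for a fixed $\xi$ there are only countably many $\eta$'s in its orbit, while $\partial T$ is uncountable. This immediately yields the asymmetry announced in the corollary.

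The main obstacle is the inductive swap construction in the second paragraph: although both intuitively and structurally clear, assembling the transpositions from $a$ and from the branch data $K\times K\leq K$ requires keeping careful track of where one lands after each section, and of how the wreath decomposition $\Stab_\Grig(\level{1})\hookrightarrow\Grig\times\Grig$ interacts with the inductive hypothesis. Once this is done, everything else in both assertions is a formal consequence of Propositions \ref{Proposition:ClosureOfStabs} and \ref{Proposition:WMStrongRigidSStab}.
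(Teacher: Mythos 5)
Your handling of the first assertion and of the final countability claim is essentially correct and follows the paper's intended route: for $C\in\mathcal C$, minimality of $\SStab_\Grig(C)\acts C$ on a finite set is transitivity, so $C$ lies in a single $\Grig$\hyphen orbit, Proposition \ref{Proposition:ClosureOfStabs} gives $\Theta\bigl(\SStab_\Grig(C)\bigr)=\SStab_{\bar\Grig}(C)$, and $\Psi$ is visibly a left inverse; and weak maximality of $\SStab_\Grig(\{\xi,\eta\})$ forces $\eta\in\Grig\cdot\xi$, which is countable.

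The gap is in the second assertion, precisely at the step you yourself identify as the main work. First, the inductive statement is false as written: if $v,w\in\level{n}$ have their deepest common ancestor at level at most $n-2$, the transposition $(v\,w)$ is not realized by \emph{any} automorphism of $T$, let alone by an element of $\Grig$. For instance, no tree automorphism acts on $\level{2}$ as $(00\;10)$: sending $00$ to $10$ forces $0\mapsto 1$, hence also moves $01$ into $T_1$. So the most you can hope for is that \emph{some} element of $\Grig$ exchanges $v$ and $w$ while moving other vertices; the compactness argument at the end is compatible with that weaker statement, but your induction does not establish it either. After reducing by self-replication to $v=0v'$, $w=1w'$, you need an element $(h_0,h_1)\varepsilon$ of $\Grig$ with $h_0v'=w'$ and $h_1w'=v'$, and the obstruction is that the pairs of first-level sections of elements of $\Stab_\Grig(\level{1})$ form a \emph{proper} subdirect product of $\Grig\times\Grig$: the two sections are coupled modulo $B$ by the automorphism of the dihedral group $\Grig/B$ of order $8$ exchanging $aB$ and $dB$, so they cannot be prescribed independently. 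Your plan to supply the missing freedom with ``$K$\hyphen type elements'' of rigid stabilizers cannot work, because $K$ and $B$ are contained in $\Stab_\Grig(\level{1})$; consequently sections of $\Rist_\Grig(u)$ never exchange the two children of a vertex, and the ``same subtree'' branch of your induction produces no swap at the divergence vertex (there one must pass through the full vertex stabilizer and self-replication, not the rigid stabilizer).

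What is actually needed, and what neither your sketch nor the paper spells out, is that the mod-$B$ coupling can always be satisfied: the image of every point stabilizer $\Stab_{\bar\Grig}(\zeta)$ in $\Grig/B$ is the normal index-two subgroup $\gen{dB,adaB}$ (compute it for $\zeta=1^\infty$ from the explicit description of $P$, then conjugate using transitivity of $\bar\Grig$ on $\partial T$ and normality), and the coupling automorphism does \emph{not} preserve this subgroup, so the coset of admissible values for $h_0\bar B$ always meets the preimage of the coset of admissible values for $h_1\bar B$. This is a genuine, $\Grig$\hyphen specific verification rather than bookkeeping; without it the claim that $W_{\xi,\eta}$ is weakly maximal for \emph{every} pair $\{\xi,\eta\}$ remains unproved.
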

Let $T_{\lfloor 3\rfloor}$ be the finite subtree of $T$ consisting of all vertices of level at most~$3$.
A labelling of vertices of $T_{\lfloor 3\rfloor}$ by elements of $\Sym(2)$ is called an \defi{allowed pattern} if it occurs as the top of the portrait $\portrait(g)$ of some $g$ in $\Grig$.
Otherwise it is a \defi{forbidden pattern}.
A portrait $\portrait(g)$ of an element $g\in \Aut(T)$ is said to contain a forbidden pattern if there is a labelled subtree in it that is a forbidden pattern.
Observe that looking at labellings of vertices of $T_{\lfloor 3\rfloor}$ is equivalent to looking at labellings of inner vertices of $T_{\lfloor 4\rfloor}$, that is at automorphisms of $T_{\lfloor 4\rfloor}$.
The following result about portrait will be one of the two ingredients in the proof of Proposition~\ref{Proposition:ClosureOfStabs}.
\begin{proposition}[{\cite[Proposition 7.2, Corollary 7.2]{MR2195454}}]\label{Proposition:Portrait}
Let $T$ be the $2$-regular rooted tree and $g$ an element of $\Aut(T)$.
Then
\begin{enumerate}
\item
$g$ is in $\overline{\Grig}=\hat \Grig$ if and only if $\portrait(g)$ contains no forbidden patterns,
\item
$g$ is in $\Grig$ if and only if $\portrait(g)$ contains no forbidden patterns and there is a transversal $X$ of $T$ such that for every $v$ in $X$, the portrait of $g$ below $T_v$ is the portrait of one element in $\{1,a,b,c,d\}$.
\end{enumerate}
\end{proposition}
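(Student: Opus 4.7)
The plan is to combine self-similarity of $\Grig$ with the identification $\bar\Grig=\hat\Grig$ (the congruence subgroup property). For the forward direction of (1): if $g\in\bar\Grig$, then for every vertex $v$ the section $\treesection{g}{v}$ lies in $\bar\Grig$, since sectioning is continuous for the congruence topology and the closure of a self-similar group is itself self-similar. By density, some $h_v\in\Grig$ agrees with $\treesection{g}{v}$ on the first three levels, so the 3-level window of $\portrait(g)$ at $v$ is the top of $\portrait(h_v)$ and is therefore an allowed pattern.

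For the converse of (1), I would induct on $n\ge 3$ to produce $h_n\in\Grig$ agreeing with $g$ on $T_{\lfloor n\rfloor}$. The base case $n=3$ is the very definition of ``allowed'' applied to the top 3-window of $g$. In the inductive step, given $h\in\Grig$ agreeing with $g$ on $T_{\lfloor n\rfloor}$, one seeks $s\in\Stab_\Grig(\level n)$ such that $hs$ additionally agrees with $g$ at level $n+1$. Modulo $\Stab_\Grig(\level{n+1})$, such an $s$ is determined by a family of transpositions, one at each vertex of $\level n$. The core technical step, and the main obstacle, is to show that the image of $\Stab_\Grig(\level n)$ in $\Stab_{\Aut(T)}(\level n)/\Stab_{\Aut(T)}(\level{n+1})$ consists of exactly those families locally consistent with the allowed 3-windows based at level-$(n-2)$ vertices. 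The ``only if'' direction is forced by self-similarity of $\Grig$. The ``if'' direction uses the branch structure of $\Grig$ over $K$: since $\Rist_\Grig(u)$ for $u\in\level{n-2}$ surjects under $\pi_u$ onto a finite-index subgroup of $\Grig$ whose image modulo $\Stab_\Grig(\level 3)$ is all of $\Grig/\Stab_\Grig(\level 3)$, any locally consistent choice of transpositions at the level-$n$ descendants of $u$ is realized by an appropriate element of $\Rist_\Grig(u)$.

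Part (2) builds on part (1) with two further ingredients. For the forward direction, one uses that $\Grig$ is a bounded automaton group with state set $\{1,a,b,c,d\}$: by induction on the length of $g$ as a word in $\{a,b,c,d\}$, using the self-similar relations $b=(a,c)$, $c=(a,d)$, $d=(1,b)$ and the composition rule for sections in the wreath product, one finds a uniform $N$ with $\treesection{g}{v}\in\{1,a,b,c,d\}$ for all $v\in\level N$; then $X=\level N$ is a transversal of the required form. For the reverse direction, part (1) gives $g\in\bar\Grig$, and setting $n$ to the maximal level of vertices of $X$, part (1) applied at depth $n$ supplies $h_0\in\Grig$ agreeing with $g$ on $T_{\lfloor n\rfloor}$. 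Writing $g=h_0\cdot s$ with $s\in\Stab_{\Aut(T)}(\level n)$, the sections $\treesection{s}{v}$ for $v\in X$ are products in $\Grig$ of an element of $\{1,a,b,c,d\}$ and an inverse section of $h_0$. To conclude $s\in\Grig$ one shows that the tuple $(\treesection{s}{v})_{v\in\level n}$ lies in the image of $\Stab_\Grig(\level n)\to\prod_{v\in\level n}\Grig/K$: this follows from branchness over $K$ (which provides the kernel $\Rist_\Grig(\level n)\iso K^{2^n}$) combined with a direct check that the given tuple, being a modification of the allowed tuple coming from $h_0$ by elements of $\{1,a,b,c,d\}^{\level n}$ supported on $X$, is compatible with the finite constraint group $\Stab_\Grig(\level n)/\Rist_\Grig(\level n)$.
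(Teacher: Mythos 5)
First, a remark on scope: the paper does not prove this proposition at all --- it is quoted from \cite{MR2195454} without proof --- so your attempt can only be compared against the standard argument in the literature, not against anything in the text. Your overall architecture (correct the portrait level by level, using the regular branch structure of $\Grig$ over $K$ to realize the prescribed labels at the new level, and contraction to the nucleus $\{1,a,b,c,d\}$ for the forward direction of (2)) is indeed the standard route, and the forward directions of both parts are essentially fine.

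There is, however, a genuine gap at the one place where all the content of the proposition sits, and the justification you offer there is false. You claim that for $u\in\level{n-2}$ the section $\pi_u\bigl(\Rist_\Grig(u)\bigr)$ --- which is $K$ for $u$ of level at least $2$ --- has image modulo $\Stab_\Grig(\level{3})$ equal to all of $\Grig/\Stab_\Grig(\level{3})$. This is wrong: $K=\gen{(ab)^2}^\Grig$ with $(ab)^2=(ca,ac)$, so $K\leq\Stab_\Grig(\level{1})$ (indeed $K\leq\Grig'$), and its image modulo $\Stab_\Grig(\level{3})$ has index $16$ in $\Grig/\Stab_\Grig(\level{3})$, not index $1$. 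The fact that actually powers the level-by-level correction is the \emph{reverse} containment $\Stab_\Grig(\level{3})\leq K$: it guarantees that the discrepancy between $g$ and the already-built approximation, read in the section at a vertex $u$, lies in the closure of $K$ and can therefore be absorbed into $\Rist_\Grig(u)$. You never identify this input, and nothing in your argument explains why windows of this particular depth (labellings of all of $T_{\lfloor 3\rfloor}$, i.e.\ levels $0$ through $3$ --- note your ``3-level windows'' are one level too shallow) are the right cutoff; with the reasoning as written one could equally ``prove'' the statement for shallower patterns. Consequently the central claim --- that the image of $\Stab_\Grig(\level{n})$ in $\Stab_{\Aut(T)}(\level{n})/\Stab_{\Aut(T)}(\level{n+1})$ is exactly the set of locally window-consistent label families --- is asserted rather than established. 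The same issue recurs in part (2): the ``direct check'' that the tuple of level-$n$ sections is compatible with the constraint group $\Stab_\Grig(\level{n})/\Rist_\Grig(\level{n})$ is precisely the nontrivial verification the proposition encodes, and it is not performed.
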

The following technical lemma about stabilizers of vertices of level $4$ is the second main ingredient of the proof of Proposition~\ref{Proposition:ClosureOfStabs}.
\begin{lemma}\label{Lemma:StabilizerinT4}
Let $v$ be a vertex of level $4$.
For any $g\in\Stab_\Grig(v)$, there exists $h$ in $\Stab_\Grig(T_v)$, the pointwise stabilizer of $T_v$, such that $\portrait(g)$ and $\portrait(h)$ coincide on $T_{\lfloor 3\rfloor}$.
\end{lemma}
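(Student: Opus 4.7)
The plan is to reformulate the statement as a surjectivity assertion about sections. Two elements of $\Grig$ have the same portrait on $T_{\lfloor 3\rfloor}$ if and only if they induce the same automorphism of $T_{\lfloor 4\rfloor}$, equivalently their quotient lies in $\Stab_\Grig(\level{4})$. Moreover, since $T_v$ meets $T_{\lfloor 4\rfloor}$ only in the single vertex $v$, an element $h\in\Stab_\Grig(v)$ lies in $\Stab_\Grig(T_v)$ exactly when $\treesection{h}{v}=1$. Writing $h=gs$ with $s\in\Stab_\Grig(\level{4})$, both requirements collapse into the single condition $\treesection{s}{v}=\treesection{g}{v}^{-1}$. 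Hence the lemma is equivalent to asking that the section map $\pi_v\colon\Stab_\Grig(\level{4})\surjection\Grig$ be surjective.

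To prove this surjectivity I would establish, by induction on $n$, that $\pi_w\bigl(\Stab_\Grig(\level{n})\bigr)=\Grig$ for every $w\in\level{n}$. The base case $n=1$ is the self-replicating property of $\Grig$. For the inductive step, given $x\in\Grig$ and $w\in\level{n}$ with parent $u\in\level{n-1}$, the induction hypothesis yields $\tilde s\in\Stab_\Grig(\level{n-1})$ with $\treesection{\tilde s}{u}=x$; one then multiplies $\tilde s$ by appropriate elements of $\Rist_\Grig(\level{n-1})\cong K^{2^{n-1}}$, using the factors indexed by level-$(n-1)$ vertices other than $u$ to cancel the unwanted action of $\tilde s$ on the corresponding level-$n$ vertices, and the factor at $u$ to rectify the section at $w$ within its $K$-coset. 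Because $[\Grig:K]=16$ and every coset of $K$ in $\Grig$ is realised by an element of $\Stab_\Grig(\level{1})$, the required adjustments can be made while staying inside $\Grig$.

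Once surjectivity is in hand, the conclusion is immediate: pick $s\in\Stab_\Grig(\level{4})$ with $\pi_v(s)=\treesection{g}{v}^{-1}$ and set $h\defegal gs$. Then $h\in\Grig$ acts as $g$ on $T_{\lfloor 4\rfloor}$, so the portraits of $g$ and $h$ coincide on $T_{\lfloor 3\rfloor}$; and $\treesection{h}{v}=\treesection{g}{v}\cdot\treesection{s}{v}=1$, so $h$ fixes $T_v$ pointwise. The main obstacle is the inductive step of this ``super-strongly fractal'' property: the simultaneous coset juggling inside the finite quotient $\Grig/\Rist_\Grig(\level{n})$ is delicate and relies on the specific structure of the overgroups of $K$ in $\Grig$ recalled in Figure~\ref{WMC:Figure:Subgroups}, together with the branching identity $\Rist_\Grig(\level{n})=K^{2^n}$ for $n\geq 2$.
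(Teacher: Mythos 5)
Your opening reduction is correct and is in fact the cleanest way to see what the lemma says: since $\Grig$ is self-replicating, $\treesection{g}{v}$ ranges over all of $\Grig$ as $g$ ranges over $\Stab_\Grig(v)$, so the lemma is exactly the assertion that $\pi_v\bigl(\Stab_\Grig(\level{4})\bigr)=\Grig$ for every $v$ of level $4$. The paper establishes precisely this surjectivity, but by a direct computation rather than an induction on levels: it writes down an explicit generating set of $\Stab_\Grig(1^4)$ and, for each generator $s$, produces a corrector $s^{-1}\psi(s)\in\Stab_\Grig(\level{4})$ with section $\treesection{s}{1^4}^{-1}$ at $1^4$; the generators $(ac)^4_{@1}$, $(ac)^4_{@11}$, $(ab)^2_{@111}$ and $K_{@1111}$ already lie in $\Stab_\Grig(\level{4})$ and their sections at $1^4$ (namely $c$, $b$, $ac$ and $K$) generate $\Grig$ on their own.

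The gap is in your inductive proof of the surjectivity. You propose to take $\tilde s\in\Stab_\Grig(\level{n-1})$ with a prescribed section at the parent $u$ and then push it into $\Stab_\Grig(\level{n})$ by multiplying by elements of $\Rist_\Grig(\level{n-1})$, using the factors at the vertices $u'\neq u$ to cancel the action of $\treesection{\tilde s}{u'}$ on the children of $u'$. This cannot work: the section at $u'$ of any element of $\Rist_\Grig(u')$ lies in $K$ (in $B$ when $u'$ has level $1$), and $K\leq B\leq\Stab_\Grig(\level{1})$, so multiplying by such elements never changes whether $\treesection{\tilde s}{u'}$ permutes the children of $u'$. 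For the same reason the assertion that every coset of $K$ in $\Grig$ is realised by an element of $\Stab_\Grig(\level{1})$ is false: $K\leq\Stab_\Grig(\level{1})$ and $[\Grig:\Stab_\Grig(\level{1})]=2$, so half of the $16$ cosets of $K$ are disjoint from $\Stab_\Grig(\level{1})$. The obstruction is genuine, not an artefact of the write-up. For instance, no element of $\Stab_\Grig(\level{2})$ has section $c$ at the vertex $0$: the image of $\Stab_\Grig(\level{1})$ in $(\Grig/B)\times(\Grig/B)$ under the first-level decomposition is the graph of the automorphism of $\Grig/B\cong D_8$ swapping $\bar a$ and $\bar d$, so any $x$ with $\treesection{x}{0}=c$ has $\treesection{x}{1}\in aB$, a coset disjoint from $\Stab_\Grig(\level{1})$. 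Thus the sections at the various level-$(n-1)$ vertices are coupled, and one cannot prescribe the section at $u$ and then repair the remaining coordinates independently inside $\Rist_\Grig(\level{n-1})$. The target statement $\pi_w\bigl(\Stab_\Grig(\level{n})\bigr)=\Grig$ for $w\in\level{n}$ does hold, but proving it requires controlling all level-$(n-1)$ sections simultaneously, which is exactly what the paper's explicit choice of generators of $\Stab_\Grig(1^4)$ accomplishes at level $4$.
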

\begin{proof}
Up to conjugating by an element of $\Grig$, it is enough to prove the lemma for $v=1^4$.
On the other hand, it is also sufficient to prove the lemma for elements of some generating set of $\Stab_\Grig(1^4)$.
Recall that $\Stab_\Grig(1^4)$ is the iterated semi-direct product depicted in Figure~\ref{WMC:Stab1n} for $n=4$.
That is, $\Stab_\Grig(1^4)$ is generated by
\[
	S=B_{@0}\cup K_{@10}\cup K_{@110}\cup K_{@1110}\cup K_{@1111}\cup \{c, (ac)^4, b_{@1},(ac)^4_{@1},(ac)^4_{@11},(ab)^2_{@111}\}
\]
where $B_{@0}\coloneqq \setst{g\in \Rist(0)}{\treesection{g}{0}\in B}$ and similarly for the other elements.
By~\cite[Theorem 7.9]{MR2035113} we have $B_{@0}=\Rist_\Grig(0)$ and $B_{@v}=\Rist_\Grig(v)$ for any $v$ of level at least $2$, and all these are subgroups of $\Grig$.
The element $b$ is in $B$, and hence $b_{@1}$ is in $B_{@1}=\Rist_\Grig(1)$.
Similarly, both $(ac)^4=(b,b,b,b)$  and $(ab)^2=(ca,ac)$ belong to $K=\gen{(ab)^2}^\Grig\leq\Rist_\Grig(v)$.
Hence every element of $S$ is indeed in $\Grig$. 

It now remains to construct a function $\psi\colon S\to\Stab_\Grig(1^4)=\gen{S}$ such that $s$ and $\psi(s)$ agree on $T_{\lfloor 3\rfloor}$ and $\psi(s)$ belongs to $\Stab_\Grig(T_{1^4})$.
If $s$ belongs to $B_{@0}\cup K_{@10}\cup K_{@110}\cup K_{@1110}$, it fixes pointwise $T_{1^4}$ and we can take $\psi(s)=s$.
If $s$ belongs to $K_{@1111}\cup\{(ac)^4_{@1},(ac)^4_{@11},(ab)^2_{@111}\}$ then $\portrait(s)$ is trivial on $T_{\lfloor 3\rfloor}$ and we can take $\psi(s)=1$.
Finally, we define $\psi(c)\coloneqq c(ac)^4_{@1}(ac)^4_{@11}$, $\psi(b_{@1})\coloneqq b_{@1}(ac)^4_{@11}$ and  $\psi\bigl((ac)^4\bigr)\coloneqq(ac)^4(ac)^4_{@1}(ac)^4_{@11}$.
Since the portrait of $(ac)^4_{@1}$ and $(ac)^4_{@11}$ are trivial on $T_{\lfloor 3\rfloor}$ we each time have that the portrait of $s$ and of $\psi(s)$ coincide on $T_{\lfloor 3\rfloor}$ and that $\treesection{\psi(c)}{1^4}=\treesection{c}{1^4}\treesection{(ac)^4_{@1}}{1^4}\treesection{(ac)^4_{@11}}{1^4}$ and similarly for $\psi(b_{@1})$ and $\psi\bigl((ac)^4\bigr)$.
We have
\begin{gather*}
	\treesection{c}{1^4}=d \qquad \treesection{{b_{@1}}}{1^4}=b\qquad \treesection{(ac)^4}{1^4}=d\\
	 \treesection{(ac)^4_{@1}}{1^4}=c \qquad \treesection{{(ac)^4_{@11}}}{1^4}=b
\end{gather*}
and direct computations give us that $\treesection{\psi(c)}{1^4}=\treesection{\psi(b_{@1})}{1^4}=\treesection{\psi\bigl((ac)^4\bigr)}{1^4}=1$ and hence that $\psi(c)$, $\psi(b_{@1})$ and $\psi\bigl((ac)^4\bigr)$ all fixe pointwise $T_{1^4}$ as desired.
\end{proof}
We can now prove Proposition~\ref{Proposition:ClosureOfStabs}.
\begin{proof}[Proof of Proposition~\ref{Proposition:ClosureOfStabs}]
Since $C$ is finite, there exists a level $n_0$ such that every vertex $v$ of level at least $n_0$ belongs to at most one ray in $C$.
For the following, we will always look at levels $n\geq n_0$.

We begin by proving the equality for pointwise stabilizers.
Let $g$ be in $\Stab_{\overline{\Grig}}(C)$ and let $(h_n)_n$ be a sequence of elements of $G$ converging to $g$ such that $h_n$ is at distance at most $2^{-n}$ of $g$.
The condition on the distance is here to ensure that $\portrait(h_n)$ and $\portrait(g)$ coincide on $T_{\lfloor n-1\rfloor}$. In other words, the actions of $h_n$ and $g$ coincide on $T_{\lfloor n\rfloor}$.

We will now modify the $h_n$ to obtain a new sequence $(g_n)_n$ of elements of $\Stab_{\Grig}(C)$ (respectively $\SStab_{\Grig}(C)$) that still converges to $g$.
Let $\{v_1,\dots,v_{\abs C}\}$ be the intersection of $C$ with level $n$ and let $w_i$ be the unique vertex of level $n+4$ that is in $C$ with $v_i\leq w_i$. Then $\{w_1,\dots,w_{\abs C}\}$ is the intersection of $C$ with level $n+4$.
The element $h_{n+4}$ fixes all the $w_i$, and hence also all the $v_i$.
By Lemma~\ref{Lemma:StabilizerinT4}, we can replace the portrait of $h_{n+4}$ below $v_i$ by the portrait of some $f_{v_i}$ fixing pointwise $T_{w_i}$.
The element $g_n$ obtained in this way is in $\Grig$ (Proposition~\ref{Proposition:Portrait}), still at distance at most $2^{-n}$ of $g$ and the restriction of its portrait to the $T_{w_i}$'s is trivial.
Hence, $g_n$ pointwise stabilizes $C$.

We now show that the same strategy takes care of the setwise stabilizer.
Firstly, since $\Grig$ is finitary along rays, the orbits of $\Grig$ are contained in cofinality classes.\footnote{In fact, if $G\leq\Aut(T)$ is both finitary along rays and spherically transitive, then it's orbits are exactly the cofinality classes.}
In particular, if $C$ is finite and contained in one $\Grig$-orbit, then there exists $n_1$ such that the portrait of elements of $\SStab_{\Aut(T)}(C)$ contains only $1$ on vertices of level at least $n_1$ that are above~$C$.

Now, let $n\geq \max\{n_0,n_1\}$ and $g$ be in $\SStab_{\overline{\Grig}}(C)$.
As for the pointwise stabilizer, let $\{v_1,\dots,v_\abs{C}\}$ be the intersection of $C$ with level $n$ and $\{w_1,\dots,w_\abs{C}\}$ be the intersection of $C$ with level $n+4$.
By the above remark, the portrait of $g$ contains only $1$ on vertices between $v_i$ and $w_i$ included.
Hence, we can still apply Lemma~\ref{Lemma:StabilizerinT4} to obtain an element $g_n$ at distance at most $2^{-n}$ of $g$ such that the portrait of $g_n$ contains only $1$'s on vertices below the $w_i$.
Since the portraits of $g$ and $g_n$ coincide on the $n$\textsuperscript{th} level and that they both have only $1$ on vertices of level at least $n+1$ lying above elements of $C$, the action of $g_n$ on $C$ coincide with the action of $g$ on $C$.
In particular, $g_n$ belongs to $\SStab_G(C)$.
\end{proof}
We finally provide a full description of the sections of parabolic subgroups.
\begin{proposition}\label{Proposition:SectionsParabolic}
Let $W=\Stab_\Grig(\xi)$, with $\xi=(v_i)_{i\geq 0}$ and let $\sigma\colon\{0,1\}^\N\to\{0,1\}^\N$ be the shift operator: $\sigma(v_i)_{i\geq 0}=(v_{i+1})_{i\geq 0}$.
Then for all $j$ we have 
\[
	\pi_{v_j}(W)=\Stab_\Grig\bigl(\sigma^{j}(\xi)\bigr) \qquad \pi_{\bar v_j}(W)=J_{0,2}<_2\Grig
\]
where $\bar v_j$ is the only sibling of $v_j$.

Moreover, if $v\in T$ is not equal to any $v_j$ or $\bar v_j$, then $\pi_{v}(W)=\Grig$.
\end{proposition}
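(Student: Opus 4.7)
The plan is to handle the three assertions in turn.

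The first identity $\pi_{v_j}(W)=\Stab_\Grig(\sigma^j(\xi))$ follows from the fact that $\Grig$ is self-replicating. The inclusion $\subseteq$ is immediate from the definition of a section. For the reverse, given $h\in\Stab_\Grig(\sigma^j(\xi))$, self-replicacity provides $g\in\Stab_\Grig(v_j)$ with $\treesection{g}{v_j}=h$; since $g$ fixes $v_j$ it automatically fixes every ancestor $v_0,\ldots,v_{j-1}$, and the relation $\treesection{g}{v_j}=h\in\Stab_\Grig(\sigma^j(\xi))$ forces $g$ to fix the remaining ray vertices $v_{j+1},v_{j+2},\ldots$ as well, so $g\in W$.

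Next, suppose $v$ is neither on $\xi$ nor a sibling $\bar v_j$. Then $v$ is a strict descendant of some $\bar v_k$. Since $\bar v_k\notin\xi$, every element of $\Rist_\Grig(\bar v_k)$ acts trivially outside $T_{\bar v_k}$ and hence fixes $\xi$ pointwise, so $\Rist_\Grig(\bar v_k)\subseteq W$. Writing $v'$ for the position of $v$ inside $T_{\bar v_k}\cong T$, one gets
\[
\pi_v(W)\supseteq\pi_v\bigl(\Rist_\Grig(\bar v_k)\bigr)=\pi_{v'}\bigl(\pi_{\bar v_k}(\Rist_\Grig(\bar v_k))\bigr),
\]
equal to $\pi_{v'}(B)$ when $k=1$ and to $\pi_{v'}(K)$ when $k\geq 2$. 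Using the regular branch structure of $\Grig$ over $K$ (in particular $K\times K\leq K$) together with $K\leq B$, a short verification shows $\pi_{v'}(K)=\pi_{v'}(B)=\Grig$ for every non-root $v'$, and hence $\pi_v(W)=\Grig$.

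The remaining claim $\pi_{\bar v_j}(W)=J_{0,2}$ is the most delicate. By the first part, $\pi_{v_{j-1}}(W)=\Stab_\Grig(\sigma^{j-1}(\xi))$ is itself a parabolic subgroup of $\Grig$, and $\bar v_j$ corresponds to the first-level vertex of $T_{v_{j-1}}\cong T$ complementary to $v_j$; one further section reduces the problem to computing the first-level section of an arbitrary parabolic. The $a$-conjugation symmetry exchanging $0$ and $1$, together with the normality of $J_{0,2}$ in $\Grig$, allow us to focus on $\pi_0(P)$ for $P=\Stab_\Grig(1^\infty)$. From the iterated semidirect decomposition recalled in Figure~\ref{WMC:Stab1n} we have $P=(B\times(B\cap P))\rtimes\gen{c,(ac)^4}$, so every element of $P$ has the form $(b_0,x)\sigma$ with $b_0\in B$, $x\in B\cap P$, and $\sigma\in\gen{c,(ac)^4}\subseteq H$. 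Hence
\[
\pi_0\bigl((b_0,x)\sigma\bigr)=b_0\cdot\pi_0(\sigma)\in B\cdot\gen{\pi_0(c),\pi_0((ac)^4)}=B\cdot\gen{a,dada},
\]
using $\pi_0(c)=a$ and $\pi_0((ac)^4)=dada=(b,b)$. In $\Grig/B\cong D_4$, both $\gen{B,a,dada}/B$ and $J_{0,2}/B=\gen{a,a^c}$ coincide with the Klein four-subgroup $\{1,a,dad,dada\}$ (note $a^c\equiv dad\pmod B$), so $\pi_0(P)=J_{0,2}$ as index-two subgroups of $\Grig$.

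The main obstacle is this final identification: proving equality (and not merely $\subseteq$ or $\supseteq$) in $\pi_0(P)=J_{0,2}$ crucially relies on the full semidirect decomposition of $P$, which guarantees that no further cosets of $B$ arise in $P$ beyond those coming from $\gen{c,(ac)^4}$. Granted this, the reduction to $j=1$ via the first part and the descendant argument for the second part are essentially bookkeeping, although the claim $\pi_{v'}(K)=\Grig$ used there also requires a small computation tailored to $\Grig$.
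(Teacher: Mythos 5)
Your proof of the first identity via self-replicacy is correct and complete, and your computation of $\pi_0(P)=B\cdot\gen{a,(ad)^2}=J_{0,2}$ for the specific parabolic $P=\Stab_\Grig(1^\infty)$ agrees with the paper's. The fatal gap is the reduction of the general case to this one. Conjugating by $a$ only normalises the first letter of the ray; it does not carry $\Stab_\Grig\bigl(\sigma^{j-1}(\xi)\bigr)$ to $\Stab_\Grig(1^\infty)$. By Proposition~\ref{Prop:BartholdiGrigorchuk} parabolic subgroups are pairwise distinct, and for the uncountably many rays $\eta$ outside the (countable) $\Grig$\hyphen orbit of $1^\infty$ the subgroup $\Stab_\Grig(\eta)$ is not conjugate to $P$ in $\Grig$, so the semidirect decomposition of $P$ says nothing about $\pi_0\bigl(\Stab_\Grig(\eta)\bigr)$. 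This is exactly the difficulty the paper's proof is built around: it passes to the closure in $\Aut(T)$, where an element $g\in\hat\Grig$ with $g.\xi=1^\infty$ does exist, and combines Proposition~\ref{Proposition:ClosureOfStabs} (namely $\overline{\Stab_\Grig(\xi)}=\Stab_{\bar\Grig}(\xi)$) with the normality of $J_{0,2}$ to obtain $\overline{\pi_{\bar v_j}(W)}=\overline{J_{0,2}}$, recovering $J_{0,2}$ by intersecting with $\Grig$. Some argument of this kind, valid for every ray, is indispensable and is missing from your proposal.

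The ``moreover'' part also fails as written. The inclusion $\pi_v(W)\supseteq\pi_{v'}\bigl(\pi_{\bar v_k}(\Rist_\Grig(\bar v_k))\bigr)$ is fine, but the claim that $\pi_{v'}(B)=\pi_{v'}(K)=\Grig$ for every non-root $v'$ is false. Since $B=\gen{b}^\Grig$ and every conjugate of $b=(a,c)$ has the form $(a^{g_0},c^{g_1})$ or $(c^{g_1},a^{g_0})$, both first-level sections of $B$ equal $\gen{a,c}^{\Grig}$, whose image in $\Grig/\Grig'\cong(\Z/2)^3$ omits $\bar b$; hence $\pi_0(B)$ and $\pi_1(B)$ are proper (index~$2$) subgroups of $\Grig$, and the analogous computation for $K$ gives an even smaller section $\gen{ac}^{\Grig}$. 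So for $\xi=1^\infty$ and $v=01$ your lower bound is a proper subgroup of $\Grig$ and does not prove $\pi_v(W)=\Grig$. To conclude one needs the full section $\pi_{\bar v_k}(W)=J_{0,2}$ from the second part: the non-root sections of $J_{0,2}=\gen{B,a,(ad)^2}$ are indeed all equal to $\Grig$, but only because of the extra generators $a$ and $(ad)^2=(b,b)$ beyond the rigid stabilizer. The third part is therefore not bookkeeping; it genuinely depends on the second.
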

\begin{proof}
The description of $P$ immediately implies that for all $j$ we have 
\[
	\pi_{1^j}(P)=P \qquad \pi_{1^{j-1}0}(P)=\gen{B,(ad)^2,a}=J_{0,2}<_2 \Grig
\]
For the general case, let $\xi$ be any ray in $T$ and $W=\Stab_\Grig(\xi)$.
There exists $g\in\hat\Grig$ sending $\xi$ onto $1^\infty$.
Therefore, for all $j$, we have
\begin{align*}
	\overline{\pi_{\bar v_j}(W)}&=\pi_{\bar v_j}(\overline{W})\\
	&= \pi_{\bar v_j}\bigl(\Stab_{\hat\Grig}(\xi)\bigr)\\
	&=\pi_{1^{j-1}0}\bigl(\Stab_{\hat\Grig}(1^\infty)\bigr)^g\\
	&=\overline{J_{0,2}}^g=\overline{J_{0,2}}.
\end{align*}
Where the last equality follows from the fact that the closure preserves normality.
Finally we have $\pi_{\bar v_j}(W)=\overline{\pi_{\bar v_j}(W)}\cap G=J_{0,2}$.

For the last part, if $v$ is not equal to one off the $v_j$ or $\bar v_j$, then it is a descendant of some $\bar v_j$.
In this case, $\pi_v(W)$ is equal to the section $\pi_w(J_{0,2})$ for some vertex $w$ distinct from the root.
All these sections are equal to $\Grig$.
\end{proof}
\providecommand{\noopsort}[1]{} \def\cprime{$'$}


\begin{thebibliography}{BRLN16}

\bibitem[BG00]{MR1841750}
Laurent Bartholdi and Rostislav~I. Grigorchuk.
\newblock On the spectrum of {H}ecke type operators related to some fractal
  groups.
\newblock {\em Tr. Mat. Inst. Steklova}, 231(Din. Sist., Avtom. i Beskon.
  Gruppy):5--45, 2000.

\bibitem[BG02]{MR1899368}
Laurent Bartholdi and Rostislav~I. Grigorchuk.
\newblock On parabolic subgroups and {H}ecke algebras of some fractal groups.
\newblock {\em Serdica Math. J.}, 28(1):47--90, 2002.

\bibitem[BG{\v{S}}03]{MR2035113}
Laurent Bartholdi, Rostislav~I. Grigorchuk, and Zoran {\v{S}}uni{\'k}.
\newblock Branch groups.
\newblock In {\em Handbook of algebra, {V}ol. 3}, pages 989--1112.
  North-Holland, Amsterdam, 2003.
  
 \bibitem[BMB20]{2020arXiv200608677L}
Adrien {le Boudec} and Nicol{\'a}s {Matte Bon}.
\newblock {A commutator lemma for confined subgroups and applications to groups
  acting on rooted trees}.
\newblock {\em Trans. Amer. Math. Soc.}, 376(10):7187--7233, 2023.

\bibitem[Bon10]{MR2727305}
Ievgen~V. Bondarenko.
\newblock Finite generation of iterated wreath products.
\newblock {\em Arch. Math. (Basel)}, 95(4):301--308, 2010.

\bibitem[BRLN16]{MR3478865}
Khalid Bou-Rabee, Paul-Henry Leemann, and Tatiana Nagnibeda.
\newblock Weakly maximal subgroups in regular branch groups.
\newblock {\em J. Algebra}, 455:347--357, 2016.

\bibitem[BSZ12]{MR2891709}
Laurent Bartholdi, Olivier Siegenthaler, and Pavel Zalesskii.
\newblock The congruence subgroup problem for branch groups.
\newblock {\em Israel J. Math.}, 187:419--450, 2012.

\bibitem[CSST01]{MR1872621}
Tullio Ceccherini-Silberstein, Fabio Scarabotti, and Filippo Tolli.
\newblock The top of the lattice of normal subgroups of the {G}rigorchuk group.
\newblock {\em J. Algebra}, 246(1):292--310, 2001.

\bibitem[FAZR14]{MR3152720}
Gustavo~A. Fernández-Alcober and Amaia Zugadi-Reizabal.
\newblock {GGS}-groups: order of congruence quotients and {H}ausdorff
  dimension.
\newblock {\em Trans. Amer. Math. Soc.}, 366(4):1993--2017, 2014.

\bibitem[FG18]{MR3886188}
Dominik Francoeur and Alejandra Garrido.
\newblock Maximal subgroups of groups of intermediate growth.
\newblock {\em Adv. Math.}, 340:1067--1107, 2018.

\bibitem[FGLN24]{2024arXiv240215496F}
Dominik {Francoeur}, Rostislav {Grigorchuk}, Paul-Henry {Leemann}, and Tatiana
  {Nagnibeda}.
\newblock {On the structure of finitely generated subgroups of branch groups}.
\newblock {\em arXiv e-prints}, page arXiv:2402.15496, February 2024.

\bibitem[FL20]{FL2019}
Dominik Francoeur and Paul-Henry Leemann.
\newblock Subgroup induction property for branch groups.
\newblock {\em arXiv e-prints}, page arXiv:2011.13310, November 2020.

\bibitem[Fra19]{FrancoeurThese}
Dominik Francoeur.
\newblock {\em On maximal subgroups and other aspects of branch groups}.
\newblock PhD thesis, Université de Genève, 2019.

\bibitem[Gar16a]{MR3513107}
Alejandra Garrido.
\newblock Abstract commensurability and the {G}upta-{S}idki group.
\newblock {\em Groups Geom. Dyn.}, 10(2):523--543, 2016.

\bibitem[Gar16b]{MR3556961}
Alejandra Garrido.
\newblock On the congruence subgroup problem for branch groups.
\newblock {\em Israel J. Math.}, 216(1):1--13, 2016.

\bibitem[GLN21]{MR4344374}
Rostislav~I. Grigorchuk, P.-H. Leemann, and T.~V. Nagnibeda.
\newblock Finitely generated subgroups of branch groups and subdirect products
  of just infinite groups.
\newblock {\em Izv. Ross. Akad. Nauk Ser. Mat.}, 85(6):104--125, 2021.

\bibitem[Gri84]{MR764305}
Rostislav~I. Grigorchuk.
\newblock Degrees of growth of finitely generated groups and the theory of
  invariant means.
\newblock {\em Izv. Akad. Nauk SSSR Ser. Mat.}, 48(5):939--985, 1984.

\bibitem[Gri00]{MR1765119}
Rostislav~I. Grigorchuk.
\newblock Just infinite branch groups.
\newblock In {\em New horizons in pro-{$p$} groups}, volume 184 of {\em Progr.
  Math.}, pages 121--179. Birkhäuser Boston, Boston, MA, 2000.

\bibitem[Gri05]{MR2195454}
Rostislav~I. Grigorchuk.
\newblock Solved and unsolved problems around one group.
\newblock In {\em Infinite groups: geometric, combinatorial and dynamical
  aspects}, volume 248 of {\em Progr. Math.}, pages 117--218. Birkhäuser,
  Basel, 2005.

\bibitem[Gri11]{MR2893544}
Rostislav~I. Grigorchuk.
\newblock Some problems of the dynamics of group actions on rooted trees.
\newblock {\em Tr. Mat. Inst. Steklova}, 273(Sovremennye Problemy
  Matematiki):72--191, 2011.

\bibitem[GS83]{MR696534}
Narain Gupta and Saïd Sidki.
\newblock On the {B}urnside problem for periodic groups.
\newblock {\em Math. Z.}, 182(3):385--388, 1983.

\bibitem[GW03a]{MR2009443}
Rostislav~I. Grigorchuk and John~S. Wilson.
\newblock A structural property concerning abstract commensurability of
  subgroups.
\newblock {\em J. London Math. Soc. (2)}, 68(3):671--682, 2003.

\bibitem[GW03b]{MR2011117}
Rostislav~I. Grigorchuk and John~S. Wilson.
\newblock The uniqueness of the actions of certain branch groups on rooted
  trees.
\newblock {\em Geom. Dedicata}, 100:103--116, 2003.

\bibitem[Har00]{MR1786869}
Pierre de~la Harpe.
\newblock {\em Topics in geometric group theory}.
\newblock Chicago Lectures in Mathematics. University of Chicago Press,
  Chicago, IL, 2000.
  
\bibitem[KT18]{MR3834728}
Benjamin Klopsch and Anitha Thillaisundaram.
\newblock Maximal {S}ubgroups and {I}rreducible {R}epresentations of
  {G}eneralized {M}ulti-{E}dge {S}pinal {G}roups.
\newblock {\em Proc. Edinb. Math. Soc. (2)}, 61(3):673--703, 2018.

\bibitem[Lee16]{LeemannThese}
Paul-Henry Leemann.
\newblock {\em On Subgroups and Schreier Graphs of Finitely Generated Groups}.
\newblock PhD thesis, Université de Genève, 2016.

\bibitem[MW17]{MR3592603}
François le~Maître and Phillip Wesolek.
\newblock On strongly just infinite profinite branch groups.
\newblock {\em J. Group Theory}, 20(1):1--32, 2017.

\bibitem[Myr15]{2015arXiv150908090M}
Aglaia Myropolska.
\newblock {The class $\mathcal{MN}$ of groups in which all maximal subgroups
  are normal}.
\newblock {\em arXiv e-prints}, page arXiv:1509.08090, Sep 2015.

\bibitem[Per00]{MR1841763}
Ekaterina~L. Pervova.
\newblock Everywhere dense subgroups of a group of tree automorphisms.
\newblock {\em Tr. Mat. Inst. Steklova}, 231(Din. Sist., Avtom. i Beskon.
  Gruppy):356--367, 2000.

\bibitem[Per05]{MR2197824}
Ekaterina~L. Pervova.
\newblock Maximal subgroups of some non locally finite {$p$}-groups.
\newblock {\em Internat. J. Algebra Comput.}, 15(5-6):1129--1150, 2005.

\bibitem[Per07]{MR2308183}
Ekaterina~L. Pervova.
\newblock Profinite completions of some groups acting on trees.
\newblock {\em J. Algebra}, 310(2):858--879, 2007.

\bibitem[UA16]{Uria}
Jone Uria-Albizuri.
\newblock On the concept of fractality for groups of automorphisms of a regular
  rooted tree.
\newblock {\em Reports@ SCM}, 2(1):33--44, 2016.

\bibitem[Vov00]{MR1754681}
Taras Vovkivsky.
\newblock Infinite torsion groups arising as generalizations of the second
  {G}rigorchuk group.
\newblock In {\em Algebra ({M}oscow, 1998)}, pages 357--377. de Gruyter,
  Berlin, 2000.

\bibitem[Wil09]{MR2605182}
John~S. Wilson.
\newblock Structure theory for branch groups.
\newblock In {\em Geometric and cohomological methods in group theory}, volume
  358 of {\em London Math. Soc. Lecture Note Ser.}, pages 306--320. Cambridge
  Univ. Press, Cambridge, 2009.

\end{thebibliography}
\end{document}